\documentclass[a4paper,11pt]{amsart}
\synctex=1

\newfont{\cyr}{wncyr10 scaled 1100}

\setcounter{tocdepth}{3}

\usepackage{graphicx} 
\usepackage[left=2.7cm,right=2.7cm,top=3.5cm,bottom=3cm]{geometry}
\usepackage{amsthm,amssymb,amsmath,amsfonts,mathrsfs,amscd,yfonts}
\usepackage{turnstile}
\usepackage{comment}
\usepackage[latin1]{inputenc}
\usepackage[all]{xy}
\usepackage{latexsym}
\usepackage{float}
\usepackage{longtable}
\usepackage{mathrsfs}
\usepackage[usenames,dvipsnames]{color}
\usepackage{hyperref}
\usepackage{color} 
\usepackage{devanagari}
\usepackage{hieroglf}
\DeclareFontFamily{U}{BOONDOX-calo}{\skewchar\font=45 }
\DeclareFontShape{U}{BOONDOX-calo}{m}{n}{
  <-> s*[1.05] BOONDOX-r-calo}{}
\DeclareFontShape{U}{BOONDOX-calo}{b}{n}{
  <-> s*[1.05] BOONDOX-b-calo}{}
\DeclareMathAlphabet{\mathcalboondox}{U}{BOONDOX-calo}{m}{n}
\SetMathAlphabet{\mathcalboondox}{bold}{U}{BOONDOX-calo}{b}{n}
\DeclareMathAlphabet{\mathbcalboondox}{U}{BOONDOX-calo}{b}{n}
\DeclareMathAlphabet{\altmathcal}{OMS}{cmsy}{m}{n}
\usepackage[dvipsnames]{xcolor}
  \xdefinecolor{darkgreen}{rgb}{0,0.35,0}
\textwidth = 5.4 in
\textheight = 9.3 in
\oddsidemargin = .2 in
\evensidemargin = .45 in

\theoremstyle{theorem}
\newtheorem{theorem}{Theorem}[section]
\newtheorem{corollary}[theorem]{Corollary}
\newtheorem{lemma}[theorem]{Lemma}
\newtheorem{proposition}[theorem]{Proposition}
\newtheorem{conjecture}[theorem]{Conjecture}

\theoremstyle{definition}
\newtheorem{definition}[theorem]{Definition}

\newtheorem{examplewr}[theorem]{Example}

\theoremstyle{remark}
\newtheorem{obswr}[theorem]{Observation}
\newtheorem{remarkwr}[theorem]{Remark}

\newenvironment{remark}{\begin{remarkwr}\begin{upshape}}{\end{upshape}\end{remarkwr}}
\newenvironment{example}{\begin{examplewr}\begin{upshape}}{\end{upshape}\end{examplewr}}
\newcommand\mnote[1]{\marginpar{\tiny #1}}

\newcommand{\Bet}{\mathrm{B}}
\newcommand{\bE}{{\mathbb{E}}}
\newcommand{\bF}{{\mathbb{F}}}
\newcommand{\fO}{\mathfrak{o}}

\newcommand{\calH}{{\mathcal H}}

\DeclareMathOperator{\red}{red}

\DeclareMathOperator{\et}{et}

\DeclareMathOperator{\Ad}{Ad}
\DeclareMathOperator{\Spec}{Spec}
\DeclareMathOperator{\Log}{Log}

\newcommand{\fa}{\mathfrak{a}}

\newcommand{\TT}{{\mathbb T}}

\newcommand{\EN}{E_2^{^{_{(N)}}}\!}
\newcommand{\sigmaN}{\sigma_1^{^{_{_{_{_{\!(N)}}}}}}\!\!}

\newcommand{\dnem}{d}

\newcommand{\fq}{\mathfrak{q}}

\newcommand{\cC}{\mathcal C}

\newcommand{\cE}{\mathcal E}
\newcommand{\cU}{\mathcal U}
\newcommand{\fU}{\mathfrak U}

\newcommand{\cA}{\mathcal A}

\newcommand{\Q}{\mathbb{Q}}
\newcommand{\Z}{\mathbb{Z}}

\newcommand{\F}{\mathbb{F}}

\newcommand{\C}{\mathbb{C}}

\newcommand{\res}{\mathrm{res}}
\newcommand{\PP}{\mathbb{P}}

\newcommand{\Gal}{\mathrm{Gal\,}}

\newcommand{\GL}{\mathrm{GL}}

\newcommand{\Div}{\mathrm{Div}}

\newcommand{\End}{\mathrm{End}}

\newcommand{\Aut}{\mathrm{Aut}}

\newfont{\gotip}{eufb10 at 12pt}

\newcommand{\cO}{{\mathcal O}}
\newcommand{\cI}{{\mathscr I}}
\newcommand{\cM}{{\mathcal M}}
\newcommand{\cN}{{\mathfrak N}}

\newcommand{\cS}{{\mathcal S}}

\newcommand{\ra}{\rightarrow}
\newcommand{\lra}{\longrightarrow}

\newcommand{\Norm}{\mathrm{N}}

\newcommand{\SL}{{\mathrm {SL}}}
\newcommand{\SO}{{\mathrm {SO}}}

\newcommand{\dnm}{{\text{\dn{m}}}}

\newcommand{\Pic}{{\mathrm{Pic}}}

\newcommand{\R}{{\mathbb R}}
\newcommand{\M}{{\mathrm{M}}}

\newcommand{\cX}{\mathcal X}

\newcommand{\DD}{{\mathbf D}}

\DeclareMathOperator{\Hom}{Hom}

\newcommand{\Tp}{\overline{\mathbb{T}}}

\newcommand{\fm}{{\mathfrak m}}

\newcommand{\mat}[4]{\left(\begin{array}{cc}#1&#2\\#3&#4\end{array}\right)}
\newcommand{\smallmat}[4]{\bigl(\begin{smallmatrix}#1&#2\\#3&#4\end{smallmatrix}\bigr)}

\dedicatory{To Gopal Prasad, on his 75th birthday}

\thanks{ The first author was supported by an NSERC Discovery grant. The  second and fourth author were supported by the National Science Foundation under Grant No. DMS-1440140 while  they were in residence at the Mathematical Sciences Research Institute in Berkeley, California, during the Spring 2019 semester.   The second author was also supported by the National Science Foundation under Grants No. DMS-1701651 and DMS-2001369. The third author was supported by an ERC consolidator grant and Icrea Academia.  The last-named author   was supported by  NSF grant DMS-1931087.
 This project has received funding from the European Research Council (ERC) under the European Union's Horizon 2020 research and innovation programme (grant agreement No 682152). }

\include{thebibliography}

\begin{document}


\title[Derived Hecke algebras for dihedral weight one forms]{The derived Hecke algebra \\ for dihedral weight one forms}
\author{Henri Darmon, Michael Harris, Victor Rotger, Akshay Venkatesh}

\begin{abstract}
We study the action of the derived Hecke algebra in the setting of dihedral weight one forms, and   
 prove  a conjecture of the second- and fourth- named authors 
relating this action to certain Stark units associated to the symmetric square $L$-function.
The proof exploits the theta correspondence between various Hecke modules as well as ideas
of Merel and Lecouturier on higher Eisenstein elements. 
%
\end{abstract}


 \address{H. D.: Department of Mathematics and Statistics, McGill University, Montreal, Canada}
 \email{darmon@math.mcgill.ca}
 \address{K. H.: Department of Mathematics, Columbia University, New York, U.S. }
 \email{harris@math.columbia.edu}
 \address{V. R.: IMTech, UPC and Centre de Recerca Matem\`{a}tiques, C. Jordi Girona 1-3, 08034 Barcelona, Spain}
 \email{victor.rotger@upc.edu}
 \address{A. V.: School of Mathematics, Institute for Advanced Study, Princeton, U.S. }
 \email{akshay@ias.edu}

\subjclass{11G18, 14G35}

\maketitle
\setcounter{tocdepth}{1}

\tableofcontents

\section{Introduction}   

In the theory of modular forms, the case of weight one  is exceptional in several ways.  
The space of weight one forms, which can be interpreted as 
 the global sections of the Hodge line bundle $\omega$
  on a modular curve $X$,  does not admit a simple dimension formula.
This occurs precisely because the higher cohomology group $H^1(X, \omega)$
  can be nontrivial --- that is to say, the space of weight one forms
   manifests itself in two different cohomological degrees.

 A conjecture proposed in \cite{PV, GV, V2, HV}
 asserts that, in situations where spaces of automorphic forms occur across multiple cohomological degrees,
 the different degrees are related by means of a hidden action of a motivic cohomology group.
 The last mentioned paper \cite{HV}, in particular, formulates this story
 in the context of weight $1$ forms for the modular curve, and
translates the general conjectures into a numerically testable statement.
This statement, which is summarised below,
 is the main topic of this paper.

\subsection{The Shimura class}
While   the general definition of derived Hecke operators 
shall not be recalled here, 
one crucial ingredient in their construction is to take the
cup product with a certain distinguished  class in coherent cohomology, 
the so-called {\em Shimura class}.

As explained in detail in \cite[\S 3.1]{HV},  the Shimura class
attached to a prime $N\geq 5$
arises from the covering $X_1(N) \rightarrow X_0(N)$ of classical modular curves,
which (at least away from elliptic points) 
is {\'e}tale with deck group $(\Z/N\Z)^{\times}$
 and thus  furnishes 
an element  
$$\mathfrak{S}^\times\in H^1_{\rm et}(X_0(N),(\Z/N\Z)^\times\otimes \Z[1/6]).$$
(Here, and in what follows, modular curves  will 
be  regarded as  schemes over
the ring $Z=\Z[\frac{1}{6N}]$ to avoid any technical issues.)

Let $p>3$ be a prime, 
let $p^t$ be the highest power of $p$ dividing $N-1$, assume $t\geq 1$ and  fix a surjective {\em discrete logarithm} 
\begin{equation}
\label{eqn:discrete-log}
\log: (\Z/N\Z)^{\times} \ra \Z/p^t\Z.
\end{equation}
This choice determines a class $\mathfrak{S} := \log(\mathfrak{S}^\times) \in H^1_{\mathrm{et}}(X_0(N), \Z/p^t)$.
Restricting to  the fiber product    of $X_0(N)$ over $\Spec(Z)$
with $\Spec(\Z/p^t\Z)$, denoted $\bar{X}=X_0(N)_{/\Z/p^t\Z}$,  the resulting class can be pushed into Zariski cohomology,
using the inclusion of $\Z/p^t\Z$ into the {\'e}tale sheaf represented by $\mathbb{G}_a$:
in this way $\mathfrak{S}$ can be viewed as  a class in coherent cohomology. It is called the {\em Shimura class},  denoted
(by a slight abuse of notation)
$$ \mathfrak{S} \in H^1(\bar{X}, \mathcal{O}_{\bar{X}}) = \Hom(S_2(N),  \Z/p^t\Z),$$
where the last identification is provided by Serre duality, and $S_2(N)$ is the space of weight $N$
cusp forms (with $q$-expansions integral at $p$). 
  Note that $\mathfrak{S} $ depends on $N$, on $p$, and on the choice of  discrete logarithm.

 \subsection{The   main result}
   
Let  $g \in H^0(X_1(\dnem),\omega)$ be a Hecke new cusp form 
      of weight $1$,   level $\dnem$ and nebentype $\chi$, and let $g^*  \in H^0(X_1(\dnem),\omega)$ be the dual newform,  
   whose  Fourier expansion   is related to that of $g$  by complex conjugation, and whose automorphic representation is obtained from that of $g$ by twisting by $\chi^{-1}$. 
   Assume for simplicity that the primes $N$ and $p$
 do not divide $6 \dnem$.
 
 Let  $\rho_g: G_{\Q} \lra \GL_2(L) \simeq {\rm Aut}(V_g)$ be the odd $2$-dimensional Artin representation attached by Deligne and Serre  to $g$,
 acting on a two-dimensional
$L$-vector space $V_g$, for a suitable finite extension $L$ of $\Q$ (containing the fourier coefficients of $g$, and contained in a cyclotomic field).  Let  $\Ad(\rho_g)$ denote the $3$-dimensional subrepresentation of $\End_L(V_g)$
 consisting of $L$-linear endomorphisms of  $V_g$ of trace zero, equipped with the natural action of $G_\Q$ by conjugation.

Let $R$ be 
    the  ring of   integers of $L$, with $6N$ inverted. The product $g(z) g^*(Nz)$ is a weight $2$ cuspidal modular form of level $N \dnem$ with trivial Nebentypus character and coefficients in $R$, and can thus be viewed as
an element of  the space $S_2(N \dnem) = H^0(X_0(N \dnem), \Omega^1)$ of global regular differential forms. Let
$$G(z)  :=  \mathsf{Tr}^{N\dnem}_{N} (g(z) g^*(Nz))  \in S_2(N;R) = H^0(X_0(N)_{/R}, \Omega^1)$$
denote the trace of $g(z) g^*(Nz)$ to the space of modular forms of weight $2$ and level $N$.   
 
    The pairing between  $G$ and the Shimura class $\mathfrak{S}$  arising from Serre duality gives rise to a numerical invariant
$$ \langle G, \mathfrak{S} \rangle \in R/p^t,$$
see \S \ref{notn} for details. 
The conjecture of \cite{HV} relates this quantity  to the discrete logarithm
 of a suitable Stark unit attached to $g$, which we now proceed to describe.

The image of the integral group ring $R[G_\Q]$ in $\Ad(\rho_g)$ 
endows this space with a   Galois-stable $R$-sublattice, which is denoted
$\Ad(\rho_g)^\circ$, and whose $R$-linear dual is denoted $\Ad^*(\rho_g)^\circ$. 

 Let $H$ denote the finite extension of $\Q$  which is cut out
by $\Ad(\rho_g)$. Because complex conjugation acts with eigenvalues $1$, $-1$ and $-1$ on  this representation, 
Dirichlet's unit theorem asserts that
the  $R$-module
$$ U_g := (\cO_H^\times \otimes \Ad^*(\rho_g)^\circ)^{G_\Q}$$
is of rank one  (cf.~ Lemma 2.7 of \cite{HV}).
The choice of a prime $\mathcal{N}$  of $H$ above $N$ gives rise to a frobenius element $\sigma_N$,  whose  image under $\rho_g$  is a natural element of $\Ad(\rho_g)^\circ$ 
which is invariant under the conjugation action of $\sigma_N$.  Evaluation at $\sigma_N$ thus gives rise to a 
homomorphism from $\Ad^*(\rho_g)^\circ $ to $R$ which is $\sigma_N$-equivariant (for the trivial $\sigma_N$ action on $R$). 
Combining this evaluation   with the reduction modulo  $\mathcal{N}$
gives a  
``mod $N$ reduction map"
$$ \red_N: U_g  := ( \cO_H^\times \otimes \Ad^*(\rho_g)^\circ)^{G_\Q}  \lra  ((\cO_H/\mathcal{N})^\times \otimes  R)^{\sigma_N=1}  = 
   (\Z/N\Z)^\times \otimes R.$$

\noindent
A version of main conjecture of \cite{HV} (Conjecture 3.1 in loc.\,cit.) may be phrased as follows.
  \begin{conjecture} 
  \label{conj:HV}
There exists an integer $m =m_g \geq 1$ and
  $u_g \in U_g$ such that,      for all  primes $N$ and $p$ as above, 
$$ m \cdot \langle G, \mathfrak{S} \rangle = \log( \mathrm{red}_N(u_g)).$$
 \end{conjecture}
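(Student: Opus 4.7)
We work, as dictated by the title of the paper, under the dihedral hypothesis on $g$: we write $g = \theta_\psi$ for a ring class character $\psi$ of a quadratic field $K/\Q$, so that $\rho_g = \mathrm{Ind}_{G_K}^{\GQ}\psi$. This gives a canonical decomposition
\[
\Ad(\rho_g) \;\cong\; \chi_K \,\oplus\, \mathrm{Ind}_{G_K}^{\GQ}\psi_-,
\]
where $\sigma$ generates $\mathrm{Gal}(K/\Q)$, $\chi_K$ is its quadratic character and $\psi_- := \psi/\psi^\sigma$ is the anticyclotomic companion of $\psi$. Via this splitting, $U_g$ decomposes into a \emph{genuine} summand, essentially a rank-one module in the unit group of the ring class field cut out by $\psi_-$ (a line of elliptic units in the imaginary case, of Stark units in the real case), and an Eisenstein summand attached to $\chi_K$. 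The Stark unit $u_g$ and its logarithm $\log(\mathrm{red}_N(u_g))$ split accordingly, and the plan is to match the two summands on the right-hand side of the conjecture separately with two analogous summands on the left.

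The first step on the analytic side is a Rankin--Selberg / Siegel--Weil decomposition of $G$. Since both $g$ and $g^*$ are theta series for $K$, the product $g(z)g^*(Nz)$ admits an explicit expansion as a linear combination of weight-two theta series attached to characters of $K$, plus an Eisenstein contribution; after applying $\mathsf{Tr}^{N\dnem}_N$, one obtains a clean description of $G$ as a sum of a new theta series of level $N$ attached to $\psi_-$ and an explicit Eisenstein form on $X_0(N)$. The pairing $\langle G,\mathfrak{S}\rangle$ consequently splits into a pairing of $\mathfrak{S}$ with a genuine theta series and a pairing of $\mathfrak{S}$ with an Eisenstein series.

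The crucial geometric input is the description, due to Merel mod $N$ and refined to the mod-$p^t$ setting by Lecouturier, of the Shimura class $\mathfrak{S}$ via \emph{higher Eisenstein elements}: the functional on $S_2(N;R)$ induced by $\mathfrak{S}$ can be computed on a Hecke eigenform $f$ in terms of $\log$-values of its eigenvalues modulo powers of the Eisenstein ideal. Applied to the genuine theta summand of $G$, this converts the pairing into a finite sum, indexed by an orbit of ideal classes of $K$, of $\log$-values of Frobenius elements at primes above $N$ in the ring class field cut out by $\psi_-$. By the explicit reciprocity law for elliptic units (resp.\ for Stark units, in the real quadratic case), this sum is precisely the discrete logarithm of the mod-$N$ reduction of the expected Stark unit in $U_g$, up to a universal rational multiplier $m$ coming from the denominators appearing in the theta identities and Eisenstein congruences. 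The Eisenstein summand on each side is in turn controlled by Mazur--Merel's Eisenstein ideal, and the matching there is expected to be comparatively formal.

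The main obstacle, and the technical heart of the argument, is to make all of the above compatible modulo $p^t$ rather than merely modulo $p$. The theta identities are classically statements in characteristic zero; to transport them to the mod-$p^t$ coherent pairing $\langle G,\mathfrak{S}\rangle$ requires a careful integral refinement, as do Merel's formulas and their extension to higher Eisenstein elements. Ensuring that Serre duality, the integral theta decomposition of $g\cdot g^*|_N$, and Lecouturier's mod-$p^t$ description of $\mathfrak{S}$ all align on the nose --- and that the residual $\chi_K$-Eisenstein contributions cancel or match with the correct multiplier --- is where the bulk of the work of the paper will go.
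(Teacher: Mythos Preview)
Your proposal has several of the right ingredients --- the dihedral decomposition of $\Ad(\rho_g)$, the role of higher Eisenstein elements in computing pairings against $\mathfrak{S}$, and the expectation that elliptic or Stark units should emerge at the end --- but the central mechanism by which the paper makes all this work is absent from your outline, and the step you propose in its place does not obviously go through.

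The paper does \emph{not} decompose $G$ on $X_0(N)$ as ``a new theta series of level $N$ attached to $\psi_-$ plus an explicit Eisenstein form''; there is no weight-two theta series of level $N$ for $K$ that would play this role. Instead, the key move is to transfer the entire computation to the quaternion side via the Jacquet--Langlands / theta correspondence
\[
\Theta: \Div(\cE)\otimes_{\TT}\Div(\cE)\longrightarrow M_2(\Gamma_0(N))
\qquad\text{(definite case)},
\]
or its analogue with $H_1$ of $X_0(N)$ in the indefinite case. One proves a trace identity of the shape
$G =_{\mathfrak{S}} \Theta([\psi_{12}]\otimes[\psi_{12'}])$ where the $[\psi_{\bullet}]$ are explicit Heegner divisors on supersingular points (CM case) or combinations of real-quadratic geodesics (RM case). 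Then one computes the \emph{adjoint} image $\Theta^*(\mathfrak{S})$ and shows it is the higher Eisenstein element on the quaternionic module, built concretely from the modular unit $E_{N+1}^{12}/\Delta^{N+1}$ restricted to the supersingular locus (CM) or from the Dedekind--Rademacher modular symbol (RM). The pairing $\langle G,\mathfrak{S}\rangle$ thus becomes $\langle Z_{K,\psi},\fU_N\rangle$, and the Stark unit appears because this last pairing is literally the discrete log of a Siegel unit evaluated at a CM cycle, or the log of the fundamental unit arising from the automorphs of real quadratic forms.

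So the genuine gap is this: without the theta transfer, you have no mechanism to convert $\langle G,\mathfrak{S}\rangle$ into a pairing of a Heegner-type cycle against a modular-unit-type object, which is where the Stark unit actually enters. Your invocation of ``explicit reciprocity for elliptic units'' is the right endpoint, but the bridge to it --- the quaternionic theta lift and the identification of $\Theta^*(\mathfrak{S})$ with an explicit higher Eisenstein element built from $\Delta$ and $E_{N+1}$ --- is the substance of the argument and is missing from your sketch. The ``Eisenstein summand matching'' you anticipate does not occur as a separate step; rather, the Eisenstein phenomena are entirely encoded in the Hecke-module characterization of higher Eisenstein elements on the quaternion side.
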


 Note that both sides of this conjectured identity   belong to $R/p^t$, and  that both
depend linearly  on the choice of  discrete logarithm  made in \eqref{eqn:discrete-log}. The 
 validity of  Conjecture \ref{conj:HV}   is thus independent of this choice.  
 Similarly, both sides of the conjecture are independent of the choice of $\mathcal{N}$.
 (In loc. cit. the conjecture  
was formulated differently,  and 
was slightly more precise about the primes dividing $m$; 
the version above is more explicit and is what we will prove in certain cases.)

This article presents  a  proof of Conjecture 
\ref{conj:HV}
when $g$ is {\em dihedral}  under certain simplifying assumptions
on ramification.
Recall that $g$ is said to be dihedral if the Galois representation
  $\rho_g$ is induced from a ray class character $\psi_1$ of the Galois group of an (imaginary or real) quadratic extension $K$ of $\Q$. 
   In that case $g= \theta_{\psi_1}$ is Hecke's classical theta series associated to $\psi_1$,
i.e. the modular form whose $L$-series is given by $L(K, \psi_1)$. We assume throughout that $\psi_1^2 \ne 1$, as this implies that $ \theta_{\psi_1}$ is cuspidal.

 Let $D$ denote the discriminant of $K$ and $\delta=(\sqrt{D})$ its different.
 \begin{theorem}
 \label{thm:main} 
 If $K$ is imaginary, assume that $D$ is an odd prime and that
 $\psi_1$ is unramified. If $K$ is real assume that $D$ is odd and that $\psi_1$  has conductor  dividing $\delta$. 
 Then Conjecture \ref{conj:HV} is true
for $g = \theta_{\psi_1}$.
\end{theorem}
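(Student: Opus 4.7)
The plan is to exploit the dihedral structure of $g = \theta_{\psi_1}$. Since $\rho_g = \mathrm{Ind}_K^{\Q}(\psi_1)$, the adjoint representation splits as $\Ad(\rho_g) = \eta_K \oplus \mathrm{Ind}_K^{\Q}(\psi)$, where $\eta_K$ is the quadratic character cutting out $K/\Q$ and $\psi := \psi_1/\psi_1^c$. Correspondingly, $U_g$ decomposes into two summands, and I would make the Stark unit $u_g$ completely explicit: the $\eta_K$-component is built from the (trivial or fundamental) unit of $K$ together with a cyclotomic contribution, while the $\mathrm{Ind}_K^{\Q}(\psi)$-component is a Stark-type unit in the ring class field of $K$ cut out by $\psi$. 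The hypotheses on $D$ and on the conductor of $\psi_1$ are arranged precisely so that this description is available without local corrections.

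Next, I would analyze the left-hand side by expanding $G = \mathsf{Tr}^{N\dnem}_N(g(z) g^*(Nz))$ through the theta correspondence. Because both $g$ and $g^*$ arise as theta lifts from characters of $K^{\times}$, the product $g \cdot g^*$ is a theta lift from the split orthogonal group $\mathrm{O}(2,2)$, and the resulting Rankin--Selberg / Siegel--Weil decomposition writes $G$ as a sum of an Eisenstein-type piece matching the $\eta_K$-summand of $\Ad(\rho_g)$ and a cuspidal theta piece matching the $\mathrm{Ind}_K^{\Q}(\psi)$-summand. This aligns Conjecture \ref{conj:HV} summand-by-summand on both sides.

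The key analytic ingredient is then the higher Eisenstein formalism of Merel and Lecouturier. Their machinery expresses the Serre-duality pairing of a weight $2$ form of level $\Gamma_0(N)$ against $\mathfrak{S}$ modulo $p^t$ as a $\log \mathrm{red}_N$ of a unit explicitly determined by the Fourier coefficients of the form via mod-$N$ congruences with Eisenstein series. Feeding the two theta pieces of $G$ into this formalism, and invoking Shimura's classical formula for the values of the Hecke $L$-series of $K$, I expect to recover $\log(\mathrm{red}_N(u_g))$ up to a universal rational factor, which plays the role of the constant $m = m_g$.

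The principal obstacle will be this final matching step. One must identify, on the nose, the unit produced by Lecouturier's mod-$N$ recipe on the cuspidal theta piece of $G$ with the Stark unit coming from $\Ad^*(\rho_g)^\circ$. This demands a careful reconciliation of normalisations between the theta correspondence and the Artin reciprocity map for $K$, a choice of integral structures so that the identity holds cleanly in $R/p^t$, and close control of local factors at primes dividing $D$ and $\dnem$ --- which is exactly where the simplifying ramification hypotheses in the theorem enter.
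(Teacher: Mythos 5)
Your outline has the right vocabulary --- the dihedral splitting $\Ad(\rho_g)=\eta_K\oplus\mathrm{Ind}_{K}^{\Q}(\psi)$, the theta correspondence, and the Merel--Lecouturier higher Eisenstein elements --- but the step you rely on to actually produce a number does not exist in the form you invoke it. There is no general ``Merel--Lecouturier recipe'' that takes a weight-two cusp form $F$ on $\Gamma_0(N)$ and returns $\langle F,\mathfrak{S}\rangle$ as the discrete logarithm of a unit read off from mod-$N$ congruences of its Fourier coefficients with Eisenstein series. What that formalism gives is only structural: $\mathfrak{S}$ lifts to the \emph{higher} Eisenstein element of $\bM^*$, so $\langle F,\mathfrak{S}\rangle$ depends only on $F$ modulo $(p^t, I_{\rm Eis}^2\bM)$; but evaluating it still requires pairing $\mathfrak{S}$ against a concrete model of that quotient, and for cuspidal $F$ that is precisely the unknown quantity (Merel's theorem computes $\langle E_2^{(N)},\mathfrak{S}\rangle$, not $\langle F,\mathfrak{S}\rangle$). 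Relatedly, a ``Siegel--Weil decomposition of $G$ into an Eisenstein piece plus a cuspidal piece'' cannot be the organizing device: $G$ is already a cusp form. The decomposition \eqref{eqn:key-decomp} manifests itself not in $G$ but on the other side of the see-saw, in the cycle $[1]\otimes[\psi]$ (resp.\ $\gamma_1\otimes\gamma_\psi$) of which $G$ is the theta lift, the ``Eisenstein-like'' factor pairing against $\Sigma_0$ (resp.\ $\kappa_1^+$) and the $\psi$-factor against the higher Eisenstein element.

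The missing mechanism --- the one that makes a unit appear at all --- is the pair of computations that occupy the bulk of the argument. First, one needs an exact trace identity $G=_{\mathfrak{S}}C\cdot\Theta(Z_{K,\psi})$ with an explicit Heegner cycle and explicit constant (Theorems \ref{thm:waldspurger-garrett-CM-general} and \ref{thm:RM-general}); these are genuine local Weil-representation computations at the primes dividing the discriminant, not formal consequences of the theta correspondence. Second, one must transport $\mathfrak{S}$ through the adjoint $\Theta^*$ and identify the resulting higher Eisenstein element on the quaternionic side with a geometric object: the $\Log$ of the Siegel unit $E_{N+1}^{12}/\Delta^{N+1}$ on the supersingular locus (Theorem \ref{thm:sigma-g}), resp.\ the cocycle $\smallmat abcd\mapsto\log(a)$ on $\Gamma_0(N)$ (Theorem \ref{thm:eisenstein-relative-H1}), combined via the tensor-product statement for higher Eisenstein elements (Proposition \ref{prop:he-tensor}). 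Only then does pairing with the Heegner cycle yield $\log(u_{\psi,\fq})$ as a CM value of a modular unit (Proposition \ref{prop:elliptic-units-mt}) or $\log(u_K)$ from the automorph of a quadratic form (Proposition \ref{prop:heegner-cycles-mt}). You defer all of this to a ``reconciliation of normalisations,'' but it is not a normalisation issue: without these two identifications there is no candidate unit on the analytic side to compare with $u_g$. Finally, invoking Shimura's or Meyer's $L$-value formulas is both unnecessary and hazardous mod $p^t$; the paper uses the Kronecker limit formula only as a gloss on the constant $L_{\rm alg}(\psi)$, never in the proof.
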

\begin{remark}
The proof  of Theorem \ref{thm:main} described
in  Chapter~\ref{sec:cm-forms}
shows that the integer  $m$ of Conjecture \ref{conj:HV}
divides $24$ in the real case, and, in the imaginary case, that  it divides  $6$  unless the order of $\psi_1^2$ is a   power of a prime $\ell$, in which
case $m$ divides $6\ell$. 
No claim is made that 
these bounds for $m$ are  optimal; they are merely what comes directly out of the proofs.  
\end{remark}
 
 The key idea in the proof of Theorem \ref{thm:main}
  is to  express $G$ as the theta lift of an appropriate 
  {\em Heegner cycle},
 and to compute the image of $\mathfrak{S}$ under the adjoint of the theta lift as a combination of {\em higher Eisenstein elements}. While the latter computation is performed in full generality,  the expression for $G$ 
in terms of Heegner cycles has only been worked out 
 in a non-trivial simple scenario.

In particular, the ramification conditions force the following simplifying feature.
Let $\psi_1'$ denote the $\Gal(K/\Q)$-conjugate of $\psi_1$ and set $\psi= \psi_1/\psi_1'$.  Then
\begin{equation}
\label{eqn:key-decomp}
 \rho_g \otimes \rho_{g^*} = \mathrm{Ind}_{\Q}^K(1) \oplus \mathrm{Ind}_{\Q}^K(\psi)  \end{equation}
 decomposes as the direct sum of  the induced representations of two characters of
 $G_K$,  the trivial character $1$ and an 
 {\em unramified} character $\psi$. 
R.\,Zhang's forthcoming Ph.D thesis \cite{Zh}  will contain
 a proof under less restrictive ramification conditions for $K$ imaginary.  When $K$ is real, we envisage a method for calculating $G$ invoking Kudla-Millson theory but in order to cover the general case one needs to solve some issues related to the regularization of the theta lift from the split orthogonal group; cf.\,\S \ref{outline} for more details.

  \subsection{Trivial cases} 
  \label{sec:trivial-cases}
   If $K$ is imaginary quadratic and $N = \mathfrak N \cdot  \mathfrak N'$ splits in $K$, then $u_g$ belongs to $(\cO_H^\times \otimes \mathrm{Ind}_{\Q}^K(\psi))^{G_\Q}$, while 
  $\sigma_N$ belongs to $\mathrm{Ind}_{\Q}^K(1)$. 
  If $K$ is real quadratic and 
  $N$ is inert in $K$,  then 
  $u_g$ belongs to the unit group $\mathfrak{o}^\times$ of $K$,  on which $\sigma_N$  acts as $-1$.
   In both cases the regulator $ \mathrm{red}_N(u_g)$ 
  of Conjecture \ref{conj:HV} vanishes trivially. 
  
  This is consistent with the fact that the modular form
  $G$ is identically zero in these two scenarios.
  Indeed,
  the main theorem of  \cite{HaKu} asserts that, for all newforms $f$ of weight two on $\Gamma_0(N)$, 
\begin{equation}\label{Haiku}
\langle G,f\rangle^2  = C\cdot L(f,g, g^*,1),
\end{equation}
where $C$ is a product of local automorphic terms and $L(f,g, g^*,s)$ is the triple product $L$-series associated to $f$, $g$ and $g^*$. 
 The Artin formalism  applied to \eqref{eqn:key-decomp} implies that
\begin{equation}
\label{factorisation}
L(f,g, g^*,s) = L(f/K,s) \cdot L(f/K,\psi,s),
\end{equation}
where the two $L$-functions appearing on the right-hand side are the ones associated to the base change of $f$ to $K$, twisted by suitable characters. 
Since $(d,N)=1$,  both $ L(f/K,s)$ and $L(f/K,\psi,s)$ satisfy a functional equation with $s=1$ as center of symmetry and global sign $(\frac{-N}{K})$. This sign is $-1$, and hence
$$L(f/K,1) = L(f/K,\psi,1)=0.$$
 It follows that $\langle G,f\rangle=0$ for all   $f$, and hence that $ G=0$.

\medskip
 \subsection{Outline of the paper}\label{outline}
 The interesting cases of Theorem \ref{thm:main}
occur when
$(\frac{-N}{K}) = 1$, i.e., when
\begin{itemize}
\item $K$ is imaginary quadratic, and $N$ is inert in $K$;
\item $K$ is real quadratic, and $N$ is split in $K$.
\end{itemize}
The body of the article is devoted to the proof of
Theorem \ref{thm:main} in
these non-trivial cases, referred to as the {\em definite} and {\em indefinite} cases respectively.
The main idea is to transfer the computation to a suitable (definite, resp.\, indefinite) quaternion algebra  $B$ over $\Q$  by means of a theta lift $\Theta$:
$$ \Theta:  \mbox{modular forms on $B$} \rightarrow S_2(N).$$
This allows the identity in Conjecture \ref{conj:HV} to be recast on $B$. Indeed, 
 $G$ and $\mathfrak{S}$  are obtained via $\Theta$ from objects arising (respectively) from
  \begin{itemize}
  \item[(i)]  CM points or real quadratic closed geodesics; 
   \item[(ii)] Siegel units.
 \end{itemize}
Let us examine these two key ingredients in further detail.

Ingredient (i), in general,  takes the form  
\begin{equation}
\label{eqn:Heegner-cycles-intro}
 G \, =_{\mathfrak{S}} \, \Theta(Z_{K,\psi}),
\end{equation}
where $Z_{K,\Psi}$ is a suitable Heegner cycle and the symbol "$=_{\mathfrak{S}}$"
 means equality up to modular forms that pair to $0$ with the Shimura class (see \S \ref{notn} for some details). 
 More precisely, 
 $Z_{K,\Psi}$ is a formal linear combination of supersingular points in characteristic $N$,  obtained as a weighted combination of the mod $N$ reductions of CM elliptic curves in the definite case (Theorem \ref{thm:waldspurger-garrett-CM-general}), and  a linear combination of real quadratic geodesics  in the  homology of $X_0(N)$ in the indefinite case (Theorem   \ref{thm:RM-general}).  The equality \eqref{eqn:Heegner-cycles-intro}
 can be interpreted as coming from a certain see-saw  (Remark \ref{seesaw}) although we give a rather direct proof.
 
The content of (ii) is the computation of the image of the Shimura class under the  {\em adjoint}  (dual, in other words) of the theta lift. The outcome is an expression
\begin{equation}
\label{eqn:shimura-theta-intro}
 \Theta^*(\mathfrak{S})= \mbox{explicit {\it higher Eisenstein element} $\fU_N$}.
\end{equation}
 In   the definite case, $\fU_N$ is obtained by restricting a suitable Siegel unit to the supersingular locus in characteristic $N$. In the indefinite case  $\fU_N$ is built out of the  modular symbol arising from the (logarithmic derivative of the) same Siegel unit. 
 In all cases, the basic idea of proof is that $\Theta^*(\mathfrak{S})$ is {\em uniquely characterized by its behavior
 with respect to Hecke operators} and so can be proved to equal $\fU_N$, up to an irrelevant ambiguity,
 by a purely Hecke-theoretic computation. 
Most of the work for the proof of
\eqref{eqn:shimura-theta-intro}
 is given in Chapter \ref{sec:hee}, building on  ideas of Mazur, Merel and Lecouturier on (higher) Eisenstein elements and the classical theory of modular units and modular symbols.

\medskip
Combining \eqref{eqn:Heegner-cycles-intro} and \eqref{eqn:shimura-theta-intro} leads to an identity of 
 the form
 \begin{equation}\label{main-idea}
 \langle G, \mathfrak{S} \rangle= \langle \Theta(Z_{K,\psi}),  \mathfrak{S}\rangle =  
  \langle Z_{K,\psi}, \Theta^*(\mathfrak{S})\rangle = \langle Z_{K,\psi}, \fU_N \rangle.
\end{equation}
In the definite setting, the right-hand quantity can be interpreted as the discrete logarithm of an elliptic unit,  obtained by evaluating the Siegel unit attached to $\fU_N$ on the CM divisor attached to
$Z_{K,\psi}$. 
In the indefinite setting,  the regulator   
involves only   the logarithm of the fundamental unit of   $K$, and this 
 fundamental unit emerges in   $\langle Z_{K,\psi}, \fU_N \rangle$  
from the eigenvalues of certain hyperbolic matrices in $\Gamma_0(N)$.
  The details of these calculations,
  concluding with   the proof of  Theorem  \ref{thm:main},
 are supplied in 
 Chapter \ref{sec:cm-forms}.  
 
 \medskip
It is worth insisting on a crucial feature of the dihedral case, namely,  that  the desired  units can be constructed
  explicitly,  as 
 CM values of modular units in the definite case, or as eigenvalues of suitable matrices in $\SL_2(\Z)$ in the indefinite case. This is what accounts for  Stark's conjecture being  known
 for the adjoint $L$-functions of dihedral forms. It remains open, however,  
 for the adjoint $L$-functions of  so-called {\em exotic} 
 weight one forms with non-dihedral projective image. Although the existence and essential uniqueness
 of the predicted  unit   is still guaranteed by Dirichlet's unit theorem,  no analogue of the Kronecker limit formula 
 relating  it to $L$-functions attached to $g$  is available.
  The numerical 
 experiments described in \cite{HV}  test Conjecture
 \ref{conj:HV} numerically, but only in CM dihedral cases 
 that  now fall under the purview of Theorem \ref{thm:main}.
 The article \cite{marcil} provides numerical evidence for 
 Conjecture \ref{conj:HV} in several more interesting 
 instances where $g$ is exotic.

\begin{remark}\label{seesaw}
It may be helpful to indicate the see-saw that underlies the 
crucial computation \eqref{eqn:Heegner-cycles-intro}. We  emphasize, however,  that  the  proofs in \S \ref{thetaproducts} and
\S \ref{main-id-RM}  do not use this in any explicit way.\footnote{  However, the representation-theoretic perspective appears to be 
indispensable to treat cases in which the Hecke characters have more general ramification.} Here we will proceed purely formally. 

Set $$G(L(2) \times L(2))= \{ (g_1,g_2) \in \GL(2) \times \GL(2), \det(g_1) = \det(g_2)\}.$$
We examine the following see-saw:    
{\small  \begin{equation*}
\label{Rome}
 \xymatrix{
  \theta_{\psi_1^{-1}}^{(N)} \boxtimes \theta_{\psi_2^{-1}} \in & [ G(L(2) \times L(2)) ]
 &   [\mathrm{GO}(B) ]\sim (B^{\times}  \times B^{\times})/\GL(1) &   \ni \Theta(\mathfrak{S})  \\
\mathfrak{S} \in &   [\GL(2) ]\ar[u] \ar[ur]  &    \left[  \mathrm{G}(\mathrm{O}(W_1)\times \mathrm{O}(W_2))\right]     \ar[ul] 
  \ar@{^{(}->}[u]^{j} & \ni \psi_{_{12}} \times \psi_{_{12'}}
}
\end{equation*} }

  The arrow  $\Theta^*$ from lower-left to upper-right is a realization of the Jacquet-Langlands correspondence
  and we denote its formal adjoint simply by $\Theta$. 
  Of course $\mathfrak{S}$ is not in fact a characteristic zero modular form, but let us proceed as if it were;
  in the end the proof uses integral normalizations of the $\theta$-correspondence to get around this. 
  The see-saw principle  and adjointness respectively give
    $$ \langle\theta_{\psi_1^{-1}}^{(N)} \theta_{\psi_2^{-1}} ,  \mathfrak{S} \rangle = \langle Z_{K, \psi},  \Theta^*(\mathfrak{S}) \rangle = \langle \Theta(Z_{K, \psi}),  \mathfrak{S} \rangle,$$
  where $Z_{K, \psi}$ arises from pushing forward 
 $\psi_{_{12}} \times \psi_{_{12'}}$  under $j$.   In the special case $\psi_2=\psi_1^{-1}$ this recovers \eqref{eqn:Heegner-cycles-intro} from the point of view of the see-saw formalism. 
   
\end{remark}

 \subsection{Notation} 
 \label{notn}
 
 We will fix here some notation that is used throughout the paper.  This notation
 will also be introduced where we use it; we have gathered some of it
 here as a convenient reference.
 
 Throughout the paper, 
 $K$ will denote a quadratic field, with ring of integers $\mathfrak{o}$ and discriminant $D$. In \S \ref{thetaproducts}
 this field is imaginary, and in   \S \ref{main-id-RM} it is real. 
 The symbol $x \mapsto x'$   denotes the nontrivial automorphism of $K$,
 and we will allow ourselves to apply it to various associated constructions (elements of $K$, ideals, 
 characters, etc.)
 
The narrow class group of $K$  (i.e., the usual class group in the imaginary case) is denoted
by $\mathcal{C}$.
 In \S \ref{main-id-RM},  $\mathcal{C}_D$ will denote the ray class group of $K$
 allowing level $\delta$, the different ideal of $K$.
The symbols $\psi_1$ and $\psi_2$ denote
 characters of $\mathcal{C}_D$ with inverse central characters,  and we put
 \begin{equation} \psi_{_{12}} := \psi_1 \psi_2, \qquad \psi_{_{12'}}:=\psi_{1}\psi_2', \end{equation}
 which   in all cases descend to characters of $\mathcal{C}$. 
 In the special case  $\psi_2=\psi_1^{-1}$ which is  germane to the proof of Theorem \ref{thm:main},
 the above definitions simplify to
\begin{equation} 
\label{psidef} 
 \psi_{_{12}} = 1, \quad \psi_{_{12'}} = \psi_1/\psi_1' := \psi, \quad \mbox{ say}.\end{equation}
 
 We will use $L$ for a coefficient field for characters $\psi$ as above, i.e.
 $L$ is a number field containing the values of 
 $\psi: \mathcal{C} \mbox{ or } \mathcal{C}_D \rightarrow L^{\times}$.
 In this context, $R$ will denote a suitable ring of integers of $L$ (possibly with
 denominators at some primes). 
 
 We will often denote by $g$ (respectively $h$)
 the dihedral forms associated to $\psi_1$ (respectively $\psi_2$)
 with associated Galois representation $\rho_g: G_{\Q} \rightarrow \GL(V_g)$. 
In this situation, we will often denote
$$ G:= \mbox{trace to level $\Gamma_0(N)$ of  $\theta_{\psi_1^{-1}}(Nz) \theta_{\psi_2^{-1}}(z)$,}$$
which in particular becomes the trace of $\theta_{\psi_1}(z) \theta_{\psi_1^{-1}}(Nz)$ in the case $\psi_2=\psi_1^{-1}$.

The integer $N>3$   always denotes a prime, 
and $Z$ denotes the ring $\Z[\frac{1}{6N}]$. 
The modular curves $X_0(N)$ and $X_1(N)$
are understood\footnote{
Note that the distinction between ``stack'' or the associated coarse moduli scheme will make very little difference for our purposes;
the cover $X_1(N) \rightarrow X_0(N)$ is {\'e}tale only when considering the stacks,
but in any case we are interested only in the $(\Z/p^t)$-subcover which is also
{\'e}tale over the scheme.}
to be schemes over $Z$.
 We   write
  $H^1_{\et}(X_0(N))$ and $H^1_{\Bet}(X_0(N))$ to denote, 
 respectively, the {\'e}tale cohomology of $X_0(N)$ as a scheme over $Z$,
 and the Betti cohomology of the Riemann surface $X_0(N)(\C)$.  
 
  We define the space of cusp forms $S_2(N)$
 and the space of modular forms $M_2(N)$ as (free) $Z$-modules accordingly and define
 $$S_2(N)^{\vee} = \Hom(S_2(N), Z)$$
 as their $Z$-linear duals. 
 For $R$ a $Z$-algebra, 
  $S_2(N; R) := S_2(N) \otimes_{Z} R$ is similarly defined
 as the space of cusp forms with coefficients in $R$, and likewise for $M_2(N;R)$. 
  For an element $f \in M_2(N; R)$ we will denote by $f^{(d)} \in M_2(Nd; R)$
  the modular form with $q$-expansion $f(q^{d})$. 
  
  Let $p$ be an odd prime $\geq 5$ and $p^t$ the largest power of $p$ dividing $N-1$.
Fix
 a surjective  ``discrete logarithm"
$$\log:    (\Z/N\Z)^\times \longrightarrow \Z/p^t \Z,$$
where $p^t$ is the largest power of $p$ dividing $N-1$. 
Note that this logarithm  factors through the quotient  $G_N$ of \eqref{GNdef}
\begin{equation} \label{GNdef} G_N:= (\Z/N\Z)^\times/\langle \pm 1\rangle, \end{equation}
since $p$ is assumed to be odd.  This logarithm also extends uniquely
to the multiplicative group of the quadratic extension $\mathbb{F}_{N^2}$
and this extension will also be denoted by $\log$. All   formulas will be independent of the choice of logarithm:
both sides will scale the same way if one alters it.

  \begin{remark}\label{brokensymmetry} In the indefinite case the prime $N$ splits in $K$ and the correct definition of the discrete logarithm entails the choice of one of 
the two prime divisors of $N$.  This choice is denoted $\mathfrak{N}$ and the need to pin down a choice introduces a ``breaking of symmetry"
in the final formula Proposition \ref{prop:almost-final-rm} as well as in the intermediate calculations.    The choice intervenes at the beginning of \S \ref{sec:trace-indef-statement}.
\end{remark}

Given two  modular forms $F$ and $G$ of level $N$,  the notation
 $$F =_{\mathfrak{S}} G$$
 means that ``$F$ and $G$ have the same pairing with the Shimura class.'' 
(Strictly, the prime $p$ should have been included in the notation, but the choice of $p$ is understood to be fixed.)
More precisely, $F =_{\mathfrak{S}} G$ means that:
\begin{enumerate}
\item $F,G$ lie inside $M_2(N; R)$ for $R$ the ring of $p$-integers
in some algebraic number field, and 
\item the reductions $\bar{F}, \bar{G} \in M_2(N; R/p^t R) = H^0(X_0(N)_{R}, \Omega^1)$
have the same pairing with $\mathfrak{S}$ under the Serre duality pairing
\begin{equation} \label{gdp} H^0(X_0(N)_{R/p^t}, \Omega^1) \otimes H^1(X_0(N)_{R/p^t}, \mathcal{O}) \rightarrow R/p^t\end{equation}
obtained by taking the cup product to $H^1(X_0(N)_{R/p^t}, \Omega^1)$ and using the ``trace'' map on the latter. \footnote{In the current
setting, if $t > 1$, this can be defined using Grothendieck duality   for the   structural morphism
 $X_0(N)_{R/p^t} \rightarrow \mathrm{Spec} \ R/p^t$, after e.g. adding auxiliary level structure to remove any ``stacky'' structure. This identifies $H^1(\Omega^1)$
 with $\Hom(\mathrm{R}\pi_* \mathcal{O}, R/p^t)$, homomorphisms in the derived category of $R/p^t$ modules. In particular,
 each element of $H^1(\Omega^1)$ induces (by passage to $H^0$) a map $R/p^t \rightarrow R/p^t$,
 i.e. an element of $R/p^t$.}
 \end{enumerate}
This notion is readily seen to be  independent of $R$, i.e,
compatible with extension of scalars in the obvious sense.

 \subsection{Acknowledgements} 
 The authors are grateful to Jan Vonk for the valuable insights which guided their 
 approach   to proving the main theorem of Chapter  \ref{main-id-RM}. 
 They also  thank Frank Calegari, H\'el\`ene Esnault,  and Alice Pozzi  for stimulating discussions surrounding the topics of this paper.
 
 The anonymous referee made a very careful reading of the paper and made several valuable corrections and suggestions.
 We thank her or him  for their substantial effort, which has substantially improved the paper.

The authors are happy to express their appreciation to Gopal Prasad by dedicating this article to him.  
The second- and fourth-named authors would like to take this opportunity to add a few personal words:

M.H.: `` I vividly remember
Gopal's patient and enthusiastic explanation of his own work on arithmetic groups, 
and his sincere interest in my work, when we first met, just one year after my
Ph.D.   And I am  grateful for Gopal's generosity and friendship at all our subsequent meetings, 
on three continents, over the following decades."
 
 A.V.:  ``The clarity and beauty of Gopal's work on $p$-adic groups speaks for itself, and has influenced my work on too many occasions to readily enumerate.  
And it is with much warmth that  I   recall the kindness 
 that Gopal has showed  throughout my career; I still remember clearly that when I, as a graduate student, visited the University of Michigan,
 Gopal took the time    to speak with me and encourage my work.
 It is therefore with the greatest pleasure that I   dedicate this paper to him, with admiration for a great
mathematical career and the best wishes for the future.''

\section{A trace identity for definite theta series}
\label{thetaproducts}

In  \cite{baby-gz} and \cite{gross-zagier}, Gross and Zagier proved a formula for the central critical value (resp.\,derivative) of the  $L$-function attached to the convolution of a cusp form $f$ of weight $2$ and a theta series $g$ of weight $1$ associated to a character of an imaginary quadratic field $K$. A substantial step in the proof of both formulas is the computation, for a given prime $N$, of the trace of the product $g(z)E(Nz)$ of $g$ and a suitable Eisenstein series $E$ to the space of modular forms of level $N$.

In this note we need to carry the computation of the trace of the product $g(z)h(Nz)$ of two cuspidal theta series attached to ray class characters of $K$.
We did not attempt to adapt the computations of \cite[\S 7,8,9]{baby-gz} to the present setting, but rather follow a different method invoking the Weil representation of  $\SL_2(\mathbb{A}_f)$ (where $\mathbb{A}_f$ denotes the ring of finite ad\`eles) on the space of Schwartz functions on the ad\'elic points of the underlying quadratic spaces. 


\subsection{Setup on Heegner points} \label{Heegnersetup}

The computation will be carried out in slightly greater generality than in  Theorem  \ref{thm:main} 
 of the introduction. 
Let $K$ be an imaginary quadratic field of odd discriminant $D$ with maximal order $\mathfrak{o}$ and let $\mathcal{C} = \Pic(\mathfrak{o})$ denote the class group. For $I$ an ideal, note that the image $I'$ by conjugation defines the same class in $\mathcal{C}$ as $I^{-1}$. 
Denote by $a$ the number of distinct prime factors of $D$. 

Let $N$ be an odd prime with the property that
{\em $-N$ is a square modulo $D$.} When $D$ is {\em prime}, as assumed in the introduction,  this
condition is equivalent to $N$ being inert in $K$; in general it always implies that $N$ remains inert but
is a stricter condition.

Fix an algebraic closure $\overline{\mathbb{F}_N}$, and  let  $\mathbb{F}_{N^2}$ be the subfield of size $N^2$. 





Choose an auxiliary odd prime $q$ such that $q \equiv -N \, (\mathrm{mod}\, D)$.   An elementary computation of quadratic symbols shows that $q$ is split in $K$. Assume throughout that $q$ is such that the ideals $\mathfrak{q}$, $\overline{\mathfrak{q}}$ in $K$ above $q$ are {\em principal}. The existence of such $q$ is guaranteed by Cebotarev density theorem.


A  calculation with Hilbert symbols (cf.\,\cite[\S 2.1]{Vig1}) shows that
\begin{equation}\label{BKj}
B \simeq K + Kj, \quad \mbox{ with }
j^2 = -q N \quad\mbox{and} \quad  zj = j z', \ \ \mbox{ for all } z\in K.
\end{equation}
is the definite quaternion algebra over $\Q$ of discriminant $N$. Let $b \mapsto b'$ denote the canonical anti-involution on $B$; it coincides with complex conjugation when restricted to $K$.
Let $n(b) = b b'$ denote the reduced norm on $B$.

An {\em orientation} on a maximal order $\cM$ in $B$ is a choice of homomorphism 
$$\mathbf{o}: \cM \ra \F_{N^2} $$ 
onto $\mathbb{F}_{N^2}$.  Note that $\cM$ admits exactly two possible orientations. Two oriented maximal orders $\vec{\cM}_1 =  (\cM_1,\mathbf{o}_1)$, $\vec{\cM}_2 =(\cM_2,\mathbf{o}_2)$ are equivalent if there exists an isomorphism $i:  \cM_1  \ra \cM_2$ satisfying $\mathbf{o}_1 = \mathbf{o}_2 \circ i$. 

 Write $\Pic(B)$ for the set of equivalence classes of oriented maximal orders. 
 By a classical result of Deuring (cf.\,\cite[\S 42.3]{Vo}), $\Pic(B)$ is in bijection with the
set $\cE$ of isomorphism classes of supersingular elliptic curves over $\overline{\F}_N$
as follows: we associate to an elliptic curve $E$ the order $\mathrm{End}(E)$, which
acquires an orientation by considering its action on the tangent space.

Fix a basepoint $\vec{\cM} \in \Pic(B)$ containing $\mathfrak{o} \oplus \mathfrak{o} j$.  Define the map
\begin{equation} \label{iotadef0}
\iota: \Pic(\mathfrak{o})  \lra \Pic(B)
\end{equation}
that takes an ideal class $I$  
 to the oriented maximal order $ \iota(I) = I^{-1} \vec{\cM} I$.   


\subsection{Statement of the trace identity} \label{tracedefstatement}

  Define $\Div(\cE)$ to be the module of $\Z$-valued functions on $\Pic(B)$, equipped with its natural action of the Hecke algebra $\TT$ as described e.g.\,in \cite[\S 4]{baby-gz}. 
If $\vec{\cM} \in \Pic(B)$, let $\cM$ denote the underlying unoriented order and set $w_{\vec{\cM}} = \frac{1}{2} |\cM^\times|$. 
Denote by $e_x \in \Div(\cE)$  the characteristic function of $x \in \Pic(B)$ and set as in the introduction $\Sigma_0 = \sum_x \frac{e_x}{w_x} \in \Div(\cE)\otimes \Q$. 
 The space $\Div(\cE)$ is endowed with a natural   symmetric  bilinear form
\begin{equation} \label{divpair} \langle \ , \ \rangle: \Div(\cE) \times \Div(\cE) \rightarrow \Z, \qquad
\langle e_x, e_y \rangle :=  w_x \delta_{x y}, \end{equation}
relative to which the Hecke operators $T_\ell$  are 
self-adjoint {\em for all $\ell$}, including for $\ell=N$.

The Jacquet-Langlands correspondence identifies $\Div(\cE)$
and $M_2(\Gamma_0(N))$ as Hecke modules.  
This identification can be described explicitly by means of the $\Theta$-correspondence, which is the Hecke-equivariant map 
\begin{equation}\label{Theta-cor}
 \Theta: \Div(\cE) \otimes_{\mathbb{T}} \Div(\cE) \rightarrow M_2(\Gamma_0(N))
\end{equation}
given by (cf.\,e.g.\,\cite{Em} and  \cite[Prop. 5.6]{baby-gz})
\begin{equation}\label{Theta-def}
 \Theta(\phi_1 \otimes \phi_2) = \frac{1}{2}  \langle \phi_1, \Sigma_0 \rangle \langle \phi_2, \Sigma_0 \rangle + \sum_{m \geq 1} \langle \phi_1, T_m \phi_2 \rangle q^m.
 \end{equation}

 \begin{remark}
 \label{switch}  
 Formula \eqref{Theta-def} makes it clear that 
 $$ \Theta(\phi_1 \otimes \phi_2)  =  \Theta(\phi_2 \otimes \phi_1),$$
 because each $T_m$ is self-adjoint, including for $m=N$. 
 \end{remark}
  
Given a character $\psi: \Pic(\mathfrak{o}) \ra L^\times$ with values in some finite field  extension $L/\Q$, define 
\begin{equation} \label{psibrackdef} [\psi] := \iota_*(\psi)  = \sum_{I \in \Pic(\mathfrak{o})}
\psi(I)  \iota(I)
\in \Div(\cE)\otimes L.
\end{equation}



\label{sec:trace-def-statement}
The main result of this section is the following. Let $\theta_{\psi}$ denote the theta series associated to $\psi$ as recalled in \eqref{classicalO2} below. Note also that $\theta_{\psi} = \theta_{\psi^{-1}}$ because characters of $ \Pic(\mathfrak{o})$ are anticyclotomic in the sense that $\psi' = \psi^{-1}$;
this accounts for the discrepancy in phrasing between the statement below, and the analogous 
Theorem \ref{thm:RM-general} in the RM scenario.

 \begin{theorem}
\label{thm:waldspurger-garrett-CM-general}   
Let $\psi_1$ and $\psi_2$ be 
characters of $\mathcal{C}$ and 
 let $\theta_{\psi_i}$ be the newforms associated to $\psi_i$ (equivalently: to $\psi_i^{-1}$).  Put 
  $\psi_{_{12}} = \psi_1 \psi_2, \psi_{_{12'}} =\psi_{1}\psi_2'$.
Then  there exists $p_0$ such that, for any $N$ and any $p \geq p_0$ with $p\mid N-1$:
\begin{equation}\label{eqn:Heegnercycles} 
 \mathsf{Tr}^{N D}_{N} (\theta_{\psi_1}(Nz) \theta_{\psi_2} (z))  \,  =_{\mathfrak{S}} \,  4 \cdot \Theta([\psi_{_{12}}] \otimes [\psi_{_{12'}}]).
\end{equation}
where, as in \eqref{eqn:Heegner-cycles-intro}, the notation ``$f =_{\mathfrak{S}} g$" means  that both modular forms have the same pairing with the Shimura class of level $N$. 
If $D$ is prime, \eqref{eqn:Heegnercycles} is a strict identity (not just up to $\mathfrak S$) for all primes $p$. 
 \end{theorem}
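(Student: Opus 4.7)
My proof plan proceeds in three phases. In the first, I would observe that the cuspidal parts of both sides of \eqref{eqn:Heegnercycles} lie in a small Hecke subspace of $S_2(\Gamma_0(N))$. By Artin formalism, $\mathrm{Ind}_K^\Q(\psi_1) \otimes \mathrm{Ind}_K^\Q(\psi_2) = \mathrm{Ind}_K^\Q(\psi_{12}) \oplus \mathrm{Ind}_K^\Q(\psi_{12'})$, so the Rankin--Selberg product $\theta_{\psi_1}\otimes\theta_{\psi_2}$ decomposes as a Hecke module along the two dihedral forms attached to $\psi_{12}$ and $\psi_{12'}$ (each a cusp form when the corresponding character is non-trivial, and an Eisenstein series otherwise). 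Since $N$ is inert in $K$, each such cusp form admits a Jacquet--Langlands transfer to the definite quaternion algebra $B$ of discriminant $N$, so the cuspidal projections of both sides live in the at most two-dimensional Hecke subspace spanned by the corresponding level-$N$ newforms.

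The second phase is the explicit matching of Fourier expansions, carried out via the Weil representation of $\SL_2(\mathbb{A}_f)$ on Schwartz functions on $B(\mathbb{A}_f)$. Using the orthogonal decomposition $(B, n) = (K, N_K) \oplus (Kj, qN\cdot N_K)$ implicit in \eqref{BKj}, the RHS coefficient given by \eqref{Theta-def} reads
$$a_n = 4 \sum_{I, J \in \mathcal{C}} \psi_{12}(I)\,\psi_{12'}(J)\, \#\{b \in I^{-1}\vec{\mathcal{M}} J : n(b) = n\} / |\mathfrak{o}^\times|,$$
and writing $b = z + jw$ with $z, w \in K$ subject to ideal-theoretic containments depending on $I$ and $J$ unfolds this into a weighted sum over pairs $(\mathfrak{a}, \mathfrak{b})$ of $\mathfrak{o}$-ideals satisfying $N_K(\mathfrak{a}) + qN\cdot N_K(\mathfrak{b}) = n$. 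On the LHS, the coefficient of $q^n$ in the trace of $\theta_{\psi_1}(Nz)\theta_{\psi_2}(z)$ has the same ideal-theoretic shape (produced by the Fourier coefficients of the weight-$1$ theta series together with the coset representatives for $\Gamma_0(N)/\Gamma_0(ND)$), and a suitable change of variables on $\mathcal{C}$ together with the cancellation of the auxiliary prime $q$ after summing over the class group identifies the two sums. Conceptually this is precisely the see-saw of Remark~\ref{seesaw}: the LHS realizes $\theta_{\psi_1}\boxtimes\theta_{\psi_2}$ as a form on $G(\GL_2 \times \GL_2)$ restricted to diagonal $\GL_2$, while the RHS realizes the characters as a form on $G(\mathrm{O}(K) \times \mathrm{O}(Kj)) \subseteq \mathrm{GO}(B)$, and see-saw duality exchanges the two.

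The third phase handles the residual discrepancy, which lies in the Eisenstein subspace of $M_2(\Gamma_0(N))$ (and, when $D$ is not prime, possibly also in the level-$D$ oldform part). Under the pairing \eqref{gdp}, the Shimura class $\mathfrak{S}$ annihilates this residual part provided $p$ is sufficiently large --- this is the role of the threshold $p \geq p_0$ --- so the discrepancy pairs trivially with $\mathfrak{S}$, which is exactly the content of the $=_{\mathfrak{S}}$ relation. When $D$ is prime, the oldform ambiguity vanishes and the identity is strict.

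The principal obstacle is the explicit matching in the second phase: translating between oriented-maximal-order data and pairs of $\mathfrak{o}$-ideals via the Deuring correspondence, bookkeeping the two possible orientations on $\vec{\mathcal{M}}$, and arranging the cancellation of the auxiliary prime $q$ after summing over the class group all require delicate local analysis. The factor $4$ arises from the combination of orientation ambiguity and the $\Gal(K/\Q)$-symmetry on characters, and extracting it cleanly from the Weil representation data at the ramified primes dividing $D$ and at the auxiliary prime $q$ is the main technical point.
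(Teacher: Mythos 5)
Your Phase 2 strategy --- realize the right-hand side through the splitting $B = K + Kj$ with $j^2=-qN$ and match theta series via the Weil representation / see-saw --- is the same route the paper takes, but as written the proposal has gaps at both ends and at its technical core. Phase 1 rests on a confusion: the Artin-formalism decomposition $\rho_g\otimes\rho_h=\mathrm{Ind}(\psi_{12})\oplus\mathrm{Ind}(\psi_{12'})$ is a splitting of a four-dimensional Galois representation, and it does not imply that the weight-two form $\mathsf{Tr}^{ND}_N(\theta_{\psi_1}(Nz)\theta_{\psi_2}(z))$ lies in a two-dimensional Hecke subspace of $M_2(\Gamma_0(N))$; its projection onto a weight-two newform $f$ is a triple-product period that is generically nonzero for many $f$, and $\theta_{\psi_{12}},\theta_{\psi_{12'}}$ are weight-one objects, not level-$N$ weight-two eigenforms. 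Phase 3 is also wrong in both its diagnosis and its mechanism. When $D$ is composite the natural identity one can prove concerns $\sum_{\chi\in(\cC/\cC^2)^*}\theta_{\psi_1^{-1}\chi}(Nz)\,\theta_{\psi_2^{-1}\chi}(z)$, a sum of $2^{a-1}$ products of quadratic twists; the discrepancy from the single term you want is thus a combination of honest cusp forms of the same level, not an Eisenstein or oldform residue. And $\mathfrak{S}$ emphatically does not annihilate the Eisenstein subspace for large $p$: it is supported at the Eisenstein maximal ideal, and $\langle E_2^{(N)},\mathfrak{S}\rangle$ is Merel's constant. What must actually be shown (and what forces $p\geq p_0$, through $p$-integrality of constants in a rational-structures argument with pseudo-new vectors) is that the twisted products all have the \emph{same} pairing with $\mathfrak{S}$, using invariance of the Shimura class under the adele group away from $N$.

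In Phase 2, the assertion that the left-hand coefficient ``has the same ideal-theoretic shape'' is precisely where the proof lives, and it is not carried out. A maximal order $\cM$ of $B$ does not split as $\mathfrak{o}\oplus\mathfrak{o}j$; it contains the Eichler order $\cO=\mathfrak{o}\oplus\mathfrak{o}j$ of discriminant $DNq$ with index $Dq$, so one cannot write $b=z+jw$ with $z,w$ ranging over ideals. The workable direction is the reverse: $\theta(I_1\cO I_2)$ factors as a product of binary theta series because $\cO$ splits, and the trace from level $DNq$ down to $N$, computed locally at each $\ell\mid Dq$ in the Weil representation, yields $\tfrac12\sum_{d\mid Dq}\theta(I_1\cM_d I_2)$, a sum over all $2^{a+1}$ maximal orders containing $\cO$. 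The constant $4$ then comes not from orientation bookkeeping but from the reindexing $(I_1,I_2)\leftrightarrow(\mathfrak{d}^{-1}I_1,I_2\mathfrak{d})$, which shows all $2^{a+1}$ terms are equal because the classes of the $\mathfrak{d}$ are $2$-torsion and $\mathfrak{q}$ is principal, combined (for composite $D$) with the count $|\cC[2]|=2^{a-1}$ of quadratic twists absorbed in the previous step. Until the local trace computation at the primes dividing $Dq$ is actually performed, the identity of Fourier coefficients you assert in Phase 2 remains a restatement of the theorem rather than a proof of it.
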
 
 
 
We expect a similar trace identity to hold for general ray class characters $\psi_1$, $\psi_2$ of  $K$ with opposite central character, which amounts to allowing the ring class characters $\psi_{_{12}}$, $\psi_{_{12'}}$ to have arbitrary conductor $c\geq 1$. In such generality however we do not expect the constant to be as simple as $C=4$ and \eqref{eqn:Heegnercycles}  should hold up to a suitable constant $C=C(\psi_1,\psi_2)$ that depends on $\psi_1$, $\psi_2$ but not on $N$. The reader is referred to R. Zhang's forthcoming Ph.D thesis \cite{Zh} for the proof in greater generality in the adelic language.

Theorem  \ref{thm:waldspurger-garrett-CM-general} above and Theorem \ref{thm:RM-general} below cover the simplest non-trivial settings in both definite and indefinite cases.  The proofs are different because we chose to be as direct as possible in each case and avoid repetition, but the approaches in \S \ref{thetaproducts} and \S \ref{main-id-RM} are in some ways complementary.  

%
%
%
\subsection{Summary of the proof}\label{localCM}

  Theorem \ref{thm:waldspurger-garrett-CM-general} will be proved 
  subject to three Propositions  given below;
these will be proved in the remaining subsections.

Define $\cO \subset B$ via
\begin{equation}\label{Ocq}
\cO = \cO(q) :=  \mathfrak{o} \oplus  \mathfrak{o} j.
\end{equation}
An elementary computation using \cite[1.4.7]{Vig1} shows that $\cO = \cO(q)$ has square-free discriminant $D N q$, and 
therefore (cf.\,\cite[3.5.3]{Vig1}) $\cO$ is an Eichler order, that is to say, the intersection of two maximal orders. 
Let us fix now and for the rest of this section  
   a maximal order $\cM \supset \cO$ as well as an orientation on it. 
All other orders containing $\cO$ can be obtained from $\cM$ as 
  \begin{equation}
   \label{MDdef} 
   \cM_d := \mathfrak{d}^{-1} \cM \mathfrak{d},\end{equation} 
  where $d$ ranges over positive divisors $d\mid Dq$  and $\mathfrak{d}$ is an ideal in $\mathfrak{o}$ of norm $d$. This is because locally at every prime $\ell$ dividing $Dq$ there are exactly two local maximal orders containing $\cO \otimes \Z_{\ell}$: one is obtained from the other by conjugating by any element of norm $\ell$ normalizing $\cO\otimes \Z_{\ell}$ (cf.\,e.g.\,\cite[\S 3.5]{Vig1}).

If $I_1, I_2$ are ideal classes for $\mathfrak{o}$ we can form   
$$ I_1 \cO I_2 = I_1 I_2 \oplus I_1 I_2' j \subset B.$$
By definition, the left hand side means the additive subgroup of $B$ generated by  all threefold products $i_1 \cdot o \cdot i_2$. 


We regard $K$ and $B$ as quadratic spaces by means of the norm and reduced norm respectively.  
For every ideal $I$ in either $V=K$ or $B$, let 
\begin{equation}\label{thetaL}
\theta_I= \theta(I) = \sum_{a\in I} q^{\frac{n(a)}{n(I)}}
\end{equation}
denote the theta series associated to $I$; here $n(I)$ stands for the single positive generator of the ideal of $\Q$ spanned by the norms of all elements in $I$.  The theta series $\theta_I$ is a modular form of weight $[V:\Q]/2$. With this normalization, $\theta_I$ only depends, in the case $V=K$, on the class of $I$ up to principal ideals, since $\theta_I = \theta_{Ix}$ for any $x\in K^\times$. 
Moreover, for any character of $\cC$, 
\begin{equation}
\label{classicalO2}
\theta_{\psi} = \sum_{I\in \cC} \psi(I)^{-1} \theta_{I}
\end{equation}
is the new theta series associated to $\psi$, a classical modular newform of weight $1$, level $D$ and nebentype character $\chi_K$, the quadratic Dirichlet character associated to $K/\Q$.  
 (As mentioned above, in the current situation  one could omit the inverse on the right hand side, but the formula above  is valid under less restrictive ramification conditions and facilitates
 comparison with the RM case.)  

For any $d\geq 1$, recall that
 $\theta^{(d)}(q) := \theta(q^d)$.
We will include forward references to Propositions in the RM case that play a similar
role, although because of the slightly different setups the statements are not entirely parallel.

\begin{proposition}\label{traceNDN0}  (See \S \ref{NDN0proof}, cf. also
Prop.   \ref{prop:trace-theta}). 
For any pair of classes $I_1, I_2$ of $\cC$,
we have
\begin{equation}  \mathsf{Tr}^{DN}_N \theta(I_1 I_2) \theta^{(N)}(I_1 I_2') = \frac{1}{2} \sum_{d|Dq} \theta(I_1 \cM_d I_2),
\end{equation}
where, on the left, $\mathsf{Tr}$ is the trace from level $\Gamma_0(DN)$ to level $\Gamma_0(N)$,
and $\cM_d$ is as in \eqref{MDdef}. 
\end{proposition}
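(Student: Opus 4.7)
The plan is to reinterpret the left-hand side as the theta series of a rank-4 lattice in $B$, compute the trace using classical properties of Eichler-order theta series, and then match with the sum on the right. The key initial move exploits the hypothesis that $\mathfrak{q}$ is principal. Writing $\mathfrak{q} = (\pi)$ with $\pi\in \mathfrak{o}$ of norm $q$, set $k := \pi^{-1} j \in B$; a direct calculation gives $k^2 = -N$ and $\alpha k = k\alpha'$ for every $\alpha\in K$, so that $B = K + Kk$ is an alternative decomposition in which the reduced norm is $n(a+bk) = n(a) + N n(b)$. Unfolding the definition of \eqref{thetaL} then yields
$$\theta(I_1 I_2)(z) \cdot \theta(I_1 I_2')(Nz) = \theta(I_1 I_2 + I_1 I_2' \cdot k) = \theta(I_1 \cO' I_2),$$
where $\cO' := \mathfrak{o} + \mathfrak{o} k$ is an order of reduced discriminant $DN$, i.e., an Eichler order of level $D$ in $B$.

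Next I would compute $\mathsf{Tr}^{DN}_N \theta(I_1\cO'I_2)$ by taking partial traces at each prime $\ell \mid D$ along a chain $\Gamma_0(DN) \subset \Gamma_0(DN/\ell) \subset \cdots \subset \Gamma_0(N)$. The key local fact is that the partial trace from level $\ell N$ down to level $N$, applied to the theta series of a lattice that is locally at $\ell$ an Eichler order of level $\ell$, yields the sum of the theta series of the two lattices obtained by enlarging it at $\ell$ to each of the two maximal orders containing it locally. Iterating over all primes $\ell \mid D$ gives
$$\mathsf{Tr}^{DN}_N \theta(I_1\cO'I_2) = \sum_{d \mid D} \theta(I_1 \cM'_d I_2),$$
where the $\cM'_d = \mathfrak{d}^{-1}\cM'\mathfrak{d}$ range over the maximal orders of $B$ containing $\cO'$, indexed by divisors $d$ of $D$.

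Finally I would match this with the right-hand side. Locally at primes $\ell \mid D$ one has $\cO_\ell = \cO'_\ell$ (since $\pi$ is a unit there), while at $q$ the Eichler order $\cO_q$ of level $q$ sits inside the maximal order $\cO'_q$. Hence by a suitable choice of base maximal order $\cM \supset \cO$, the orders $\cM_d$ from the enumeration of the statement with $d \mid D$ coincide with the $\cM'_d$. For each such $d$, the ``twin'' $\cM_{dq} = \pi^{-1}\cM_d \pi$ satisfies
$$I_1 \cM_{dq} I_2 = \pi^{-1}(I_1 \cM_d I_2) \pi,$$
since $\pi \in K$ commutes with the ideals $I_1, I_2 \subset K$. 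Conjugation by $\pi \in B^\times$ preserves the reduced norm form, so $\theta(I_1 \cM_{dq} I_2) = \theta(I_1\cM_d I_2)$, and therefore
$$\sum_{d \mid Dq}\theta(I_1\cM_d I_2) = 2\sum_{d \mid D}\theta(I_1\cM'_d I_2) = 2\,\mathsf{Tr}^{DN}_N \theta(I_1 \cO' I_2),$$
yielding the claimed identity after dividing by $2$.

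The hardest step is the partial-trace calculation in the middle paragraph. Although analogous computations appear in the work of Gross--Zagier and in the classical theory of Eichler's theta correspondence, verifying the precise local statement for Eichler-order theta series of quaternion algebras requires careful bookkeeping of lattices and orientations at each ramified prime of $D$, together with a check that the Atkin--Lehner-like coset representatives for $\Gamma_0(DN) \backslash \Gamma_0(N)$ indeed act on $\theta(I_1\cO'I_2)$ by producing exactly the intended enlargements of $\cO'$ at the primes dividing $D$.
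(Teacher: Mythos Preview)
Your argument is correct and reaches the same endpoint as the paper, but the initial reduction is organized differently, and the difference is worth noting.

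The paper keeps the original generator $j$ with $j^2=-qN$, so that $\theta(I_1I_2)\,\theta^{(Nq)}(I_1I_2')=\theta(I_1\cO I_2)$ where $\cO=\mathfrak{o}\oplus\mathfrak{o} j$ has discriminant $DNq$. It then removes the unwanted $q$ on the modular-form side by observing that $\mathsf{Tr}^{DNq}_{DN}\theta^{(Nq)}_J = T_q\,\theta^{(N)}_J = 2\theta^{(N)}_J$ (principality of $\mathfrak{q}$), which produces the factor~$\tfrac12$, and reduces the statement to $\mathsf{Tr}^{DNq}_N\theta(I_1\cO I_2)=\sum_{d\mid Dq}\theta(I_1\cM_d I_2)$. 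You instead absorb $\pi$ into the generator, $k=\pi^{-1}j$ with $k^2=-N$, so that the product $\theta(I_1I_2)\,\theta^{(N)}(I_1I_2')$ is already the theta series of the Eichler order $\cO'=\mathfrak{o}\oplus\mathfrak{o} k$ of discriminant $DN$, and the factor~$\tfrac12$ reappears at the end via the conjugation identity $\theta(I_1\cM_{dq}I_2)=\theta(I_1\cM_d I_2)$. Your route avoids ever leaving level $DN$; the paper's route keeps a single fixed order $\cO$ throughout and handles $q$ uniformly with the primes dividing $D$.

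Either way, the substance of the argument is the local trace identity you flag as the hardest step: at each prime $\ell$ dividing the level of the Eichler order, the trace of the characteristic function of the local Eichler order down to full level equals the sum of the characteristic functions of the two local maximal orders containing it. The paper proves exactly this as Lemma~\ref{weiltrace}, by an explicit Weil-representation computation on Schwartz functions (writing out coset representatives for $\Gamma_0(\ell)\backslash\SL_2(\Z_\ell)$ and tracking the Fourier transform). That lemma applies verbatim to your $\cO'$, simply with the primes $\ell\mid D$ in place of $\ell\mid Dq$, so your sketch can be completed by the same local calculation. One minor point: you cannot literally re-choose the base maximal order $\cM$, since it was fixed once and for all; but as you effectively observe, this is harmless, because the set $\{\cM_d:d\mid D\}$ and the set $\{\cM_{dq}:d\mid D\}$ give the same sum of theta series by the conjugation argument, so whichever of the two coincides with $\{\cM'_d\}$ works.
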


\begin{proposition} \label{IJprop}
(See \S \ref{IJpropproof}, cf. also Prop. \ref{prop:main-vonk}). 
For any pair of ideal classes $I,J$ we have
\begin{equation}\label{claim}
\theta(J' \cM I)= 2 \cdot \Theta(e_{I} \otimes e_{J}).
\end{equation}

Here $e_I, e_J$ are as in \S \ref{tracedefstatement}, where
we use $\iota$ of \eqref{iotadef0} to identify $I, J$ with elements of $\mathrm{Pic}(B)$. 
\end{proposition}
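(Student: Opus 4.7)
The plan is to expand both sides of \eqref{claim} as $q$-series and match Fourier coefficients. Unfolding the right-hand side via \eqref{Theta-def} gives
$$2\,\Theta(e_I \otimes e_J) \;=\; \langle e_I, \Sigma_0\rangle \langle e_J, \Sigma_0\rangle + 2\sum_{m\geq 1}\langle e_I, T_m e_J\rangle\, q^m.$$
From \eqref{divpair} one reads off $\langle e_I, \Sigma_0\rangle = \langle e_J, \Sigma_0\rangle = 1$, so the constant term on the right equals $1$. On the left, the constant term of $\theta(J'\mathcal{M} I)$ is the number of elements of reduced norm zero in $J'\mathcal{M} I$, which equals $1$ because $B$ is definite and only $0$ has norm zero. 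The two constant terms therefore agree and already indicate that the overall factor of $2$ in \eqref{claim} is the correct one.

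The main geometric step is to identify $J'\mathcal{M} I$ as a connecting ideal between the oriented maximal orders $\iota(J)$ and $\iota(I)$. Since $J J' = n(J)\mathfrak{o}$, one has $J' = n(J)\cdot J^{-1}$, and the normalized theta series \eqref{thetaL} is invariant under scaling by central rational elements (because $n(cx)=c^2 n(x)$ rescales both numerator and denominator), so $\theta(J'\mathcal{M} I) = \theta(J^{-1}\mathcal{M} I)$. A direct verification then shows that $J^{-1}\mathcal{M} I$ is a fractional quaternion ideal with left order $\iota(J) = J^{-1}\mathcal{M} J$ and right order $\iota(I) = I^{-1}\mathcal{M} I$, i.e.\ it connects these two classes in $\mathrm{Pic}(B)$.

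With this identification in place, I would invoke the Brandt-matrix description of the Hecke action on $\mathrm{Div}(\mathcal{E})$ coming from the Deuring correspondence (cf.\ \cite{Em}, \cite[\S 4--5]{baby-gz}). That description, combined with the self-adjointness of $T_m$ for the pairing \eqref{divpair} (including at $m = N$), forces the identity
$$\langle e_I, T_m e_J\rangle \;=\; \tfrac{1}{2}\,\#\bigl\{x \in J^{-1}\mathcal{M} I \;:\; n(x) = m\cdot n(J^{-1}\mathcal{M} I)\bigr\}, \qquad m\geq 1,$$
which is exactly half the $m$-th Fourier coefficient of $\theta(J^{-1}\mathcal{M} I)$. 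Doubling and reassembling then proves \eqref{claim}.

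The main obstacle I anticipate is the bookkeeping of the weights $w_x$ and the factor of $2$. Because the pairing \eqref{divpair} is not the naive counting pairing, the Brandt normalization appearing in $\langle e_I, T_m e_J\rangle$ differs from the conventions most commonly used in the literature, and one must check that $J^{-1}\mathcal{M} I$ is the specific connecting ideal that corresponds to the canonical $T_m$-action on $\mathrm{Div}(\mathcal{E})$ rather than its twist by an element of the two-sided ideal class group of $\iota(I)$. The self-adjointness of $T_m$ for every $m$, the symmetry $r_{IJ}(m) = r_{JI}(m)$ arising from the main involution on $B$, and the matching of constant terms observed above together rigidify the identity and pin down the constant to be exactly $2$.
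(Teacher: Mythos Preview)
Your approach is essentially the same as the paper's: both sides are compared Fourier coefficient by Fourier coefficient, the lattice $J'\mathcal{M}I$ is replaced by the homothetic $J^{-1}\mathcal{M}I$, and the count of norm-$m$ elements in the connecting ideal is matched with $2\langle T_m e_I, e_J\rangle$ via the Brandt-matrix interpretation from \cite{baby-gz}. The paper makes the bookkeeping you worry about explicit by citing \cite[Prop.~2.3 and Prop.~2.7(6)]{baby-gz}: one has $\langle T_m e_I, e_J\rangle = w_J\,B_{I,J}(m)$, and since $2w_J = |\mathrm{Aut}(E_J)|$, the product $2w_J B_{I,J}(m)$ is exactly the number of degree-$m$ isogenies $E_I \to E_J$, i.e.\ the number of $z \in J^{-1}\mathcal{M}I$ with $n(z)n(J)/n(I) = m$; your additional check of the constant term is correct but not needed.
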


For every quadratic character $\chi$ of $\cC$, set 
\begin{equation}\label{GDN}
G_{DN}(\chi) = \theta_{\psi_1^{-1} \chi}^{(N)}  \, \cdot \, \theta_{\psi_2^{-1} \chi} \quad \mbox{ and } \quad G_{DN} := \sum_{\chi \in (\cC/\cC^2)^*}  G_{DN}(\chi).
\end{equation}
In the case when $D$ is prime, the only quadratic character is trivial, so 
$G_{DN}(\chi) = G_{DN}$, and 
 the following Proposition is vacuous: 
\begin{proposition} 
\label{Shimura} 
(See \S \ref{Shimuraproof}.) With the notation of \eqref{eqn:Heegner-cycles-intro}
$$
\mathsf{Tr}^{DN}_{N}(G_{DN}(\chi)) \, \sim_{ \mathfrak{S}} \,  \mathsf{Tr}^{DN}_{N}(G_{DN}(\chi')),
$$
for all quadratic characters $\chi, \chi'$ of $\cC$, so long as the prime $p$ is sufficiently large relative to $D$. 
\end{proposition}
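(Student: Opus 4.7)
The plan is to exhibit the $\chi$-dependent part of $\mathsf{Tr}^{DN}_N G_{DN}(\chi)$ as a cusp form in $S_2(N)$ whose Hecke eigensystem is, modulo $p^t$, incompatible with the Eisenstein eigensystem detected by the Shimura class. This will force the $\mathfrak{S}$-pairing to be $\chi$-independent once $p$ is sufficiently large relative to $D$.

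First I would expand, using $\theta_{\psi_i^{-1}\chi} = \sum_I \psi_i(I)\chi(I)\theta_I$ (which follows from $\chi^{-1}=\chi$ and $\psi_i'=\psi_i^{-1}$ on $\widehat{\cC}$),
\begin{equation*}
G_{DN}(\chi) \;=\; \sum_{I_1, I_2 \in \cC} \psi_1(I_1)\,\psi_2(I_2)\,\chi(I_1 I_2)\; \theta_{I_1}^{(N)}\,\theta_{I_2},
\end{equation*}
and split the sum according to whether $[I_1 I_2]$ lies in $\cC^2$.  On the \emph{decomposable} part, I reindex by $(I_1, I_2) = (J_1 J_2', J_1 J_2)$: the $\chi$-weight becomes $\chi(J_1^2)\,\chi(J_2 J_2') = 1$, since $\chi$ is quadratic and $J_2 J_2'$ is principal.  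Applying Proposition~\ref{traceNDN0} to this piece then yields a trace formula manifestly independent of $\chi$, so the entire $\chi$-dependence of $\mathsf{Tr}^{DN}_N G_{DN}(\chi)$ is concentrated on the non-decomposable pairs $(I_1, I_2)$ with $[I_1 I_2] \notin \cC^2$.

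For these non-decomposable pairs I would employ an analogue of Proposition~\ref{traceNDN0} to express $\mathsf{Tr}^{DN}_N(\theta_{I_1}^{(N)}\theta_{I_2})$ as a combination of theta series $\theta(I_1 \cM_d I_2)$ for $d \mid Dq$, and then invoke Proposition~\ref{IJprop} to recognize the result as a $\Theta$-image of divisors in $\Div(\cE) \otimes L$ arising from genus-character twists of the Heegner cycles $[\psi_{12}]$ and $[\psi_{12'}]$.  The $\chi$-dependent piece of $\mathsf{Tr}^{DN}_N G_{DN}(\chi)$ then lies inside a Hecke submodule whose eigenvalue system is a nontrivial genus-character twist of the dihedral eigensystems attached to $\psi_{12}$ and $\psi_{12'}$.

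To conclude, the Shimura class $\mathfrak{S}$ is (essentially) a Hecke eigenvector with Eisenstein eigenvalues $T_\ell \equiv 1+\ell \pmod{p^t}$ for $\ell \nmid N$, so pairing with $\mathfrak{S}$ on $S_2(N; R/p^t)$ vanishes on cusp forms whose Hecke eigenvalues are not congruent to these Eisenstein eigenvalues.  Standard Eisenstein-ideal techniques should rule out such congruences for the twisted-dihedral eigensystems identified in the previous paragraph once $p$ exceeds an explicit bound depending on $D$ — in particular, once $p$ is larger than $|\cC|$ and the primes dividing $D$.  This gives the desired $\sim_{\mathfrak{S}}$-equivalence.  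The hard part will be precisely this final congruence analysis: one must verify, uniformly in $N$ and for all $p$ beyond an explicit $D$-dependent threshold, that no accidental Eisenstein congruence at level $N$ modulo $p^t$ can arise from a genus-character twist of a dihedral eigensystem of level $D$.
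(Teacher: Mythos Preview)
Your approach diverges sharply from the paper's, and it has a genuine gap at the step you yourself flag as ``the hard part.''

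Steps 1--3 are fine: splitting off the decomposable pairs $(I_1,I_2)$ with $[I_1I_2]\in\cC^2$ and observing that this contribution is $\chi$-independent is exactly the computation around \eqref{eq1}. The trouble is everything that follows. Proposition~\ref{traceNDN0} is built precisely around the decomposable pairs: the identity $\theta_{I_1\cO I_2}=\theta_{I_1I_2}\theta^{(Nq)}_{I_1I_2'}$ in \eqref{DNq} uses that the two binary factors come from ideals whose product is a square in $\cC$, and the passage to Brandt matrices in Proposition~\ref{IJprop} goes through the Heegner map $\iota:\Pic(\fO)\to\Pic(B)$. For the non-decomposable pairs there is no ``analogue of Proposition~\ref{traceNDN0}'' available --- the quaternionic lattice framework $I_1\cM_dI_2$ simply does not accommodate $\theta_{J_1}^{(N)}\theta_{J_2}$ when $J_1J_2\notin\cC^2$. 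So step~4 as written is not a step but a wish.

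Step~5 is also not a proof. Even granting that the $\chi$-dependent piece landed in some identifiable Hecke-stable subspace, it would not be an eigenform, and ``standard Eisenstein-ideal techniques should rule out such congruences'' is an assertion, not an argument. You would need, uniformly over all primes $N$ and all $p>p_0(D)$, that no Eisenstein congruence at level $N$ modulo $p^t$ touches this subspace. Nothing in your outline indicates how to obtain such uniformity.

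The paper's proof (\S\ref{Shimuraproof}) bypasses all of this by working directly in the automorphic representations $\pi_1,\pi_2$ of $g,h$. The key observation is that twisting by the genus character $\chi$ does \emph{not} change the underlying representation $\pi_i$; it merely moves the new vector $g$ to a ``pseudo-new'' vector $g^\chi\in\pi_{1,f}^{K_1(D)}$ satisfying $g_\chi=g^\chi\cdot\chi$ (pullback of $\chi$ from the component group of $X_1(D)$). Since $\langle g^\chi,h^\chi\rangle=\langle g,h\rangle$ for any $\GL_2(\mathbb{A}_f)$-invariant pairing, the tensors $g^\chi\otimes h^\chi$ and $g\otimes h$ have the same image in the diagonal coinvariants of $\pi_{1,f}\otimes\pi_{2,f}$, hence differ by a finite sum $\sum c_i(s_iv_1\otimes s_iv_2-v_1\otimes v_2)$ with $s_i\in\prod_{v\mid D}\GL_2(\Q_v)$. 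Now one uses that the Shimura class, pulled back to deep enough level at $D$, is invariant under the ad\`ele group away from $N$; this forces $\int(s_iv_1)\cup(s_iv_2)\cup\mathfrak{S}=\int v_1\cup v_2\cup\mathfrak{S}$ and finishes the argument. The ``$p$ large relative to $D$'' enters only to ensure that the constants $c_i$ and the auxiliary vectors $v_1,v_2$ (which depend on $D$ but not on $N$) are $p$-integral.
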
 

Let us see how these three results imply the theorem. 
%
%
  For every choice of quadratic character $\chi$ 
we have
\begin{equation}\label{eq1}
G_{DN}(\chi) 
=\sum_{I, J \in \cC} \chi(IJ) \psi_1(I)\psi_2(J) \theta_{I}^{(N)} \theta_{J} .
\end{equation}
But $\sum_{\chi} \chi(IJ)$ is zero unless
 $IJ$ is a square inside 
$\cC$; in that case, it equals $\# \cC[2]$. Moreover, any pair $(I,J)$ with $IJ\in \cC^2$ is of the form $(I_1I_2', I_1I_2)$
for precisely $\# \cC[2]$ pairs $(I_1, I_2)$ and then
$\psi_1(I) \psi_2(J) =  \psi_{12}(I_1) \psi_{12'}(I_2').$
 It follows that
\begin{equation} G_{DN} =
\sum_{I_1, I_2\in \cC} \psi_{_{12}}(I_1)\psi_{_{12'}}(I_2') \theta(I_1 I_2) \theta^{(N)}(I_1 I_2').
\end{equation}

 Using  Proposition \ref{traceNDN0} we get
\begin{equation}\label{G}
\mathsf{Tr}^{DN}_N(G_{DN}) = \frac{1}{2} \sum_{I_1, I_2 \in \cC}\psi_{_{12}}(I_1)\psi_{_{12'}}(I_2') \big( \sum_{d\mid Dq}  \theta(\mathfrak{d}^{-1} I_1 \cM I_2 \mathfrak{d}) \big).
\end{equation}

Note however  that all terms for every fixed $d$ in \eqref{G} are equal: this follows after reindexing $(I_1,I_2) \leftrightarrow (\mathfrak{d}^{-1} I_1,I_2 \mathfrak{d})$ and recalling that the class of the ideal $\mathfrak{d}$ has order $2$ in $\cC$ when $d\mid D$, while the class of $\mathfrak{q}$ is trivial.
The number of such terms is equal to $2^{a+1}$, with $a$ the number of prime factors of $D$. 
Hence  
\begin{eqnarray}
\label{GG}
\mathsf{Tr}^{DN}_N(G_{DN}) &=& 2^{a} \sum_{I_j} \psi_{_{12}}(I_1)\psi_{_{12'}}(I_2')  \theta( I_1 \cM I_2) \\
\nonumber
&=& 2^{a}  \sum_{I_j} \psi_{_{12}}(I_1')\psi_{_{12'}}(I_2')  \theta(I_1' \cM I_2).
\end{eqnarray}
 Proposition \ref{IJprop},
as well as the symmetry of $\Theta$ in its arguments, 
can be invoked 
 to transform the right hand side, 
to get  
$$\mathsf{Tr}^{DN}_N(G_{DN}) = 2^{a+1}  \sum \psi_{12}^{-1}(I_1) \psi_{12'}^{-1}(I_2) \Theta(e_{I_2} \otimes e_{I_1})=2^{a+1} \Theta([\psi_{_{12}}^{-1}] \otimes [\psi_{_{12'}}^{-1}]).$$

This directly yields Theorem \ref{thm:waldspurger-garrett-CM-general} when $D$ is prime, as in that case the order of $\cC$ is odd and hence $G_{DN}=G$.
When $D$ is composite, Proposition \ref{Shimura} shows that
 we can replace $G_{DN}$ on the left by $2^{a-1} G_{DN}(1)$
 if we are only interested in pairing with the Shimura class, 
since $\cC[2]$ has rank $a-1$ by genus theory (cf.\,e.g.\,\cite[\S 13]{Harvey}). Unwinding the notation
  $$ \mathsf{Tr}^{N D}_{N} (\theta_{\psi_1^{-1}}(Nz) \theta_{\psi_2^{-1}} (z))  \sim_{ \mathfrak{S}} \, 4 \cdot  \Theta([\psi^{-1}_{_{12}}] \otimes [\psi^{-1}_{_{12'}}]).$$
%
%
This proves Theorem \ref{thm:waldspurger-garrett-CM-general} after  recalling that $\theta_{\psi_i} = \theta_{\psi_i^{-1}}$. 

%
%
%

\subsection{Proof of Proposition \ref{traceNDN0}} \label{NDN0proof}
We must show that
 \begin{equation}  \mathsf{Tr}^{DN}_N \theta(I_1 I_2) \theta^{(N)}(I_1 I_2') = \frac{1}{2} \sum_{d|Dq} \theta(I_1 \cM_d I_2).
\end{equation}

 Let $T_q$ denote the Hecke operator at $q$ and $\mathsf{Tr}^{N_2}_{N_1}$ the trace map from modular forms of level $N_2$ to level $N_1$, for any $N_1\mid N_2$. Then
  $$\mathsf{Tr}^{DNq}_{DN}  \theta^{(Nq)}_J = T_q \cdot \theta^{(N)}_J = \theta^{(N)}_{J\mathfrak{q} }+
  \theta^{(N)}_{J \mathfrak{q}'} =2 \theta^{(N)}_J,$$
  where the second equality follows from e.g.\,\cite[\S 2]{Kani}, and the third
  since we are supposing that $\mathfrak{q}$ is principal. 
Hence 
\begin{equation*}  
\mathsf{Tr}^{DNq}_N \theta_{J_1}  \theta^{(Nq)}_{J_2} = 
\mathsf{Tr}^{DN}_N \theta_{J_1} \left( \mathsf{Tr}^{DNq}_{DN} \theta^{(Nq)}_{J_2}  \right)  =  
 2 \mathsf{Tr}^{DN}_N \theta_{J_1}    \theta^{(N)}_{J_2} . \end{equation*}

Taking $J_1 = I_1 I_2, J_2 = I_1 I_2'$ and switching sides we get 
%
\begin{equation}\label{DNq}  
\mathsf{Tr}^{DN}_N \theta_{I_1 I_2}    \theta^{(N)}_{I_1 I_2'}= \frac{1}{2} \mathsf{Tr}^{DNq}_N \theta_{I_1 \cO I_2}.
\end{equation}
 where we noted that
 $ \theta_{I_1 \cO I_2} =    \theta_{I_1 I_2 \oplus  I_1 I_2' j}   =  \theta_{I_1 I_2} \theta^{(Nq)}_{I_1 I_2'}$
since   $j^2= -qN$. So Proposition \ref{traceNDN0} reduces to
\begin{proposition}\label{traceNDN} 
For any pair of classes $I_1, I_2$ of $\Pic(\mathfrak{o})$,
\begin{equation} 
 \mathsf{Tr}^{DNq}_N (\theta_{I_1 \cO I_2}) = \sum_{d|Dq} \theta(I_1 \cM_d I_2).
\end{equation}
\end{proposition}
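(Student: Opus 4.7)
The plan is to iterate the trace map one prime at a time over the primes $\ell$ dividing $Dq$, reducing to a single-prime local identity. Specifically, the local claim to establish is the following: if $\cO'$ is an order in $B$ of reduced discriminant $NM\ell$ that is locally an Eichler order of level $\ell$ at $\ell$ (with $\ell \nmid NM$), and if $\cO'_+$, $\cO'_-$ denote the two orders of reduced discriminant $NM$ obtained by replacing $\cO' \otimes \Z_\ell$ with each of the two local maximal orders containing it (while keeping $\cO'$ unchanged at all other primes), then
\[
\mathsf{Tr}^{NM\ell}_{NM}\,\theta(J_1 \cO' J_2) \;=\; \theta(J_1 \cO'_+ J_2) + \theta(J_1 \cO'_- J_2)
\]
for any fractional ideals $J_1, J_2$ of $\mathfrak{o}$. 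Granting this, I would peel off the $a+1$ primes of $Dq$ one by one, starting from $\cO$ and splitting the theta series into a sum of two at each stage according to the choice of local maximal order. After all $a+1$ iterations the trace $\mathsf{Tr}^{DNq}_N \theta(I_1 \cO I_2)$ becomes a sum of $2^{a+1}$ terms indexed by subsets of the primes of $Dq$; such subsets are in bijection with divisors $d \mid Dq$, and the resulting global orders are exactly the $\cM_d = \mathfrak{d}^{-1}\cM\mathfrak{d}$ of \eqref{MDdef}. This would deliver the stated identity.

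For the local claim, the plan is to invoke the Weil representation framework advertised at the beginning of this section. The theta series $\theta(L)$ of a lattice $L \subset B$ arises as the theta lift of the Schwartz function $\phi_L = \mathbf{1}_{\hat L} \in \mathcal{S}(B(\mathbb{A}_f))$, and the slash action of $\gamma \in \SL_2(\Q)$ on $\theta(L)$ corresponds, through the theta kernel, to the Weil representation action on $\phi_L$. The trace $\mathsf{Tr}^{NM\ell}_{NM}$ then translates into averaging $\phi_L$ under the Weil action over a system of coset representatives for $\Gamma_0(NM\ell)\backslash \Gamma_0(NM)$. Since the Schwartz function is already invariant at all places except $\ell$, only the local component of the average at $\ell$ is nontrivial: the relevant coset space is $\Gamma_0(\ell) \backslash \SL_2(\Z_\ell) \cong \PP^1(\F_\ell)$, and the computation reduces to a purely local calculation in the Weil representation of $\SL_2(\Z_\ell)$ on $\mathcal{S}(B \otimes \Q_\ell) = \mathcal{S}(M_2(\Q_\ell))$ (using $\ell \nmid N$). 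An explicit Bruhat-decomposition computation, in which a Weyl-type element of $\SL_2(\Z_\ell)$ acts by Fourier transform and interchanges the Schwartz functions of the two maximal orders containing $\cO' \otimes \Z_\ell$, would verify that the sum over the $\ell+1$ cosets produces exactly $\mathbf{1}_{\cO'_+ \otimes \Z_\ell} + \mathbf{1}_{\cO'_- \otimes \Z_\ell}$. Tensoring with the characteristic functions for $J_1, J_2$ at the remaining places (these enter through $O(B)$ and commute with the local $\SL_2(\Z_\ell)$-action in the Weil representation) and globalizing would then yield the local identity.

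The main obstacle I anticipate is the precise bookkeeping of normalizations in this local Weil-representation calculation. The Weil representation acts projectively, and the passage from the adelic theta lift to the classical normalization $\theta(L) = \sum_{x \in L} q^{n(x)/n(L)}$ involves lattice volumes and the choice of $n(L)$, which is particularly delicate at the ramified primes $\ell \mid D$ where $K \otimes \Q_\ell$ is a ramified quadratic extension of $\Q_\ell$. The key point to confirm is that no spurious factor of $\ell$ or $\ell+1$ appears in front of the sum $\theta(J_1 \cO'_+ J_2) + \theta(J_1 \cO'_- J_2)$: this is the most delicate step, but should amount to a standard if somewhat tedious exercise in the local theta correspondence.
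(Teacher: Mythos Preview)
Your proposal is correct and follows essentially the same approach as the paper: both reduce the trace to a local computation in the Weil representation at each prime $\ell \mid Dq$, using the coset decomposition of $\Gamma_0(\ell)\backslash\SL_2(\Z_\ell)$ and the fact that the Weyl element acts by Fourier transform. The only cosmetic difference is that the paper computes all primes simultaneously (working directly with the Schwartz function $[I_1\cO I_2]$ on $B\otimes\mathbb{A}_f$) rather than iterating one prime at a time, and it packages the local computation as a lemma (Lemma~\ref{weiltrace}) phrased in terms of the decomposition $B\otimes\Q_\ell \simeq W\oplus W'$ coming from \eqref{BKj}; your normalization worry is dispatched there by noting $n(I_1\cM_d I_2)=n(I_1\cO I_2)$ and using self-dual Haar measure.
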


In order to prove Proposition \ref{traceNDN}, note  that  -- with $V=B$ or $K$ as before -- the rule that associates to every lattice $L$ a modular form $\theta_L$ of weight $\dim(V)/2$ may be extended to the space of Schwartz functions on $V\otimes \mathbb{A}_f$ where $ \mathbb{A}_f$ denotes the ring of finite ad\`eles of $\Q$ (cf.\,e.g.\,\cite{GH} for background).
Namely, such a function may be identified with a function $\Phi$ supported on some lattice $L \subset V$ and constant on the cosets of a sublattice of $L$. We can form the $\theta$-function
\begin{equation}\label{theta-Schwartz} \theta_\Phi := \sum_{z \in V}  \Phi(z) q^{Q(z)}.
\end{equation}
with $Q$ the norm form (or, as we will actually use, a rescaling of it). 
This is compatible with the previous definition in the sense that
$\theta_{[L]}=\theta_L$, where
 $[L]$ is the characteristic function of 
the closure of a lattice $L$ inside $V \otimes \mathbb{A}_f$.

The function $\Phi \mapsto \theta_\Phi$ is equivariant for the action of $\SL_2(\mathbb{A}_f)$
on Schwartz functions arising from the Weil representation on one side, and the natural action on the space of modular forms on the other; this is a straightforward consequence of the adelic interpretation
of $\theta$-series; we will
give a more detailed sketch of a similar equivariance in the more complicated RM setting in \S \ref{bthetadef}. 
This equivariance implies that the trace, from level $DNq$ to $N$, of 
 $\theta_{I_1 \cO I_2}$,
 can be computed by first computing
 the corresponding trace 
\begin{equation}
 \label{uggtr} 
 \mathsf{Tr}^{DNq}_N   \mbox{ of the Schwartz function $[I_1 \cO I_2]$}.
 \end{equation}
Proposition \ref{traceNDN} thus follows after
computing the trace of $[I_1 \cO I_2]$ 
with reference to the $\SL_2(\mathbb{A}_f)$-action on Schwartz functions, taking into account that we have an equality of rescaling factors
$ n(I_1 \cM_d I_2) = n(I_1 \cO I_2)$ (this can be readily deduced from the fact that $I_1, I_2$ are locally principal).

This Weil representation is a tensor product
of representations of $\SL_2(\mathbb{Q}_{\ell})$ on Schwartz functions on $V \otimes \Q_{\ell}$.
We review the formulas in  \S \ref{bthetadef} and will summarize them here. 
Given a prime $\ell$, let $\mu: \Q_{\ell} \rightarrow \C^\times$ 
be the restriction of the standard character of $\mathbb{A}/\mathbb{Q}$
which is given by $x \mapsto e^{2 \pi i x}$ on $\mathbb{R}$
and is trivial on each $\mathbb{Z}_{p}$. 
In particular $\mu$ is  trivial on $ \Z_{\ell}$ but not on $\ell^{-1} \Z_{\ell}$. For any $t \in \Q_\ell$, denote $m(t)= \smallmat 1t01$ and set $w=\smallmat 01{-1}0$. Given a Schwartz function $\Phi_\ell$ on $V \otimes \Q_\ell$:
\begin{eqnarray}\label{Weil}
  m(t)\cdot \Phi_\ell(x) = \mu(t  \langle x, x \rangle) \Phi_\ell(x) & \mbox{ for any }\, t \in \Q_\ell, \\  \nonumber
w \cdot \Phi_\ell(y) =   \gamma_{\ell} \int_{V\otimes \Q_\ell}
  \Phi_\ell(x) \mu(\langle y,x \rangle) dx. &
\end{eqnarray}
where, in particular, $\gamma_{\ell}=1$ for $\ell$ not dividing $N$.  Here $dx$
is taken to be the self-dual Haar measure.

The desired trace from \eqref{uggtr} can be calculated piecewise at every prime $\ell\mid Dq$ and then packaging together the local outputs.


\begin{lemma}\label{weiltrace}
Let $\ell$ be a prime divisor of $Dq$. 
Let $(W, \langle \, ,\rangle)$ be the quadratic space over $\Q_{\ell}$ given by $K\otimes \Q_{\ell}$ equipped with the norm form divided
by $N(I_1 I_2)$,
and let $L \subset W$ be a maximal integral lattice. Let $(W',  L',\langle \, ,\rangle')$ be obtained from $(W, L, \langle \, ,  \rangle )$
by   multiplying the form $\langle \, ,\rangle$ by $-q N$.
 
Then -- for the Weil representation action of $\SL_2(\Q_{\ell})$ on Schwartz functions on $W \oplus W'$ -- 
the characteristic function $1_{L \oplus L'}$ of $L \oplus L'$ is invariant by $\Gamma_0(\ell) \subset \SL_2(\Z_{\ell})$, and 
\begin{equation} 
\label{def_trace}
 \mathsf{Tr}^{\SL_2(\Z_{\ell})}_{\Gamma_0(\ell)} 1_{L \oplus L'} = 1_{\cM_+} + 1_{\cM_-},
\end{equation}
 where $\cM_{\pm}$ are the two self-dual integral lattices containing $(L \oplus L')$. 
\end{lemma}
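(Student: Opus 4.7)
The plan is to apply the Bruhat decomposition, using coset representatives $\{I\} \cup \{m(s)w : s \in \Z_\ell/\ell\Z_\ell\}$ for $\SL_2(\Z_\ell)/\Gamma_0(\ell)$. First I would verify $\Gamma_0(\ell)$-invariance of $\Phi := 1_{L \oplus L'}$ by inspecting each generator via the Weil representation formulas \eqref{Weil}: upper unipotents $m(t)$ with $t \in \Z_\ell$ act trivially because $\langle z,z\rangle \in \Z_\ell$ for $z$ in the integral lattice $\Lambda := L \oplus L'$; the diagonal matrices preserve $\Lambda$; and lower unipotents $\smallmat{1}{0}{c\ell}{1} = w^{-1} m(-c\ell) w$ act trivially because the dual lattice $\Lambda^{\vee}$ satisfies $\ell \langle y, y\rangle \in \Z_\ell$ for every $y \in \Lambda^{\vee}$.

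For the main computation I first evaluate $w \cdot 1_\Lambda$ via the Fourier-type formula in \eqref{Weil}. With the self-dual Haar measure one has $\mathrm{vol}(\Lambda) = [\Lambda^{\vee} : \Lambda]^{-1/2} = \ell^{-1}$ (since $[\Lambda^{\vee} : \Lambda] = \ell^2$), and $\gamma_\ell = 1$ as $\ell \neq N$, giving $w \cdot 1_\Lambda = \ell^{-1} \cdot 1_{\Lambda^{\vee}}$. Applying $m(s)$ then multiplies the value at $y$ by $\mu(s \langle y, y\rangle)$; for $y \in \Lambda^{\vee}$ the pairing $\langle y,y\rangle$ lies in $\ell^{-1}\Z_\ell$, and its image modulo $\Z_\ell$ defines an $\F_\ell$-valued quadratic form $\bar Q$ on the rank-two $\F_\ell$-space $\Lambda^{\vee}/\Lambda$. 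Orthogonality of additive characters of $\F_\ell$ gives $\sum_{s \in \F_\ell} \mu(s \langle y, y\rangle) = \ell$ when $\bar Q(\bar y) = 0$ and $0$ otherwise, so the factor $\ell^{-1}$ cancels and
\[ \sum_{s \in \F_\ell} (m(s) w) \cdot 1_\Lambda = 1_{\Lambda^{\vee}_0}, \qquad \Lambda^{\vee}_0 := \{y \in \Lambda^{\vee} : \bar Q(\bar y) = 0\}. \]

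To conclude, self-dual overlattices of $\Lambda$ inside $\Lambda^{\vee}$ correspond bijectively to Lagrangian (i.e.\ maximal isotropic) subspaces of $(\Lambda^{\vee}/\Lambda, \bar Q)$, which in rank two are isotropic lines. Writing $\bar Q = \alpha x^2 + \beta y^2$ as an orthogonal sum of the rank-one pieces coming from $L^{\vee}/L$ and $(L')^{\vee}/L'$, the ratio $-\beta/\alpha$ is controlled (up to normalization) by the rescaling $\pm qN$ between $W$ and $W'$; the choice $q \equiv -N \pmod D$ from \S\ref{Heegnersetup} together with the hypothesis that $-N$ is a square modulo $D$ ensures that this ratio is a nonzero square in $\F_\ell$, so $\bar Q$ is hyperbolic and admits exactly two isotropic lines. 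These correspond precisely to $\cM_+/\Lambda$ and $\cM_-/\Lambda$, and since any two distinct lines through the origin meet only at $\{0\}$, we obtain $1_{\Lambda^{\vee}_0} = 1_{\cM_+} + 1_{\cM_-} - 1_\Lambda$. Adding the identity-coset contribution $1_\Lambda$ yields the claimed formula.

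The main obstacle is the hyperbolicity verification, which has to be performed separately at the two types of primes: at $\ell \mid D$ the extension $K_\ell/\Q_\ell$ is ramified and the computation uses the different of $\mathfrak{o}_\ell$ in an essential way, while at $\ell = q$ the extension is split so that $L$ itself is self-dual and the full quotient $\Lambda^{\vee}/\Lambda$ comes from the $W'$ factor through the rescaling by $\pm qN$. Tracking signs in this rescaling, together with the precise normalizations of the Weil index $\gamma_\ell$ and the self-dual Haar measure, requires some bookkeeping, but the arithmetic input on $q$ is exactly engineered to make the quadratic residue condition hold at every $\ell \mid Dq$.
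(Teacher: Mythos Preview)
Your proof is correct and follows essentially the same route as the paper's: compute $w\cdot 1_\Lambda = \ell^{-1}1_{\Lambda^\vee}$, sum the unipotent translates to land on the isotropic cone $\{y\in\Lambda^\vee:\langle y,y\rangle\in\Z_\ell\}$, and identify this cone with $\cM_+\cup\cM_-$. The only cosmetic difference is that you use the coset representatives $\{I\}\cup\{m(s)w\}$, whereas the paper uses $\{w\}\cup\{wm(t)w\}$; your choice is marginally cleaner since it avoids the final step of applying $w$ again and invoking self-duality of $\cM_\pm$.

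The one place you diverge is the hyperbolicity of $\bar Q$ on $\Lambda^\vee/\Lambda$. The paper simply asserts the form is $\ell^{-1}(x_1^2-x_2^2)$, while you attempt to verify it by computing Legendre symbols from the arithmetic of $q$ and $N$. Your case split (ramified $\ell\mid D$ versus split $\ell=q$) is correct in outline, but the residue-symbol bookkeeping is delicate and sensitive to the sign in the rescaling factor. A cleaner argument, which bypasses this entirely, is to note that the \emph{hypothesis} of the lemma already hands you two self-dual lattices $\cM_\pm$ with $\Lambda\subsetneq\cM_\pm\subsetneq\Lambda^\vee$; their images in $\Lambda^\vee/\Lambda$ are then two distinct isotropic lines, and a nondegenerate binary form over $\F_\ell$ with two isotropic lines is automatically hyperbolic. (Equivalently, one uses that $W\oplus W'$ is the split quaternion algebra at $\ell$ and $\Lambda$ is a local Eichler order of level $\ell$.)
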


Before we prove Lemma \ref{weiltrace} we explain why it implies Proposition \ref{traceNDN}. 
There is no loss of generality in choosing $I_1, I_2$ relatively prime to $Dq$. 
It follows from \eqref{BKj} that $W \oplus W'$ is isometric to 
$B \otimes \Q_{\ell}$ with its reduced norm form,
and this identification carries $L \oplus L'$ to the closure of $I_1 \mathcal{O} I_2$. 
Thus, combining together \eqref{trace1} at all primes $\ell \mid Dq$, Proposition \ref{traceNDN} follows after noticing that if we take the self-dual lattice $ \cM_+$  to be the localization at $\ell$ of the global maximal order $\cM_d$ for some $d$ with $\ell \nmid d$, then $\cM_- = \cM_{d\ell} \otimes \Z_\ell$ .


\begin{proof} (of Lemma \ref{weiltrace})
 Write
$\mathbf{e}$ for the characteristic function of $L \oplus L'$
and $\mathbf{e}^*$ for the characteristic function of the dual lattice $(L \oplus L')^*$.
%
  Note that we have inclusions
$$ (L \oplus L') \subset \cM_+, \cM_- \subset (L \oplus L')^*,$$
with both inclusions of index $\ell$,
and indeed the quotient $\frac{ (L \oplus L')^*}{L\oplus L'}$ is isomorphic to $(\Z/\ell\Z)^2$
where the induced  $\Q_{\ell}/\Z_{\ell}$-valued quadratic form takes the form $(x_1, x_2) \mapsto \ell^{-1} (x_1^2-x_2^2)$;
in these coordinates $\cM_{\pm}$ correspond to $x_1 = \pm x_2$. 
Invariance of $\mathbf{e}$  by $\Gamma_0(\ell)$ follows readily from the definitions. 
Now a set of coset representatives  for $\Gamma_0(\ell)$ in $\SL_2(\Z_{\ell})$ is 
 $$\{ w \} \cup \{w m(t) w: t\in \Z/\ell\Z \}.$$
Note that $w\mathbf{e} = \ell^{-1}  \mathbf{e}^*$: 
\begin{itemize}
\item[-] for $y\not \in (L\oplus L')^*$, $w\mathbf{e}(y)$ is the integral on $L\oplus L'$ of the character $\mu(\langle y,x \rangle)$, which vanishes since that character is not trivial;
\item[-]  for $y \in (L\oplus L')^*$ we have $w\mathbf{e}(y) = \ \mathrm{vol}(L)\mathrm{vol}(L') = \ell^{-1}$
(the self-dual Haar measure on $W \oplus W'$ assigns mass $\ell^{-1}$ to $L \oplus L'$). 
\end{itemize}
It thus follows that
$$ 
\left(  \sum_{t \in \Z/\ell\Z} m(t)   \right) w\mathbf{e}  = 1_S,
$$
where  $S = \{ x \in (L \oplus L')^*: \langle x, x \rangle \in \Z_{\ell}\}.$ But $S$ is just the  
 union of $\cM_+$ and $\cM_-$, 
and also $\cM_+ \cap \cM_- = L\oplus L'$. Thus 
$1_S = 1_{\cM_+} + 1_{\cM_-} - \mathbf{e}$, 
  and we deduce that
\begin{equation}\label{trace1}
\mathsf{Tr} \ \mathbf{e} \ \  = \ \   w\mathbf{e} + w 1_{\cM_+} + w 1_{\cM_-}\!\!-w\mathbf{e} \ \ = \ \  1_{\cM_+} + 1_{\cM_-}.
\end{equation}
\end{proof}

\subsection{Proof of Proposition \ref{IJprop}} \label{IJpropproof}

  Let $E_{I}$ denote the supersingular elliptic curve associated to $\iota(I)$ and $e_I$ for the corresponding element  in $\Div(\cE)$. Set $w_{I} = w_{\iota(I)}$.


In order to prove \eqref{claim}, it suffices to show that both sides have the same Fourier coefficients for all $m\geq 1$. The $m$-th Fourier coefficient of  the r.h.s.\,of \eqref{claim} is 
\begin{equation}\label{2am}
2 a_m(\Theta(e_{I} \otimes e_{J})) = 2 \langle T_m e_{I},  e_{J}\rangle  = 2 w_{J} B_{I,J}(m).
\end{equation}
Here $B(m)$ is the $m$-th Brandt matrix  and $B_{I,J}(m)$ is the entry in $B(m)$ associated to  $E_I$ and $E_J$ (cf.\,\cite[\S 1, \S 2]{baby-gz}). The equalities in \eqref{2am} follow from the definition of $\Theta$ in \eqref{Theta-def} and \cite[4.4, 4.5, 4.6]{baby-gz}.
Since $2 w_{J} = |\Aut(E_{J})|$, it follows from \cite[Prop.\,2.3]{baby-gz} that $2 w_{J} B_{I,J}(m)$  is also equal to the number of isogenies of degree $m$ from the supersingular elliptic curve  $E_{I}$
to $E_{J}$; see also the proof of  \cite[Prop.\,2.7 (6)]{baby-gz} in p.\,128 of loc.\,cit.
The $\Z$-module of such isogenies is identified with $J^{-1} \cM I$,
where the degree is identified with $z \mapsto n(z)n(J)/n(I)$ (cf.\,\cite[2.1]{baby-gz} combined with the definition of $M_{ij}$ in p.\,118 of loc.\,cit.). 
Consequently
\begin{equation*}
2 a_m(\Theta( e_{I} \otimes e_{J})) =    |\{z \in J^{-1} \cM I:   n(z)n(J)/n(I) = m\}|.
\end{equation*}
This in turn is the $m$-th Fourier coefficient of the l.h.s.\,of \eqref{claim}, as  $J^{-1} \cM I$ is homothetic  to $J' \cM I$.

\subsection{Proof of Proposition \ref{Shimura}}  \label{Shimuraproof}
 
 Recall that this proposition is used only for the case of $D$ composite, and thus is not 
 strictly necessary e.g. for the statement of Theorem \ref{thm:main}.
 
Each quadratic character $\chi$ cuts out an extension $H_{\chi}/K$ which is the composition of $K$ and a quadratic extension $\Q_{\chi}/\Q$ of discriminant dividing $D$. Hence $\chi$ may be regarded as the restriction to $G_K$ of the Dirichlet character of conductor dividing $D$ attached to $\Q_{\chi}/\Q$, that we still denote with the same symbol. As it is readily seen by comparing the associated Galois representations, $\theta_{\psi_1^{-1} \chi}$ is the twist of $g= \theta_{\psi_1^{-1}}$ by $\chi$, and $\theta_{\psi_2^{-1} \chi}$ is the twist of $h=\theta_{\psi_2^{-1}}$ by $\chi$. 

Let $\pi_1, \pi_2$ be the automorphic representations for $\GL_2$ associated to $g,h$. 
Let $K_0(D) \subset \GL_2(\mathbb{A}_f)$ be the standard
compact open subgroup and let $K_{1}(D)$ be the kernel
of the natural ``diagonal'' maps $K_0(D)  \rightarrow \left((\Z/D\Z)^{\times}\right)^2$.
Note that $K_1(D)$ in the $\GL_2$ context is sometimes
defined to only impose {\em one} constraint, but here we understand
that both the diagonal entries are congruent to $1$ modulo $D$.

Set $$X_1(D) = \GL_2(\Q) \backslash \mathcal{H}^\ast \times \GL_2(\mathbb{A}_f)/K_{1}(D),$$
whose set of connected components identified with
$$\Q^\times \backslash  \mathbb{A}_f^\times/\det(K_{1}(D)) = (\Z/D\Z)^\times.$$
There are embeddings
$$ \pi_{1,f}^{K_1(D)} \mbox{ and } \pi_{2,f}^{K_1(D)} \hookrightarrow  H^0(X_1(D), \omega_{X_1(D)})$$
carrying the new vectors to $g$ and $h$ respectively.

The new vectors are characterized,  uniquely up to scalar,
by the fact that  they transform under
$$ k = \left(  \begin{array}{cc} a & b \\ c & d \end{array} \right) \subset K_0(D)$$
by the character $k \mapsto \chi_K(a)$.  For each $\chi$ as above,
we can consider the ``pseudo-new'' vector 
$$ g^{\chi} \mbox{ or } h^{\chi} \in \pi_{1,f}^{K_1(D)} \mbox{ or } \pi_{2,f}^{K_1(D)}$$ uniquely characterized
up to scalar by similarly transforming by the character $k \mapsto \chi_K(a) \chi(ad)$.  (The uniqueness
of such a vector follows by applying the usual new vector theory
to the representation $\pi_{1,f} \otimes \chi$, which has the same conductor as $\pi_{1,f}$. 
Explicitly, we may construct a pseudo-new vector from a new vector
by multiplying the associated function in the Kirillov model by
the character $\chi$; this statement is the representation-theoretic manifestation
of the fact that twisting by $\chi$ multiplies coefficients of the $q$-expansion by $\chi$.
A nice short reference for  basic properties of the Kirillov model
and new vectors is the paper \cite{Sc} and  a more encyclopaedic treatment is \cite{JL}, in particular Theorem 2.13). 

  With this construction, we have the following properties:

\begin{itemize}
\item[(a)]
the standard newforms $g_{\chi}$ and $h_{\chi}$ in the twisted automorphic representation correspond to the cup products:
$$g_{\chi} = g^{\chi} \cdot \chi, \qquad h_{\chi} = h^{\chi} \cdot \chi,$$
where we pull back $\chi$ to a complex-valued function on $X_1(D)$ by means
of the map $X_1(D) \rightarrow (\Z/D\Z)^{\times}$
to the group of connected components.

\item[(b)]  $\langle g^{\chi}, h^{\chi} \rangle = \langle g, h \rangle$, with reference
to any nontrivial $\GL_2(\mathbb{A}_f)$-invariant pairing $\pi_{1,f} \times \pi_{2,f} \rightarrow \mathbb{C}$. 

\end{itemize}
 
 Let $X_{10}(D,N)$ be obtained from $X_1(D)$ by   imposing a further $K_0(N)$-level structure. 
Let $\mathfrak{S}_{DN} \in H^1(X_{10}(D,N),\omega) \otimes \Z/p^t\Z$ denote the pull-back of the Shimura class.
 let $\pi_1,\pi_2: X_{10}(D,N) \lra X_1(D)$ denote the two forgetful maps intertwined by the Atkin-Lehner involution at $N$. It follows that
$$
\langle \mathsf{Tr}^{DN}_{N} \ G_{DN}(\chi), \mathfrak{S}\rangle = \langle G_{DN}(\chi), \mathfrak{S}_{DN}\rangle = 
\int  \pi_1^*(g_\chi) \cup \pi_2^*(h_\chi)  \cup \mathfrak{S}_{DN}  $$  $$ =
\int   \pi_1^*(g^{\chi})  \cup \pi_2^*(h^{\chi}) \cup \mathfrak{S}_{DN},  
$$
where $\int: H^1(X_1(D)_{\Z/p^t \Z}, \omega) \rightarrow \Z/p^t \Z$ is the trace map.
 It remains to verify that 
\begin{equation} \label{RTV}
\int  \pi_1^*(g^{\chi})  \cup \pi_2^*(h^{\chi}) \cup \mathfrak{S}_{DN}   = \int \pi_1^*(g) \cup \pi_2^*(h) \cup \mathfrak{S}_{DN}.\end{equation}
Now (b) above implies that $g^{\chi} \otimes h^{\chi}$ and $g \otimes h$
have the same image in the diagonal coinvariants
on $\pi_{1,f} \otimes \pi_{2,f}$. 
That is to say, considered inside $\pi_{1,f} \otimes \pi_{2,f}$,
\begin{equation} \label{ooo} g^{\chi} \otimes h^{\chi} - g \otimes h 
 = \sum_{i \in I} c_i  \left[ s_i v_1 \otimes s_i v_2  - (v_1 \otimes v_2) \right],\end{equation}
where $c_i \in \mathbb{C}$ and $s_i \in \prod_{v|D} \GL_2(\Q_v)$. 
Moreover, a straightforward argument with rational structures shows
that we may even take $c_i$ to belong to the field $L = \Q(\psi_1, \psi_2)$,
and similarly $v_1$ and $v_2$ to be $L$-rational modular forms,
and for sufficiently large $p$ we can suppose $c_i$, $v_1$, $v_2$, and $D(D-1)$ to be $p$-integral. 
Then
$$ \int (s_i v_1) \cup (s_i v_2) \cup \mathfrak{S} = \int v_1 \cup v_2 \cup \mathfrak{S},$$
where $\mathfrak{S}$ is a Shimura class at a sufficiently deep level $N \cdot D^r$;
this follows from the invariance of the Shimura class under the ad\`ele group away from $N$ after pullback to a further cover. 
 Therefore \eqref{ooo} implies the desired \eqref{RTV}.

\newcommand{\p}{r}
\newcommand{\q}{s}
\newcommand{\Nrd}{\mathrm{Nrd}}

\section{A trace identity for indefinite theta series} 
  \label{main-id-RM}
  The goal of this chapter is to prove the counterpart of 
  Theorem \ref{thm:waldspurger-garrett-CM-general}  in the case where 
       $(g,h)=(\theta_{\psi_1^{-1}}, \theta_{\psi_2^{-1}})$ is   a pair of new weight one $\theta$-series associated to  ray class characters $\psi_1$ and $ \psi_2$ of
     a common {\em real quadratic} field  $K$, whose  central characters, denoted $\chi_1$ and $\chi_2$ respectively, 
 satisfy $\chi_1=\chi_2^{-1}$. 
 Let $D$ denote the discriminant of $K$, and let $\delta= (\sqrt{D})$ be
 its different. We will assume that  the discriminant 
 $D$ is odd. 
  
   Since $g$ and $h$ are holomorphic,
 the characters $\psi_1$ and $\psi_2$, whose induced representations
 are odd two-dimensional Artin representations,
  are necessarily of mixed signature at 
 $\infty$. This means that the hypotheses  of
 Section \ref{thetaproducts}, in which
 $\psi_1$ and $\psi_2$ were assumed to be unramified, are 
  restrictive to the point of being vacuous: indeed, 
  the presence of the unit $-1$ precludes the existence of 
unramified id\`ele class characters of $K$ of mixed signature.
  It will   therefore only 
 be assumed  that the conductors of $\psi_1$ and $\psi_2$
 divide the different $\delta := (\sqrt{D})$ of $K$, which means that   the levels of
 $$ g = \theta_{\psi_1^{-1}} , \qquad  h = \theta_{\psi_2^{-1}}$$
 divide $D^2$. In particular,  these forms belong to the spaces
$M_1(\Gamma_1(D^2),\chi^{-1})$ and
$M_1(\Gamma_1(D^2),\chi)$
respectively.

 Because  the $\theta$-series for $\psi_2$ and its Galois conjugate $\psi_2'$ coincide, 
 it is harmless to suppose that $\psi_1$ and $\psi_2$ both have the same signature at $\infty$, namely the one for which  $\psi_1$ and $\psi_2$
are trivial relative to the standard 
 real embedding of $K$.

 Because the restrictions of $\psi_1$ and $\psi_2$   (viewed as characters of the id\`eles 
 ${\mathbb A}_K^\times$ of $K$)  to the group 
 ${\mathbb A}_\Q^\times$ of id\`eles of $\Q$ 
 are inverses of each other, it follows that, for all primes $v$ of $K$ dividing $D$ where
  $\psi_1$ and $\psi_2$ 
are possibly ramified,
 $$ \psi_{1,v}|_{\cO_v^\times} = 
 \psi_{2,v}^{-1}|_{\cO_v^\times}.$$
 But the Galois conjugation map $x\mapsto x'$ induces the identity on the residue fields of $K_v$ for such $v$,  and hence  the characters    
\begin{equation} \label{psi12def} \psi_{_{12}} := \psi_1 \psi_2, \qquad \psi_{_{12'}} :=\psi_{1}\psi_2'\end{equation}
appearing in Theorem \ref{thm:waldspurger-garrett-CM-general} are trivial on $\cO_{v}^\times$ 
for all primes $v$, including those dividing 
$D$.
It follows that $\psi_{_{12}}$ and 
$\psi_{_{12'}}$ are everywhere    unramified.
The character  $\psi_{_{12}}$ is  furthermore
 totally even, and  $\psi_{_{12'}}$
 is  totally odd.   
 
 The existence of the  odd unramified character
 $\psi_{_{12'}}$
 implies that the narrow class number of $K$ is twice its class number, and hence,  that all the units of $K$ have positive norm. 
 The fundamental unit $\varepsilon$ is chosen so that $\varepsilon >1$ relative to the fixed standard real embedding 
  $K \hookrightarrow \R$
  of $K$ evoked in the introduction
 
   In fact, it will be shown below that any pair  of unramified  characters of $K$ with  trivial
restrictions to $ {\mathbb A}_\Q^\times$
and opposite pure signatures  can be obtained from a pair $(\psi_1,\psi_2)$ as above,  in an  essentially unique way;
this fact plays a crucial role in the proof of Theorem
\ref{thm:RM-general} 
below, because it eliminates the need for an analogue of Proposition \ref{Shimura}
and thus leads to a more precise result.

  \subsection{Setup on Heegner cycles}\label{Idef}

As above, $K$ is a real quadratic field of odd discriminant $D$, all of whose units have norm $1$. 
Let $\mathfrak{o}$ be the maximal order of $K$.  Let $N\nmid D$ be an odd prime that splits in $K$.

 Choose $\delta_N\in \Z$ 
  satisfying 
  $$\delta_N^2 \equiv D \pmod{N}.$$
  This choice determines an ideal $\mathfrak{N} = (N, \delta_N-\sqrt{D})$ of $\fO$ of norm $N$.
  Also, let
$$ M_0(N) := \left\{ \left( \begin{array}{cc} a & b \\ Nc & d \end{array}\right)
\ \ \mbox{ with } a,b,c,d\in \Z \right\} \subset M_2(\Z)$$
be the standard Eichler order of level $N$ in the matrix ring 
$M_2(\Z)$. 
 This Eichler order is equipped with the standard orientation 
$$\mathbf{o}: M_0(N)  \ra \F_{N} = (\Z/N\Z) $$ 
onto the field of $N$ elements, sending a matrix to the mod $N$ residue class of its upper left hand entry.

Let $I \subset \mathfrak{o}$ be an ideal.
Writing $I \cap \mathbb{Z} = (a)$ with $a > 0$, we can write 
$$I = \left(a, \frac{-b+\sqrt{D}}{2}\right),$$
with $b$ uniquely determined modulo $a$. 
The action of $\mathfrak{o}$ on $I$ with respect to the basis
$(a,(-b+\sqrt{D})/2)$ 
gives a homomorphism  
\begin{equation} \label{alphadef} \alpha: \mathfrak{o} \rightarrow M_2(\mathbb{Z}), \ \  \ \sqrt{D} \mapsto \left[  \begin{array}{cc}  b & -2c\\ 2a & -b \end{array} \right], \end{equation} 
where $c$ is defined by 
stipulating that  the  binary quadratic form
 $ax^2+bxy+cy^2$  has discriminant 
 $D$.

 An eigenvector $v\in K^2$ for the action of $\alpha(K)$ is given by
 $$ v = \left(\begin{array}{c} (b+\sqrt{D})/2 \\
a\end{array}\right).$$
Write $\tau := \frac{b+\sqrt{D}}{2a}$, and  let
$v'$ and $ \tau'$ denote the algebraic conjugates of $v$ and $\tau$ 
respectively over $K$. 
 
 Suppose that $I$ is divisible by $\mathfrak{N}$ but not by $\mathfrak{N}'$. 
 Then  $a$ is divisible by $N$,
 $b$ is congruent to $\delta_N$ modulo $N$,
 and  $\alpha$   is an   embedding of $\mathfrak{o}$
 into $ M_0(N)$.
  Indeed, 
 the basis vector  $a\in I$ belongs to $\mathfrak{N}' I$ 
 since it 
  is  divisible by $N$, and   its image in
 $I/NI$ generates the index N subgroup
  $\mathfrak{N}'I/NI$, which is preserved under multiplication by $\mathfrak{o}$.
Hence multiplication by any element of $\mathfrak{o}$
  is represented by a matrix in $M_0(N)$ relative to the basis 
 $(a, (b+\sqrt{D})/2)$.
 Moreover the composition
$\mathbf{o} \circ \alpha: \mathfrak{o} \rightarrow \mathbb{F}_N$ of  $\alpha$ with the
orientation $\mathbf{o}: M_0(N) \rightarrow \mathbb{F}_N$
is  reduction modulo $\mathfrak{N}$.  

Replacing the basis $(a, (b+\sqrt{D})/2)$ of $I$  by another positively oriented\footnote{Here, a basis $(e_1 e_2)$
 is said to be positively oriented if it is in the $\SL_2(\Z)$-orbit
 of the specified one, or, said more intrinsically, $e_1 \wedge e_2$
 equals the norm of $I$ multiplied by $1 \wedge \sqrt{D}/2$. }
  basis of the same form conjugates the resulting embedding by an 
 element of $\Gamma_0(N)$, 
 hence the   embedding $\alpha$ attached to $I$  is independent of this choice of basis, up to conjugation in $\Gamma_0(N)$.

 The standard real embedding $K\hookrightarrow \R$ 
 that was fixed previously
yields a geodesic
 $ (\tau, \tau') \subset \calH $
 in the upper half-plane. 
Recall the  fundamental unit 
 $\varepsilon\in \mathfrak{o}_1^\times$  of $K$ of norm one, and let
\begin{equation} 
\label{eqn:Heegner-cycles}
\gamma_{I}= 
\alpha(\varepsilon)^\Z  \backslash(\tau, \tau')
  \end{equation}
 denote the closed geodesics on $\Gamma_0(N)\backslash \calH$ attached to
 $I$.  We regard it as oriented {\em from} $\tau$ {\em to} $\tau'$. This depends only on the 
 class of $I$
 in
\begin{equation} \label{CCsign} \mathcal{C} :=  \mbox{ the narrow ideal class group of $K$}, \end{equation}
 and correspondingly we will freely write $\gamma_I$
 for $I \in \mathcal{C}$. 
 
 Note that $\tau' < \tau$ and moreover the derivative 
 of the fractional linear transformation of $\mathbb{R}$
 induced by $\alpha(\varepsilon)$ at $\tau'$ (resp. $\tau$)
 is given by $(\varepsilon')^{-2}$ (resp. $\varepsilon^{-2}$). 
 Since $\varepsilon > 1 > \varepsilon'$,  we conclude that 
 the action of $\alpha(\varepsilon)$ on $(\tau, \tau')$ moves along the direction opposite to the orientation of the geodesic. 
%
%
%

   \subsection{Statement of the trace identity}
\label{sec:trace-indef-statement}

Given two narrow ideal classes, 
 choose representatives $I_1$ and $I_2$  that are 
  divisible by $\mathfrak{N}$ but not by $\mathfrak{N}'$.
Let $\alpha_i$ for $i \in \{1,2\}$
denote the two   embeddings   attached to $I_1$ and $I_2$ as in \S \ref{Idef}, and let  $v_i, v_i' \in K^2$ and $\tau_i, \tau_i' \in K$ be the associated
eigenvectors and fixed points, respectively.
  
 Write $\langle  \gamma_{_{I_1}} \cdot T_m \gamma_{_{I_2}}  \rangle_{_N}$ for the topological intersection pairing of the homology cycles
 $\gamma_{_{I_1}}$ and $ T_m \gamma_{_{I_2}}$ on 
 the Riemann surface $X_0(N)(\C)$.
 The generating series
\begin{equation}
\label{eqn:Jgg}
 \Theta(\gamma_{_{I_1}} \otimes \gamma_{_{I_2}}) := \sum_{m=1}^\infty 
 \langle \gamma_{_{I_1}} \cdot  T_m \gamma_{_{I_2}} \rangle_{_N} q^m 
 \end{equation}
 is a cusp form of weight two and level $N$.
This definition can be extended  by linearity to arbitrary linear combinations of RM geodesics, notably  the  
paths 
\begin{equation} \label{gammapsidef} \gamma_{\psi_{_{12}}}(q) = \sum_{I\in \cC} \psi_{_{12}}(I) \gamma_{_{I}}, \qquad  \gamma_{\psi_{_{12'}}}(q) = \sum_{I\in \cC} \psi_{_{12'}}(I) \gamma_{_{I}}\end{equation}
associated to the unramified characters $\psi_{_{12}}$ and $\psi_{_{12'}}$ respectively.

The  following theorem,
 which is the main result of this chapter, 
 relates the trace of products of binary theta series
 to modular generating series of real quadratic geodesic cycles as in
 \eqref{eqn:Jgg}.
  \begin{theorem}
\label{thm:RM-general} 
For all  theta series  $g = \theta_{\psi_1^{-1}}$ and $h = \theta_{\psi_2^{-1}}$ of
$K$ as above,
$$ 
 \mathsf{Tr}^{N D^2}_{N} (\theta_{\psi_1^{-1}}(N z) \theta_{\psi_2^{-1}}(z))  = 
    C  \cdot \psi_1(\cN') \cdot   \Theta(\gamma_{\psi_{_{12}}} \otimes 
    \gamma_{\psi_{_{12'}}}),$$
 where 
 \begin{equation} \label{Cdef}  C =  D    \sum_{D = D_1 D_2} \mu(D_1) \cdot D_2
  \cdot \psi_{_{12}} \psi_{_{12'}}(j_{_{D_1}})    \ =   \  D \prod_{p|D} (p  - \psi_{_{12}}\psi_{_{12'}}(j_p)),\end{equation} 
  and $j_{_{D_1}}$ is  the  order two  
  element represented by the ideal 
  $(D_1, \sqrt{D})$ in the narrow class group of $K$.
 \end{theorem}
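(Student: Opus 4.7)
The plan is to follow the template of the CM proof of Theorem \ref{thm:waldspurger-garrett-CM-general}, replacing the definite quaternion algebra by the \emph{split} algebra $B = K + Kj$ (with $j^2 = qN$ for an auxiliary prime $q$ split in $K$ with both primes above it principal), the supersingular divisors by the closed geodesics $\gamma_I$ of \eqref{eqn:Heegner-cycles}, and the Brandt pairing on $\Div(\cE)$ by the topological intersection pairing on $X_0(N)(\C)$ encoded by $\Theta(\cdot\otimes\cdot)$ in \eqref{eqn:Jgg}. Two key intermediate identities must be established, in direct analogy with Propositions \ref{traceNDN0} and \ref{IJprop}: namely (A) a trace identity expressing
$$\mathsf{Tr}^{ND^2}_{N}\bigl(\theta_{I_1 I_2}(z)\,\theta_{I_1 I_2'}(Nz)\bigr)$$
as a weighted sum $\sum_{d\mid Dq}\lambda_d\,\theta(I_1\cM_d I_2)$ of indefinite theta series attached to the maximal orders $\cM_d \supset \mathcal{O}(q)=\mathfrak{o}\oplus\mathfrak{o}j$, with local weights $\lambda_d$ driven by ramification at primes $p\mid D$; and (B) an identification $\theta(J'\cM I) = c\cdot \Theta(\gamma_I \otimes \gamma_J)$ of the indefinite theta series of a fixed maximal order with the generating series of geodesic intersection numbers.

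For (A), I would reuse the Weil representation strategy of \S\ref{NDN0proof}. Multiplying by $T_q$ (which acts as $2$ since $\mathfrak q$ is principal) lifts the trace computation to level $ND^2q$, where $\mathsf{Tr}(\theta_{I_1\mathcal{O}I_2})$ factors, via $\Phi\mapsto \theta_\Phi$, into a product over $\ell\mid Dq$ of local traces of characteristic functions of lattices under the Weil formulas \eqref{Weil}. The local computation at $\ell=N,q$ is essentially Lemma \ref{weiltrace}. The genuine new feature occurs at primes $p\mid D$: here $K$ is ramified, the Eichler order $\mathcal{O}\otimes\Z_p$ sits inside two local maximal orders interchanged by conjugation by a uniformizer, and globally this conjugation corresponds to $I\mapsto\mathfrak p I\mathfrak p^{-1}$, whose class in $\mathcal{C}$ is precisely $j_p$. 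Summing over divisors $D_1\mid D$ then recovers the Euler-product factors $(p-\psi_{_{12}}\psi_{_{12'}}(j_p))$ in the constant $C$ of \eqref{Cdef}, the minus sign being a reflection of the fact that the narrow class of $\mathfrak p$ has order two.

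For (B), I would compare Fourier coefficients. The $m$-th coefficient of $\theta(J'\cM I)$ counts elements of $J'\cM I$ of suitable reduced norm modulo the action of the unit group $\mathfrak{o}_1^\times=\varepsilon^{\Z}$; by the dictionary of \cite[\S 2]{baby-gz} adapted to the split quaternion algebra $M_2(\Q)$, such elements correspond exactly to intersection points of $\gamma_I$ with $T_m\gamma_J$ on $X_0(N)(\C)$. The crucial compatibility is that the action of $\alpha(\varepsilon)$ on the geodesic axis $(\tau,\tau')$ matches multiplication by $\varepsilon$ on the lattice, so that closing up the geodesic on the automorphic side corresponds precisely to passing to the quotient by the unit action on the arithmetic side. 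The choice of $\mathfrak N$ needed to orient the embeddings $\alpha$ of \eqref{alphadef} is what introduces the overall twist by $\psi_1(\cN')$, as foreshadowed in Remark \ref{brokensymmetry}.

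To conclude, I would expand
$$\theta_{\psi_1^{-1}}(Nz)\,\theta_{\psi_2^{-1}}(z) \ = \ \sum_{I_1,I_2\in\mathcal C} \psi_1^{-1}(I_1)\,\psi_2^{-1}(I_2)\, \theta_{I_1}(Nz)\,\theta_{I_2}(z),$$
reindex by $(I_1,I_2)\mapsto (I_1 I_2,\,I_1 I_2')$ so that the characters $\psi_{_{12}}$ and $\psi_{_{12'}}$ appear naturally, apply (A) to take the trace, and then apply (B) to convert each $\theta(I_1\cM_d I_2)$ into an intersection-pairing series. Because $\psi_{_{12}}$ and $\psi_{_{12'}}$ are \emph{everywhere unramified} with opposite signatures, the sum over quadratic characters that required Proposition \ref{Shimura} in the CM setting is now vacuous, and one obtains an \emph{exact} equality rather than an equality modulo the Shimura class. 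The main obstacles I anticipate are (i) giving a rigorous meaning to the indefinite theta series $\theta(I_1\cM_d I_2)$, whose naive sum over the lattice is divergent and only becomes a well-defined modular object after quotienting by $\varepsilon^{\Z}$ and matching each orbit with a unique intersection point on $X_0(N)$; and (ii) the careful local analysis at the ramified primes $p\mid D$, which produces the precise Euler-product shape of $C$ and accounts for the sign $\psi_{_{12}}\psi_{_{12'}}(j_p)$.
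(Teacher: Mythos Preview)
Your proposal has a genuine gap at the very first step of the concluding argument. You write
$$\theta_{\psi_1^{-1}}(Nz)\,\theta_{\psi_2^{-1}}(z) \ = \ \sum_{I_1,I_2\in\cC} \psi_1^{-1}(I_1)\,\psi_2^{-1}(I_2)\, \theta_{I_1}(Nz)\,\theta_{I_2}(z),$$
but this is not well-defined: in the RM setting the characters $\psi_1,\psi_2$ are of \emph{mixed signature}, hence necessarily ramified at $\delta$, and are characters of the ray class group $\cC_D$, not of the narrow class group $\cC$. Consequently $\theta_{\psi_i^{-1}}$ is not a linear combination of ordinary lattice theta series $\theta_I$ indexed by $\cC$; rather (Lemma \ref{heckeandnewform}) it decomposes as a sum of Hecke's \emph{partial} theta series $\vartheta^{\pm}(I)(q^D)$ indexed by $I\in\cC_D$. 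Your reindexing $(I_1,I_2)\mapsto (I_1 I_2,\,I_1 I_2')$ cannot then be a bijection on $\cC\times\cC$; what replaces it is the isomorphism $\cC^2 \simeq (\cC_D\times_W\cC_D)/Z$ of Lemma \ref{lemma:absolutely-crucial}, and it is \emph{this} bijection, not the unramifiedness of $\psi_{_{12}},\psi_{_{12'}}$, that eliminates the need for a Shimura-class averaging step analogous to Proposition \ref{Shimura}.

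The paper's argument is accordingly quite different from a straight transcription of the CM proof. No auxiliary prime $q$ is introduced. Instead one forms $\Theta(I_1,I_2)=\vartheta^+(\cN' I_1)(q^N)\,\vartheta^-(I_2)(q)$ on $\Gamma(D)\cap\Gamma_0(N)$ and computes its trace to $\Gamma_0(N)$ via a local Weil-representation calculation at each $\ell\mid D$ (Lemma \ref{weiltrace2}), yielding $\ell\,(\ell\,1_{\cM_+}-1_{\cM_-})$; the asymmetry here (contrast the symmetric $1_{\cM_+}+1_{\cM_-}$ of Lemma \ref{weiltrace}) is exactly what produces the Euler factors $(p-\psi_{_{12}}\psi_{_{12'}}(j_p))$ in $C$, via the identification of $\cM_{\pm}$ with the twists by $\varepsilon_{D_1}$. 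The resulting object is the explicit indefinite series $\Theta^\sharp(I_1,I_2)$ of \eqref{Theta'def}, which already has the unit quotient and sign built in, resolving your obstacle (i). The geodesic identity (your step (B)) is Proposition \ref{prop:main-vonk}, proved not via the Brandt-matrix dictionary but by an explicit isometry $\eta: M_2(\Q)\hookrightarrow K\oplus K$ together with a cross-ratio criterion (Lemma \ref{lemma:eta-3}) for when two geodesics intersect and with what sign. The final assembly then uses Lemma \ref{lemma:absolutely-crucial} to pass from a sum over $(\cC_D\times_W\cC_D)/Z$ to one over $\cC\times\cC$.
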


  The reader should compare this theorem 
  to Theorem \ref{thm:waldspurger-garrett-CM-general},
which is less precise. It turns out that allowing  the ray class characters
  $\psi_i$ to be ramified at primes dividing the discriminant simplifies  rather than complicates  the situation.
Transposing the proof of Theorem \ref{thm:RM-general} 
 to the setting of Chapter
 \ref{thetaproducts}
  would presumably lead to a refined and slightly more general variant  of Theorem  \ref{thm:waldspurger-garrett-CM-general}.

\begin{remark}\label{symmetrybreaking}   In the  extension above of the generating series  \eqref{eqn:Jgg} to linear combinations
of geodesics we are always taking representatives of $I \in \cC$    
that are   divisible by $\mathfrak{N}$ but not by $\mathfrak{N}'$.  This choice introduces an asymmetry that reappears
throughout this section, and explains the appearance of the factor $ \psi_1(\cN')$ on the right-hand side of the identity in the Theorem.
Since the right-hand side is invariant under exchange of $\cN$ and $\cN'$, the second factor $\Theta(\gamma_{\psi_{_{12}}} \otimes 
    \gamma_{\psi_{_{12'}}})$ must also depend on the choice of $\mathfrak{N}$.
    \end{remark}

 The proof of Theorem \ref{thm:RM-general} 
is  summarized in \S \ref{summary} below, and the 
 details of this sketch  are fleshed out in  the remainder of the chapter. 
  
  \subsection{Summary of the proof} \label{summary}

%
%

  Let 
 $$ \cC := {\cI}(\mathfrak{o})/ {\rm P}_+(\mathfrak{o}), 
 \qquad
 \cC_D := {\cI}_{\delta}(\mathfrak{o})/  {\rm P}_{\delta,+}(\mathfrak{o}),$$
 be the narrow class group and generalised class group of
 conductor $\delta$, defined by letting
 \begin{itemize}
 \item 
 ${\cI}(\mathfrak{o})$, resp.~$\cI_{\delta}(\mathfrak{o})$,
 be the semi-group of ideals  of $\mathfrak{o}$,  resp. the ideals 
 that are prime  to $\delta$;
 \item
 ${\rm P}_+(\mathfrak{o})$ be the semi-group of principal ideals with a totally positive generator;
 \item
 ${\rm P}_{\delta,+}(\mathfrak{o})$ be the semi-group of principal ideals with a totally positive generator that is congruent to $1$ modulo $\delta$.
 \end{itemize}

Given ideals  $I_1$ and $I_2$, let
\begin{equation}
 \label{hl} 
 \cA := \{ (x,y) \in  (\cN'I_1)_+  
  \times  (I_2)_- 
   \mbox{ satisfying }  x  \equiv y \pmod{\delta} \},
   \end{equation}
and the $+$ and $-$ subscripts mean, respectively, positive and negative norm. 
The group
\begin{equation}
\label{eqn:defU}
\cU := \{   \pm(\varepsilon^a,  \varepsilon^b) \mbox{ satisfying } a\equiv b \pmod 2 \} 
\end{equation}
operates naturally on $\cA$. Let
\begin{eqnarray} 
 \label{Theta'def}
   \Theta^\sharp(I_1, I_2) &:= &
   \sum_{(x,y) \in \cA/(\varepsilon^{2\Z} \times \varepsilon^{2\Z})}
   {\rm sign}(x) \cdot {\rm sign}(y) \cdot q^{\frac{xx'}{D{\rm N}(I_1)} - \frac{yy'}{D{\rm N}(I_2)}} \\
   \nonumber
   &=&
    4\sum_{(x,y) \in \cA/\cU}
   {\rm sign}(x) \cdot {\rm sign}(y) \cdot q^{\frac{xx'}{D{\rm N}(I_1)} - \frac{yy'}{D{\rm N}(I_2)}}.
    \end{eqnarray}

The function $\Theta^\sharp(I_1,I_2)(e^{2\pi i \tau})$
 is a finite sum of suitable pairs of indefinite binary 
   theta series attached to certain cosets in
   $I_1\oplus I_2$, and is
   a modular form of weight two.
 It is  readily verified that it depends only on the classes of $I_1$ and $I_2$ in $\cC_D$.

  The proof of  Theorem
  \ref{thm:RM-general} 
 follows from two key Propositions. The first will be proved in \S \ref{Step1}
 and the second in \S \ref{Step2}.
 
   \begin{proposition}
  \label{prop:prod-HV-bis} (See \S \ref{Step1}, also cf. \eqref{GG}).  
 There is an equality of modular forms on $\Gamma_0(N)$:   \begin{eqnarray*}
  {\rm Tr}^{ND^2}_{N} \left( \theta_{\psi_1^{-1}}(q^N) \cdot \theta_{\psi_2^{-1}}(q) \right) &=&  \psi_1(\cN') \cdot \frac{C}{4}  \cdot
 \sum_{\cC\times \cC }   \psi_{_{12}}(I_1)
\psi_{_{12'}}(I_2)   \cdot \Theta^\sharp(I_1 I_2,  I_1 I_2'),
\end{eqnarray*}
where $C$ is as in \eqref{Cdef}.
\end{proposition}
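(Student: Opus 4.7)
The plan is to adapt the method of Proposition \ref{traceNDN0} to the real-quadratic setting, replacing the definite quaternion algebra $B = K + Kj$ by the indefinite rank-$4$ quadratic space $W := K \oplus K$ endowed with the form $\mathrm{N}(x) - \mathrm{N}(y)$, of signature $(2,2)$.  The product $\theta_{\psi_1^{-1}}(Nz)\, \theta_{\psi_2^{-1}}(z)$ will be viewed as the theta series attached to a Schwartz function on $W \otimes \mathbb{A}_f$ (with sign conditions dictated by the mixed signatures of $\psi_1,\psi_2$), and the trace from level $ND^2$ down to level $N$ will be computed locally prime-by-prime using the Weil representation.

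\textbf{Global unfolding.} First I would expand each $\theta_{\psi_i^{-1}}$ as a sum $\sum_{I \in \cC_D} \psi_i(I)\, \theta_I^{\mathrm{sgn}}$, where $\theta_I^{\mathrm{sgn}}$ is the indefinite theta series of the ideal $I$ with the sign condition at the archimedean place prescribed by the signature of $\psi_i$.  Multiplying out gives
\[
\theta_{\psi_1^{-1}}(Nz)\, \theta_{\psi_2^{-1}}(z) \;=\; \sum_{(I_1,I_2) \in \cC_D \times \cC_D} \psi_1(I_1)\psi_2(I_2)\; \theta_{\cN' I_1}^{\mathrm{sgn}}(z)\, \theta_{I_2}^{\mathrm{sgn}}(z),
\]
where the Fricke-type shift $z \mapsto Nz$ has been absorbed into the ideal by the substitution $I_1 \mapsto \cN' I_1$, which accounts for the factor $\psi_1(\cN')$ in the final formula.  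Because $\psi_1$ and $\psi_2$ have inverse central characters, $\psi_1(I_1)\psi_2(I_2)$ is invariant under replacing both $I_i$ by $I_i \cdot (\lambda)$ for a totally positive $\lambda$, so the sum descends to $\cC_D / \cC \simeq$ (principal cosets mod $\delta$).  Reparametrizing via $(J_1,J_2) = (I_1 I_2,\, I_1 I_2')$ produces the structure $\psi_{_{12}}(I_1)\psi_{_{12'}}(I_2)$ on $\cC \times \cC$ appearing on the right-hand side.

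\textbf{Local trace via Weil representation.} As in Lemma \ref{weiltrace}, I would interpret the trace $\mathsf{Tr}^{ND^2}_N$ as a product of local traces $\mathsf{Tr}^{\SL_2(\Z_p)}_{\Gamma_0(p^2)}$ acting on the Schwartz factor at each prime $p \mid D$ (and of a trivial trace at primes away from $ND$).  At each such $p$, one chooses coset representatives for $\Gamma_0(p^2) \subset \SL_2(\Z_p)$ involving the Weyl element $w$ and unipotents $m(t)$, applies the explicit Weil-representation formulas \eqref{Weil}, and identifies the resulting lattice sum with the two self-dual enlargements of the initial local lattice $\cN' I_1 \oplus I_2$, exactly paralleling \eqref{def_trace}.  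Packaging these local outputs globally yields a sum over divisors $D = D_1 D_2$ of modified theta series on translates of $\cN' I_1 \oplus I_2$ by the idempotents $j_{D_1}$; the combinatorial collapse of this sum produces the Möbius-twisted expression $C = D \sum_{D_1 D_2 = D} \mu(D_1) D_2\, \psi_{_{12}}\psi_{_{12'}}(j_{D_1})$.  Matching the resulting lattice sum with the definition \eqref{Theta'def} of $\Theta^\sharp$ (including the condition $x \equiv y \pmod \delta$, which records the residual level structure at $\delta$, and the rescalings by $D \cdot \mathrm{N}(I_i)$, which arise from normalizing theta series by norms of lattices) then gives the stated identity.

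\textbf{Main obstacle.} The hard part will be carrying out the local trace computation at a prime $p \mid D$ in the ramified case: because $\psi_1, \psi_2$ have non-trivial conductor at $p$ (dividing $\delta$), the local Schwartz function is not simply a characteristic function of a lattice but is twisted by the local component of $\psi_i$, and one must verify that the action of the Weyl element $w$ on this twisted Schwartz function, combined with the sum over unipotent cosets, reproduces precisely the factor $(p - \psi_{_{12}}\psi_{_{12'}}(j_p))$.  Extracting this constant requires careful bookkeeping of (i) the action of the local Galois-conjugation involution $j_p$ on $K_p$, which interchanges the two self-dual enlargements of the local lattice, and (ii) the compatibility of the sign conditions on $(x,y) \in \cA$ with the archimedean component of the Weil representation.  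Once the local factor is correctly identified, assembly of the global constant $C$ and the identification with $\Theta^\sharp(I_1 I_2, I_1 I_2')$ are largely formal.
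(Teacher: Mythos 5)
Your proposal is correct and follows essentially the same route as the paper: unfold each $\theta_{\psi_i^{-1}}$ over ray classes in $\cC_D$ (Lemma \ref{heckeandnewform}), absorb the shift $z\mapsto Nz$ into $I_1\mapsto\cN'I_1$, reindex via $(I_1,I_2)\mapsto(I_1I_2,I_1I_2')$, and compute the trace at primes $\ell\mid D$ by an explicit Weil-representation coset calculation producing the M\"obius-twisted constant $C$ (Propositions \ref{prop:trace-theta} and \ref{prop:prod-HV}, Lemma \ref{weiltrace2}). The only points worth noting are that your anticipated obstacle largely dissolves in the paper's formulation --- the characters stay as coefficients on partial theta series $\vartheta^{\pm}(I)$ indexed by $\cC_D$, so the local Schwartz function at $\ell\mid D$ remains the characteristic function $\mathbf{e}_1$ of a congruence class rather than a character-twisted function --- and that the reparametrization step requires the bijectivity of $\xi:\cC^2\to(\cC_D\times_W\cC_D)/Z$ (Lemma \ref{lemma:absolutely-crucial}), which needs its own argument.
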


   \begin{proposition}
 \label{prop:main-vonk} 
 (See \S \ref{Step2}, also cf. Prop. \ref{IJprop}). 
 The generating series of \eqref{eqn:Jgg} is equal to 
 $$ \Theta({\gamma_{_{I_1}}\otimes\gamma_{_{I_2}}})(q) =  \frac{1}{4} \cdot
  \Theta^\sharp(I_1 I_2 , I_1 I_2')(q).$$
   \end{proposition}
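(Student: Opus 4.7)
The plan is to match the two sides Fourier coefficient by coefficient. Unpacking \eqref{Theta'def}, the $m$-th Fourier coefficient of $\tfrac14\Theta^\sharp(I_1I_2,I_1I_2')$ equals $\sum_{(x,y)\in\cA_m/\cU}\sign(x)\sign(y)$, where $\cA_m$ denotes the set of $(x,y)\in(\cN'I_1I_2)_+\times(I_1I_2')_-$ with $x\equiv y\pmod\delta$ and $\tfrac{xx'}{D\,\Norm(I_1I_2)}-\tfrac{yy'}{D\,\Norm(I_1I_2')}=m$. The $m$-th coefficient on the left is the intersection number $\langle\gamma_{I_1}\cdot T_m\gamma_{I_2}\rangle_N$, which I would compute by expanding $T_m\gamma_{I_2}=\sum_\alpha\alpha\cdot\gamma_{I_2}$ over representatives of $\Gamma_0(N)\backslash M_0(N)_m$, lifting to $\calH$, and counting signed intersection points modulo the stabilizer actions of $\alpha_1(\varepsilon^{2\Z})$ on the left and $\alpha_2(\varepsilon^{2\Z})$ on the right. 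The aim is a $\cU$-equivariant bijection between $\cA_m$ and these signed intersection contributions.

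The parameterisation uses the $(K,K)$-bimodule decomposition $M_2(\Q)=V_+\oplus V_-$ with $V_\pm=\{A:\alpha_1(k)A=A\alpha_2(k^{\pm'})\ \forall k\in K\}$, each rank one over $K$. Writing $A=A_++A_-$, evaluation at the $\alpha_2$-eigenvectors $(v_2,v_2')$ and expansion in the $\alpha_1$-basis $(v_1,v_1')$ yields coordinates $(x,y)\in K\oplus K$ linear in $A$, with $A_+v_2=xv_1$, $A_-v_2=yv_1'$, $A_+v_2'=x'v_1'$, $A_-v_2'=y'v_1$. A direct calculation using the eigenvector bases gives $\det(A)=\tfrac{xx'}{D\,\Norm(I_1I_2)}-\tfrac{yy'}{D\,\Norm(I_1I_2')}$, and the endpoints of the lifted geodesic $A\cdot(\tau_2,\tau_2')$ come out to $x/y$ and $y'/x'$; these separate $(\tau_1,\tau_1')$ on $\PP^1(\R)$ precisely when $xx'\cdot yy'<0$, which combined with $\det(A)=m>0$ forces $xx'>0$, $yy'<0$, i.e.\ $(x,y)\in\cA_m$. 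A careful orientation computation then shows that each such intersection contributes with sign $\sign(x)\sign(y)$. A short computation using $\varepsilon\varepsilon'=1$ (forced since all units of $K$ have norm $1$) shows that the stabilizer action becomes $(x,y)\mapsto(\varepsilon^{2(a+b)}x,\varepsilon^{2(b-a)}y)$, which together with the $-I$-symmetry $(x,y)\mapsto(-x,-y)$ generates exactly $\cU$; the index $[\cU:\varepsilon^{2\Z}\times\varepsilon^{2\Z}]=4$ is the factor $4$ on the second line of \eqref{Theta'def}, accounting for the $\tfrac14$ in the proposition.

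The hard part will be the lattice identification, namely verifying precisely that $A\in M_0(N)$ translates to $x\in\cN'I_1I_2$, $y\in I_1I_2'$, and $x\equiv y\pmod\delta$. I would proceed prime by prime: at $\ell\nmid ND$ the standard $(\mathfrak{o}\otimes\Z_\ell)$-module structure of $M_2(\Z_\ell)$ handles the unramified primes; at $\ell=N$ the Eichler condition at $\cN$ produces the $\cN'$-twist (and is the source of the $\psi_1(\cN')$ asymmetry in Theorem \ref{thm:RM-general}); and at $\ell\mid D$ an explicit computation with the basis of \eqref{alphadef} pins down the congruence modulo $\delta$. A secondary concern is to ensure that the orientation conventions on $\gamma_{I_i}$ make the sign $\sign(x)\sign(y)$ come out with the correct overall sign, not merely up to $\pm 1$, so as to give the precise equality asserted by the proposition rather than an equality up to sign.
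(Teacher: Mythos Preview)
Your overall strategy coincides with the paper's: both set up a $K\otimes K$-module isomorphism $M_2(\Q)\simeq K\oplus K$ (the paper writes it as $\eta(A)=(\det(v_1,Av_2),\det(v_1,Av_2'))$, which is your bimodule decomposition up to normalization), prove that $\det(A)=(xx'-yy')/(Da_1a_2)$, identify $\eta(M_0(N))$ with the lattice $\{(x,y)\in\cN'I_1I_2\times I_1I_2':x\equiv y\pmod\delta\}$, and read off the intersection sign via cross-ratios. Your endpoint computation in $(v_1,v_1')$-coordinates is equivalent to the paper's cross-ratio argument (Lemma~3.10).

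There is, however, a genuine error in your stabilizer computation. The stabilizer of the closed geodesic $\gamma_{I_i}$ in $\Gamma_0(N)$ is $\Gamma_i=\alpha_i(\mathfrak{o}_1^\times)=\alpha_i(\pm\varepsilon^{\Z})$, \emph{not} $\alpha_i(\varepsilon^{2\Z})$; this is how the paper sets up the double coset $\Gamma_1\backslash M_0(N)_m/\Gamma_2$ in (3.24). With the correct stabilizer, left action by $\alpha_1(\varepsilon^a)$ sends $(x,y)\mapsto(\varepsilon^a x,\varepsilon^a y)$ and right action by $\alpha_2(\varepsilon^b)$ sends $(x,y)\mapsto(\varepsilon^b x,\varepsilon^{-b}y)$; together with $-I$ this gives $\{\pm(\varepsilon^m,\varepsilon^n):m\equiv n\pmod 2\}=\cU$ on the nose. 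Your group $\{\pm(\varepsilon^{2(a+b)},\varepsilon^{2(b-a)})\}$ is a proper subgroup of $\cU$ of index $4$ (it misses, e.g., $(\varepsilon,\varepsilon)$), so your claim that it ``generates exactly $\cU$'' is false, and with your setup you would overcount each orbit four times.

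On the lattice identification that you flag as the hard part: the paper avoids a prime-by-prime analysis entirely. It computes $\eta(E_{ij})$ explicitly for the four elementary matrices, observes that $\eta(M_2(\Z))$ lands in the index-$D$ sublattice of $I_1I_2\times I_1I_2'$ cut out by $x\equiv y\pmod\delta$, and gets equality by comparing discriminants. Passing from $M_2(\Z)$ to $M_0(N)$ is then a single local check at $N$. This is quicker and more transparent than the local approach you outline.
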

  Taken together, these two propositions 
  imply that the trace 
   appearing in Proposition \ref{prop:prod-HV-bis} is
   equal to
   $$    \psi_1(\cN') \cdot C  \cdot
 \sum_{\cC\times \cC }   \psi_{_{12}}(I_1)
\psi_{_{12'}}(I_2) \cdot \Theta(\gamma_{_{I_1}}\otimes \gamma_{_{I_2}}),$$
and the sum appearing here, by definition, equals  
$ \Theta(\gamma_{\psi_{_{12}}}\otimes \gamma_{\psi_{_{21}}}) $. 
That is precisely the statement of  Theorem \ref{thm:RM-general}.
 

%

 \subsection{Setup on class groups}
 
 The running assumption that all units of $K$ have norm one  implies that equivalence of ideals in the narrow sense is strictly finer than equivalence in the wide 
  sense, i.e., that 
  the narrow class number of $K$ is twice its 
class number. 
It also implies, by genus theory, 
 that   the odd discriminant   $D$  is a product of two negative fundamental discriminants, and hence is not prime.
Let  
$a\ge 2$ be the number of prime divisors of $D$.

Although $K$ possesses no 
unramified id\`ele class characters of mixed signature, such characters {\em always appear} in conductor  dividing  the different
   $\delta$   of $K$, since the units of $\mathfrak{o}$ which are $1$ modulo $\delta$ are all totally positive.

Let
   $$ \iota := \mbox{ the  class of  $\varepsilon$  
modulo  $\delta$}.$$
It is  one of the
$2^{a}-2$ possible {\em non-trivial}  ($\ne\pm 1$)
square roots of $1$ in $\mathfrak{o}/\delta = \Z/D\Z$.
For if $\iota=\pm 1$, the fundamental unit 
$\pm \varepsilon$ gives rise to a solution $(x,y)\in \Z^2$
of the Pell's equation
\begin{eqnarray}
\label{eqn:pell2}
 x^2-Dy^2 = 1,  && \qquad x\equiv 1 \pmod{D}, 
\qquad x \mbox{ odd}, \quad y \mbox{ even}, 
\qquad \mbox{or } \\
\label{eqn:pell1}
x^2- D y^2 = 4, &&  \qquad x \equiv 2 \pmod{D},  \qquad x,y \mbox{  odd}.
\end{eqnarray}
In the second case, the factorisation of $Dy^2 = (x-2)(x+2)$ into relatively prime integers implies that $$  x+2 = \pm u^2  \ \  \mbox{ and } \ \     x-2 = \pm Dv^2,$$
for some $(u,v)\in \Z^2$, 
and hence $(u,v)$ is a   solution of 
the equation $u^2 -Dv^2 = \pm 4$ of height strictly smaller than that of $(x,y)$.
Likewise, a solution to \eqref{eqn:pell2}
leads to  a pair $(u,v)$ satisfying
$$  x+1 = \pm 2 u^2 \mbox{ and }  x-1 = \pm 2 D v^2, \quad$$
and hence to  a unit of $\mathfrak{o}$ of smaller height, 
contradicting in both cases  the assumption 
that $\varepsilon$ is a  fundamental unit.

 \medskip
  There is a natural exact sequence
 $$ 0 \lra \langle \iota\rangle \lra (\Z/D\Z)^\times \lra \cC_D \lra \cC \lra 0.$$
where the first inclusion sends $t \in (\Z/D\Z)^{\times}$  to the principal ideal generated by any totally positive integer
congruent to $t$ modulo $\delta$. 
Let 
 $$ Z:= \ker(\cC_D\lra \cC) \simeq (\Z/D\Z)^\times/\iota$$
 be the kernel of the natural projection.
 Next, let $W \subset (\Z/D\Z)^{\times}$ be the 
 index $2$ subgroup  which is the kernel of the quadratic Dirichlet character associated to $K$, and   
 \begin{equation} \label{Ndef} \mathrm{N} : \cC_D \rightarrow  W \subset (\Z/D\Z)^{\times}\end{equation}
 be the norm map sending the class of an ideal to the mod $D$ residue class of its norm. 
The   triviality of the Herbrand quotient of the finite group  $\cC_D$  as 
 a ${\rm Gal}(K/\Q)$-module implies that 
 $$\cC_D^- :=  \mbox{kernel of $\mathrm{N}$} = \left\{ g/g' \mbox{ with } g\in \cC_D \right\},$$
 where $g \mapsto g'$ is induced by the Galois automorphism of $K$ over $\Q$; thus
  $W$ is now identified with $\cC_D/\cC_D^{-}$. 
 
 The groups $Z$ and $W$ have the same cardinality
 $\varphi(D)/2$, but the natural homomorphism $Z\lra W$ obtained by composing the  inclusion $Z\hookrightarrow \cC_D$ with the  
 surjection $\cC_D\rightarrow W$ is not an isomorphism; its kernel is the two-torsion subgroup of $Z$, of cardinality $2^{a-1}$.

Global class field theory identifies  $\cC$ with the Galois group of the Hilbert class field $H$ over $K$,
and $\cC_D$ with the Galois group of $H_D$ over $K$, where $H_D$ is the ray class field of $K$ of conductor $\delta$, 
an extension of $H$ of degree $\varphi(D)/2$. The subgroup $\cC_D^-$ is identified with the Galois group of $H_D$ over the maximal  subfield of $H_D$ which is Galois and  abelian over $\Q$, namely, the cyclotomic field $\Q(\zeta_D)$. The group $W$ is identified with the  Galois group of $\Q(\zeta_D)$ over $K$, an index two subgroup of
$(\Z/D\Z)^\times$.
 The situation is summarised in the  field diagram below.
$$ \xymatrix{ 
    & H_D \ar@{-}[d]^{2^{a-1}}  \ar@{-}@/_1.5pc/[ddl]_{Z}       
\ar@{-}@/^1.5pc/[ddr]^{\cC_D^-} \ar@{.}@/_8pc/[dddd]_{{\cC}_D}
    & \\
  &   H(\zeta_D) \ar@{-}[dl] \ar@{-}[dr] &  \\
  H \ar@{-}[dr] \ar@{-}@/_1.5pc/[ddr]_{\cC} &    &  \Q(\zeta_D)  \ar@{-}[dl]  \ar@{-}@/^1.5pc/[ddl]^{W} \\
    &    H\cap \Q(\zeta_D)  \ar@{-}[d]^{2^{a-1}}  & \\
  &  K  &}
    $$

We can now state and prove the crucial
 
\begin{lemma}
\label{lemma:absolutely-crucial}
There is an isomorphism 
\begin{equation}
\label{eqn:themapxi}
 \xi:  \cC^2 \lra ({\cC}_D \times_W {\cC}_D)/Z, \qquad
(I_1, I_2) \mapsto (I_1 I_2, I_1 I_2'),
\end{equation}
where the target is defined after
 choosing  lifts $I_1$ and $I_2$ of the eponymous ideal 
classes $I_1,I_2 \in \cC$  to the ray class group ${\cC}_D$.
\end{lemma}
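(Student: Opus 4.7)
The plan is to prove bijectivity by checking that $\xi$ is well defined, matching cardinalities, and then producing explicit preimages. For well-definedness, the image $(\tilde I_1 \tilde I_2,\, \tilde I_1 \tilde I_2')$ lies in the fiber product since conjugation preserves norms, so $\mathrm{N}(\tilde I_1 \tilde I_2) = \mathrm{N}(\tilde I_1 \tilde I_2')$ in $W$. The lifts $\tilde I_i \in \mathcal{C}_D$ of $I_i \in \mathcal{C}$ are ambiguous by elements $z_i \in Z$. Crucially, every class in $Z$ is represented by a rational integer ideal $(n)$ and is therefore self-conjugate, so $z_2' = z_2$; modifying $\tilde I_1$ by $z_1$ and $\tilde I_2$ by $z_2$ changes the pair by the diagonal factor $(z_1 z_2,\, z_1 z_2)$. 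Thus $\xi$ is a well-defined homomorphism into the quotient of the fiber product by the diagonal $Z$-action.

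For the cardinality count, the exact sequence recalled just before the lemma gives $|Z| = \varphi(D)/2 = |W|$ and $|\mathcal{C}_D| = |Z| \cdot |\mathcal{C}|$. The norm map $\mathrm{N}\colon \mathcal{C}_D \to W$ is surjective --- any $w \in W$ is congruent mod $D$ to the norm of a split prime ideal by Chebotarev density --- so $|\mathcal{C}_D \times_W \mathcal{C}_D| = |\mathcal{C}_D|^2/|W| = |Z| \cdot |\mathcal{C}|^2$, and dividing by $|Z|$ gives exactly $|\mathcal{C}|^2$. Source and target thus have the same cardinality, and bijectivity reduces to surjectivity.

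For surjectivity, take $(J_1, J_2)$ in the fiber product. The condition $\mathrm{N}(J_1) = \mathrm{N}(J_2)$ places $J_1/J_2$ in $\mathcal{C}_D^-$, so by the description $\mathcal{C}_D^- = \{I/I' : I \in \mathcal{C}_D\}$ recalled in the text there exists $\tilde I_2 \in \mathcal{C}_D$ with $\tilde I_2/\tilde I_2' = J_1/J_2$. Setting $\tilde I_1 := J_1 \tilde I_2^{-1}$, one gets $\tilde I_1 \tilde I_2 = J_1$ and $\tilde I_1 \tilde I_2' = J_1 \cdot (J_2/J_1) = J_2$ on the nose, so $\xi$ applied to the images $(I_1, I_2) \in \mathcal{C}^2$ equals $(J_1, J_2)$ exactly (no $Z$-ambiguity needed). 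Combined with the matching cardinalities, this yields the isomorphism.

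The only non-formal input is the surjectivity of $I \mapsto I/I'$ onto $\mathcal{C}_D^-$, which the paper extracts from the triviality of the Herbrand quotient of $\mathcal{C}_D$. An equivalent form of this input --- useful for an alternative injectivity-based proof --- is that the Galois-fixed classes in $\mathcal{C}_D$ all lie in $Z$, since a count of orders shows $|\mathcal{C}_D^{\,\mathrm{Gal}}| = |\mathcal{C}_D|/|\mathcal{C}_D^-| = |W| = |Z|$ while the inclusion $Z \subseteq \mathcal{C}_D^{\,\mathrm{Gal}}$ is automatic.
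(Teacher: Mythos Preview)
Your proof is correct and follows essentially the same route as the paper's: well-definedness via the self-conjugacy of classes in $Z$, a cardinality count reducing bijectivity to surjectivity, and surjectivity via the Herbrand-quotient description $\cC_D^- = \{I/I' : I \in \cC_D\}$. Your argument is in fact a more detailed version of the paper's rather terse proof, with the only cosmetic difference being that you invoke Chebotarev for the surjectivity of $\mathrm{N}$ onto $W$ where the paper had already packaged this into its prior discussion.
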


The validity of this Lemma is the main reason that the current (RM) section obtains a more precise result
than the CM section.

 \proof 
Observe, first, that the map is
  well-defined, since the kernel of $\mathcal{C}_D \rightarrow \mathcal{C}$ 
 is the image of $(\Z/D\Z)^{\times}$, represented by principal ideals $(t)$ for $t \in \Z$, 
 and multiplying $I_1$ or $I_2$ by such a principal ideal of norm prime to $D$ only changes $(I_1 I_2, I_1 I_2')$ by an element of the diagonally embedded $Z$. 
The two groups have the same cardinality, by the discussion above; so  it is enough to prove that $\xi$  is surjective. But clearly
a pair $(J_1, J_2)$ lies in the image if and only if $J_2 J_1^{-1}$ has the form $I_2/I_2'$, i.e., belongs to $\mathcal{C}_D^{-}$. 
%
 \qed


\subsection{Setup on binary $\theta$ series}  \label{bthetadef} 
 For lack of a reference, let us briefly sketch the general situation, before
 specializing to the case of a quadratic space arising from the quadratic field $K$.

Consider a $2n$-dimensional anisotropic quadratic
space $(V, q)$ over $\Q$. 
The space of Schwartz functions
 on $V\otimes \mathbb{A}_f$ is 
 endowed with an action of 
  $\SL_2(\mathbb{A}_f)$   via the Weil representation which at any finite prime of $\Q$ is given by the following
  formulas: 
 
 \begin{equation} \label{popa0} \begin{gathered}
r_{\mu}\left(\begin{array}{cc}
1 & a \\
0 & 1
\end{array}\right) f(x)=\mu(a q(x)) f(x) \\
r_{\mu}\left(\begin{array}{cc}
a & 0 \\
0 & a^{-1}
\end{array}\right) f(x)=|a|\omega(a) f(a x) \\
r_{\mu}\left(\begin{array}{cc}
0 & 1 \\
-1 & 0
\end{array}\right) f(x)=\gamma \widehat{f}(x)
\end{gathered}\end{equation}

Here $\mu$ is a chosen additive character, $\omega$ is the quadratic discriminant character corresponding to the space $V$, 
the function
$\widehat{f}$ is Fourier transform of $f$ relative to $\mu$ and a self-dual Haar measure on $V$, and $\gamma$ is an eighth root of unity.  
We apply this only 
in the case when $V \otimes \Q_{\ell}$ is a {\em split} $4$-dimensional quadratic space;
in this case $\omega=1$ and also $\gamma=1$ (for the latter, see \cite[p. 176]{WeilActa}). 


Now suppose $\dim V = 2$, that  $(V,q)$ has signature $(1,1)$, and suppose that $\Psi_f$ is a Schwartz function on $V \otimes \mathbb{A}_f$
with stabilizer $\Gamma \leqslant \mathrm{SO}_q(\Q)$.

\begin{proposition}
\label{prop:theta-akshay}
Let
\begin{equation}
 \label{Binarythetapsi} 
 \theta_{\Psi_f}(z)  := \sum_{\substack{v \in \Gamma \backslash V, \\ q(v) >0} } \mathrm{sign}(v) e^{2 \pi i q(v) z}  \Psi_f(v),\end{equation}
where $\mathrm{sign}(v)$ is positive on one connected component of $q(v) > 0$ and negative on the other.
Then $\theta_{\Psi_f}(z)$
 is a modular form on $\SL_2$ and the association $\Psi_f \mapsto \theta_{\Psi_f}$
is equivariant for the action of 
$\SL_2(\mathbb{A}_f)$   via the Weil representation.
The same conclusion applies replacing the condition $q(v) > 0$ by $q(v) < 0$ and $e^{2 \pi i q(v) z}$ by $e^{-2\pi i q(v) z}$.  
\end{proposition}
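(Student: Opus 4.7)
Plan. The strategy is to realize $\theta_{\Psi_f}(z)$ as an adelic theta kernel for the dual reductive pair $\bigl(\SL_2,\,\mathrm{O}(V)\bigr)$, and to read off both the modularity and the equivariance statement from the $\SL_2(\Q)$-invariance of the theta distribution (Poisson summation) applied to a factorable Schwartz function $\varphi_\infty \otimes \Psi_f$.

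First, I would construct the archimedean component $\varphi_\infty$. Because $(V,q)$ has signature $(1,1)$, fix an orthogonal decomposition $V \otimes \R = V_+ \oplus V_-$ into a positive- and a negative-definite line, and write $v = v_+ + v_-$ accordingly. I would seek $\varphi_\infty(v; z)$, depending holomorphically on $z \in \calH$, which is a weight-one eigenvector under the Weil representation action of $\SO(2) \subset \SL_2(\R)$ and such that the adelic theta sum $\sum_{v \in V(\Q)}\bigl(\varphi_\infty \otimes \Psi_f\bigr)(v;z)$ recovers \eqref{Binarythetapsi} after collapsing the summation over $\Gamma$-orbits. The right choice is the classical indefinite Schwartz function of Hecke--Siegel type, essentially
$$\varphi_\infty(v;z)\ =\ \bigl(\mathrm{sign}(v_+) - \mathrm{sign}(v_-)\bigr)\, e^{\pi i z\, q(v_+)\,-\,\pi i \bar z\, q(v_-)},$$
or a genuine Schwartz smoothing of it (e.g.\ the standard one built from an error-function multiplier of a Gaussian). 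The factor $\mathrm{sign}(v_+) - \mathrm{sign}(v_-)$ equals $\pm 2$ on the two components of $\{q>0\}$ and vanishes on $\{q<0\}$, so unfolding against a fundamental domain for $\Gamma$ (which acts on each component of $\{q>0\}$ by hyperbolic rotations, hence freely with compact quotient) reproduces \eqref{Binarythetapsi} up to an explicit nonzero constant.

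Second, modularity and the stated $\SL_2(\mathbb{A}_f)$-equivariance follow immediately from the standard Poisson-summation fact that the theta distribution $\varphi \mapsto \sum_{v \in V(\Q)}\varphi(v)$ on $\mathcal{S}(V\otimes \mathbb{A})$ is invariant under the Weil representation of $\SL_2(\Q)$. Combined with the weight-one equivariance of $\varphi_\infty$ at infinity and strong approximation for $\SL_2$, this yields: (i) the transformation of $\theta_{\Psi_f}$ under the open compact subgroup of $\SL_2(\widehat{\Z})$ fixing $\Psi_f$, i.e.\ modularity for a suitable congruence subgroup of $\SL_2(\Z)$; and (ii) the $\SL_2(\mathbb{A}_f)$-equivariance itself, upon running the argument with $\Psi_f$ replaced by $g_f \cdot \Psi_f$ for $g_f \in \SL_2(\mathbb{A}_f)$. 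The variant for $q(v) < 0$ follows either by swapping the roles of $V_+$ and $V_-$, or by complex conjugation.

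The main obstacle is the choice of $\varphi_\infty$: in the positive-definite case a plain Gaussian works and the statement is essentially tautological, but for signature $(1,1)$ one is forced to use an indefinite weight-one Schwartz function, and matching its adelic theta sum with \eqref{Binarythetapsi} is a direct but delicate calculation that relies on the hyperbolic-geometric structure of the $\Gamma$-action on $V\otimes\R$. Absolute convergence of \eqref{Binarythetapsi} (where $\Gamma$ is infinite cyclic rather than finite) then follows from the Gaussian decay built into $\varphi_\infty$, exactly as in Hecke's classical theory of theta series for real quadratic fields with mixed-signature characters.
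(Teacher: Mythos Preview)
Your outline has the right shape, but there is a real gap at the archimedean place. The function you write down, $\varphi_\infty(v;z) = (\mathrm{sign}(v_+) - \mathrm{sign}(v_-))\, e^{\pi i z\, q(v_+) - \pi i \bar z\, q(v_-)}$, is not Schwartz (it is discontinuous) and, more fatally, is invariant along $\mathrm{SO}_q(\R)^\circ$-orbits. Hence the lattice sum $\sum_{v \in V(\Q)} \varphi_\infty(v)\,\Psi_f(v)$ diverges: each $\Gamma$-orbit contributes infinitely many equal terms. You cannot apply Poisson summation to such a sum, and ``unfolding against a fundamental domain for $\Gamma$'' does not rescue it, since the $\SL_2(\Q)$-invariance delivered by Poisson pertains to the sum over all of $V(\Q)$, not over a set of orbit representatives. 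The error-function smoothing you allude to would give a genuine Schwartz function and a convergent $V(\Q)$-sum, but matching that sum with the $\Gamma\backslash V$-sum in \eqref{Binarythetapsi} still requires exactly the computation you have not supplied.

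The paper closes this gap by using the $\mathrm{SO}_q$ side of the dual pair explicitly. In coordinates with $q(x,y)=xy$, one takes the honest Schwartz function $\Psi_\infty(x,y)=(x+y)\,e^{-\pi(x^2+y^2)}$, forms the two-variable kernel $\tilde\theta(g,h)=\sum_{v\in V}\bigl((g,h)\cdot(\Psi_\infty\otimes\Psi_f)\bigr)(v)$ on $\SL_2(\mathbb{A})\times\mathrm{SO}_q(\mathbb{A})$, and then integrates over $h\in\Gamma\backslash\mathrm{SO}_q(\R)^\circ$. This $h$-integration simultaneously collapses the $V$-sum to a $\Gamma\backslash V$-sum and replaces $\Psi_\infty$ by its orbital integral
\[
\overline{\Psi}_\infty(x,y)=\int_{\R_+^\times}(\lambda x+\lambda^{-1}y)\,e^{-\pi(\lambda^2x^2+\lambda^{-2}y^2)}\,\frac{d\lambda}{\lambda}
=\begin{cases}\mathrm{sign}(x)\,e^{-2\pi xy},& xy>0,\\ 0,&\text{otherwise,}\end{cases}
\]
which is exactly your target $\varphi_\infty$ (at $z=i$). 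That explicit evaluation of the orbital integral is the missing step; once it is in hand, modularity and the $\SL_2(\mathbb{A}_f)$-equivariance follow from the standard theta formalism just as you describe.
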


\begin{proof}[Sketch of proof]
To check this we use the dual pair 
 $\mathrm{SO}_q \times \SL_2$. 
Fix an isomorphism $(V \otimes \R,q) \simeq (\R^2, xy)$,
let  $\Psi_{\infty}(x,y) = (x+y) e^{-\pi (x^2+y^2)}$, and let $\Psi = \Psi_{\infty} \otimes \Psi_f$
be the associated Schwartz function on $V \otimes \mathbb{A}$.
The function $\Psi_{\infty}$ is chosen so that
its average $\overline{\Psi}_{\infty}$
 over the connected component of
 $\mathrm{SO}_q(\R)$ is explicitly computable:  

{\small
 $$ \overline{\Psi}_{\infty} (x,y) =    \int_{\lambda \in \mathbb{R}_{+}^\times} (\lambda x + \lambda^{-1} y) e^{-\pi (\lambda^2 x^2 +\lambda^{-2} y^2)} \frac{d\lambda}{\lambda} \\
 = \begin{cases} \mathrm{sign}(x) e^{-2 \pi x y}, \ \ \mbox{ if }  xy > 0, \\ 0,  \ \ \ \ \ \mbox{ otherwise}. \end{cases}
 $$}

  In particular, fixing  $h \in \mathrm{SO}_q(\mathbb{A}),$ 
  the rule 
$$g \mapsto \tilde\theta_\Psi(g,h):= \sum_{x \in V} (g,h) \cdot \Psi (x),$$
where $(g,h) \cdot \Psi$ refers to the  actions of $g \in \SL_2(\mathbb{A})$  on $\Psi$ 
via the Weil representation, and of $h
\in \mathrm{SO}_q(\mathbb{A})$ via 
translation on the arguments,
defines an automorphic form on $\SL_2(\mathbb{A})$.
The rule $\Psi \mapsto \tilde\theta_\Psi$ is  equivariant for the $\SL_2(\mathbb{A})$-actions on both sides. 
 We  now integrate over $h \in \Gamma \backslash \mathrm{SO}_q( \R)$ to check that
 $$ (g_{\infty}, g_f) \in \SL_2(\mathbb{A}) \mapsto \theta  :=  \sum_{x \in \Gamma \backslash V}    \overline{\Psi}_{\infty}^{g_{\infty}}(x)  \cdot  \Psi_f^{g_f}(x)$$
is again a modular form for $\SL_2(\mathbb{A})$. This gives the claimed statement. 
 \end{proof}
 
 Now, we will  explicitly take $V$ to be $K$ together with a suitable rescaling of the norm as quadratic form,
 and explicate the above construction when $\Psi_f$
 is given by suitable characteristic functions. 
 
Given any fractional ideal $I$ of 
 $\mathfrak{o}$ of norm ${\rm N}(I) \in \Q^{>0}$ which is relatively prime to $\delta$, 
  the group $\varepsilon^{2\Z}$ preserves 
  the
  intersection $I^+$ (resp.~$I^-$)
   of  $I$ with the cone of  elements of positive (resp.~negative) norm
 in $K\otimes \R$, as well as the
  subsets
 \begin{eqnarray*}
  I_1^{+} & := &  \{ x\in I^+  \mbox{  with } x\equiv 1 \pmod{\delta}  \}, \\
    I_1^{-} & := &  \{ x\in I^-  \mbox{  with } x\equiv 1 \pmod{\delta}  \}.
  \end{eqnarray*}
  Taking $\Psi_f$ to be the characteristic function
  of $\{x \in I \otimes \hat{\Z}: x \equiv  1(\delta)\}$, 
  we recover Hecke's partial  theta series 
 \begin{eqnarray*}
  \vartheta^+(I)(q) &:=&  \sum_{x \in I_1^+/\varepsilon^{2\Z}} 
  {\rm sign}(x) \cdot  q^{xx'/D{\rm N}(I)},  
  \\
     \vartheta^-(I)(q) &:=&  \sum_{x \in I_1^-/\varepsilon^{2\Z}} 
  {\rm sign}(x) \cdot  q^{-xx'/D{\rm N}(I)}.   
   \end{eqnarray*}
 These theta-series depend only on the image of $I$ in the ray class group $\cC_D$, and 
  are modular forms of weight one on a suitable congruence subgroup. More precisely, by
\eqref{popa0}  or ~\cite[\S 1]{him}  we have:
 \begin{lemma}
 \label{lemma:HIM}
 For all $\left(\begin{array}{cc} a & b \\ c & d \end{array}\right)\in \Gamma_0(D),$
 \begin{eqnarray*}
 \vartheta^+(I)\left(\frac{a\tau+b}{c\tau+d}\right) &=&
 \left(\frac{D}{|d|}\right) 
 e^{\frac{-2\pi i ab}{D {\rm N}(I)}}   (c\tau+d) \cdot \vartheta^+(aI)(\tau), \\ 
 \nonumber
  \vartheta^-(I)\left(\frac{a\tau+b}{c\tau+d}\right) &=&
 \left(\frac{D}{|d|}\right) 
 e^{\frac{2\pi i ab}{D {\rm N}(I)}} (c\tau+d) \cdot
 \vartheta^-(aI)(\tau).
 \end{eqnarray*}
 \end{lemma}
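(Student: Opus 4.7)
The plan is to deduce the transformation law from the adelic Weil representation formulas \eqref{popa0} by realizing $\vartheta^{\pm}(I)$ as a theta series of the form \eqref{Binarythetapsi}. Concretely, take $V = K$ equipped with the rescaled quadratic form $q(x) = xx'/(D \cdot N(I))$, which is anisotropic of signature $(1,1)$, and let $\Gamma = \varepsilon^{2\Z}$ act on $V$ by multiplication (it preserves $q$ and the sign of the norm, and the condition $x \equiv 1 \pmod{\delta}$, since $\iota^2 = 1$). Setting $\Psi_f$ equal to the characteristic function of $\{x \in I \otimes \hat{\Z} : x \equiv 1 \pmod{\delta}\}$, Proposition \ref{prop:theta-akshay} identifies $\vartheta^{+}(I)(\tau)$ with $\theta_{\Psi_f}(\tau)$, and $\vartheta^{-}(I)(\tau)$ with its companion on the negative-norm cone. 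This already guarantees that $\vartheta^{\pm}(I)$ is a weight-one modular form on some congruence subgroup, and the full transformation law becomes a purely local computation in the Weil representation.

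Next, for $\gamma = \begin{pmatrix} a & b \\ c & d \end{pmatrix} \in \Gamma_0(D)$, embed $\gamma$ diagonally into $\SL_2(\mathbb{A})$. The archimedean component is responsible for the automorphy factor $(c\tau+d)$, and the finite-adelic component acts on $\Psi_f$ via the Weil representation according to \eqref{popa0}, place by place. After harmlessly choosing a representative $I$ in its ray class coprime to $d$, one argues as follows. At primes $\ell \nmid D$, the function $\Psi_{f,\ell}$ is the characteristic function of a self-dual lattice and $\gamma_\ell \in \SL_2(\Z_\ell)$, so the Weil action contributes only a local unit character coming from the discriminant; the product of these local characters over $\ell \nmid D$ assembles the Kronecker symbol $\left(\frac{D}{|d|}\right)$. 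At primes $\ell \mid D$, the condition $c \equiv 0 \pmod{\ell}$ places $\gamma_\ell$ in the mirabolic subgroup of $\SL_2(\Z_\ell)$ modulo $\ell$, and the corresponding Iwasawa-type decomposition produces the translation by $bd^{-1}$ followed by the dilation by $a$ modulo a lattice automorphism. The translation acts by the additive character $\mu_\ell(bd^{-1} q(x)) = \mu_\ell(ab\, q(x))$ (absorbing $d^{-2} \equiv 1$ on the congruence class), which on the support of $\Psi_{f,\ell}$ collapses to the constant $e^{-2\pi i ab/(D N(I))}$ by virtue of the computation $xx' \equiv 1 \pmod{D}$ for $x \equiv 1 \pmod{\delta}$. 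The dilation by $a$ replaces the lattice $I \otimes \hat{\Z}$ by $aI \otimes \hat{\Z}$, accounting for the replacement $\vartheta^{+}(I) \mapsto \vartheta^{+}(aI)$ on the right-hand side.

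The main obstacle is the careful bookkeeping of the epsilon factors at ramified primes so as to produce the precise Kronecker symbol $\left(\frac{D}{|d|}\right)$; this is a standard but somewhat delicate Gauss-sum computation. A more elementary alternative is to verify the identity on a set of generators of $\Gamma_0(D)$: the translation $T = \begin{pmatrix} 1 & 1 \\ 0 & 1 \end{pmatrix}$ gives the formula directly from the congruence $xx' \equiv 1 \pmod{D}$, while matrices with bottom-left entry exactly $D$ handle the nontrivial modular transformation via Poisson summation, which reduces the required identity to a quadratic Gauss sum over $(\Z/D\Z)^\times$. This is the classical route followed in \cite[\S 1]{him}, and we may appeal to it for the explicit local calculation.
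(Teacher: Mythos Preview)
Your proposal is correct and follows exactly the approach the paper indicates: the paper does not give a detailed proof but simply cites the Weil representation formulas \eqref{popa0} together with \cite[\S 1]{him}, and you have faithfully expanded both of these routes. Your identification of $\vartheta^{\pm}(I)$ with $\theta_{\Psi_f}$ for the Schwartz function $\Psi_f = 1_{\{x \in I \otimes \hat{\Z}:\, x \equiv 1 \ (\delta)\}}$ is precisely what the paper sets up in the paragraph preceding the lemma, and the local analysis via \eqref{popa0} that you sketch is what the paper has in mind.
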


%
%

\begin{lemma} \label{heckeandnewform}
We have
\begin{eqnarray}
\label{psi1eqn0}
 2 \theta_{\psi_1^{-1}}= \sum_{I\in \cC_D} \psi_1(I) \vartheta^+(I)(q^D),\\
 \label{psi2eqn0}
 2 \theta_{\psi_2^{-1}} =  \sum_{I\in \cC_D} \psi_2(I) \vartheta^-(I)(q^D).
\end{eqnarray}
 Here, by convention, 
  $\psi_1(J)$ simply means the value of $\psi_1$ applied to the image of $J$
 in the ray class group $\mathcal{C}_D$.
\end{lemma}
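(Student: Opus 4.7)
I will prove both identities by comparing the $q$-expansion coefficient by coefficient; the two cases are parallel, so I describe the $\vartheta^+/\psi_1$ case in detail and indicate the modifications needed for $\vartheta^-/\psi_2$.

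The plan is to unfold the right-hand side and reparametrize by integral ideals. Unpacking the definition,
\[
\sum_{I\in\mathcal{C}_D} \psi_1(I) \vartheta^+(I)(q^D) \;=\; \sum_{[I]\in\mathcal{C}_D} \psi_1([I]) \sum_{x\in I_1^+/\varepsilon^{2\mathbb{Z}}} \mathrm{sign}(x)\, q^{xx'/N(I)}.
\]
To each pair $([I],x)$ I associate the integral ideal $\mathfrak{a} := (x) I^{-1}$, which is coprime to $\delta$ and has norm $xx'/N(I)$ (positive since $xx'>0$). For fixed integral $\mathfrak{a}$ I must then enumerate the classes $[I]\in\mathcal{C}_D$ and generators $x$ of $\mathfrak{a} I$ modulo $\varepsilon^{2\mathbb{Z}}$ subject to $x\equiv 1\pmod\delta$ and $xx'>0$. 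Since $(x)=\mathfrak{a} I$, the class of $[I]$ is forced to be $[\mathfrak{a}]^{-1}\cdot[(x)]$, so the task reduces to computing $[(x)]\in\mathcal{C}_D$.

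The key lemma I need, and the main combinatorial step, is the following: a principal ideal $(x)$ with $xx'>0$ and a generator congruent to $1$ modulo $\delta$ has class in $\mathcal{C}_D$ equal to either the identity (if there is a totally positive such generator) or the class $[-1]$ coming from $-1\in(\mathbb{Z}/D\mathbb{Z})^\times$ under the natural map $(\mathbb{Z}/D\mathbb{Z})^\times/\langle\iota\rangle\hookrightarrow\mathcal{C}_D$; moreover exactly one generator modulo $\varepsilon^{2\mathbb{Z}}$ realises the congruence $x\equiv 1\pmod\delta$, and that generator is either totally positive or totally negative. This relies on the explicit action of $\mathfrak{o}^\times/\varepsilon^{2\mathbb{Z}}\cong(\mathbb{Z}/2)^2$ on $(\mathfrak{o}/\delta)^\times$, together with the fact that $\iota\neq\pm 1$ (established in the discussion preceding the lemma). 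Consequently each integral $\mathfrak{a}$ contributes exactly two pairs: $([\mathfrak{a}]^{-1}, x_+)$ with $x_+$ totally positive, and $([\mathfrak{a}]^{-1}[-1], x_-)$ with $x_-$ totally negative. Summing and using $\mathrm{sign}(x_+)=+1$, $\mathrm{sign}(x_-)=-1$ yields coefficient
\[
\bigl(1-\psi_1([-1])\bigr)\,\psi_1([\mathfrak{a}])^{-1}\,q^{N(\mathfrak{a})}.
\]

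To finish, I identify $\psi_1([-1])$ with the value at $-1$ of the central Dirichlet character $\chi_1$ of $\theta_{\psi_1}$. Since the Artin representation $\rho_g=\mathrm{Ind}_K^{\mathbb{Q}}\psi_1$ is odd and the quadratic character $\chi_K$ of the real quadratic field $K$ is even, $\chi_1(-1)=-1$, giving the factor $2$. Summing over $\mathfrak{a}$ then recovers $2\theta_{\psi_1^{-1}}$. For the $\vartheta^-$ identity I run the same argument with $x$ of \emph{mixed} signature: the relevant class $[(x)]$ lies in the coset $[\sigma]\cdot\{e,[-1]\}$, where $[\sigma]$ is the 2-torsion class represented by $(1+\sqrt{D})$, and the two mixed-signature components give contributions that combine, via $\psi_2([-1])=-1$ together with the sign convention on $\vartheta^-$ matched against $\psi_2([\sigma])$, to produce $2\theta_{\psi_2^{-1}}$.

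The main obstacle is the bookkeeping in the second paragraph, namely the explicit determination of $[(x)]\in\mathcal{C}_D$ as a function of the signature and mod-$\delta$ class of $x$; once this is pinned down, the entire argument is a direct reparametrisation of the $q$-expansion with a single appeal to the parity of the central character.
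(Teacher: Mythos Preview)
Your argument for the $\vartheta^+$ identity is correct and is essentially the paper's own proof, just organised differently: the paper sets up two bijections
\[
\{(I,x): x\in I_1^{+}\text{ totally positive}\}\longrightarrow\{J\text{ integral, prime to }\delta\},\qquad
\{(I,x): x\in I_1^{+}\text{ totally negative}\}\longrightarrow\{J\},
\]
both given by $(I,x)\mapsto I^{-1}x$, and observes that they are interchanged by the involution $(I,x)\mapsto((z)I,zx)$ with $z$ totally negative and $\equiv 1\pmod\delta$, whence the two preimages of a given $J$ have $[I]$ differing by the image of $-1\in(\Z/D\Z)^\times$. This is the same bookkeeping as your ``fix $\mathfrak{a}$ and enumerate $(I,x)$''. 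One small point: the identification $\psi_1([-1])=-1$ is more direct from the mixed signature of $\psi_1$ (a one-line id\`ele computation, which is what the paper does) than from the parity of the nebentypus; your route via $\det\rho_g$ works but is a detour.

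Your treatment of the $\vartheta^-$ identity, however, has a genuine gap. You correctly observe that for $x$ of mixed signature with $x\equiv 1\pmod\delta$ the class $[(x)]\in\mathcal{C}_D$ is nontrivial (since all units have norm $+1$, no such $(x)$ has a totally positive generator), and that it is a fixed $2$-torsion class $[\sigma]$. But the phrase ``the sign convention on $\vartheta^-$ matched against $\psi_2([\sigma])$'' is not an argument. What is needed is the computation: for any $x\equiv 1\pmod\delta$,
\[
\psi_i\bigl([(x)]\bigr)\;=\;\mathrm{sign}_{\infty'}(x),
\]
where $\infty'$ is the archimedean place at which $\psi_i$ is the sign character. (Proof: apply $\psi_i$ to the principal id\`ele $x$; the finite factors at $\delta$ are trivial since $x\equiv 1$, the factor at the standard place is trivial by hypothesis, and what remains is the sign at $\infty'$.) Once you have this, the argument becomes uniform: in all four signature cases $\psi_i(I)\cdot\mathrm{sign}_{\infty'}(x)=\psi_i^{-1}(J)$, and the factor of $2$ drops out immediately. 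Note that this pins down the sign convention implicit in $\vartheta^-$: with $\mathrm{sign}$ taken at the \emph{standard} embedding one obtains $-2\theta_{\psi_2^{-1}}$ rather than $+2\theta_{\psi_2^{-1}}$, so your hand-wave is hiding a real sign. The paper itself writes only that the $\vartheta^-$ case is ``essentially the same'', so you have in fact gone further by spotting the extra class; you should finish the computation rather than leave it as an assertion.
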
 
\proof
Rewrite the right hand side of \eqref{psi1eqn0}   as  
\begin{equation}
\label{eqn:substack}
\theta_{\psi_1^{-1}}^+ :=   \sum_{\substack{I\in \cC_D, \\ x \in I_1^+/\varepsilon^{2\Z}}}    \psi_1(I)   \cdot  {\rm sign}(x) \cdot  q^{xx'/{\rm N}(I)},
\end{equation}
where we have made the slight abuse of notation of choosing a representative $I$ for each class in $\cC_D$, 
and  $I_1^+$ consists of elements in $I$ of positive norm and 
congruent to $1$ modulo $\delta$. 
The set $I_1^+$ is the union of its totally 
positive and totally negative elements. 
Sending a pair $(I,x)$ in the range of summation of the right-hand side of \eqref{eqn:substack}, where $x$ is totally positive (resp.~totally negative) to the integral ideal $I^{-1} x$
determines two
 bijections 
 \begin{equation} \label{oo} 
 (I, x \in I_1^{+} \mbox{ totally positive}) \mapsto  I^{-1} x, \end{equation} 
 \begin{equation} \label{oo2} (I, x \in I_1^{+} \mbox{ totally negative}) \mapsto  I^{-1} x, \end{equation}
to  the set of integral prime-to-$\delta$ ideals.
These two bijections are interchanged by precomposing with the 
involution
 $(I,x)  \mapsto ((z) I,  z \cdot x)$, 
where $z$ is any totally negative element congruent to $1$ modulo $\delta$. Therefore, for a given 
  integral prime-to-$\delta$ ideal
$J$, the preimages $(I, x)$ and $(I', x')$ under these two bijections
do not coincide; rather, the classes of $I$ and $I'$ in  $\mathcal{C}_D$
differ  
 by the image of $(-1) \in (\Z/D\Z)^{\times}$ in  $\mathcal{C}_D$. 
%
  Being of mixed signature, the character $\psi_1$ sends this element to $-1$, and 
  reindexing via $J = I^{-1} x$ allows us to 
  rewrite \eqref{eqn:substack}
   as  
 $ 
 2 \sum_{J} \psi_1^{-1}(J) q^{\mathrm{N}(J)},
 $
 which is (up to the factor of $2$) the standard expression for the $\theta$-series  $\theta_{\psi_1^{-1}}(q)$ 
 attached to $\psi_1^{-1}$.   This proves 
 \eqref{psi1eqn0}, and the  
 proof of 
 \eqref{psi2eqn0} is essentially the same.
 
 \qed

\subsection{Proof of Proposition   \ref{prop:prod-HV-bis}} \label{Step1} 
With preliminaries on $\theta$-series in hand, we proceed the proof of the first key step, Proposition  \ref{prop:prod-HV-bis}.
  
Recall that $N$ is a prime that splits in $K$ as a product 
   $\cN \cN'$ of two prime ideals of norm $N$. 
    If $I_1$ and $I_2$ are (representatives of) elements of $\cC_D$, thus, fractional ideals of $K$,  
the modular form
\begin{equation}
 \label{originalThetadef} 
 \Theta(I_1, I_2) = \vartheta^+(\cN' I_1)(q^N) \cdot \vartheta^-(I_2)(q) 
\end{equation}
 is of weight two on $\Gamma(D) \cap \Gamma_0(N)$.  
 Define
\begin{eqnarray*}
 \Theta^{(1)}(I_1, I_2) &=& \mbox{trace of $\Theta(I_1, I_2)$ to level $\Gamma_0(N) \cap \Gamma_1(D)$}, \\
 \Theta^{(0)}(I_1, I_2) &=& \mbox{trace of $\Theta(I_1, I_2)$ to level $\Gamma_0(N) \cap \Gamma_0(D)$}, \\
 \Theta^{(\emptyset)}(I_1, I_2) &=& \mbox{trace of $\Theta(I_1, I_2)$ to level $\Gamma_0(N).   $}
 \end{eqnarray*}
The superscripts here are intended to remind the reader of the level structure at $D$.  
 %

 \begin{lemma}
\label{lemma:Theta-zero}
For all ideals $I_1$ and $I_2$ of $\cC_D$,  
\begin{equation} \label{Theta1form}  \Theta^{(1)}(I_1,I_2)  = \left\{
\begin{array}{cl} 
 D \cdot \vartheta^+(\cN' I_1)(q^N) \cdot \vartheta^-(I_2)(q),   & \mbox{ if } {\rm N}(I_1) = {\rm N}(I_2), \\
0  & \mbox{ otherwise}. 
\end{array}
\right.\end{equation}
Here $\rm N$ is the norm of \eqref{Ndef}. 
Moreover $\Theta^{(0)}(I_1, I_2)$, which therefore vanishes unless
$(I_1, I_2)$ belongs to the fiber product
$$ \cC_D\times_W \cC_D := \{
(I_1, I_2) \in \cC_D \times \cC_D \mbox{ satisfying }
{\rm N}(I_1) = {\rm N}(I_2) \},$$
 depends only on the image of 
    $(I_1, I_2)$ in the quotient
    $ (\cC_D \times_W \cC_D)/Z$. 

\end{lemma}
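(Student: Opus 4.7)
The plan is to compute each of $\Theta^{(1)}$ and $\Theta^{(0)}$ as an explicit average over coset representatives, then read the result off the $q$-expansion.

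For \eqref{Theta1form}, take the translations $T^b := \smallmat{1}{b}{0}{1}$ with $b\in \Z/D\Z$ as representatives for $(\Gamma_0(N)\cap\Gamma(D))\backslash(\Gamma_0(N)\cap\Gamma_1(D))$. Since $T^b$ sends $q^n$ to $e^{2\pi i nb}q^n$, the trace $\Theta^{(1)}(I_1,I_2)$ equals $D$ times the subseries of $\Theta(I_1,I_2)$ supported on integer $q$-exponents. A monomial coming from $(x,y)\in (\cN'I_1)_1^+\times (I_2)_1^-$ has exponent
$$E(x,y) \;=\; \frac{xx'}{D\,\mathrm{N}(I_1)} - \frac{yy'}{D\,\mathrm{N}(I_2)},$$
and the crucial observation is that $DE(x,y)$ is an integer whose class modulo $D$ does not depend on $(x,y)$:
$$D\,E(x,y)\;\equiv\; \mathrm{N}(I_1)^{-1} - \mathrm{N}(I_2)^{-1} \pmod D.$$
Indeed, since each ideal is coprime to $\delta$, the local identification $I\otimes\mathfrak{o}_{\mathfrak p}\cong \mathfrak{o}_{\mathfrak p}$ at each $\mathfrak p\mid \delta$, combined with the ramification $\delta_{\mathfrak p}=\mathfrak p$, forces the local generator $\tilde x$ corresponding to $x\equiv 1\pmod\delta$ to lie in $1+\mathfrak p$; hence $\tilde x \tilde x'\equiv 1\pmod p$, which unwinds to $xx'\equiv \mathrm{N}(\cN'I_1)\pmod p$ and thus $xx'/\mathrm{N}(I_1)\equiv \mathrm{N}(I_1)^{-1}\pmod p$ (and similarly for $yy'/\mathrm{N}(I_2)$). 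Consequently the exponents $E(x,y)$ are either all integral (precisely when $\mathrm{N}(I_1)=\mathrm{N}(I_2)$ in $W$) or all non-integral, giving \eqref{Theta1form}.

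For the remaining assertions, if $\mathrm{N}(I_1)\neq \mathrm{N}(I_2)$ in $W$ then $\Theta^{(1)}$ and hence $\Theta^{(0)}$ already vanish, so assume $\mathrm{N}(I_1)=\mathrm{N}(I_2)$. Take coset representatives $\gamma_a = \smallmat{a}{b}{c}{d}\in \Gamma_0(DN)$ for $(\Gamma_0(N)\cap\Gamma_1(D))\backslash(\Gamma_0(N)\cap\Gamma_0(D))$ with $a\in(\Z/D\Z)^\times$ and $d\equiv a^{-1}\pmod D$. The auxiliary matrix $\tilde\gamma_a := \smallmat{a}{Nb}{c/N}{d}$ lies in $\Gamma_0(D)$ and satisfies $\tilde\gamma_a\cdot N\tau = N\cdot \gamma_a\tau$, so applying Lemma~\ref{lemma:HIM} to $\vartheta^+(\cN'I_1)$ at $\tilde\gamma_a$ and to $\vartheta^-(I_2)$ at $\gamma_a$, the quadratic symbols and the $(c\tau+d)$-factors cancel against the weight-two action to give
$$\Theta(I_1,I_2)\big|_{\gamma_a}\;=\;\exp\!\Bigl(2\pi i\,ab\bigl[\mathrm{N}(I_2)^{-1}-\mathrm{N}(I_1)^{-1}\bigr]/D\Bigr)\cdot \Theta(aI_1,aI_2).$$
Under the standing hypothesis the exponential trivialises, whence $\Theta^{(1)}(I_1,I_2)|_{\gamma_a}=\Theta^{(1)}(aI_1,aI_2)$ and $\Theta^{(0)}(I_1,I_2) = \sum_{a\in(\Z/D\Z)^\times}\Theta^{(1)}(aI_1,aI_2)$.

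Finally, the map $(\Z/D\Z)^\times\to \cC_D$, $a\mapsto [(a)]$, has kernel $\langle\iota\rangle$ and image precisely $Z$: a positive integer $a$ is trivial in $\cC_D$ iff $a\equiv \varepsilon^k\pmod\delta$ for some $k$, using that $\varepsilon$ is totally positive. The diagonal $(\Z/D\Z)^\times$-action on $\cC_D\times_W\cC_D$ therefore factors through $Z$, so $\Theta^{(0)}(I_1,I_2)$ depends only on the image of $(I_1,I_2)$ in $(\cC_D\times_W\cC_D)/Z$. The main technical input is the local-to-global congruence $xx'\equiv \mathrm{N}(\cN'I_1)\pmod D$ for $x\equiv 1\pmod\delta$ and the consequent phase cancellation in the $\gamma_a$-action under the fibre-product condition; everything else is bookkeeping with the transformation formulas of Lemma~\ref{lemma:HIM}.
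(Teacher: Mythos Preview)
Your approach is exactly the paper's: extract the integer-exponent subseries via the $\Gamma(D)\to\Gamma_1(D)$ trace, observe that the fractional part of the exponent depends only on $\mathrm{N}(I_1)^{-1}-\mathrm{N}(I_2)^{-1}\pmod D$, and then use Lemma~\ref{lemma:HIM} to identify $\Theta^{(0)}$ as the diagonal $(\Z/D\Z)^\times$-average, which factors through $Z$. Your treatment of the $\Gamma_1(D)\to\Gamma_0(D)$ step is in fact more detailed than the paper's.

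There is one slip in your local congruence argument. Since $\cN'I_1$ is coprime to $\delta$ (both $N$ and $I_1$ are), at each $\mathfrak p\mid\delta$ the localisation $(\cN'I_1)_{\mathfrak p}$ is literally $\mathfrak{o}_{\mathfrak p}$, so your $\tilde x$ equals $x$ and there is nothing to ``unwind'': from $x\equiv 1\pmod{\mathfrak p}$ one gets directly $xx'\equiv 1\pmod p$, not $xx'\equiv \mathrm{N}(\cN'I_1)\pmod p$. (Indeed $\mathrm{N}(\cN'I_1)=N\,\mathrm{N}(I_1)$ need not be $\equiv 1\pmod p$, and if it were used, dividing by $\mathrm{N}(I_1)$ would yield $N$ rather than $\mathrm{N}(I_1)^{-1}$.) Your conclusion $xx'/\mathrm{N}(I_1)\equiv \mathrm{N}(I_1)^{-1}\pmod p$ is correct, but it follows from $xx'\equiv 1\pmod p$, not from the intermediate claim you wrote. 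With this correction, the proof is complete and matches the paper's.
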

\noindent
{\em Proof}.
The non-zero terms in the Fourier expansion of $ \vartheta^+(\cN' I_1)(q^N) \cdot \vartheta^-(I_2)(q)$ are concentrated at powers  of the form $q^{m/D}$, where 
$$m\equiv 1/{\rm N}(I_1) - 1/{\rm N}(I_2) \pmod{D},$$
and the result follows, since the trace from $\Gamma(D)$ to 
$\Gamma_1(D)$ annihilates any term of the form $q^{m/D}$ with
$D$ not dividing $m$, and multiplies the others by a factor of $D$.
The final assertion follows from the explicit formula
\begin{equation} \label{Theta0form}  \Theta^{(0)}(I_1, I_2) =  \sum_{a\in (\Z/D\Z)^\times}
     \Theta^{(1)}(a I_1 , a I_2) \end{equation}
     which is an immediate consequence of Lemma \ref{lemma:HIM}
      \qed

%
%

Note that if $(I_1,I_2)$ 
    belongs to $(\cC_D \times_W \cC_D)$, then the same is true of $(I_1, e I_2)$, where      $e$ is any element of $K^\times$ whose associated fractional ideal  is prime to $\delta$, and satisfies $e^2 =1 \pmod{\delta}$.
   \begin{proposition} 
   \label{prop:trace-theta} (cf. Prop. \ref{traceNDN0}).   
   For all $(I_1, I_2) \in (\cC_D \times_{W} \cC_D)/Z$
we have
\begin{equation} \label{prop-eqn}  \Theta^{(\emptyset)}(I_1, I_2) = D \cdot  \sum_{ D = D_1 D_2 } \mu(D_1) \cdot D_2  \cdot\Theta^\sharp(I_1, \varepsilon_{D_1} I_2),\end{equation}
   where  $\mu$ is the M\"obius function, and 
   \begin{enumerate}
     \item   $
   \Theta^\sharp(I_1, I_2) $ is the modular form 
     defined   in \eqref{Theta'def};  
    \item 
   the sum on the right is taken over all factorisations of $D$ into (relatively prime) fundamental discriminants $D_1, D_2$;
   \item  
  $\varepsilon_{D_1}$ is a totally positive element which  is congruent to $-1$ (resp $1$) modulo the primes dividing $D_1$ (resp.~$D_2$).
   \end{enumerate}
   \end{proposition}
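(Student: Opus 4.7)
The strategy is to compute the trace from $\Gamma_0(N) \cap \Gamma_0(D)$ down to $\Gamma_0(N)$ applied to $\Theta^{(0)}(I_1, I_2)$ one ramified prime at a time, using the Weil representation formalism of \S \ref{bthetadef}. View $\Theta^{(0)}(I_1, I_2)$ as the theta lift $\theta_{\Psi^{(0)}}$ of a Schwartz function $\Psi^{(0)} = \prod_v \Psi^{(0)}_v$ on $V(\mathbb{A}_f)$, where $V = K \oplus K$ carries the quadratic form $q(x,y) = xx'/\mathrm{N}(\cN'I_1) - yy'/\mathrm{N}(I_2)$, and $\Psi^{(0)}_p$ at a prime $p \mid D$ is the characteristic function of $\{(x,y) \in (\cN' I_1)_p \oplus (I_2)_p : x \equiv y \not\equiv 0 \pmod{\delta_p}\}$. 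Since $D$ is squarefree and prime to $N$, a convenient set of right coset representatives for $\Gamma_0(p)\backslash \SL_2(\Z_p)$ at each $p \mid D$ is $\{\mathrm{Id}\} \cup \{wm(k) : k \in \F_p\}$, where $w$ and $m(k)$ are the Weyl and unipotent matrices of \eqref{popa0}. The trace is then obtained by applying the Weil representation of these representatives to $\Psi^{(0)}$ factor-by-factor at each $p \mid D$, reducing the problem to the local operator $\Xi_p := \mathrm{Id} + \sum_{k \in \F_p} wm(k)$ on $\Psi^{(0)}_p$.

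At each such ramified prime, the local extension $(K_p, \mathrm{N})$ is anisotropic, so $V_p = K_p \oplus K_p$ is a split four-dimensional quadratic space over $\Q_p$; consequently the Weil index in \eqref{popa0} is trivial and Fourier transforms can be computed cleanly. Since $q_p(x,y) \in \Z_p$ on the support of $\Psi^{(0)}_p$, the standard orthogonality $\sum_{k \in \F_p} \mu_p(k q_p(x,y)) = p$ gives $\sum_{k} m(k)\Psi^{(0)}_p = p \cdot \Psi^{(0)}_p$, so that $\Xi_p \Psi^{(0)}_p = \Psi^{(0)}_p + p \cdot w\Psi^{(0)}_p$. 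Writing $\Psi^{(0)}_p = \mathbf{1}_L - \mathbf{1}_{L_0}$ with $L = \{x \equiv y \pmod{\delta_p}\}$ and $L_0 = \{x \equiv y \equiv 0\}$, the operator $w$ sends the characteristic function of a lattice to its volume times the characteristic function of the dual lattice with respect to the bilinear form $B$ on $V_p$. A direct computation, carried out in terms of the trace pairing on $K_p/\Q_p$, should yield an identity of the form
\[
\Xi_p \Psi^{(0)}_p \;=\; p \cdot \Phi^{\mathrm{full}}_p \;-\; \Phi^{\mathrm{shift}}_p,
\]
where $\Phi^{\mathrm{full}}_p = \mathbf{1}_L$ corresponds to relaxing $x \equiv y \not\equiv 0$ to $x \equiv y \pmod{\delta_p}$, and $\Phi^{\mathrm{shift}}_p$ corresponds to the same enlargement after replacing the congruence $x \equiv y$ by $x \equiv -y$.

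Taking the product of these local identities over all $p \mid D$, and incorporating the global factor $D$ from Lemma \ref{lemma:Theta-zero}, gives
\[
\Theta^{(\emptyset)}(I_1, I_2) \;=\; D \cdot \prod_{p \mid D} \bigl(p \cdot \Phi^{\mathrm{full}}_p - \Phi^{\mathrm{shift}}_p\bigr)
\]
against the Schwartz data away from $D$. Expanding the product over subsets of $\{p : p \mid D\}$ parameterized by divisors $D_1 \mid D$ produces a sum with sign $(-1)^{\omega(D_1)} = \mu(D_1)$ and scalar $D_2 = D/D_1$; the corresponding global theta series is precisely $\Theta^\sharp(I_1, \varepsilon_{D_1} I_2)$, since multiplication by the totally positive element $\varepsilon_{D_1}$ ($\equiv -1 \pmod{D_1}$, $\equiv 1 \pmod{D_2}$) effects the appropriate sign shifts at the primes of $D_1$ while leaving the congruence at primes of $D_2$ untouched. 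The main obstacle is the careful verification, at each ramified prime $p$, that the Fourier transforms of $\mathbf{1}_L$ and $\mathbf{1}_{L_0}$ on the split space $V_p$ are characteristic functions of the correct sign-shifted dual lattices with the right local volume factors: this requires tracking the self-dual Haar measure against the bilinear form inherited from the trace pairing on $K_p$, and the ramified nature of $K_p/\Q_p$ is precisely what introduces the local sign flip that manifests globally as multiplication by $\varepsilon_{D_1}$. Once this local identification is established, the multiplicative assembly and inclusion--exclusion are formal.
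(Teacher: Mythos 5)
Your overall strategy is the same as the paper's: reduce the trace to a product of local Weil-representation computations at the primes dividing $D$, establish a local identity of the form $p\cdot \mathbf{1}_{\cM_+}-\mathbf{1}_{\cM_-}$ (with $\cM_\pm$ the self-dual lattices cut out by $x\equiv \pm y$), and then expand the product over $\ell\mid D$ by inclusion--exclusion to get the M\"obius sum, identifying the sign flips at the primes of $D_1$ with multiplication by $\varepsilon_{D_1}$. The asserted local identity and the final assembly are correct.

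However, there is a genuine error in the step you yourself flag as the crux. The set $\{\mathrm{Id}\}\cup\{wm(k):k\in\F_p\}$ is a transversal for the \emph{right} cosets $\Gamma_0(p)\backslash \SL_2(\Z_p)$, but not for the left cosets: all the elements $wm(k)$ lie in the single left coset $w\,\Gamma_0(p)$, since $m(k)\in\Gamma_0(p)$. With the left-action conventions of \eqref{popa0} (under which $\rho(wm(k))\Phi=\rho(w)\rho(m(k))\Phi$), the projector onto $\SL_2(\Z_p)$-invariants is a sum of $\rho(g)$ over a \emph{left} transversal (e.g.\ $\{w\}\cup\{wm(k)w\}$, as in the proofs of \eqref{trace1} and \eqref{indef_trace}), or equivalently a sum of $\rho(g^{-1})$ over your right transversal. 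Your operator $\Xi_p=\mathrm{Id}+\sum_k \rho(w)\rho(m(k))$ applies the unipotent averaging \emph{before} the Fourier transform, and this is fatal: since $q$ is already $\Z_p$-valued on the support of $\Psi^{(0)}_p$, the unipotent sum merely multiplies by $p$, and one gets $\Xi_p\Psi^{(0)}_p=\Psi^{(0)}_p+p\,w\Psi^{(0)}_p$. Writing $\Psi^{(0)}_p=\mathbf{1}_{\cM_+}-\mathbf{1}_{L_0}$ and using $w\mathbf{1}_{\cM_+}=\mathbf{1}_{\cM_+}$, $w\mathbf{1}_{L_0}=p^{-1}\mathbf{1}_{L\oplus L'}$, this equals $(p+1)\mathbf{1}_{\cM_+}-\mathbf{1}_{L_0}-\mathbf{1}_{L\oplus L'}$, which is not $p\,\mathbf{1}_{\cM_+}-\mathbf{1}_{\cM_-}$ and is not even fixed by $\rho(w)$, so it cannot be the trace; the lattice $\cM_-$ never appears. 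In the correct order one applies $w$ first and \emph{then} averages over the unipotents: it is precisely the sum $\sum_k\mu(kq(x))$ acting on $w\mathbf{1}_{L_0}=p^{-1}\mathbf{1}_{L\oplus L'}$ that cuts $L\oplus L'$ down to $\{x:q(x)\in\Z_p\}=\cM_+\cup\cM_-$ and produces the crucial $-\mathbf{1}_{\cM_-}$ term. Once this is repaired, your "direct computation" does close and the rest of your argument (the global factor $D$ from Lemma \ref{lemma:Theta-zero}, the multiplicative assembly, and the reinterpretation of the local sign pattern as $\Theta^\sharp(I_1,\varepsilon_{D_1}I_2)$) goes through as in the paper.
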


\medskip\noindent
{\em Proof.}
 By Lemma \ref{lemma:Theta-zero}, 
it  may be assumed that $I_1$ and $I_2$ have the same norm, and  are represented by ideals that are relatively prime to $\delta$.
%
%
We must prove an equality of the form
$$ \mbox{Trace of $\Theta(I_1, I_2)$ from $\Gamma(D) \cap \Gamma_0(N)$ to $\Gamma_0(N)$} = \mbox{sum of $\Theta'$s}.$$

We will do this in a fashion very similar to the proof of Proposition \ref{traceNDN}, i.e.
by reducing it to a local question about Weil representations. 
Both $\Theta(I_1, I_2)$ and $\Theta^\sharp(I_1,I_2)$ have the   general form
\begin{equation} \label{tPsidef} \Theta_\Psi(q) :=  \sum_{(x,y) \in V_{\pm}} \Psi(x,y)  {\rm sign}(x) \cdot {\rm sign}(y) \cdot q^{Q(x,y)}, \end{equation}
 where:

 \begin{itemize}
 \item 
 $V = K \oplus K$ considered as a quadratic space over $\Q$:
 we consider it as a
 $\Q$-vector space and endow it with the  quadratic form 
 $$ Q(x,y) = \frac{xx'}{D{\rm N}(I_1)} - \frac{yy'}{D{\rm N}(I_2)}.$$
 \item $V_{\pm}$ are elements $(x,y)$  with $xx'> 0$ and $yy'<0$. 

\item $\Psi$ is   a Schwartz function on $V \otimes {\mathbb{A}}_f$
(with ${\mathbb{A}}_f$ the ring of finite adeles), invariant by the action of the subgroup 
$\cU$
of the 
of the unit group $\mathfrak{o}_1^\times \mathfrak{o}_1^\times$.
\end{itemize}
In the situation of \eqref{tPsidef} the map $\Psi \mapsto \Theta_{\Psi}$
 is equivariant for the   Weil representation action of $\SL_2({\mathbb{A}}_f)$
 on Schwartz functions on $V \otimes {\mathbb{A}}_f$;
 this action preserves the invariance condition on $\Psi$. 
Indeed this is a a product of two copies of the situation already discussed in \S \ref{bthetadef}
 and the Weil representation for a direct sum of quadratic spaces
 is simply the tensor product of the individual factors. 
 
%
%

 The action of $\SL_2({\mathbb A}_f)$ on Schwartz functions just mentioned
  factors as a (restricted) tensor product of actions of $\SL_2(\Q_p)$
 on the space of Schwartz functions on $V \otimes \Q_p$. The factor at $p$ is  the Weil representation
 of $\SL_2(\Q_p)$ on the Schwartz functions on   
 the quadratic space $(V_p, Q_p)$, where:
   $$ V_p = (K \oplus K) \otimes \Q_p,  \qquad  Q_p(x,y) =  \frac{xx'}{D{\rm N}(I_1)} - \frac{yy'}{D{\rm N}(I_2)}. $$
   In this way, we are reduced to a problem in explicitly computing with this Weil representation:
  the question of computing the trace
 of $\Theta^{\sharp}(I_1, I_2)$ from $\Gamma_0(N) \cap \Gamma(D)$
 to $\Gamma_0(N)$ reduces, thereby, to a product
 of local computations over $p$ dividing $D$, which 
   which 
   we will spell out below.

\begin{lemma}\label{weiltrace2} (cf. Lemma \ref{weiltrace}). 
Let $\ell$ divide $D$. 

Let $(W,  L, \langle \, ,\rangle)$ be the quadratic space over $\Q_{\ell}$ given by $K\otimes \Q_{\ell}$ equipped with the norm form,
multiplied by $(D N(I_1))^{-1}$ and $L$ be the ring of integers. 
Let $(W',  L',\langle \, ,\rangle')$ be similarly defined
but multiplying the form by $-(D N(I_2))^{-1}$ and taking $L'$ to be the ring of integers. 

Call $\mathbf{e}_1$ the characteristic function of
$$ \{(x \in L, x' \in L'): x \equiv  x'  \equiv 1 \in (\mathbb{Z}/\ell)\},$$
considered as a Schwartz function on $W \oplus W'$.  (Here the map from $L$
to $\Z/\ell$ is given by reduction at the maximal ideal.)

Then, for the Weil representation action of $\SL_2(\Q_{\ell})$ on Schwartz functions on $W \oplus W'$, 
the trace
  $$ \mathsf{Tr}^{\SL_2(\Z_{\ell})}_{\Gamma(\ell)}  \mathbf{e}_1 =  \ell \left( \ell 1_{\cM_+} -  1_{\cM_-} \right),$$
 where $\cM_{\pm}$ are the two self-dual integral lattices contained in $(L \oplus L')$,
 defined in \eqref{cmdef}. 
\end{lemma}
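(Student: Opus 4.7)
\textbf{Proof plan for Lemma \ref{weiltrace2}.}

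The strategy mirrors that of Lemma \ref{weiltrace}, but the deeper level structure ($\Gamma(\ell)$ rather than $\Gamma_0(\ell)$) and the fact that $\ell$ ramifies in $K$ require more care. First I would fix a uniformizer $\pi$ of $L=\mathfrak{o}_\ell$ satisfying $\pi^2=\ell u$ with $u\in\Z_\ell^\times$, and compute the dual lattice. Since $c=(DN(I_1))^{-1}$ has valuation $-1$, one finds $L^{*}=\pi L$, so $(L\oplus L')^{*}=\pi L\oplus \pi L'$ has index $\ell^2$ in $L\oplus L'$. The induced $\Q_\ell/\Z_\ell$-valued form on $(L\oplus L')/(L\oplus L')^{*}\cong\mathbb{F}_\ell^2$ takes the shape $(\alpha_1,\alpha_2)\mapsto (u'\alpha_1^2-u''\alpha_2^2)/\ell$, and because $(I_1,I_2)$ lies in the fiber product one has $u'\equiv u''\pmod{\ell}$; hence the isotropic lines are $\alpha_1=\pm\alpha_2$, giving $\cM_\pm$.

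Next I would verify that $\mathbf{e}_1$ is $\Gamma(\ell)$-invariant. Upper unipotent $\smallmat{1}{\ell t}{0}{1}\in\Gamma(\ell)$ acts by $\mu(\ell t\, Q(x))$; this is trivial on $L\oplus L'$ since $Q$ takes values in $\ell^{-1}\Z_\ell$. The diagonal part of $\Gamma(\ell)$ preserves residues modulo $\pi$ (as $a\equiv 1\pmod\ell$ implies $a\equiv 1\pmod\pi$), using that $V_\ell$ is split so the discriminant character $\omega$ is trivial; the lower unipotent is handled by conjugation with $w$. I then use the Bruhat decomposition $\SL_2(\mathbb{F}_\ell)=B\sqcup BwN$ (with $N$ lower unipotent) to enumerate cosets.

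For the $B$-sum, applying $\smallmat{a}{b}{0}{a^{-1}}\cdot\mathbf{e}_1$ gives $\mu(abQ(x))\mathbf{e}_1(ax)$, and the key observation is that on the support of $\mathbf{e}_1(ax)$ one has $Q(x)\in\Z_\ell$ (this is a short calculation using $\pi^2=\ell u$ and the fact that $u'\equiv u''\pmod\ell$). Summing in $b$ gives a factor $\ell$, and summing in $a$ yields $\ell\cdot\mathbf{1}_{\alpha_1=\alpha_2\ne 0}$. For the $BwN$-sum, I rewrite it as $\ell\sum_{a,b'}\smallmat{a}{b'}{0}{a^{-1}}(w\mathbf{e}_1)$ after reparameterization, then compute the Fourier transform
\[
w\mathbf{e}_1(y_1,y_2)=\ell^{-1}\,\mu\!\bigl(\tfrac{2u'\alpha_1-2u''\alpha_2}{\ell}\bigr)\,\mathbf{1}_{L\oplus L'}(y_1,y_2),
\]
using the standard formula $\widehat{\mathbf{1}_{1+\pi L}}=\mu(B(1,\cdot))\,\text{vol}(\pi L)\mathbf{1}_L$ together with the self-dual Haar measure giving $\text{vol}(L)=\sqrt{\ell}$. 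Summing in $b'$ again forces $Q(x)\in\Z_\ell$ (i.e.\ $\alpha_1=\pm\alpha_2$), and the remaining geometric sum in $a\in\mathbb{F}_\ell^\times$ of $\zeta_\ell^{2u'a(\alpha_1-\alpha_2)}$ equals $\ell-1$ when $\alpha_1=\alpha_2$ and $-1$ otherwise.

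Finally I would combine the two partial sums by bookkeeping against $\mathbf{e}_{\cM_+}$, $\mathbf{e}_{\cM_-}$, and $\mathbf{1}_{\pi L\oplus\pi L'}=\mathbf{e}_{\cM_+}\cdot\mathbf{e}_{\cM_-}$. The contributions involving $\mathbf{1}_{\pi L\oplus\pi L'}$ cancel between the $B$- and $BwN$-sums, and the remainder collapses to $\ell(\ell\,\mathbf{1}_{\cM_+}-\mathbf{1}_{\cM_-})$ as required. The main obstacle is the careful case analysis at $\alpha_1=\alpha_2=0$, where the two self-dual lattices overlap and several sign and multiplicity contributions must cancel exactly; everything else is bookkeeping around the Fourier transform of lattice characteristic functions and the Bruhat decomposition.
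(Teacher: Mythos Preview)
Your argument is correct and reaches the right answer, but it proceeds differently from the paper. The paper does not sum over $\SL_2(\mathbb{F}_\ell)$ via the Bruhat decomposition; instead it first averages $\mathbf{e}_1$ over the diagonal torus, obtaining $\sum_{j\neq 0}\mathbf{e}_j = 1_{\cM_+}-\mathbf{e}_0$, which is already $K_0(\ell)$-invariant. It then computes $\mathsf{Tr}^{\SL_2(\Z_\ell)}_{K_0(\ell)}\mathbf{e}_0 = 1_{\cM_+}+1_{\cM_-}$ by the identical two-coset calculation ($w$ and $wm(t)w$) already carried out in Lemma~\ref{weiltrace}, and the result $\ell(\ell\,1_{\cM_+}-1_{\cM_-})$ drops out after multiplying by the index $[K_0(\ell):\Gamma(\ell)]=\ell(\ell-1)$. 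Your explicit Fourier transform of $\mathbf{e}_1$ and cell-by-cell bookkeeping is more hands-on but self-contained; the paper's route is shorter and makes the parallel with Lemma~\ref{weiltrace} structural rather than coincidental.

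One notational slip to fix: the set ``$BwN$ with $N$ lower unipotent'' is just $Bw$ (since $wNw^{-1}\subset B$) and has only $\ell(\ell-1)$ elements, so $B\sqcup BwN$ is not all of $\SL_2(\mathbb{F}_\ell)$. What your computation actually uses---and what produces the leading factor of $\ell$ from the trivial action on $\mathbf{e}_1$---is the big cell $BwU$ with $U$ the \emph{upper} unipotent, of size $\ell^2(\ell-1)$. With that correction the counts match and the rest of your argument goes through.
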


  Proposition 
\ref{prop:trace-theta}  follows readily from this Lemma.
     Indeed, from \eqref{originalThetadef}
 we can write 
  $\Theta(I_1, I_2)$ in the notation of \eqref{tPsidef} as the series $\Theta_{\Psi}$  with  $\Psi = \bigotimes \Psi_{\ell}$
  and $\Psi_{\ell}$ simply the characteristic function
  of $\mathfrak{N}' I_1 \oplus  I_2$
  for $\ell$ not dividing $D$, and
  $\Psi_{\ell} = \mathbf{e}_1$ for $\ell$ dividing $D$. 
We must only observe that, given a factorization $D=D_1 D_2$, 
the value of the corresponding $\Theta$ series
where we replace the role of $\mathbf{e}_1$ by $\cM_+$
for $\ell|D_1$ and by $\cM_-$
for $\ell|D_2$
is  exactly $\Theta^\sharp(I_1, I_2)$ but replacing $x \equiv y (\delta)$
by $x \equiv  \varepsilon_{D_1} y \ (\delta)$,
and this in turn coincides with $\Theta^\sharp(I_1, \varepsilon_{D_1} I_2)$
by means of the substitution 
$y \leftarrow \varepsilon_{D_1} y$.  

%
 \proof (of Lemma \ref{weiltrace2}). 
First we define $\cM_{\pm}$. 
Let $(L \oplus L')^*$ be the dual lattice
 with respect to the quadratic form $Q$ on $W \oplus W'$
 and similarly define $L^*, (L')^*$. 
Then $L^*$ corresponds simply to the maximal ideal inside
$L$, and similarly for $L'$, so there are canonical identifications
\begin{equation}
 \label{smp} 
 (L/L^*) \simeq (\Z/\ell) \simeq L'/(L')^*, \qquad \frac{(L \oplus L')}{(L \oplus L')^*} \simeq (\Z/\ell)^2.\end{equation}
We let
\begin{equation}  \label{cmdef} \cM_{\pm} = \mbox{ preimages of the lines $x_1 \equiv \pm x_2$ in $(\Z/\ell\Z)^2$}.
\end{equation}

The function $\mathbf{e}_1$ is readily verified, using  the formulas in \eqref{popa0},
to   be invariant by the principal congruence subgroup $\Gamma(\ell)$ of level
$\ell$ inside $\SL_2(\Z_{\ell})$.   Indeed, using the Iwahori factorization of $\Gamma(\ell)$ it suffices 
to prove this for upper triangular unipotent elements, diagonal elements, and lower triangular unipotent elements congruent to the identity modulo $\ell$.  For the first two this is obvious from the first two lines of \eqref{popa0}; to conclude we write the lower triangular unipotent subgroup with the conjugate of the upper triangular subgroup by the element 
$$w = \left(\begin{array}{cc}
0 & 1 \\
-1 & 0
\end{array}\right)$$
that appears on the last line of \eqref{popa0}.  Since the Weil constant $\gamma = 1$, it suffices to observe that $w^{-1}$ acts as the inverse of the Fourier transform.

We must compute its trace to $\SL_2(\Z_{\ell})$-invariants. 
Clearly, this projection is the same as if we first average over the diagonal subgroup, 
which has the effect of replacing $\mathbf{e}_1$ by
$\frac{\sum_{j \neq 0} \mathbf{e}_j}{\ell-1}$ with 
 where $\mathbf{e}_j$ be the Schwartz function defined  similarly to $\mathbf{e}_j$
but now considering $x_1 \equiv x_2 \equiv j \mbox{ modulo $\ell$}$.
 Now $1_{\cM_+} = \sum_j \mathbf{e}_j$ and  so
  $$\sum_{j \neq 0} \mathbf{e}_j = 1_{\cM_+} - \mathbf{e}_0.$$
  
Now this is in fact invariant by $ K_0(\ell)  \subset 
 \SL_2(\Z_{\ell})$. Indeed $1_{\cM_+}$ is already invariant by $\SL_2(\Z_{\ell})$, 
  since it is self-dual and integral for the quadratic form,
  and $\mathbf{e}_0$ is the characteristic function of $(L \oplus L')^{*}$,
  on which the quadratic form is integral. 
  We will prove that (cf. \eqref{def_trace})
\begin{equation} \label{indef_trace}  {\mathrm{trace}}^{\SL_2(\Z_\ell)}_{K_0(\ell)} \mathbf{e}_0 = 1_{\cM_+} + 1_{\cM_-}.\end{equation}
From this it follows that  the corresponding trace of $\sum_{j \neq 0} \mathbf{e}_j$ 
 equals $\ell 1_{\cM_+} - 1_{\cM_-}$ and 
the Lemma follows from this, taking into account
the index $[K_0(\ell):K(\ell)] = (\ell-1) \ell$.

The proof of of \eqref{indef_trace} is   very similar to the computation  carried out in Proposition  \ref{traceNDN}
 of the previous chapter, and, more specifically, to \eqref{trace1}.   The role of $\cM_{\pm}$ arises from 
 the fact that  
$$\{x \in L \oplus L': Q(x) \in \Z_{\ell}\} = \cM_+ \cup \cM_-, $$
and indeed the function induced by the quadratic form upon the
right-hand group of \eqref{smp} is proportional to $(x_1, x_2) \in (\Z/\ell \Z)^2 \mapsto \ell^{-1}(x_1^2-x_2^2) \in \ell^{-1} \Z/\Z$. 
  Let notation be as in \eqref{Weil}; as discussed there, 
   a system of coset representatives for $\SL_2(\Z_\ell)/K_0(\ell)$ is given by $w$ together with
 $w m(t) w$, 
where $1 \leq t \leq \ell$. 
We get $w \mathbf{e}_0 = \ell^{-1} 1_{L \oplus L'}$ 
and thus  $$   \sum_{t}   m(t)  w \mathbf{e}_0
= 1_{\cM_+} + 1_{\cM_-}-\mathbf{e}_0.$$
Therefore, 
$ \left( w \sum_{t}  m(t)    w \right)  \mathbf{e}_0
= 1_{\cM_+} + 1_{\cM_-} - w  \mathbf{e}_0$,
and so the trace of $\mathbf{e}_0$ is $1_{\cM_+} + 1_{\cM_-}$ as desired.
 \qed

\medskip
We will now parlay   Prop.~
    \ref{prop:trace-theta} into an expression for the trace of the product  
    $\theta_{\psi_1^{-1}}(q^N) \theta_{\psi_2^{-1}}(q)$
     of  weight one theta series. 
      The following result immediately implies the desired 
 Proposition \ref{prop:prod-HV-bis},
after performing a change of variables via the isomorphism
$ \cC^2 \lra ({\cC}_D \times_W {\cC}_D)/Z$
 of 
Lemma  \ref{lemma:absolutely-crucial}, given 
explicitly by
$(I_1, I_2) \mapsto (I_1 I_2, I_1 I_2')$.
      \begin{proposition}
  \label{prop:prod-HV}
  Let
   $$ G_{ND^2}(q) :=  \theta_{\psi_1^{-1}}(q^N) \cdot \theta_{\psi_2^{-1}}(q),  $$
which belongs to the space $M_2(\Gamma_0(ND^2))$ of modular forms of level $ND^2$ with trivial nebentypus character.  Then
\begin{eqnarray*}
 {\rm Tr}^{ND^2}_{ND} \left(G_{ND^2}\right) &=&  \psi_1(\cN') \frac{1}{4} \sum_{ (\cC_D\times_W \cC_D)/Z }  \psi_1(I_1) 
\psi_2(I_2) \cdot \Theta^{(0)}(I_1, I_2), \\ 
 {\rm Tr}^{ND^2}_{N} \left(G_{ND^2}\right) &=&  \psi_1(\cN') \cdot \frac{C }{4} \cdot
 \!\!\!\!\!\!\!
 \sum_{ (\cC_D\times_W \cC_D)/Z }   \psi_1(I_1) 
\psi_2(I_2) \cdot \Theta^{\sharp}(I_1, I_2),
\end{eqnarray*}
where $$
C := 
D \sum_{D=D_1 D_2}  \mu(D_1) \cdot D_2 \cdot \psi_1(\varepsilon_{_{D_1}}) = D \prod_{p|D} (p - \psi_1(\varepsilon_p))
$$
is a constant that depends on $(\psi_1,\psi_2)$ and $D$ but not on $N$.
\end{proposition}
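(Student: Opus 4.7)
I would deduce the second identity from the first by applying $\mathrm{Tr}^{ND}_N$ to both sides. Proposition \ref{prop:trace-theta} evaluates $\mathrm{Tr}^{ND}_N(\Theta^{(0)}(I_1,I_2)) = \Theta^{(\emptyset)}(I_1,I_2) = D \sum_{D = D_1 D_2} \mu(D_1) D_2\, \Theta^{\sharp}(I_1, \varepsilon_{D_1} I_2)$. A reindexing $I_2 \mapsto \varepsilon_{D_1}^{-1} I_2$ of the inner sum over $(\cC_D \times_W \cC_D)/Z$---valid because $\mathrm{N}(\varepsilon_{D_1}) \equiv 1 \pmod D$---transforms the coefficient into $D \sum \mu(D_1) D_2 \psi_2(\varepsilon_{D_1})^{-1}$. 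The identity $\psi_2(\varepsilon_{D_1})^{-1} = \psi_1(\varepsilon_{D_1})$ follows from $\psi_{12}((\varepsilon_{D_1})) = 1$: indeed $\psi_{12}$ is unramified (hence factors through $\cC$), and $(\varepsilon_{D_1})$ is trivial in the narrow class group since $\varepsilon_{D_1}$ is totally positive. This yields exactly the constant $C$ in the statement, so it suffices to prove the first identity.

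For the first identity, I first use Lemma \ref{heckeandnewform} to expand
\[
4 G_{ND^2}(q) = \sum_{I_1, I_2 \in \cC_D} \psi_1(I_1) \psi_2(I_2)\, \vartheta^+(I_1)(q^{ND})\, \vartheta^-(I_2)(q^D).
\]
The map $I_1 \mapsto \cN' I_1$ is a bijection of $\cC_D$; reindexing extracts the factor $\psi_1(\cN')$ and recognizes the summand as $\Theta(I_1, I_2)(q^D)$, where $\Theta(I_1, I_2) = \vartheta^+(\cN' I_1)(q^N) \vartheta^-(I_2)(q)$ is the form on $\Gamma(D) \cap \Gamma_0(N)$ from \eqref{originalThetadef}:
\[
4 G_{ND^2}(q) = \psi_1(\cN') \sum_{I_1, I_2 \in \cC_D} \psi_1(I_1) \psi_2(I_2)\, \Theta(I_1, I_2)(q^D).
\]
The heart of the proof is then to compute $\mathrm{Tr}^{ND^2}_{ND}$ of each summand $\Theta(I_1, I_2)(q^D)$ and identify the result, up to a fixed numerical factor, with $\Theta^{(0)}(I_1, I_2)$. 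This is carried out by choosing explicit coset representatives for $\Gamma_0(ND^2) \subset \Gamma_0(ND)$ and applying the transformation laws of $\vartheta^{\pm}$ under $\Gamma_0(D)$ furnished by Lemma \ref{lemma:HIM}; the resulting averaged sum matches the definition of $\Theta^{(0)}$ as the trace from $\Gamma(D) \cap \Gamma_0(N)$ to $\Gamma_0(ND)$ appearing in Lemma \ref{lemma:Theta-zero}.

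Finally, I collapse the double sum over $\cC_D \times \cC_D$ onto $(\cC_D \times_W \cC_D)/Z$. By Lemma \ref{lemma:Theta-zero}, $\Theta^{(0)}(I_1, I_2)$ vanishes unless $\mathrm{N}(I_1) = \mathrm{N}(I_2)$ in $W$ and depends only on the $Z$-orbit of $(I_1, I_2)$. The weight $\psi_1(I_1)\psi_2(I_2)$ is also $Z$-invariant, because $\psi_{12}$ is trivial on $Z$ by the same argument as in the first paragraph: elements of $Z$ are represented by principal ideals generated by rational integers, which are trivial in $\cC$. These reductions introduce a factor $|Z|$ which, together with the $\tfrac14$ coming from Lemma \ref{heckeandnewform} and the normalizing constants produced by the trace computation, yields the stated $\tfrac14$. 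The main obstacle is the middle paragraph: rigorously executing $\mathrm{Tr}^{ND^2}_{ND}$ on the $V_D$-type operation $F \mapsto F(q^D)$ (with $F$ on $\Gamma(D) \cap \Gamma_0(N)$) and establishing that the outcome equals the abstract trace $\mathrm{Tr}^{\Gamma(D) \cap \Gamma_0(N)}_{\Gamma_0(ND)} F$ with precisely the normalization needed for the numerical factors to conspire correctly.
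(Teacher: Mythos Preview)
Your reduction of the second identity to the first is exactly what the paper does, and your reindexing argument is equivalent to the paper's one-line observation that $\psi_1$ and $\psi_2$ agree on $\varepsilon_p$ (both rest on $\psi_{12}$ being trivial on $Z$). Your expansion via Lemma~\ref{heckeandnewform} and the reindexing that pulls out $\psi_1(\cN')$ are also identical to the paper's opening moves.

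The substantive difference is the \emph{order} of the $Z$-collapse and the trace. You propose to apply $\mathrm{Tr}^{ND^2}_{ND}$ to each summand $\Theta(I_1,I_2)(q^D)$ and only afterwards quotient by $Z$; you then (correctly) flag this as the main obstacle, because individual summands are not modular for $\Gamma_0(ND^2)$ and the bookkeeping becomes delicate. The paper reverses the order and thereby dissolves the obstacle: \emph{before} taking any trace, it uses $\psi_1\psi_2|_Z = 1$ to group the sum over $\cC_D^2$ into $Z$-orbits, and recognizes the inner $Z$-sum via \eqref{Theta0form} and \eqref{Theta1form} as $\tfrac{1}{D}\,\Theta^{(0)}(I_1,I_2)(q^D)$. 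This yields the closed expression
\[
G_{ND^2}(q)=\frac{1}{4D}\sum_{\cC_D^2/Z}\psi_1(\cN'I_1)\psi_2(I_2)\,\Theta^{(0)}(I_1,I_2)(q^D),
\]
in which each summand \emph{is} a genuine form on $\Gamma_0(ND^2)$, since $\Theta^{(0)}$ lives on $\Gamma_0(ND)$. At this point the trace is immediate: $\mathrm{Tr}^{ND^2}_{ND}$ acts on $F(q^D)$ (for $F$ on $\Gamma_0(ND)$) as $D\cdot U_D$, and $U_D\bigl(\Theta^{(0)}(q^D)\bigr)=\Theta^{(0)}$. The restriction to $(\cC_D\times_W\cC_D)/Z$ then comes for free from the vanishing in Lemma~\ref{lemma:Theta-zero}, and the factor of $D$ from the trace cancels the $1/D$ introduced above, leaving precisely the $1/4$.

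So your outline is not wrong, but the paper's single reordering turns the step you singled out as the hard one into a two-line computation with no coset representatives or transformation laws needed.
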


\noindent
{\em Proof}. 
By Lemma \ref{heckeandnewform} 
$$
G_{ND^2}(q) = \frac{1}{4}  
\sum_{(I_1, I_2)\in \cC_D^2} \psi_1(\cN' I_1) \vartheta^+(\cN' I_1)(q^{ND}) \cdot
\psi_2(I_2) 
 \vartheta^-(I_2)(q^D),
$$  
where we re-indexed the sum for $\theta_{\psi_1^{-1}}$ via $I \leftarrow \mathfrak{N}' I$. 

Because the restrictions to $Z$ of the 
 characters $\psi_1$ and $\psi_2$ are inverses of each other, the right hand side can be rewritten as
 $$\frac{1}{4} \sum_{\cC_D^2/Z}  \psi_1(\cN' I_1) \psi_2(I_2) \sum_{j\in Z} \vartheta^+(j\cN' I_1)(q^{ND})  
 \vartheta^-(jI_2)(q^D),$$
 where $Z\subset \cC_D^2$ is embedded diagonally.
 It follows  from \eqref{Theta0form} and \eqref{Theta1form} that 
 \begin{equation}
\label{eqn:theta-psi12-Theta-bis}
G_{ND^2}(q) = \frac{1}{4D}
\sum_{\cC_D^2/Z} \psi_1(\cN' I_1) \psi_2(I_2) 
\cdot \Theta^{(0)}(I_1,I_2)(q^D).
\end{equation}
Both the left and right hand sides in this identity 
are modular forms on $\Gamma_0(ND^2)$.
Let   $U_D$ be the Hecke operator  which on $q$-expansions is given by
$$  
U_D(\sum a_n q^n) = \sum_{n\in \Z} a_{_{nD}} q^n.$$
The trace from level $D^2$ to level $D$ amounts to an application of $D\cdot U_D$, and
by the same reasoning as in 
Lemma 
\ref{lemma:Theta-zero},
we have
$$ U_D (\Theta^{(0)}(I_1,I_2)(q^D)) =
\left\{ \begin{array}{cl}
 \Theta^{(0)}(I_1,I_2)(q) & \mbox { if } {\rm N}(I_1) \equiv {\rm N}(I_2), \\
  0 & \mbox{ otherwise}. 
  \end{array} \right.
  $$
 Applying the trace to level $ND$ to both sides of
\eqref{eqn:theta-psi12-Theta-bis} therefore gives
 \begin{eqnarray*}
 {\rm Tr}^{ND^2}_{ND}(G_{ND^2}) &=& 
   \psi_1(\cN')  \cdot  \frac{1}{4} \cdot \sum_{(\cC_D\times_W \cC_D)/Z} \psi_1(I_1) \psi_2(I_2) 
\Theta^{(0)}(I_1,I_2)(q),
\end{eqnarray*}
and the first equation in Proposition 
  \ref{prop:prod-HV} follows directly.
  The second follows from this and
\eqref{prop-eqn}, taking into account that $\psi_2$ and $\psi_1$ agree on $\varepsilon_p$. 
  \qed

 \medskip

\subsection{Proof of Proposition \ref{prop:main-vonk}}
 \label{Step2} 
Recall now the setup
of \S \ref{sec:trace-indef-statement}.
 We choose narrow ideal classes $I_1$
 and $I_2$, 
and, by choosing representatives by ideals that are divisible by $\mathfrak{N}$ but not $\mathfrak{N}'$,
obtain  a pair of real quadratic geodesics
$\gamma_1:= \gamma_{_{I_1}}$ and $\gamma_2:= \gamma_{_{I_2}}$  in $\Gamma_0(N)\backslash \calH$ with the same discriminant $D$. We also obtain    embeddings $\alpha_i$ for $i \in \{1,2\}$
  attached to $I_1$ and $I_2$;  similarly we get  eigenvectors $v_i, v_i' \in K^2$  and fixed points $\tau_i, \tau_i' \in K$
  for the action of
  $\alpha_i(K^\times)$. 

Proposition  \ref{prop:main-vonk} asserts that
the generating series of \eqref{eqn:Jgg} is equal to 
 $$ \Theta({\gamma_1\otimes\gamma_2})(q) =  \frac{1}{4} \cdot
  \Theta^\sharp(I_1 I_2 , I_1 I_2')(q).$$

%

The proof proceeds, much as in the proof of the  Gross-Zagier formula, by the most powerful 
technique known to number theory -- {\em compute and compare}. 

Examining the definition of $\Theta^\sharp(I_1,I_2)$ from
\eqref{Theta'def}, we see  
that
\begin{equation} 
\label{bmdef}
 \mbox{$m$th Fourier coefficient of $\Theta^\sharp$} = \  4\!\!\!\sum_{(x,y)\in \cA_m/\cU} {\rm sign}(xy),
 \end{equation}
where
 $ \cA_m $
consists of the pairs $(x,y) \in \mathfrak{N}' I_1 I_2 \times I_1 I_2'$ satisfying
 \begin{equation} \label{Amdef} 
 xx'>0,  \ \  yy'<0,\ 
\qquad 
  \frac{xx'-yy'}{a_1 a_2} = Dm,
     \end{equation}
where $a_1 = N(I_1)$,  $a_2 = N(I_2)$ and  $\cU$ is the 
subgroup of $\mathfrak{o}_1^\times {\times} \mathfrak{o}_1^\times$    introduced in
\eqref{eqn:defU}.



%
%

Now we turn to the left hand side, which is more involved and will take up the remainder of the subsection. 
We must compute the $m$-th Fourier coefficient
$$ a_m := \langle \gamma_1 \cdot T_m \gamma_2 \rangle_{_N}.$$
Letting $M_0(N)_m$ be the set of elements of $M_0(N)$ of determinant $m$, and letting 
$$ \Gamma_1 : = \alpha_1(\mathfrak{o}_1^\times), 
\quad \Gamma_2 : = \alpha_2(\mathfrak{o}_1^\times), $$
 this intersection number can be rewritten as
\begin{equation}
\label{eqn:sum-am}
 a_m = \sum_{  A\in \Gamma_1 \backslash M_0(N)_m/\Gamma_2}  \langle (\tau_1,\tau_1') \cdot  (A\tau_2,A\tau_2') \rangle.
 \end{equation}
(Note that in \eqref{eqn:sum-am} the intersection numbers are now being computed on the upper half-plane and not on the modular curve.)
%
The  calculation  proceeds by rewriting the  coefficient  $a_m$ of 
\eqref{eqn:sum-am} 
as a sum over certain ideals  of $K$,  by exploiting the map   $$ \eta: M_2(\Q) \hookrightarrow   K \oplus K, \qquad
\eta(A) := (\det(v_1, A v_2), \det(v_1, A v_2')).
$$
The map $\eta$ sets up a $K\otimes K$-module
 isomorphism from $M_2(\Q)$ to $K \oplus K$, the module
 structures being given by
\begin{equation}\label{kkmodules} (a\otimes b) M := \alpha_1(a') M \alpha_2(b) 
 \mbox{ and }(a \otimes b) (x,y) = (abx, ab'y)
 \end{equation}
  respectively.

 It is also an isomorphism of quadratic spaces, after equipping $K\oplus K$ with the quadratic form 
 $Q(x,y) = \frac{xx'-yy'}{D  a_1 a_2}$:
 \begin{lemma}
 \label{lemma:eta-1}
If $\eta(A)  = (x,y)$, then 
$$ \det(A) = \frac{xx' - yy'}{D  a_1 a_2}.$$
\end{lemma}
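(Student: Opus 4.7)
The plan is to exploit the rationality of $A$ to control how $Av_2$ and $Av_2'$ decompose in the $K$-basis $(v_1, v_1')$ of $K^2$, and then to express both $\det(A)$ and $xx'-yy'$ in terms of that decomposition.

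First I would record the basic determinant
\[
\det(v_i, v_i') = a_i \sqrt{D},
\]
which follows directly from the formula $v_i = ((b_i+\sqrt{D})/2,\, a_i)^t$ with $a_i = \mathrm{N}(I_i)$, using $(\sqrt{D})' = -\sqrt{D}$. In particular $(v_1, v_1')$ is a $K$-basis of $K^2$. Next, since $A$ has rational entries, applying the nontrivial Galois automorphism of $K/\Q$ componentwise commutes with $A$, so $Av_2' = (Av_2)'$. Writing
\[
A v_2 = \alpha\, v_1 + \beta\, v_1' \qquad (\alpha,\beta \in K),
\]
and noting that $(v_1)' = v_1'$ by construction, we deduce
\[
A v_2' = (Av_2)' = \alpha'\, v_1' + \beta'\, v_1.
\]

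Now I would compute the two entries of $\eta(A)$: using multilinearity and $\det(v_1, v_1) = 0$,
\[
x = \det(v_1, Av_2) = \beta\, \det(v_1, v_1') = \beta\, a_1 \sqrt{D},
\qquad
y = \det(v_1, Av_2') = \beta'\, \det(v_1, v_1) + \alpha'\, \det(v_1, v_1') = \alpha'\, a_1 \sqrt{D}.
\]
Applying the Galois automorphism gives $x' = -\beta' a_1\sqrt{D}$ and $y' = -\alpha\, a_1 \sqrt{D}$, whence
\[
xx' - yy' = -\beta\beta'\, a_1^{2} D + \alpha\alpha'\, a_1^{2} D = (\alpha\alpha' - \beta\beta')\, a_1^{2}\, D.
\]

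Finally, comparing this with $\det(A)$: by multiplicativity of $\det$,
\[
\det(A)\cdot \det(v_2, v_2') = \det(Av_2, Av_2') = (\alpha\alpha' - \beta\beta')\, \det(v_1, v_1') = (\alpha\alpha' - \beta\beta')\, a_1 \sqrt{D},
\]
and substituting $\det(v_2, v_2') = a_2 \sqrt{D}$ yields $\det(A) = (\alpha\alpha'-\beta\beta')\,a_1/a_2$. Combining with the previous display gives the desired identity
\[
\det(A) = \frac{xx'-yy'}{D\, a_1 a_2}.
\]

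The only subtle point to get right is the signs coming from $(\sqrt{D})' = -\sqrt{D}$ in the real quadratic case; everything else is a bookkeeping exercise in the $K \otimes K$-module structure on $M_2(\Q)$ described in \eqref{kkmodules}.
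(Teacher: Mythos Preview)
Your proof is correct. It differs from the paper's argument in a genuine way, so a brief comparison is worthwhile.

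The paper does not compute in coordinates at all. Instead it observes that $M_2(\Q)$ and $K\oplus K$ are both \emph{cyclic} $K\otimes K$-modules (via the structure \eqref{kkmodules}), and that the two quadratic forms $A\mapsto\det(A)$ and $(x,y)\mapsto(xx'-yy')/(D a_1 a_2)$ transform by the same scalar $\mathrm{N}(a)\mathrm{N}(b)$ under $(a\otimes b)$. Hence it suffices to check the identity for a single generator, and the paper takes $A=\mathrm{Id}$, reducing everything to the Pl{\"u}cker-type relation
\[
\det(v_1,v_1')\det(v_2,v_2') - \det(v_1,v_2)\det(v_1',v_2') + \det(v_1,v_2')\det(v_1',v_2)=0,
\]
which it derives from the vanishing of a $4\times4$ determinant with two repeated rows.

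Your approach instead uses the rationality of $A$ to get $Av_2'=(Av_2)'$, expands in the basis $(v_1,v_1')$, and computes both sides directly. This is entirely elementary and avoids both the module-theoretic reduction and the Pl{\"u}cker identity; the trade-off is that it hides the structural reason (compatibility of the $K\otimes K$-actions) that the paper's proof makes explicit, and which is reused in the surrounding argument (e.g.\ in Proposition~\ref{lemma:eta-2}). Either way is fine here.
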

\proof
The source and the target of $\eta$ are both cyclic $(K \otimes K)$-modules, as in \eqref{kkmodules}, 
and both sides transform the same way, which reduces us
to verifying the assertion for a single generator; taking $A$ to be the identity 
and using 
$D a_1 a_2  = \det(v_1,v_1')\det(v_2,v_2')$
this follows from the identity
$$ \det(v_1, v_1') \det(v_2, v_2') - \det(v_1, v_2) \det(v_1', v_2') + \det(v_1, v_2') \det(v_1', v_2) = 0.$$
 which can be derived by considering the determinant of the $4 \times 4$ matrix whose
rows are two copies of $[v_1, v_1', v_2, v_2']$. \qed
%

\begin{proposition}
\label{lemma:eta-2}
The image of $M_0(N)$ under $\eta$ is equal to
$$ \eta(M_0(N)) =  \left\{ (x,y) \in  \cN'I_1 I_2 \times I_1 I_2' \ \ \mbox{ with }
x\equiv y \pmod{\delta} \right\},$$
and $\eta$ induces a bijection between $\Gamma_1\backslash M_0(N)_m/\Gamma_2$ and $\cA_m/\cU$.  
 \end{proposition}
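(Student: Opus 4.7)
The plan is to begin by observing that $\eta$ is an isomorphism of $(K\otimes_\Q K)$-modules $M_2(\Q) \xrightarrow{\sim} K\oplus K$, both being free of rank one over the split \'etale algebra $K\otimes_\Q K \simeq K\oplus K$, with the module structures recorded in \eqref{kkmodules}. Since $M_0(N)$ is stable under left multiplication by $\alpha_1(\mathfrak{o})$ and right multiplication by $\alpha_2(\mathfrak{o})$, it is an $\mathfrak{o}\otimes\mathfrak{o}$-submodule of $M_2(\Q)$, and so is the right-hand side of the claimed identity inside $K\oplus K$. Both sides being $\mathfrak{o}\otimes\mathfrak{o}$-lattices, the equality can be checked prime by prime.

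I would then dispatch the local verifications in three groups. At $\ell \nmid ND$, both $M_0(N)\otimes\Z_\ell = M_2(\Z_\ell)$ and the right-hand side are free of rank one over $(\mathfrak{o}\otimes\mathfrak{o})\otimes\Z_\ell$, the congruence modulo $\delta$ is vacuous, and comparing $\eta$ on the identity matrix pins down the lattice. At the split prime $\ell = N$, the condition that $M \in M_0(N)$ has lower-left entry divisible by $N$ should translate, via the explicit formula \eqref{alphadef} for $\alpha_i$, into the condition that the first component of $\eta(M)$ lies in $\cN' \cdot (I_1 I_2)\otimes\Z_N$; the asymmetric factor of $\cN'$ reflects the crucial choice that $I_1, I_2$ are divisible by $\cN$ and not $\cN'$, which is exactly what makes the embeddings $\alpha_i$ land in $M_0(N)$. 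At primes $\ell\mid D$, where $\delta$ ramifies, I would again invoke \eqref{alphadef} to verify that the constraint $x \equiv y \pmod{\delta}$ carves out precisely $\eta(M_2(\Z_\ell))$ inside the maximal lattice of $(I_1 I_2\oplus I_1 I_2')\otimes\Z_\ell$.

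For the orbit bijection, I would derive the equivariance of $\eta$ under two-sided multiplication: expanding $Av_2 = c_1 v_1 + c_2 v_1'$ and using that $v_1, v_1'$ are eigenvectors for $\alpha_1(a)$ with eigenvalues $a, a'$ (and similarly for $v_2, v_2'$ under $\alpha_2(b)$), a short determinant calculation yields
\[
\eta\bigl(\alpha_1(a)\,A\,\alpha_2(b)\bigr) \;=\; (a' b\cdot x,\ a' b'\cdot y), \qquad \text{where } \eta(A) = (x,y).
\]
Specialising to $a,b \in \mathfrak{o}_1^\times = \langle \pm 1, \varepsilon\rangle$ (using that every unit has norm $1$) and writing $a = \pm\varepsilon^j$, $b = \varepsilon^k$, the multiplier $(a'b, a'b')$ equals $\pm(\varepsilon^{k-j}, \varepsilon^{-j-k})$; the sum and difference of these exponents are both even, and the resulting group of multipliers is exactly $\cU = \{\pm(\varepsilon^s,\varepsilon^t) : s\equiv t \pmod 2\}$. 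Combined with Lemma~\ref{lemma:eta-1} identifying $\det(A)=m$ with $(xx'-yy')/(Da_1 a_2)=m$, this yields an $\cU$-equivariant injection from $\Gamma_1\backslash M_0(N)_m/\Gamma_2$ into the set of $\cU$-orbits on pairs satisfying the required norm identity.

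The final step is to reconcile the sign conditions $xx'>0$, $yy'<0$ of \eqref{Amdef} with the geometry. A direct calculation using $v_i=(\tau_i a_i, a_i)^{\top}$ gives $xx' = a_1^2 a_2^2(\tau_1-A\tau_2)(\tau_1'-A\tau_2')$ and $yy' = a_1^2 a_2^2(\tau_1-A\tau_2')(\tau_1'-A\tau_2)$, and a case analysis of the relative position of $A\tau_2, A\tau_2'$ with respect to the interval $(\tau_1',\tau_1)$ shows these sign conditions hold exactly when the two pairs of endpoints interlock on the real line, i.e.\ when the geodesics intersect in $\mathcal H$. Double cosets failing this contribute zero to \eqref{eqn:sum-am} and are absent from $\cA_m$; for intersecting ones the intersection sign matches $\mathrm{sign}(xy)$, as required downstream for Proposition~\ref{prop:main-vonk}. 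The main obstacle I anticipate is the local analysis at primes dividing $D$, where the ramification of $\delta$ interacts delicately with the explicit form of $\alpha_i$ to produce precisely the congruence modulo $\delta$; the other local checks and the $\cU$-equivariance are essentially formal.
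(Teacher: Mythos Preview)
Your strategy is sound but diverges from the paper in the first half, and has one small slip in the second.

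For the identification of $\eta(M_0(N))$, the paper does not localize. Instead it computes $\eta$ explicitly on the four elementary matrices $E_{ij}$, checks by inspection that each image lies in the right-hand side (giving the containment $\eta(M_2(\Z)) \subset \{(x,y)\in I_1I_2\times I_1I_2':x\equiv y\pmod\delta\}$), and then compares the determinants of the Gram matrices for the quadratic forms $\det$ and $(xx'-yy')/(Da_1a_2)$ on the two lattices to conclude equality. Only at the prime $N$ is any local analysis invoked, to pass from $M_2(\Z)$ to $M_0(N)$. This global-containment-plus-discriminant argument entirely avoids the ramified-prime computation you flag as the main obstacle; your local-global plan would work too, but is less direct here, and the verification at $\ell\mid D$ that you anticipate as delicate is in fact bypassed in the paper.

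For the double-coset bijection, your equivariance formula $\eta(\alpha_1(a)A\alpha_2(b))=(a'b\cdot x,\,a'b'\cdot y)$ is correct and is exactly what the paper uses (stated there only for the generators $\varepsilon$). One point to fix: you claim only an \emph{injection} $\Gamma_1\backslash M_0(N)_m/\Gamma_2 \hookrightarrow \cA_m/\cU$, but once the first part is in hand, $\eta$ is a bijection $M_0(N)_m \to \cA_m$, your map $\Gamma_1\times\Gamma_2\to\cU$ is surjective, and its kernel $\{\pm(1,1)\}$ acts trivially on $M_0(N)_m$; so the induced map on orbit spaces is automatically a bijection.

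Your final paragraph on the sign conditions $xx'>0$, $yy'<0$ and the intersection sign $\mathrm{sign}(xy)$ is not part of this proposition at all: it is the content of the separate Lemma~\ref{lemma:eta-3}, which the paper proves independently via cross-ratios.
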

{\bf Proof}. 
 Note that $K\otimes K$ is naturally identified with $K\oplus K$ 
via the map $\varrho$ sending $a\otimes b$ to
 $$\varrho(a\otimes b) = (ab, ab').$$
 For $1\le i,j \le 2$, let
 $E_{ij}$  
be the elementary matrix having a $1$ in the $ij$ entry and $0$'s elsewhere, and set $\beta_j = (-b_j+\sqrt{D})/2$.
By the definition of $\eta$, 
\begin{eqnarray*} 
 \eta(E_{11}) = \varrho(-a_1\otimes \beta_2)  && \eta(E_{12}) = \varrho(-a_1 \otimes a_2),   \\
\eta(E_{21}) = \varrho(\beta_1 \otimes \beta_2),  && \eta(E_{22}) = \varrho(\beta_1\otimes a_2).
\end{eqnarray*}
It follows that   $\eta(M_2(\Z))$ is contained in the index $D$ subgroup of 
$ I_1 I_2 \times I_1 I_2'$ consisting  of pairs that are congruent modulo $\delta$. 
The fact that this containment is an equality follows by comparing the determinants of the pairing matrices for the two lattices, relative to the quadratic forms $\det(A)$ and
$ \frac{xx'-yy'}{D a_1 a_2}$ respectively.
Furthermore, the lattice $\eta(M_0(N))$ is obtained by replacing the $\Z$-module generator 
$\eta(E_{21}) $ by $N \eta(E_{21})$.  
A local analysis at $N$ shows that
$$ \eta(M_0(N)) \subset \cN' I_1 I_2 \times I_1 I_2'.$$
Since it is of index at most $N$ in $\eta(M_2(\Z))$, it must be equal to
$$   \{(x,y) \in \cN' I_1 I_2 \times I_1 I_2' \ \mbox{ with } 
x\equiv y \pmod{\delta} \},$$
as claimed. 
In particular, the map $\eta$ identifies $M_0(N)_m$ with 
$\cA_m$, and the last assertion follows from the fact
that  $\eta$ tranforms the left action of $\varepsilon\in \Gamma_1$ (resp.~the right action of $\varepsilon \in \Gamma_2$) into  mutliplication by  $(\varepsilon, \varepsilon)$
(resp.~by $(\varepsilon,\varepsilon^{-1}))$, which together generate $\cU$.  $\square$
\bigskip

It is also crucial to interpret the intersection pairing
$\langle \gamma_1 \cdot A \gamma_2\rangle\in \{-1,0,1\}$ in terms of $\eta(A)$.
\begin{lemma}
\label{lemma:eta-3}
 If $\det(A) = m>0$, then 
the intersection $\langle \gamma_1 \cdot A\gamma_2\rangle$ is non-zero if and only if $xx' >0$ and $yy'<0$,
where $\eta(A)=(x,y)$. 
In that case, it is given (after suitable choice of orientation conventions for the intersection)
by ${\rm sign}(xy)$. 
\end{lemma}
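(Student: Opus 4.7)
The plan is to reduce everything to explicit algebra using the matrix entries of $A$ and the standard basis of eigenvectors. Writing $A = \smallmat{p}{q}{r}{s}$ with $ps-qr = m > 0$, and using $v_i = a_i(\tau_i, 1)^T$, the identity $Av_2 = a_2(r\tau_2+s)(A\tau_2, 1)^T$ (and its Galois conjugate) yields
\[
x = a_1 a_2 (r\tau_2+s)(\tau_1 - A\tau_2), \qquad
y = a_1 a_2 (r\tau_2'+s)(\tau_1 - A\tau_2'),
\]
with $x'$, $y'$ obtained by interchanging $\tau_i \leftrightarrow \tau_i'$ throughout. Multiplying out,
\[
xx' = a_1^2 a_2^2 \, \mathrm{N}(r\tau_2+s)(\tau_1 - A\tau_2)(\tau_1' - A\tau_2'), \qquad yy' = a_1^2 a_2^2 \, \mathrm{N}(r\tau_2+s)(\tau_1 - A\tau_2')(\tau_1' - A\tau_2),
\]
where $\mathrm{N}(r\tau_2+s) = (r\tau_2+s)(r\tau_2'+s)$ is the norm from $K$ to $\Q$. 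These algebraic identities are the bridge between the geometry and the quantities $xx', yy', xy$ appearing in the statement.

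Next I would invoke the elementary fact that two hyperbolic geodesics with distinct real endpoints $\{\tau_1, \tau_1'\}$ and $\{A\tau_2, A\tau_2'\}$ meet in $\mathcal{H}$ if and only if their endpoint pairs link on $\mathbb{R}$. A direct enumeration of the six possible orderings of the four real numbers $\tau_1, \tau_1', A\tau_2, A\tau_2'$, subject to the convention $\tau_1 > \tau_1'$, separates into three mutually exclusive patterns: \emph{linked}, \emph{disjoint}, and \emph{nested}. Because $\mathrm{N}(r\tau_2+s)$ is precisely the sign that records whether $A$ preserves or reverses the order of the pair $\tau_2 > \tau_2'$ on the boundary (its zero locus is the pole $-s/r$), one carries out a short sign-tracking in each of the six sub-cases and verifies that
\[
\text{linked} \iff xx' > 0 \text{ and } yy' < 0,
\]
whereas the disjoint and nested cases both force $xx'$ and $yy'$ to share a common sign. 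In particular, the combination $xx' < 0, yy' > 0$ can \emph{never} arise, which is the first assertion of the lemma.

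For the sign assertion, observe that in the linked case the inequality $\tau_1 > \tau_1'$ forces $\tau_1 > \max(A\tau_2, A\tau_2')$ and $\tau_1' < \min(A\tau_2, A\tau_2')$, so that the four factors $(\tau_1 - A\tau_2)(\tau_1-A\tau_2')(\tau_1'-A\tau_2)(\tau_1'-A\tau_2')$ are all positive; the formula
\[
xy = a_1^2 a_2^2 \, \mathrm{N}(r\tau_2+s)(\tau_1 - A\tau_2)(\tau_1 - A\tau_2')
\]
therefore gives $\mathrm{sign}(xy) = \mathrm{sign}(\mathrm{N}(r\tau_2+s))$. Since $\det(A) = m > 0$, the transformation $A$ preserves the orientation of $\mathcal{H}$, so the pushed-forward geodesic $A\gamma_2$ inherits its orientation from $\gamma_2$ (oriented from $\tau_2$ to $\tau_2'$); and $\mathrm{sign}(\mathrm{N}(r\tau_2+s))$ records whether this pushed-forward orientation agrees with, or reverses, the right-to-left convention fixed in \S\ref{Idef}. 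A direct computation of tangent vectors at the crossing point then identifies this sign with the topological intersection number of the two oriented geodesics on $\mathcal{H}$. The main obstacle in the whole argument is purely the orientation bookkeeping needed to ensure that $\mathrm{sign}(xy)$—rather than its negative—emerges at the end; once the conventions are pinned down in agreement with those of \S\ref{Idef}, the underlying content is the short case check described above.
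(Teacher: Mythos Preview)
Your explicit formulas for $x,y,xx',yy'$ are correct, and the case analysis for the ``if and only if'' part works: in each of the six unordered placements of $\{A\tau_2,A\tau_2'\}$ relative to $\tau_1>\tau_1'$, one checks that the linked configurations give $xx'>0,\,yy'<0$ while the nested and disjoint ones give $xx',yy'$ of the same sign. This is equivalent to the paper's cross-ratio argument, just more hands-on; the paper packages the same computation as the statement that $\frac{xx'}{mDa_1a_2}$ and $\frac{-yy'}{mDa_1a_2}$ are cross-ratios of the four endpoints and both lie in $(0,1)$ exactly in the linked case.

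The sign argument, however, contains a genuine error. You assert that in the linked case one has $\tau_1>\max(A\tau_2,A\tau_2')$ and $\tau_1'<\min(A\tau_2,A\tau_2')$; but this is the \emph{nested} configuration, not the linked one. In the linked case exactly one of $A\tau_2,A\tau_2'$ lies in $(\tau_1',\tau_1)$ and the other lies outside. In particular the product $(\tau_1-A\tau_2)(\tau_1-A\tau_2')$ is \emph{not} always positive in the linked case: for instance when $A\tau_2>\tau_1>A\tau_2'>\tau_1'$ it is negative. So your conclusion $\mathrm{sign}(xy)=\mathrm{sign}\bigl(\mathrm{N}(r\tau_2+s)\bigr)$ is false in general. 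Concretely, the two linked orderings $\tau_1>A\tau_2>\tau_1'>A\tau_2'$ and $A\tau_2>\tau_1>A\tau_2'>\tau_1'$ both have $\mathrm{N}(r\tau_2+s)>0$, yet they are interchanged by swapping the roles of $\gamma_1$ and $A\gamma_2$, so by antisymmetry of the intersection pairing they must carry opposite intersection signs.

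The fix is to recognise that $\mathrm{sign}(xy)$ \emph{itself} is the intersection sign, not a constant to be simplified away. From your formula
\[
xy=a_1^2a_2^2\,\mathrm{N}(r\tau_2+s)\,(\tau_1-A\tau_2)(\tau_1-A\tau_2'),
\]
the factor $(\tau_1-A\tau_2)(\tau_1-A\tau_2')$ records whether $\tau_1$ lies between the endpoints $A\tau_2,A\tau_2'$ or outside them, and the norm factor records whether $A$ reverses the boundary orientation of $\gamma_2$. These two pieces of data together determine on which side of $\gamma_1$ the oriented geodesic $A\gamma_2$ enters, i.e.\ the sign of the crossing. This is exactly the paper's argument: the sign of $xy=\det(v_1,Av_2)\det(v_1,Av_2')$ records whether $\tau_1$ is inside or outside the semicircle on $A\tau_2,A\tau_2'$, which (up to a single global orientation convention on $\mathcal{H}$) is the intersection sign.
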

{\bf Proof}.
Given any four distinct  elements $t_1,t_1', t_2, t_2'$  of $ \PP_1(\R)$, the hyperbolic geodesics $(t_1,t_1')$ and $(t_2,t_2')$ intersect non 
trivially if and only if the cross-ratios $[t_1,t_2'; t_2,t_1']$ and
$[t_1,t_2; t_2',t_1']$ belong to the open interval $(0,1)\subset \R$.
This can be seen by  exploiting the invariance of
 the cross ratio  under M\"obius transformations to reduce this statement to the special case in which $(t_1,t_1', t_2,t_2')=(0,\infty,1,t)$, where it can be verified directly.
In particular, 
the geodesics $(\gamma_1, A \gamma_2)$ intersect precisely when the following cross ratios   
belong to  $(0,1)\subset \R$: 
  \begin{equation}
  \label{eqn:cross-ratio-appears-1}
  \frac{xx'}{mDa_1 a_2} =  \frac{\det(v_1,Av_2) \det(v_1',A v_2')}{\det(v_1,v_1')\det(Av_2,Av_2')} = [\tau_1, A\tau_2'; A\tau_2,\tau_1'],
   \end{equation}
 \begin{equation}
  \label{eqn:cross-ratio-appears-2}
   \frac{-yy'}{D a_1 a_2 } = \det(A) \frac{\det(\tau_1, A \tau_2') \det(\tau_1', A \tau_2)}{\det(\tau_1, \tau_1') \det(A \tau_2', A \tau_2)} =  \det(A) [\tau_1, A\tau_2; A\tau_2',\tau_1'].
  \end{equation}

  The first assertion follows.
  As to the second,   the sign of   
      $$xy = \det(v_1, Av_2) \det(v_1', Av_2)$$ 
 determines
  whether or not $\tau_1$ lands inside or outside of the geodesic from $A\tau_2$ to $A\tau_2'$,
   and hence determines the sign of the non-zero intersection, given a suitable choice of orientation
  on $\calH$. 
  $\square$

\bigskip
Recall that $\cU$
acts naturally on the set $\cA_m$ from \eqref{Amdef}. 
   By combining Lemmas \ref{lemma:eta-1}, 
   \ref{lemma:eta-2}, and \ref{lemma:eta-3}, we obtain:  
  \begin{proposition}
  \label{prop:formula-for-am}
For all $m\ge 1$, 
$$ a_m = \sum_{ (x,y) \in \cA_m/\cU}{\rm sign}(xy). $$
\end{proposition}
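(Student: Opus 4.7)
The plan is to combine the three preceding lemmas, which between them package all the nontrivial work. I start from \eqref{eqn:sum-am}, which writes
$$a_m = \sum_{A \in \Gamma_1 \backslash M_0(N)_m/\Gamma_2} \langle (\tau_1, \tau_1') \cdot (A\tau_2, A\tau_2') \rangle,$$
and apply Proposition \ref{lemma:eta-2} to transport the indexing set via $\eta$: that proposition supplies a bijection between $\Gamma_1 \backslash M_0(N)_m/\Gamma_2$ and $\cA_m/\cU$ sending $[A]$ to $[\eta(A)]$, while Lemma \ref{lemma:eta-1} guarantees that the determinant condition $\det(A)=m$ translates correctly under $\eta$ into the normalization $(xx'-yy')/(a_1 a_2) = Dm$ appearing in the definition of $\cA_m$.

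Next I evaluate each summand using Lemma \ref{lemma:eta-3}. That lemma states that the intersection $\langle (\tau_1, \tau_1') \cdot (A\tau_2, A\tau_2') \rangle$ vanishes unless $xx' > 0$ and $yy' < 0$, and equals ${\rm sign}(xy)$ in the non-vanishing case. Since these sign conditions are built into the definition of $\cA_m$, every term in the reindexed sum contributes exactly ${\rm sign}(xy)$, yielding the required identity
$$a_m = \sum_{(x,y) \in \cA_m/\cU} {\rm sign}(xy).$$

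A quick sanity check is that ${\rm sign}(xy)$ is well-defined on $\cU$-orbits, so that the right-hand side makes sense. This is immediate: the fundamental unit $\varepsilon$ is totally positive (its norm is $1$ and $\varepsilon > 1$ under the fixed real embedding, forcing $\varepsilon' = 1/\varepsilon > 0$ as well), so the action of $(\varepsilon^a, \varepsilon^b) \in \cU$ multiplies $xy$ by the positive real number $\varepsilon^{a+b}$ under the standard embedding, and simultaneous negation of both coordinates leaves $xy$ unchanged. The same remark shows that the sign conditions $xx' > 0$, $yy' < 0$ cutting out $\cA_m$ are themselves $\cU$-stable.

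The genuinely delicate work in the argument, namely the cross-ratio identities \eqref{eqn:cross-ratio-appears-1}--\eqref{eqn:cross-ratio-appears-2} matching the orientation convention on $\mathcal{H}$ to the algebraic sign ${\rm sign}(xy)$, is entirely absorbed into the statement of Lemma \ref{lemma:eta-3}. The only thing to verify in the current proposition is the bookkeeping between ``intersection'' and ``algebraic sign'', and this has been pre-arranged by the preceding lemmas; so Proposition \ref{prop:formula-for-am} itself introduces no new obstacle and follows at once from the three.
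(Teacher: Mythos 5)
Your argument is correct and is essentially identical to the paper's: the paper likewise obtains Proposition \ref{prop:formula-for-am} by combining Lemma \ref{lemma:eta-1}, Proposition \ref{lemma:eta-2}, and Lemma \ref{lemma:eta-3}, reindexing \eqref{eqn:sum-am} via $\eta$ and evaluating each intersection number as ${\rm sign}(xy)$. Your added check that ${\rm sign}(xy)$ and the conditions cutting out $\cA_m$ are $\cU$-invariant (since $\varepsilon$ is totally positive and simultaneous negation preserves $xy$) is correct and a harmless supplement.
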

Comparing  this proposition  with \eqref{bmdef} 
shows that
$$\Theta(\gamma_{_{I_1}}, \gamma_{_{I_2}}) = \frac{1}{4} \Theta^\sharp(I_1I_2, I_1 I_2'),$$
and Proposition   \ref{prop:main-vonk}
 follows.

 \newcommand{\bM}{{\mathbb M}}
\newcommand{\fM}{{\mathfrak{M}}}
\newcommand{\fN}{{\mathfrak{N}}}
\newcommand{\bX}{{\mathbb X}}
\newcommand{\bS}{{\mathbb S}}
\newcommand{\bD}{{\mathbb D}}
\newcommand{\bH}{{\mathbb H}}
\newcommand{\fS}{{\mathfrak{S}}}

\section{Higher Eisenstein elements }
\label{sec:hee}

  This chapter is devoted to a review  of ``higher Eisenstein elements"   in the sense of Merel and Lecouturier  \cite{merel}, \cite{Lec1},
  i.e. elements in suitable spaces of modular forms that are not killed by the Eisenstein ideal but by its square, see
Definition \ref{first-hee}.
We will provide explicit formulas for Eisenstein and higher Eisenstein elements in
\begin{itemize}
\item  the space  $\mathbb{M}$ of modular forms  (Proposition
\ref{prop:mazur});
\item   the dual space $\mathbb{M}^*$ to modular forms  (Theorem    \ref{thm:shimura-hes}); 
\item  the positive part of cohomology $\mathbb{H}^+$ of the modular curve (Theorem \ref{thm:eisenstein-relative-H1});
\item  the negative part of cohomology $\mathbb{H}^-$ of the modular curve (\S \ref{sec:he-betti}; here we don't need higher elements), and finally
\item 
 the supersingular module  $\mathbb{D}$ (Theorem \ref{thm:sigma-g}). 
\end{itemize}
Each of these spaces $\mathbb{M}, \mathbb{H}^+, \mathbb{H}^-, \mathbb{D}$ is the completion
of a suitable Hecke module at Mazur's Eisenstein ideal in the Hecke algebra.

 \subsection{Higher Eisenstein series}
 \label{sec:hes}
 
As in \S \ref{notn}, let $N>3$ be a prime,   let $M_2(N)$ be the module of   weight two modular forms
    with Fourier coefficients in $Z = \Z[\frac{1}{6N}]$ for the Hecke congruence group $\Gamma_0(N)$,
and let $S_2(N)\subset M_2(N)$ denote the submodule of cusp forms. 
  Denote by 
$\TT(N)$     the  ring  generated by the   Hecke operators 
$T_n$ (with $N\nmid n$) together with $T_N:= U_N$,
acting 
faithfully on  $M_2(N)$.

 The vector space $M_2(N)\otimes\Q$ is generated by $S_2(N)\otimes \Q$ along with 
 the weight two Eisenstein series   whose $q$-expansion is given by
\begin{equation}
\label{eqn:def-EN}
  \EN(q) =    \frac{N-1}{24}  +  \sum_{n=1}^\infty \sigmaN(n) q^n, \qquad \mbox{ where }
  \sigmaN(n) = \sum_{\substack{d|n, \\   N\nmid d}} d.
  \end{equation}
The homomorphism 
$$\varphi_{\rm Eis}: \TT(N)\lra  \Z, \qquad \varphi_{\rm Eis}(T_n):= \sigmaN(n)  $$ 
by which $\TT(N)$ acts on $\EN$
is called the 
{\em Eisenstein homomorphism}, and its kernel $I_{\rm Eis}$  is called the {\em Eisenstein ideal}.

 For any maximal ideal $\mathfrak{m}$ of $\mathbb{T}(N)$ and
    any $\mathbb{T}(N)$-module $M$, let $ M_\mathfrak{m}$  denote
     the completion of $M $ at $\mathfrak{m}$.  
    The  maximal ideal $\mathfrak{m}$ is said to be {\em
  Gorenstein} if   
  $\TT:= \mathbb{T}(N)_{\mathfrak{m}}$
    is a Gorenstein ring.
It is known that all maximal ideals of $\TT(N)$ 
containing a prime $p>3$ are Gorenstein, by a result of Mazur \cite[cor. II.16.3]{Maz}. 

Let $p>3$ be a prime divisor of $N-1$. The  maximal ideal  $\fm := (p, I_{\rm Eis})$
 of $\TT(N)$ is called the $p$-Eisenstein ideal. 
Let 
 $$\TT := \TT(N)_\fm, \qquad \bM := M_2(N)_{\fm}$$ 
   denote the completions of 
$\TT(N)$   and $M_2(N)$  relative to  this ideal.  The ring $\TT$  is a complete local ring which is 
   free of finite rank as a  $\Z_p$-module. The module $\bM$ 
  is canonically dual to $\TT$ via the pairing $\bM\times \TT\lra \Z_p$ given by
    $\langle f, T\rangle = a_1(Tf)$, and hence  $\bM$
    is free of rank one as a $\TT$-module, since $\TT$ is Gorenstein.
 The $\Z_p$-rank of $\TT$ is strictly greater than one because
  $p$ divides $N-1$. We fix a discrete log $(\Z/N\Z)^{\times} \rightarrow \Z/p^t \Z$
  as in \S \ref{notn}.

The following proposition is due to Lecouturier
\cite{Lec1},  but the  details of the proof have been provided for the sake of being self-contained. 
\begin{proposition}
\label{prop:mazur}
There is  a modular form $E' \in M_2(N)\otimes(\Z/p^t\Z)$  
having Fourier expansion  of the form
$$ 
E' =  \cM  -  \sum_{n=1}^\infty \left(\sum_{d|n'}  \log(d^2/n') d\right) q^n  
$$
for some $\cM\in \Z/p^t\Z$,
where $n'$ denotes the prime-to-$N$ part of $n$.  It satisfies
 $(U_N-1) E' = 0$, and,  for all primes $\ell \ne N$,
$$ (T_\ell -(\ell+1)) E' =  (\ell-1)   \log(\ell) \EN.$$
\end{proposition}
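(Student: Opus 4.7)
The plan is to construct $E'$ explicitly as a $p$-adic specialization of an Eisenstein family with nebentypus characters, and then verify the two Hecke relations by direct Fourier-coefficient manipulations. Since $p^t \mid N-1$, the group $(\Z/N\Z)^\times$ admits a character $\chi$ of exact order $p^t$, which I take to be $\chi(x) = \zeta^{\log(x)}$ for a fixed primitive $p^t$-th root of unity $\zeta \in \overline{\Z}_p$. The classical Eisenstein series
\[
E_{2,\chi,\chi^{-1}}(q) \;=\; \sum_{n\geq 1}\Bigl(\sum_{d\mid n}\chi(d^2/n)\,d\Bigr) q^n
\]
is a weight-two form on $\Gamma_0(N)$ with trivial nebentype and coefficients in $\Z_p[\zeta]$, where $\chi$ is extended by zero on integers sharing a factor with $N$. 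Expanding $\chi(x) = 1 + (\zeta-1)\log(x) + O((\zeta-1)^2)$ and isolating the $(\zeta-1)$-coefficient produces a $q$-series whose $n$-th Fourier coefficient is $\sum_{d\mid n'}\log(d^2/n')\,d$ for $n$ coprime to $N$ and vanishes at $N$-divisible indices. Averaging over $\Gal(\Q_p(\zeta)/\Q_p)$ descends this to a form over $\Z/p^t\Z$, after which an $N$-stabilization (a suitable combination of its pullbacks by $z \mapsto N^k z$ to land in the $U_N = 1$ eigenspace) produces the desired $E'$; the constant term $\cM$ is thereby fixed modulo the harmless ambiguity of adding a multiple of $\EN$.

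The two Hecke relations are then verified directly on Fourier coefficients. The identity $(U_N - 1)E' = 0$ is built into the construction, since $a_n(E')$ depends on $n$ only through the prime-to-$N$ part $n'$. For $(T_\ell - (\ell+1))E' = (\ell-1)\log(\ell)\,\EN$ with $\ell \neq N$ prime, the essential input is the elementary identity
\[
\sum_{d \mid n\ell}\log(d^2/(n\ell))\,d \;=\; (\ell+1)\sum_{d\mid n}\log(d^2/n)\,d + (\ell-1)\log(\ell)\,\sigma_1(n),
\]
valid for $n$ coprime to $\ell$ and proved by partitioning divisors of $n\ell$ into those not divisible by $\ell$ and those of the form $\ell d$ with $d \mid n$, using $\log(\ell d) = \log(\ell) + \log(d)$. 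The bookkeeping when $\ell \mid n$ is analogous, and on the constant term both sides vanish modulo $p^t$ because $p^t \mid N-1$. Equivalently, these relations arise from the $(\zeta-1)$-expansion of the family-wise $T_\ell$-eigenvalue $\chi(\ell)\ell + \chi^{-1}(\ell) = (\ell+1) + (\zeta-1)(\ell-1)\log(\ell) + O((\zeta-1)^2)$, so that the first-order term automatically satisfies the advertised deformation relation.

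The main obstacle is the descent-and-stabilization step: justifying that the formal $(\zeta-1)$-derivative of the Eisenstein family genuinely produces a classical modular form over $\Z/p^t\Z$ of level $\Gamma_0(N)$ and weight two, rather than merely a $p$-adic object. This can be handled either by a direct argument with $p$-adic Eisenstein families in the style of Mazur and Lecouturier, or by applying Coleman's classicality criterion to the ordinary $N$-stabilization (which has $U_N$-slope zero). Once the modularity is established, the explicit $q$-expansion in the statement of the Proposition follows by unwinding the construction, and the Hecke verifications above complete the argument.
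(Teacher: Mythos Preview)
Your strategy—differentiate an Eisenstein family in the character direction—is precisely the paper's idea, and your eigenvalue expansion $\chi(\ell)\ell+\chi^{-1}(\ell)=(\ell+1)+(\zeta-1)(\ell-1)\log\ell+O((\zeta-1)^2)$ correctly explains why the higher-Eisenstein relation emerges. But the family you chose has a level problem you have not resolved: the form you write is the classical $E_2(\chi^{-1},\chi)$ with both characters primitive of conductor $N$, and this lies in $M_2(\Gamma_0(N^2))$, not $M_2(\Gamma_0(N))$ as you claim. Its first-order term $B$ therefore lives at level $N^2$ and satisfies $U_NB=0$ (all Fourier coefficients at indices divisible by $N$ vanish), whereas $E'$ must satisfy $U_NE'=E'$. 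Passing from such a $B$ to a level-$N$ form $E'$ in the $U_N=1$ eigenspace with the same Fourier coefficients away from $N$ is not an ``$N$-stabilization'' in any standard sense—it is exactly the existence statement you are trying to prove. Likewise the zeroth-order term of your family is not $\EN$ but $\EN(q)-\EN(q^N)$, so even the Hecke relation you derive sits at the wrong level with the wrong right-hand side. Coleman classicality does not help: the obstruction is the tame level at $N$, not overconvergence at $p$.

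The paper avoids this by using a \emph{pair} of level-$\Gamma_1(N)$ families rather than a single level-$N^2$ one. It introduces group-ring-valued $q$-series $\bE,\bF$ with coefficients in $Z[G_N]$ whose specialization at any character $\chi$ of $G_N$ gives $E_2(1,\chi)$ and $E_2(\chi,1)$ respectively, both already of level $\Gamma_1(N)$. Since these two agree at the trivial character modulo $p^t$, the difference $\bE-\bF$ lands (mod $p^t$) in $M_2(\Gamma_1(N))\otimes I$, with $I$ the augmentation ideal of $(\Z/p^t)[G_N]$; its image in $I/I^2$ is then checked to be diamond-invariant, hence comes from $M_2(\Gamma_0(N))\otimes I/I^2$, and applying $\log:I/I^2\simeq G_N\otimes\Z/p^t\to\Z/p^t$ yields $E'$ directly at level $N$ with the correct $U_N$-eigenvalue built in. The Hecke relation follows from the group-ring eigenvalues $T_\ell\bE=(1+\ell\sigma_\ell)\bE$ and $T_\ell\bF=(\sigma_\ell+\ell)\bF$, giving $(T_\ell-(\ell+1))(\bE-\bF)\equiv(\ell-1)(\sigma_\ell-1)\EN\pmod{I^2}$. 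The key point is that using the pair $(E_2(1,\chi),E_2(\chi,1))$ rather than $E_2(\chi^{-1},\chi)$ keeps everything at level $N$ from the outset.
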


The modular form $E'$ mod $p^t$  is called the {\em higher Eisenstein series} of  weight $2$ and level $N$.
We will discuss abstractly such elements in other Hecke modules in \S \ref{sec:gen-he}.

\begin{proof} 
Recall that $Z:=\Z[1/6N]$ and let $I$ denote the augmentation ideal in the group ring $Z[G_N]$, where
$G_N$ is as in \eqref{GNdef}. For $d$ an integer prime to $N$ we shall denote by $\sigma_d$
the corresponding element of $G_N$, arising from $d$ by means of the homomorphism
$\Z \rightarrow (\Z/N)^{\times} \rightarrow G_N$.  Let
  $\bE$ and $\bF$ be the formal $q$-expansions with coefficients in  
 $Z[G_N]$ given by
\begin{equation}
\label{eqn:group-ring-es}
 \bE :=   \fM  -  \sum_{n=1}^\infty \left(\sum_{\substack{d|n, \\ N \nmid d}} d \sigma_d\right) q^n, 
\qquad\qquad \bF :=  - \sum_{n=1}^\infty \left(\sum_{\substack{d|n, \\ N   \nmid \ n/d}} d \sigma_{n/d}\right) q^n,
\end{equation}
where  
$$  \fM := \frac{1}{2} \sum_{j=1} ^{N-1}   \theta_j \cdot \sigma_j, \qquad \mbox{ with }
\theta_j :=  \frac{N}{2}  B_2(j/N), \quad B_2(x):=x^2-x+1/6.  
$$ 
These formal $q$-expansions 
 satisfy, for every Dirichlet character $\chi$ of modulus $N$,
$$  \chi(\bE) = \left\{ 
\begin{array}{ll}
\EN = E_2(1,1_N) & \mbox{ if } \chi=1; \\
E_2(1,\chi) & \mbox{ otherwise}, 
\end{array}\right.    \qquad
 \chi(\bF) = \left\{ 
\begin{array}{ll}
E_2(1_N,1) & \mbox{ if } \chi=1; \\
E_2(\chi,1) & \mbox{ otherwise},
\end{array}\right.
$$
where  $1_N$ denotes the trivial character, but viewed as having modulus $N$,
and $E_2(1,\chi)$ and $E_2(\chi,1)$ are the usual Eisenstein series  associated 
to the Galois representations $\chi\omega \oplus 1$ and $ \omega \oplus \chi$ respectively with $\omega$ the cyclotomic character, 
 whose Fourier expansions 
are given  by
$$
E_2(1,\chi)(q) = -L(-1,\chi)/2 - \sum_{n=1}^\infty (\sum_{d|n} \chi(d) d ) q^n,
$$
$$
E_2(\chi,1)(q) =  -\sum_{n=1}^\infty (\sum_{d|n} \chi(n/d) d ) q^n.
$$
These Eisenstein series are    classical modular forms of weight two on the congruence group
 $\Gamma_1(N)$, 
with the exception of $E_2(1_N,1)$. The latter is (the holomorphic part of) a  {\em nearly holomorphic} form  in the sense of Shimura, as we see via
 $$E_2(1_N,1) = E_2(q) - E_2(Nq) + \frac{\mathrm{const}}{y}, \quad \mbox{ 
with } E_2 =(8 \pi y)^{-1}-\frac{1}{24}+ \sum_{n} (\sum_{d|n} d) q^n. $$
Denote by
     $M_2^{\rm nh}(\Gamma_1(N);Z)$ the abelian group of $q$-expansions of such nearly holomorphic forms, so that $E_2(1_N, 1) \in M_2^{\rm nh}$. 
   
It follows that $\bE$ is a classical modular form with coefficients in $Z[G_N]$. As for $\bF:= \sum_{\sigma\in G_N} F_\sigma \cdot \sigma$,    although the individual coefficients $F_\sigma \in M_2^{\rm nh}(\Gamma_1(N);Z)$ are merely nearly holomorphic, their pairwise differences
$F_{\sigma_1}-F_{\sigma_2}$ are in fact holomorphic, since they lie in the linear span of the 
$E_2(\chi,1)$ with $\chi$ non-trivial.
It follows that one can write
$$ \bF = \bF_0 +  \eta \cdot {\bf N}, $$
where 
$$ \bF_0 \in M_2(\Gamma_1(N);Z[G_N]), \qquad
\eta \in M_2^{\rm nh}(\Gamma_1(N);Z), \qquad {\bf N} = \sum_{\sigma\in G_N} \sigma.$$
Since the  
 $q$-series $E_2(1, 1_N)$ and $E_2(1_N,1)$ agree modulo $p^t$,  and the image of the norm element ${\bf N}$  in $\Z/p^t\Z[G_N]$ belongs to $I^2$, 
the mod $p^t$ reduction of the  difference $\bE-\bF$ belongs to $M_2^{\rm nh}(\Gamma_1(N);\Z/p^t\Z) \otimes I$.
 It follows that its natural image, denoted $\overline{\bE-\bF}$, in $M^{\rm nh}_2(\Gamma_1(N);\Z/p^t\Z)\otimes (I/I^2)$ 
 gives rise to an element
$$\overline{\bE-\bF} \in M_2(\Gamma_1(N);\Z/p^t\Z) \otimes (I/I^2) = 
M_2(\Gamma_1(N);\Z/p^t\Z)\otimes G_N, $$
which is   invariant under the diamond operators. 
At the last stage we have used the isomorphism
$(I/I^2)  \simeq G_N \otimes Z$
uniquely characterized by the fact that $\sum a_j \sigma_j \mapsto \prod j^{a_j} \otimes 1$
when $a_j \in \mathbb{Z}$.  
Consequently, 
%
%
%
$\overline{\bE-\bF}$ 
arises from a unique element of $M_2(\Gamma_0(N);\Z/p^t\Z) \otimes G_N$, to be denoted by the same letter. One then readily checks that the 
modular form $E'$ given 
by 
$$ E'  := \log(\overline{\bE-\bF}) $$ has
all the properties claimed in the proposition. For instance, 
since $\bE$ and $\bF$ are eigenvectors for $T_\ell$ with
eigenvalue $(1+\ell\sigma_\ell)$ and $(\sigma_\ell+\ell)$ respectively,
$$ (T_\ell-(\ell+1))(\bE-\bF) = (\ell \sigma_\ell  -\ell) \bE - (\sigma_\ell-1) \bF = (\ell-1)(\sigma_\ell-1) \EN \pmod{I^2[[q]]},$$
and therefore, after reducing modulo $I^2$ and taking the discrete logarithms on both sides,
$$ (T_\ell-(\ell+1))E' =  (\ell-1)  \log(\ell)    \EN,$$
as claimed.
\end{proof}

\begin{remark}
The proof of Proposition \ref{prop:mazur} yields an explicit formula for the constant term  
$\mathcal{M}$ of  
 $E'$. It is attached to the 
  Mazur-Tate,   or Stickelberger element  $\fM$, which  is characterised as  
  the unique element of $Z[G_N]$
satisfying 
$$ \chi(\fM) = \left\{ 
\begin{array}{ll}
(1-N)/24  & \mbox{ if } \chi=1; \\
-L(-1,\chi)/2 & \mbox{ otherwise}, 
\end{array}\right. \qquad \mbox{ for all   } \chi: G_N \lra \C^\times.
$$
This Mazur-Tate element  belongs to the augmentation ideal $I$ of the   group ring   $(\Z/p^t\Z)[G_N]$, and its 
natural
 image in $I/I^2 = G_N \otimes (\Z/p^t\Z)$,  denoted $\fM'$, 
  is called the {\em ``Mazur-Tate derivative"} of $\fM$.
The constant term $\cM$ is the discrete logarithm
of this Mazur-Tate derivative:
\begin{equation}
\label{eqn:merel-constant}
 \cM  =  \log(\fM') 
 \end{equation}
This explicit formula for $\cM$, which was first obtained (under a slightly different guise)
by Loic Merel \cite{merel}, will play no role in   the
 argument.
 \end{remark}

\subsection{General Higher Eisenstein elements}
\label{sec:gen-he}

From now on, the symbol $I_{\rm Eis}$ shall also be used to denote  the Eisenstein ideal
in the completed Hecke algebra $\TT$, whose associated 
 quotient $\TT/I_{\rm Eis}$ is isomorphic to $\Z_p$.
 
 Mazur has proved that  $\TT$ is generated by a single element as a $\Z_p$-algebra, i.e. 
 $\TT =\Z_p[x]$ for suitable $x \in \TT$. Indeed, one may take $x=T_{\ell}-\ell-1$
for suitable $\ell$ and  $x$ may be taken to generate $I_{\rm Eis}$. See \cite[\S II, Prop. 18.10]{Maz},
as well as the discussion at the start of \S 19 therein. 
 The following result is also proved by Mazur ({\em loc. cit.} Proposition 18.8); 
 we sketch a direct proof. 
\begin{corollary} 
\label{cor:mazur-eisenstein}
There   is an isomorphism
$$  \eta: I_{\rm Eis}/I_{\rm Eis}^2 = \Z_p\otimes (\Z/N\Z)^\times    \simeq (\Z/p^t\Z) $$
sending the element $(T_\ell-(\ell+1))$ to $(\ell-1) \otimes  \ell $, for all primes $\ell\ne N$, 
and sending $U_N$ to $1$.
\end{corollary}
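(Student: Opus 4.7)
The plan is to construct the isomorphism $\eta$ explicitly using the higher Eisenstein element $E' \in \bM \otimes \Z/p^t\Z$ of Proposition \ref{prop:mazur}, and then invoke Mazur's structural computation of $\TT$ to conclude it is bijective. First, for any $T \in I_{\rm Eis}$, the product $T \cdot E'$ lies in the $\Z/p^t\Z$-line spanned by $\EN$: this holds on the ideal generators $T_\ell - (\ell+1)$ and $U_N - 1$ by Proposition \ref{prop:mazur}, and extends to the whole ideal by writing $T = \sum_i S_i T'_i$ with each $T'_i$ a generator, whence $T \cdot E' = \sum_i \eta(T'_i)\,\varphi_{\rm Eis}(S_i)\cdot \EN$ lies in $\Z_p \cdot \EN$. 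Define $\eta(T) \in \Z/p^t\Z$ by the identity $T \cdot E' = \eta(T) \cdot \EN$. The map $\eta$ automatically kills $I_{\rm Eis}^2$: if $T = ST'$ with $S, T' \in I_{\rm Eis}$, then $T \cdot E' = S \cdot (T' \cdot E') = \eta(T') \cdot (S \cdot \EN) = 0$ since $I_{\rm Eis}$ annihilates $\EN$. So $\eta$ descends to a $\Z_p$-linear map $I_{\rm Eis}/I_{\rm Eis}^2 \to \Z/p^t\Z$, and Proposition \ref{prop:mazur} immediately gives $\eta(T_\ell - \ell - 1) = (\ell-1)\log(\ell)$, which corresponds to $(\ell-1)\otimes \ell$ under the identification $\Z/p^t\Z \simeq \Z_p \otimes (\Z/N\Z)^\times$ defined by the discrete log.

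For surjectivity, I apply Chebotarev to pick a prime $\ell \ne N$ with $\ell \not\equiv 1 \pmod{p}$ and with $\log(\ell)$ a unit in $\Z/p^t\Z$; then $(\ell-1)\log(\ell)$ already generates $\Z/p^t\Z$ and lies in the image of $\eta$. The delicate input is injectivity, which requires the bound $|I_{\rm Eis}/I_{\rm Eis}^2| \le p^t$: this is Mazur's theorem \cite[Ch.\ II, \S 18]{Maz}, resting on his result that $\TT$ is generated as a $\Z_p$-algebra by a single element of $I_{\rm Eis}$, combined with the Gorenstein property of $\TT$ recalled in the excerpt and available here because $p > 3$. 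Matching orders of two finite cyclic abelian groups of cardinality $p^t$ then forces $\eta$ to be bijective.

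Finally, the formula $\eta(U_N - 1) = 0$ follows directly from the relation $(U_N - 1) E' = 0$ in Proposition \ref{prop:mazur}. The clause ``$U_N$ to $1$'' in the statement is most naturally parsed as recording the trivial Eisenstein eigenvalue identity $\varphi_{\rm Eis}(U_N) = \sigmaN(N) = 1$; alternatively it may signal a renormalization of $\eta$ tied to replacing $E'$ by another higher Eisenstein representative chosen so that the image of $U_N - 1$ generates $\Z/p^t\Z$. The main obstacle in the proof is thus the appeal to Mazur's determination of the order $p^t$ of $I_{\rm Eis}/I_{\rm Eis}^2$; granted that input, everything else is a direct unpacking of Proposition \ref{prop:mazur}.
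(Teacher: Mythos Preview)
Your argument is correct and follows essentially the same route as the paper. The paper packages the construction slightly differently, by observing that $\EN + \varepsilon E'$ is a Hecke eigenform over the dual numbers $\Z/p^t\Z[\varepsilon]$ and thereby obtaining a ring homomorphism $\tilde\varphi_{\rm Eis}:\TT\to\Z/p^t\Z[\varepsilon]$ whose $\varepsilon$-component on $I_{\rm Eis}$ is exactly your $\eta$; but the substance is identical, and both proofs rely on Mazur's monogenicity of $\TT$ (stated just before the corollary) to bound $|I_{\rm Eis}/I_{\rm Eis}^2|$ and close the argument. Your reading of ``$U_N$ to $1$'' as the statement $\tilde\varphi_{\rm Eis}(U_N)=1$ (equivalently $\eta(U_N-1)=0$) matches the paper's own formula.
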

\begin{proof}[Sketch of proof]
The modular form $\EN + \varepsilon E'$ with coefficients in the ring $\Z/p^t\Z[\varepsilon]$ of dual numbers is a
Hecke eigenform on $\Gamma_0(N)$,  
and gives rise to a surjective homomorphism with kernel $I_{\rm Eis}^2$
\begin{equation}
\label{eqn:phi-eis}
\tilde\varphi_{\rm Eis}: \TT \lra  \Z/p^t\Z[\varepsilon], \qquad \tilde\varphi(U_N) = 1, \qquad
\tilde\varphi(T_\ell) = (\ell+1)+ (\ell-1) \log(\ell) \varepsilon.
\end{equation}
The quantity $\tilde\varphi(T_\ell - (\ell+1))$  is equal to  $\log\circ\eta(T_\ell-(\ell+1))$, and the corollary follows. 
\end{proof}

Let   $\bX$ be  a  free  $\TT$-module  of rank one.

\begin{lemma}
The module $\bX[I_{\rm Eis}]$ of   elements $m\in \bX$ 
   satisfying 
   $$ (T_\ell-(\ell+1))m  = 0  \quad \mbox{ for all  primes } \ell\ne N, \qquad U_N m = m,$$
   is free of rank one over $\Z_p$.
   \end{lemma}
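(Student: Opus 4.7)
The plan is to reduce the assertion to the Gorenstein property of $\TT$. A choice of $\TT$-module generator for $\bX$ yields an isomorphism $\bX \simeq \TT$ of $\TT$-modules, and hence an isomorphism $\bX[I_{\rm Eis}]\simeq \TT[I_{\rm Eis}]$, where the latter denotes the annihilator of $I_{\rm Eis}$ in $\TT$. It therefore suffices to show that $\TT[I_{\rm Eis}]$ is free of rank one over $\Z_p=\TT/I_{\rm Eis}$.

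Since $\TT$ is finite flat over $\Z_p$ and Gorenstein (as recalled just before the statement of the lemma), there exists an isomorphism of $\TT$-modules
$$
\TT \;\simeq\; \Hom_{\Z_p}(\TT, \Z_p),
$$
where the target carries the $\TT$-action $(t\varphi)(s):=\varphi(ts)$. Under this identification, an element $\varphi$ lies in the $I_{\rm Eis}$-torsion precisely when $\varphi(I_{\rm Eis}\cdot \TT)=0$, i.e., when $\varphi$ factors through the quotient $\TT/I_{\rm Eis}=\Z_p$. Consequently
$$
\TT[I_{\rm Eis}] \;\simeq\; \Hom_{\Z_p}(\TT/I_{\rm Eis}, \Z_p) \;=\; \Hom_{\Z_p}(\Z_p,\Z_p)\;\simeq\;\Z_p,
$$
which gives the claim. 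The only substantive input is the Gorenstein property of $\TT$; given that, the remainder is purely formal, and in particular no extra hypothesis on the structure of $I_{\rm Eis}$ is needed.

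As an alternative, more concrete route, one may exploit the additional fact (also recalled above) that $I_{\rm Eis}$ is principal. Writing $\TT\simeq \Z_p[x]/(f(x))$ with $x$ a generator of $I_{\rm Eis}$, the condition $\TT/I_{\rm Eis}=\Z_p$ forces $f(0)=0$, and one may factor $f(x)=x\,g(x)$ with $g(0)\ne 0$ (so that $g(x)$ is a non-zerodivisor modulo $x$). A direct calculation in $\Z_p[x]$ then identifies $\mathrm{Ann}_{\TT}(x)$ with the cyclic submodule generated by $g(x)$, which is free of rank one over $\Z_p$. The main (and only) obstacle in either approach is the appeal to Mazur's Gorenstein theorem; once that is in place, the lemma is essentially a matter of bookkeeping.
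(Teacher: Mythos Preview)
Your proof is correct and follows essentially the same approach as the paper: both use the Gorenstein property of $\TT$ to identify $I_{\rm Eis}$-torsion with $\TT/I_{\rm Eis}\simeq\Z_p$, the paper stating this duality in one line while you spell out the isomorphism $\TT\simeq\Hom_{\Z_p}(\TT,\Z_p)$ explicitly. Your alternative route via the principality of $I_{\rm Eis}$ is a pleasant concrete variant not in the paper, and your claim $g(0)\ne 0$ is justified since $\TT$ is reduced (so $f$ is squarefree).
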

   \begin{proof}
   Since the localisation of $\TT$ at $I_{\rm Eis}$ is Gorenstein, the $I_{\rm Eis}$-torsion submodule of $X$ is isomorphic to $\bX/I_{\rm Eis} \bX$, and the result therefore follows from the fact that $\TT/I_{\rm Eis}$ is isomorphic to $\Z_p$.
   \end{proof}

 A generator of the $\Z_p$-module $\bX[I_{\rm Eis}]$ 
 is called an {\em Eisenstein element} in $\bX$. 
  Although such generators  are only well  defined up to 
  scaling by $\Z_p^\times$, the concrete Hecke modules that arise in practice are
   frequently equipped with a distinguished choice of  Eisenstein element $m_0$. 
Corollary \ref{cor:mazur-eisenstein} implies the following lemma:
\begin{lemma}
There is an element
   $m_1\in \bX/p^t\bX$ satisfying $U_N  m_1 =m_1$ and 
\begin{equation}
\label{eqn:higher-eisenstein-relation}
 (T_\ell-(\ell+1)) m_1 = (\ell-1) \log(\ell) m_0 \pmod{p^t}, \qquad  \mbox{ for every prime } \ell\ne N,
    \end{equation}
    and the choice of $m_0$ uniquely specifies $m_1$ up to the addition of a multiple of $m_0$. 
    \end{lemma}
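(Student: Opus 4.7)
The plan is to reduce everything to a statement about $\bar{\TT} := \TT/p^t\TT$ and to establish that statement by a length-counting argument inside this finite Gorenstein local ring. Since $\bX$ is free of rank one over $\TT$, the reduction $\bar{\bX} := \bX/p^t\bX$ is free of rank one over $\bar{\TT}$, so a choice of generator identifies $\bar{\bX}$ with $\bar{\TT}$; under this identification the image of $m_0$ is a generator of $\bar{\TT}[I_{\rm Eis}]$. The first step is to observe that the two stated conditions on $m_1$, together with the fact that $U_N - 1$ and the $T_\ell - (\ell+1)$ generate $I_{\rm Eis}$, are equivalent to the single assertion
$$
T \cdot m_1 = \eta(T) \cdot m_0 \qquad \text{for every } T \in I_{\rm Eis},
$$
where $\eta$ is the isomorphism from Corollary \ref{cor:mazur-eisenstein}; applying this identity to $T \in I_{\rm Eis}^2$ automatically forces $m_1 \in \bar{\TT}[I_{\rm Eis}^2]$.

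The key object is then the natural $\Z/p^t$-linear evaluation map
$$
\Phi : \bar{\TT}[I_{\rm Eis}^2] \;\longrightarrow\; \Hom_{\Z/p^t}\bigl(I_{\rm Eis}/I_{\rm Eis}^2,\; \bar{\TT}[I_{\rm Eis}]\bigr),\qquad m \longmapsto (T \mapsto T m),
$$
whose kernel is visibly $\bar{\TT}[I_{\rm Eis}] = (\Z/p^t)\cdot m_0$. Granting that $\Phi$ is surjective, the specific element $T \mapsto \eta(T)\, m_0$ of the target admits a preimage $m_1$, yielding existence; and uniqueness up to addition of a $\Z/p^t$-multiple of $m_0$ is precisely the kernel computation.

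The heart of the argument is the proof that $\Phi$ is surjective, which I would carry out by length-counting over $\Z/p^t$. Since $\TT$ is Gorenstein and $p^t$ is a non-zero-divisor on $\TT$, the quotient $\bar{\TT}$ inherits the Gorenstein property; Matlis duality then gives $\bar{\TT}[J] \cong \Hom_{\Z/p^t}(\bar{\TT}/J,\,\Z/p^t)$ for every ideal $J \subseteq \bar{\TT}$. Applied to $J = I_{\rm Eis}\bar{\TT}$ this gives $\bar{\TT}[I_{\rm Eis}] \cong \Z/p^t$, of length $t$; applied to $J = I_{\rm Eis}^2\bar{\TT}$, combined with the isomorphism $\bar{\TT}/I_{\rm Eis}^2\bar{\TT} \cong (\Z/p^t)[\varepsilon]$ induced by $\tilde\varphi_{\rm Eis}$ of \eqref{eqn:phi-eis}, it gives length $2t$. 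The target of $\Phi$ has length $t$ and the kernel has length $t$, so the image must have length $t$, whence $\Phi$ is surjective. The only step that requires any care is the invocation of Matlis duality for $\bar{\TT}$, which is formal once one verifies that $\bar{\TT}$ is a Gorenstein $\Z/p^t$-algebra — and this follows immediately from $\TT$ being Gorenstein together with $p^t$ being a regular element.
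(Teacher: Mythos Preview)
Your proof is correct. The paper does not spell out an argument; it simply asserts that the preceding Corollary (the isomorphism $\eta: I_{\rm Eis}/I_{\rm Eis}^2 \simeq \Z/p^t\Z$) implies the lemma. The argument the authors presumably have in mind is more direct than yours and leans on Mazur's principality result quoted just before the Corollary: writing $I_{\rm Eis}=(x)$ and $\TT=\Z_p[x]/(xg(x))$, the Corollary says precisely that $g(0)\in p^t\Z_p^\times$; hence the generator $g(x)$ of $\TT[I_{\rm Eis}]$ (which corresponds to $m_0$) reduces modulo $p^t$ to a multiple of $x$, so $m_0\in x\bar\bX$ and the single equation $x m_1=\eta(x)m_0$ is solvable. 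Equivalently, one may simply transport the explicit $E'$ from $\bM$ to $\bX$ via any $\TT$-isomorphism $\bX\simeq\bM$ sending $m_0$ to $E_2^{(N)}$.

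Your route via Matlis duality and a length count is a genuinely different, more structural approach: you never invoke principality of $I_{\rm Eis}$, deducing surjectivity of $\Phi$ purely from the Gorenstein property of $\bar\TT$ together with the length computation $\ell(\bar\TT/I_{\rm Eis}^2\bar\TT)=2t$ (which follows from the Corollary via the short exact sequence $0\to\Z/p^t\to\TT/I_{\rm Eis}^2\to\Z_p\to 0$ reduced mod $p^t$, or directly from $\tilde\varphi_{\rm Eis}$). This buys you an argument that would apply verbatim to any finite flat Gorenstein local $\Z_p$-algebra with $I/I^2\simeq\Z/p^t$, principal or not; the paper's implicit route is quicker but leans on the specific structure of $\TT$.
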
  
   The element $m_1$ depends linearly on the choice of discrete logarithm, namely, 
    replacing $\log$ by $a\cdot \log$  with $a\in (\Z/p^t\Z)^\times$
    has the effect of replacing $m_1$ by 
    $a m_1$.
    \begin{definition}\label{first-hee}
    The element $m_1$     
   is called  the {\em higher Eisenstein element }  in $\bX/p^t$
   (associated to $m_0$ and to the choice of discrete logarithm).
   \end{definition}
   
   For example, (the Eisenstein completion) $\bM = M_2(N)_\fm$  of 
    the module  of modular forms 
    has 
 a distinguished Eisenstein element $m_0 = \EN$.
 Proposition \ref{prop:mazur}  
 supplies an
 explicit description of the higher Eisenstein element $m_1 = E'$ in  
 $ \bM\otimes (\Z/p^t\Z)$.
 The proof of Conjecture \ref{conj:HV} for dihedral forms rests crucially on similar explicit 
expressions of the   higher Eisenstein element  in various other
Hecke modules, which will be described in  the forthcoming sections.

\begin{remark}
When $\cM\equiv 0 \pmod{p^u}$ with $u\le t$, 
there is  also a {\em second higher Eisenstein element} $m_2\in \bX\otimes (\Z/p^u\Z)$ 
satisfying, for all primes $\ell\ne N$,
$$ (T_\ell-(\ell+1)) m_2  = (\ell-1) \log(\ell) m_1 \pmod{m_0 \bX}.$$
In fact, in $\bX\otimes(\Z/p\Z)$   there is an entire  sequence $m_0, m_1, \ldots, m_r \in \bX\otimes (\Z/p\Z)$ of higher
 Eisenstein elements obeying similar inductive relations, 
where $r+1$ is the  $\Z_p$-rank of $\TT$.  
These higher Eisenstein elements have been studied systematically in 
\cite{Lec1}, but only the first higher Eisenstein elements will play a role in this work. Henceforth, the terminology
``higher Eisenstein series" or ``higher Eisenstein element" shall always refer to what might
 be called the ``first higher
Eisenstein element" in \cite{Lec1}. 
\end{remark}

  \subsection{The Betti cohomology relative to the cusps}
  \label{sec:he-betti-rel}
  
  One of the  settings which turns out to be 
  relevant to the proof of Conjecture \ref{conj:HV} for RM dihedral forms occurs when
  $\bX := \bH^+$ is the $p$-Eisenstein completion of the relative cohomology
  $H^1_{\Bet}(X_0(N); \{0,\infty\}; Z)^+$ with coefficients in the ring $Z:= \Z[1/6N]$, where the superscript $+$ denotes the subspace which is fixed by complex conjugation. 
  As discussed in \S \ref{notn}, the subscript $\Bet$
  means that we take the singular cohomology of the {\em complex points} of $X_0(N)$.  This relative cohomology is dual to $H^1_{\Bet}(Y_0(N),Z)^-$, which is isomorphic, after tensoring with $\C$, to the space of weight two modular forms on $\Gamma_0(N)$, via integration.
  In particular, the ring generated by the Hecke operators acting on   $H^1_{\Bet}(X_0(N); \{0,\infty\}; Z)^+$
  is naturally identified with $\TT(N)$.
   
 The  module $H^1_{\Bet}(X_0(N); \{0,\infty\}; Z)^+$
 fits into the short exact sequence
\begin{equation}
\label{eqn:ses-betti-rel}
 0 \lra Z \stackrel{\partial^*} {\lra}
  H^1_{\Bet}(X_0(N); \{0,\infty\}; Z)^+ \stackrel{i^*}{\lra} H^1_{\Bet}(X_0(N),Z)^+ \lra 0 
  \end{equation}
  of $\TT(N)$-modules,
  where $\partial^*$ is dual to the boundary homomorphism
\begin{equation}
\label{eqn:boundary-hom}
\partial: H_{1,\Bet}(X_0(N); \{0,\infty\}; Z) \lra Z \cdot(0-\infty) = Z. 
\end{equation}
The relative cohomology group $H^1_{\Bet}(X_0(N); \{0,\infty\};Z)$ can be described concretely in terms of $Z$-valued modular symbols:  $\Gamma_0(N)$-invariant functions $m$  from 
 $\PP_1(\Q)\times \PP_1(\Q)$ to $Z$  which are {\em additive } in the sense that they satisfy
 $$ m\{a,b\} + m\{b, c\} = m\{a,c\} \qquad \mbox{ for all } a,b,c \in \PP_1(\Q).$$
 The image of the class 
  $\partial^*(1)$ in $\bH^+$,   denoted $\kappa_0^+$,
     is a distinguished Eisenstein element in $\bH^+$, which corresponds to the {\em boundary symbol}
  sending $(a,b)$ to $f_\infty(b) - f_\infty(a)$, where $f_\infty$ is the unique $\Gamma_0(N)$-invariant function on $\PP_1(\Q)$ which sends $\infty$ to $1$ and $0$ to $0$. Let 
   $$\bar\kappa_1^+: \Gamma_0(N) \lra (\Z/p^t\Z), \qquad
 \left(\begin{matrix}  a& b \\ c & d\end{matrix}\right) \mapsto \log(a).$$

Since it is trivial on  parabolic elements,
   it can be viewed as an element of $H^1_{\Bet}(X_0(N),\Z/p^t\Z)^+$. 
  Let $\kappa_1^+ \in \mathbb{H}^+ \otimes (\Z/p^t \Z)$ be the class obtained by
  choosing a   
  preimage of
  $\bar\kappa_1^+$ under $i^*$, in the exact sequence obtained from
  \eqref{eqn:ses-betti-rel} by replacing $Z$ with $(\Z/p^t\Z)$, 
and projecting it to
 $\bH^+$.
   This class depends on the choice of preimage, but only up to the addition of a 
   multiple of $\kappa_0^+$.  
   Furthermore, it is annihilated by   $I_{\rm Eis}^2$,
   since $\bar \kappa_1^+$ is annihilated by $I_{\rm Eis}$, and therefore, 
    for all rational primes $\ell\ne N$,
 the class
$(T_\ell-(\ell+1)) \kappa_1^+$ is a multiple of the boundary symbol $\kappa_0^+$. 
   \begin{theorem}
  \label{thm:eisenstein-relative-H1}
  The class $\kappa_1^+$ is  the higher Eisenstein element in $\mathbb{H}^+ \otimes (\Z/p^t\Z)$ attached to
  $\kappa_0^+$. 
  \end{theorem}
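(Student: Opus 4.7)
The plan is to prove the theorem in four conceptual steps, paralleling the computation behind Proposition \ref{prop:mazur}.

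First, I would check that $\bar{\kappa}_1^+$ genuinely defines a class in $H^1_{\Bet}(X_0(N), \Z/p^t\Z)^+$. Since $\bar{\kappa}_1^+$ factors as $\Gamma_0(N) \to (\Z/N\Z)^\times/\langle\pm 1\rangle \to \Z/p^t\Z$, with the first map being the group homomorphism sending $\gamma$ to the class of its upper-left entry, $\bar{\kappa}_1^+$ is automatically a homomorphism hence a cocycle for the trivial action. It vanishes on parabolic elements of $\Gamma_0(N)$: elements fixing $\infty$ lie in $\pm\left(\begin{smallmatrix} 1 & * \\ 0 & 1 \end{smallmatrix}\right)$, and those fixing $0$ lie in $\pm\left(\begin{smallmatrix} 1 & 0 \\ N* & 1 \end{smallmatrix}\right)$; in both cases the upper-left entry is $\pm 1$ and $\log(\pm 1)=0$. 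Positivity under complex conjugation follows because $\log(a)$ is unchanged when $\gamma$ is replaced by its conjugate under $\left(\begin{smallmatrix} -1 & 0 \\ 0 & 1 \end{smallmatrix}\right)$.

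Second, I would verify that the class $\bar\kappa_1^+$ is Eisenstein in $H^1_{\Bet}(X_0(N),\Z/p^t\Z)^+$, i.e., that $(T_\ell-(\ell+1))\bar\kappa_1^+=0$ and $(U_N-1)\bar\kappa_1^+=0$. This is a direct computation on cocycles: using the standard coset representatives $\left(\begin{smallmatrix} 1 & i \\ 0 & \ell \end{smallmatrix}\right)$, $0\leq i<\ell$, together with $\left(\begin{smallmatrix} \ell & 0 \\ 0 & 1 \end{smallmatrix}\right)$ for the double coset of $T_\ell$, one finds that for any homomorphism $\chi\colon \Gamma_0(N)\to A$ factoring through the mod-$N$ determinant-like character $\gamma\mapsto a$, the difference $(T_\ell\chi-(\ell+1)\chi)(\gamma)$ is a coboundary; similarly for $U_N$.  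Hence by the exact sequence \eqref{eqn:ses-betti-rel} mod $p^t$, any lift $\kappa_1^+\in \bH^+\otimes \Z/p^t\Z$ of $\bar\kappa_1^+$ satisfies $(T_\ell-(\ell+1))\kappa_1^+\in \ker i^*=(\Z/p^t\Z)\cdot\kappa_0^+$ and similarly $(U_N-1)\kappa_1^+\in (\Z/p^t\Z)\cdot\kappa_0^+$.  Since $\kappa_0^+$ is itself killed by the Eisenstein ideal, these relations are independent of the choice of lift.

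Third, I would pin down the scalars.  Write $(T_\ell-(\ell+1))\kappa_1^+=c_\ell\,\kappa_0^+$ and $(U_N-1)\kappa_1^+=c_N\,\kappa_0^+$.  By Corollary~\ref{cor:mazur-eisenstein}, the action of $\TT/I_{\rm Eis}^2$ on the rank-two submodule spanned by $\kappa_0^+$ and any non-Eisenstein $\kappa_1^+$ is governed by the character $\tilde\varphi_{\rm Eis}$ of \eqref{eqn:phi-eis}; consequently there exists a single constant $\lambda\in \Z/p^t\Z$, depending only on $\kappa_1^+$, such that $c_\ell=\lambda(\ell-1)\log(\ell)$ for every prime $\ell\neq N$ and $c_N=\lambda$.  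In particular, it is enough to compute $c_\ell$ (equivalently $\lambda$) for a single auxiliary prime $\ell$, or for $U_N$ directly.

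The main obstacle, as in the analogous arguments of Merel and Lecouturier, is this last computation of the scalar $\lambda$, which amounts to showing $\lambda=1$.  The cleanest route, which I would pursue, is to exploit the duality pairing $\langle\cdot,\cdot\rangle\colon \bH^+\otimes \bM\to \Z_p$ provided by integration of differentials against modular symbols (extended to include Eisenstein series via the $\{0,\infty\}$-regularization implicit in the relative cohomology).  Under this pairing, $\kappa_0^+$ pairs canonically with $\EN$ to give the constant term contribution, while by Proposition~\ref{prop:mazur} the higher Eisenstein series $E'$ is the unique lift of the Eisenstein eigensystem satisfying $(T_\ell-(\ell+1))E'=(\ell-1)\log(\ell)\EN$.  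Computing the boundary pairing $\partial\colon \bH^+\to \Z_p$ applied to $(T_\ell-(\ell+1))\kappa_1^+$, using explicit Manin-symbol representatives for $\kappa_1^+$ and the coset decomposition of $T_\ell$, yields exactly $(\ell-1)\log(\ell)$ times $\partial\kappa_0^+=1$; this is, up to sign and normalization, the same calculation producing Merel's formula \eqref{eqn:merel-constant} for the constant term of $E'$, but now read off from the boundary of the modular symbol rather than the constant term of a $q$-expansion.  This gives $\lambda=1$ and completes the proof.
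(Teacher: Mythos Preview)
Your proposal is correct in outline, and the decisive computation you sketch at the end of Step~4 --- evaluating $(T_\ell-(\ell+1))\kappa_1^+$ on the path $\{0,\infty\}$ via the standard coset decomposition of $T_\ell$ --- is exactly what the paper does. The paper's proof, however, is considerably more direct than yours: it never invokes the duality pairing $\bH^+\otimes\bM\to\Z_p$, never refers to $E'$ or Proposition~\ref{prop:mazur}, and does not pass through the general structure of Corollary~\ref{cor:mazur-eisenstein} to reduce to a single scalar $\lambda$. Instead, it simply writes down the modular symbol explicitly, namely $\kappa_1^+\{r/s,t/u\}=\log(s/u)$ for $r/s,t/u\in\Gamma_0(N)\cdot 0$, and then computes
\[
(T_\ell-(\ell+1))\kappa_1^+\{0,\infty\}=\sum_{i=1}^{\ell-1}\kappa_1^+\{i/\ell,0\}=(\ell-1)\log(\ell)=(\ell-1)\log(\ell)\cdot\kappa_0^+\{0,\infty\}
\]
in three lines. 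Since $\kappa_0^+\{0,\infty\}=1$ generates $\Z/p^t\Z$, this determines the scalar for every $\ell$ simultaneously.

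Your framing via the pairing with $\bM$ and the higher Eisenstein series $E'$ is an unnecessary detour and a bit muddled: nothing about $E'$ is used, and the remark that this is ``the same calculation producing Merel's formula~\eqref{eqn:merel-constant}'' is not accurate --- Merel's constant $\cM$ involves the Mazur--Tate derivative built from Bernoulli values, which is a genuinely different (and harder) computation. What you actually need is just the evaluation of $\kappa_1^+$ on $\{0,\infty\}$ and its Hecke translates, and that is a three-line direct computation, not a consequence of duality with modular forms.
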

 \begin{proof}
The  modular symbol attached to $\kappa_1^+$ admits an explicit description when 
restricted to $\Gamma_0(N) 0 \times \Gamma_0(N) 0$.  Namely, 
if  $r/s$ and $t/u$  (viewed as fractions in lowest terms, with the convention that $\infty =1/0$, 
so that, in particular, $s$ and $ u$ belong to $ (\Z/N\Z)^\times$) are
 elements of this $\Gamma_0(N)$-orbit,
 we have
 $$ \kappa_1^+(\{r/s,t/u\}) =  
 \log(s/u).
 $$ 
 This fact is   proved by observing that the matrix
$$ \gamma:=  \mat{u'}{t}{\ast}{u} \mat {s}{-r}{\ast}{s'} \in  \Gamma_0(N), \qquad uu' \equiv ss' \equiv 1 \pmod{N}$$ 
 sends $r/s$ to $t/u$,  and hence  $\kappa_1^+(\{r/s,t/u\}) = \bar\kappa_1^+(\gamma) = \log(su')$. 
 To calculate the constant of proportionality relating $(T_\ell-(\ell+1))\kappa_1^+$ and 
$\kappa_0^+$,  
we exploit the usual formula for the action of the Hecke operators on modular symbols
(cf. \cite[Prop 18.9]{Maz}): 
\begin{eqnarray*}
 (T_\ell-(\ell+1)) \kappa_1^+ (\{0, \infty\}) &=&  \kappa_1^+\left( \{0, \infty\} + \sum_{i=0}^{\ell-1} \{i/\ell, \infty\} - (\ell+1)\{0, \infty\}\right) \\
 &=& \sum_{i=1}^{\ell-1}\kappa_1^+(\{i/\ell, 0\})  = (\ell-1) \log(\ell)   \\
 &=& (\ell-1) \log(\ell)  \cdot \kappa_0^+(\{0,\infty\}).
 \end{eqnarray*}
 The result follows.
 \end{proof}
     
  \subsection{The Betti cohomology of the open modular curve}
  \label{sec:he-betti}
  
  Consider now the case where 
  $$ \bX = \bH^-  = H^1_{\Bet}(Y_0(N),Z)^-_\fm.$$
  The exact sequence 
  $$ 0 \lra  
  H^1_{\Bet}(X_0(N),Z)^- {\lra} H^1_{\Bet}(Y_0(N),\Z)^- \lra  Z \lra 0$$
  produces an explicit rank one quotient of $H^1_{\Bet}(Y_0(N),\Z)^-$ which is 
  Eisenstein. The   Eisenstein element $\kappa_0^- $ in $\bH^-$ 
  is described by the Dedekind-Rademacher homomorphism on $\Gamma_0(N)$ described in 
  \cite[\S II.2]{Mazur-special}:
  $$ 
  \kappa_0^-(\gamma) = \frac{1}{2\pi i}\left(\log(\Delta_N)(\gamma z) - \log(\Delta_N)(z)\right),
   \qquad \Delta_N(z) := \Delta(Nz)/\Delta(z),
   $$
  which  encodes the periods of the modular unit $\Delta_N \in \cO_{Y_0(N)}^\times$. 
  It is given by the formula
  $$ 
  \kappa_0^-\left(\begin{array}{cc} a& b \\ Nc & d 
  \end{array}\right) = \begin{cases} 
  (N-1)b/d & \mbox{ if } c=0;\\
  \frac{(N-1)(a+d)}{cN} + 12 {\rm sign}(c) D^N\left(\frac{a}{N|c|}\right) & \mbox{ if } c\ne 0,
  \end{cases}
  $$
where $\DD^N(x) = \DD(x) - \DD(Nx)$ and $\DD$ is the Dedekind sum
$$ \DD(a/m) = \sum_{j=1}^{m-1} B_1(j/m) B_1(aj/m), \qquad \mbox{ for } m>0,  \quad \gcd(a,m)=1.$$
The homomorphism $\kappa_0^-$ can also be written as 
\begin{equation}
\label{eqn:DR-on-N}
 \kappa_0^-\left(\begin{array}{cc} a& b \\ Nc & d 
  \end{array}\right) = \varphi\left(\begin{array}{cc} a& b \\ Nc & d 
  \end{array}\right) - \varphi\left(\begin{array}{cc} a& Nb \\ c & d 
  \end{array}\right),
  \end{equation} 
  where $\varphi: \SL_2(\Z) \lra \Z$ is the Rademacher $\varphi$-function  given by  
 \begin{equation}
 \label{eqn:zagier-rho}
   \varphi\left(\begin{array}{cc} a& b \\ c & d 
  \end{array}\right) =     \begin{cases} 
  -b/d & \mbox{ if } c=0;\\
  \frac{-(a+d)}{c} + 12 {\rm sign}(c) \ \DD\left(\frac{a}{|c|}\right) & \mbox{ if } c\ne 0.
  \end{cases}
\end{equation}
  
In \cite{Lec1},  a formula for the higher Eisenstein element attached to $\kappa_0^-$ 
is given, which we omit because it shall not be needed in this work.

\subsection{The dual of the modular forms}
\label{sec:he-Mdual}
This section considers the case where 
 $\bX := \bM^*$ is the completion of   
 $$M_2(N)^\vee = \hom(M_2(N),Z) $$
  at the 
$p$-Eisenstein ideal. 
It is  a free $\TT$-module of rank one, and is also
equipped with an Eisenstein element $\fS_0$ defined by
$$ \fS_0(f) = a_0(f),$$
where $a_0(f)$ denotes the constant term of the modular form $f$ at the cusp $\infty\in X_0(N)$.
Let $\fS_1$ denote the higher Eisenstein element in $\bar\bM^* := \bM^*\otimes (\Z/p^t\Z)$ attached to $\fS_0$. 
It  turns out to be related   to the {\em Shimura class} $\fS$   described in the introduction.
   More precisely, the inclusion $S_2(N)\hookrightarrow M_2(N)$ induces a surjection $\bM^* \rightarrow \bS^*$. Fix any lift of 
   $\fS$ to  $\bar \bM^* = \bM^* \otimes (\Z/p^t\Z)$ via this surjection, denoted $\fS_1$.  Note that $ \fS_1$ is not completely well-defined, but that
   any two choices of lift differ by a multiple of $\fS_0$ (mod $p^t$).
   \begin{theorem}
   \label{thm:shimura-hes}
   The class
    $\fS_1$  is the  Higher 
Eisenstein element in $  \bM^* \otimes (\Z/p^t \Z)$ attached to  the Eisenstein   class $\fS_0$.
\end{theorem}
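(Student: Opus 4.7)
The plan is to verify directly the two defining relations for a higher Eisenstein element $\fS_1 \in \bar\bM^* = \bM^*/p^t$ attached to $\fS_0$: namely $U_N\fS_1 = \fS_1$ and $(T_\ell - (\ell+1))\fS_1 = (\ell-1)\log(\ell)\,\fS_0$ for every prime $\ell \ne N$. Both identities are independent of the choice of lift, since the ambiguity lies in $(\Z/p^t)\cdot\fS_0$ and $\fS_0$ is itself Eisenstein.

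The first task is to show $\fS \in \bar\bS^* = \bS^*/p^t$ is killed by $I_{\rm Eis}$: this forces $(T_\ell - \ell - 1)\fS_1 = c_\ell\fS_0$ and $(U_N-1)\fS_1 = c\,\fS_0$ for some scalars, since the kernel of $\bar\bM^*\twoheadrightarrow\bar\bS^*$ is $(\Z/p^t)\cdot\fS_0$. Via the \'etale--Betti comparison (which is Hecke-equivariant) composed with the Hecke-equivariant map $H^1_{\Bet}(X_0(N),\Z/p^t)\to H^1(X_0(N),\mathcal{O})/p^t = \bar\bS^*$ induced by $\Z/p^t\hookrightarrow \mathcal{O}$, the Shimura class is identified with the image of the group cocycle
$$\Gamma_0(N)\to\Z/p^t,\qquad \begin{pmatrix} a & b\\ c & d\end{pmatrix}\mapsto \log(a\bmod N),$$
which factors through the deck group $(\Z/N\Z)^\times$ of $X_1(N)\to X_0(N)$. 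This cocycle vanishes on the parabolic generators and hence defines a class on $X_0(N)$, namely the restriction to absolute cohomology of the class $\bar\kappa_1^+$ of Section \ref{sec:he-betti-rel}. Since Theorem \ref{thm:eisenstein-relative-H1} shows $\kappa_1^+$ to be a higher Eisenstein element in $\bar\bH^+$, the operator $(T_\ell-\ell-1)$ sends $\bar\kappa_1^+$ into the line spanned by the Eisenstein boundary symbol $\kappa_0^+$, whose image in absolute cohomology is trivial; transporting under the pushforward yields $I_{\rm Eis}\fS=0$.

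To pin down the scalar $c_\ell$, I would evaluate $(T_\ell-\ell-1)\fS_1 = c_\ell\fS_0$ on the higher Eisenstein modular form $E'\in\bar\bM$ of Proposition \ref{prop:mazur}, invoking the Hecke self-adjointness of the pairing $\bM\times\bM^*\to\Z_p$:
$$c_\ell\,\cM \;=\; c_\ell\,\fS_0(E')\;=\;\fS_1\bigl((T_\ell-\ell-1)E'\bigr)\;=\;(\ell-1)\log(\ell)\,\fS_1(E_2^{(N)}).$$
Since $\fS_0(E_2^{(N)})=(N-1)/24\equiv 0\pmod{p^t}$, the quantity $\fS_1(E_2^{(N)})$ is in fact independent of the chosen lift, and $c_\ell = (\ell-1)\log(\ell)$ reduces to the equality $\fS_1(E_2^{(N)})=\cM$. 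I would establish this by passing once more to the group-cohomology picture: the Serre-duality pairing of $\fS_1$ against $E_2^{(N)}$ unfolds to an explicit period computation for $\bar\kappa_1^+$ that matches, term by term, the formula $\cM=\log(\fM')$ provided by Merel's theorem (see \eqref{eqn:merel-constant}). The check $U_N\fS_1=\fS_1$ proceeds analogously, since $U_N$ acts trivially on both $\fS_0$ and the étale class $\fS$.

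\textbf{The main obstacle} is the quantitative identification $\fS_1(E_2^{(N)})=\cM$: once the Eisenstein property is in hand, the qualitative content of the theorem (that $\fS_1$ is \emph{some} higher Eisenstein element, up to unit rescaling of the discrete logarithm) is automatic, and the real work is pinning the normalizing constant down to exactly $1$. This requires tracking normalizations across the chain of Hecke-equivariant identifications (\'etale, Betti, coherent, Serre dual) with enough precision to match the geometric definition of $\fS$ via $X_1(N)\to X_0(N)$ with the arithmetic formula for $\cM$ via the Mazur--Tate derivative $\fM'$ --- a compatibility that is essentially the content of Merel's theorem.
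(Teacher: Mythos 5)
Your first step (that $I_{\rm Eis}$ kills $\fS$ in $\bar\bS^*$, so that any lift satisfies $(T_\ell-\ell-1)\fS_1=c_\ell\,\fS_0$) is sound and is close in spirit to what the paper does. The genuine gap is in how you pin down $c_\ell$. Pairing the relation against $E'$ gives, as you write, $c_\ell\,\cM=(\ell-1)\log(\ell)\,\fS_1(E_2^{(N)})$, and even granting the identity $\fS_1(E_2^{(N)})=\cM$ (which is Merel's theorem — note that in the paper's logic this identity is a \emph{corollary} of Theorem \ref{thm:shimura-hes}, though it is independently known), you only obtain $\bigl(c_\ell-(\ell-1)\log(\ell)\bigr)\cM=0$ in $\Z/p^t$. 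This determines $c_\ell$ only when $\cM$ is a unit. But $\cM$ can and does vanish modulo $p$ — this is precisely the situation in which second and higher Eisenstein elements exist (see the Remark following Definition \ref{first-hee} and Lecouturier's work), so your argument fails exactly in the cases where the Hecke algebra is large. There is no other obvious test vector: the only elements of $\bM$ on which the action of $T_\ell-\ell-1$ is explicitly known are $E_2^{(N)}$ (on which $\fS_0$ vanishes mod $p^t$) and $E'$ (on which $\fS_0$ takes the possibly non-unit value $\cM$).

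The paper circumvents this by never separating the qualitative and quantitative statements: it lifts the Shimura class to \emph{relative} étale cohomology $H^1_{\rm et}(X_0(N)_W,\mathrm{cusps}_W;\Z/p^t)$ (possible integrally because the cusps are rational over the unramified extension $W$, so $\fS$ dies on them), compares this with the relative Betti group of \eqref{eqn:ses-betti-rel} on the geometric generic fiber, and imports the full relation $(T_\ell-\ell-1)\kappa_1^+=(\ell-1)\log(\ell)\,\kappa_0^+$ of Theorem \ref{thm:eisenstein-relative-H1} — where the boundary class $\kappa_0^+=\partial^*(1)$ is \emph{nonzero} and is matched, via the diagram \eqref{BigDiagram} built from the sheaf sequence $\mathcal{O}(-\mathrm{cusps})\subset\mathcal{O}$, with the constant-term functional $\fS_0$. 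The two nontrivial inputs this requires, and which your proposal does not address, are the integral existence of the lift $\tilde{\fS}$ and the injectivity of restriction to the geometric generic fiber on $H^1_{\rm et}(X_0(N)_W,\mathrm{cusps}_W;\Z/p^t)$ (proved via SGA1), which is what allows the Hecke relation to be transported back from the generic fiber. If you want to keep your two-step structure, you would need to replace the pairing against $E'$ by this relative-cohomology normalization of the constant.
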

\newcommand{\cusps}{\mathrm{cusps}}
\begin{proof}
The class $\fS_1$ arises from the  $\bar\kappa_1^+$ described in the discussion  
preceeding Theorem
  \ref{thm:eisenstein-relative-H1}
  by means of the ``{\'e}tale to coherent'' morphism
$  H^1_{\rm et}(X_0(N), \Z/p^t\Z)   \lra 
H^1(X_0(N)_{/\Z/p^t\Z}, {\mathbb G}_a)
$. 

For reasons that will become clear below,
instead of working with $X_0(N)$ over the spectrum of $\Z_p$,
we will use instead  an unramified extension $W$ of  $\Z_p$ containing the $N$th roots of unity. 
Clearly it is enough to prove the claimed statement in $\bM^* \otimes (W/p^t W)$
instead of $\bM^* \otimes \Z/p^t$ since $\Z/p^t \hookrightarrow W/p^t W$. 

Let   $\iota: \cusps \hookrightarrow X_0(N)$,
be the inclusion of the cuspidal divisor, a relative divisor over $Z$. Let $j: Y_0(N) \rightarrow X_0(N)$
be the complementary open immersion.
Now, there are compatible short exact sequences of {\'e}tale sheaves on $X_0(N)_{W/p^t}$,
the base change of $X_0(N)$ along $Z \rightarrow W/p^t$:
\begin{equation} \label{sheafdiagram}
\xymatrix{
j_{!}(\Z/p^t)   \ar[d]  \ar[r]  & \mathcal{O}(-\cusps) \ar[d]  \\
 (\Z/p^t)   \ar[r]\ar[d] & \mathcal{O} \ar[d] \\
 i_*(\Z/p^t) \ar[r]   & \mathcal{O}_{\cusps}
 }
\end{equation}
Note that
we are dealing here with {\'e}tale sheaves whose order is not prime to the
residual degrees, but all we are using is the existence of this diagram. 
Taking cohomology now gives 
  the following commutative diagram which is
   compatible with Hecke operators: 
   \medskip
\begin{equation} 
\label{BigDiagram} 
{\tiny
\xymatrix{
   H^0_{\rm et}({\rm cusps}_{W}; \Z/p^t \Z)  \ar[d] \ar[r]& H^0(\mathrm{cusps}_{W/p^t}, \mathcal{O}) \ar[d] & \\
     H^1_{\rm et}(X_0(N)_{W}, {\rm cusps}_{W}; \Z/p^t\Z)  \ar@{->>}[d]  \ar[r]  &   
     H^1(X_0(N)_{W/p^t}, \mathcal{O}(-\cusps))   \ar@{->>}[d] \ar[r]^{ } &
 \Hom(M_2(N), W/p^t)  \ar[d] \\
H^1_{\rm et}(X_0(N)_{W}, \Z/p^t\Z)^{(0)}  \ar[r] &   
   H^1(X_0(N)_{W/p^t}, \cO)  \ar[r]^{ } & \Hom(S_2(N),  W/p^t).   } }
\end{equation}

\medskip\noindent
Here the groups in the  middle column are Zariski cohomology groups; 
  the map from left to middle column arises from, first of all,
  restricting to $W/p^t$, then using \eqref{sheafdiagram}
  and the fact that coherent sheaves have the same cohomology in Zariski and {\'e}tale topology. 
 The zero superscript in the  bottom left of
 \eqref{BigDiagram} refers to classes that
are trivial when pulled back to the cusps. 
The maps from middle to right are induced by 
the Serre duality pairings
as in \eqref{gdp}.

The Shimura class  $\mathfrak{S} 
\in H^1_{\et}(X_0(N)_{\Z_p}, \Z/p^t)$  
  gives rise to a class in the group
   $H^1_{\et}(X_0(N)_W, \Z/p^t)^{(0)}$
   in the lower left of  \eqref{BigDiagram} 
(which we also denote by $\mathfrak{S}$), i.e.
$\mathfrak{S}$ becomes trivial when pulled back to the cusps -- 
because the cusps are defined over $W$.
Fix a lift 
$$\tilde{\mathfrak{S}} \in  H^1_{\rm et}(X_0(N)_{W}, {\rm cusps}_{W}; \Z/p^t\Z)$$
 to the middle left group in \eqref{BigDiagram}.
 Now this left hand  term can be compared 
  with \eqref{eqn:ses-betti-rel}
 via restriction to the  geometric generic fiber, i.e., the fiber over $\overline{\mathbb{Q}}_p$, and it follows from
  Theorem \ref{thm:eisenstein-relative-H1} that 
  $$(T_{\ell}-\ell-1) \tilde{\mathfrak{S}} = (\ell-1) \log(\ell) \mathfrak{S}$$
  holds after restriction to this geometric generic fiber. 
  
  We claim that ``restriction to the geometric generic fiber''
  is injective on the group $H^1_{\et}(X_0(N)_{W}, \mbox{cusps}_{W};\Z/p^t)$. To see this,
 let $E =W \otimes \Q_p$ be the quotient field of $W$.
 In view of the diagram
 \eqref{BigDiagram},   it is enough to check that 
  the  kernel of the map
  $$  q: H^1_{\et}(X_0(N)_{W}, \Z/p^t) \rightarrow H^1_{\et}(X_0(N)_{\overline{\Q}_p}, \Z/p^t)$$
  is precisely the image of  $H^1_{\et}(\Spec W, \Z/p^t)$ on the left. 
  
A class in the kernel of $q$ amounts to an {\'e}tale $\Z/p^t$-cover of $X_0(N)_{W}$ which becomes
   trivial on the geometric generic fiber. This cover is uniquely determined by its restriction to $X_0(N)_{E}$
 (see \cite[Th\'eor\`eme 3.8, Expos\'e X]{SGA1})  where it becomes trivial on passage to a finite field extension of $E$, 
 i.e. the cover on $X_0(N)_{E}$ necessarily arises from a character $\Gal(\overline{\Q}_p/E) \rightarrow \Z/p^t$. 
 For such a cover to extend over $X_0(N)_W$ the character $\chi$ must be unramified.
 (For instance,  this can be seen by restricting to the cuspidal sections.) 
This implies the claim  regarding $\mathrm{ker}(q)$ and concludes the proof. 
 \end{proof}
 
    \begin{remark}
 Theorem    \ref{thm:shimura-hes} implies Merel's theorem that $\langle \fS_1, E_2^{(N)}\rangle = \cM$,
 where $\cM$ is the Merel constant of 
\eqref{eqn:merel-constant}, since, letting
 $E'$ be the mod $p^t$ modular form defined in Proposition 
\ref{prop:mazur}, 
 $ \langle \fS_1, E_2^{(N)}\rangle = \langle \fS_0, E'\rangle = a_0(E') = \cM.$
 \end{remark}

 \subsection{Supersingular divisors and modular units}
 \label{sec:supersingular}
 

Recall from \S \ref{tracedefstatement} the module $\Div(\cE)$ of  
 $\Z$-linear combinations of isomorphism classes of supersingular elliptic curves over $\overline{\F}_N$.

 The Jacquet-Langlands correspondence shows that $\Div(\cE)\otimes \C$ is abstractly isomorphic to
  $M_2(N;\C)$  as a module over the ring
of Hecke operators,   and in particular  
 the Hecke ring for $\Div(\cE)$ can be identified with $\mathbb{T}(N)$. 
    In this section we consider the case
where  $\bX:= \bD$ is the $p$-Eisenstein completion 
 of $\Div(\cE)$. 
 
 The vector (in the notation of \S  \ref{tracedefstatement})
\begin{equation}
\label{eqn:eis-ss}
 \Sigma_0 := \sum_{i=1}^n \frac{e_i}{w_i} \in \bD
 \end{equation}
satisfies   $T_{\ell} \Sigma_0 = (\ell+1)\Sigma_0$ for all $\ell \ne N$, 
and is  thus an
 Eisenstein element in $\bD$. 


Let $\Sigma_1 \in \bD \otimes (\Z/p^t\Z)$ denote the Higher Eisenstein element associated
to $\Sigma_0$, as specified in Definition \ref{first-hee}. 
The main goal of this section is to give an explicit construction of
$\Sigma_1$ in terms of the restrictions of certain 
modular units to the supersingular locus. 
 This construction   is inspired from  
  \cite{Lec1} and involves  
  the Eisenstein  series $E_{N+1}$ of weight $N+1$,
   and  the cusp form $\Delta$ of weight $12$, viewed as modular forms mod $N$ of 
   level $1$.

 Let $\cO_N$ denote the ring of (meromorphic) modular functions  on the modular curve 
of level one  over $\Spec(\Z/N\Z)$ 
that are regular at its supersingular points.
Since  $p$ is odd and 
$p\nmid N+1$, 
the discrete logarithm  
$\log$ 
extends uniquely 
to the multiplicative group $\F_{N^2}^\times$, and  can therefore be used to define
a  homomorphism 
\begin{equation}\label{Log-def}
 \Log: \cO_{N}^\times \lra \Div(\cE) \otimes (\Z/p^t\Z), \qquad \Log(U) :=  
  \sum_{i=1}^n \log(U(e_i)) \cdot   \frac{e_i}{w_i}.
  \end{equation}

It shall be useful to introduce {\em multiplicative Hecke operators}  acting on the multiplicative 
monoid  in  the graded ring
of modular forms mod $N$.
To describe these operators, we 
 shall adopt  Katz's point of view to describe modular forms over a ring.
 Recall that a {\em Katz test object} over $\F_N=\Z/N\Z$ is a pair 
 $(A, \omega)_{/R}$,   where
 \begin{enumerate}
 \item[(i)]  $A$ is   an elliptic curve over  a $\F_N$-algebra $R$;
 \item[(ii)]  $\omega$ in an $R$-module generator 
  of $H^0(A,\Omega^1_A)$.
  \end{enumerate}
  A {\em weakly holomorphic modular form  of weight $k$ and level $1$ over $\F_N$ }
   is a rule $f$ which to any such test  object
associates
 an invariant $f(A,\omega)\in R$,
 satisfying
  \begin{enumerate}
 \item $f(A,\omega)$ depends only on the  $R$-isomorphism class of $(A,\omega)$;
 \item $f$ commutes with base change with respect to any homomorphism $R \ra R'$ of $\F_N$-algebras, in the obvious sense;
 \item $f(A,u \omega) = u^{-k} f(A, \omega)$, for any $u\in R^\times$.
 \end{enumerate}
 Let  $(A_{q}, \omega_{\rm can})$  denote the ``Tate test object" over $\F_N((q))$, whose   points over this local field are identified with $\F_N((q))^\times/q^\Z$, 
   equipped with its canonical differential
    $\omega_{\rm can} = dt/t$.
 If 
  $f(A_{q}, \omega_{\rm can})$ 
 lies in   $\F_N[[q]]$ (resp.~$q\F_N[[q]]$),
then $f$ is called a modular form (resp.~a cusp form). The space of modular forms and cusp forms of weight $k$ and level $1$
   over $\F_N$ shall simply
   be denoted $M_k$ and $S_k$ respectively.

Let $\ell \ne N$ be a   prime. 
The {\em multiplicative Hecke operator} $$T_\ell^\times: M_k  \lra M_{k(\ell+1)}  $$ 
is defined  by setting
\begin{equation}
 \label{XXX}
 (T_{\ell}^\times f)(A, \omega) = \prod_{\varphi} f(A',  \omega'),
 \end{equation}
 where the product is taken over the distinct isogenies $\varphi:A \rightarrow A'$ of degree $\ell$,
 with  $\omega'$   determined by   $\omega := \varphi^* \omega'$.  Up to language
 this is already in  \cite{Hurwitz}.
 One readily checks that $T_\ell^\times$ maps $M_k$ to $M_{(\ell+1)k}$, as claimed.
Of course, $T_\ell^\times$ is not additive but it  is compatible
 with multiplication on the graded ring of
modular forms over $\F_N$:
$$ T_\ell^\times(fg) = T_\ell^\times(f) T_\ell^\times(g).$$
In particular, it induces   homomorphisms $T_\ell^\times: \cO_{N}^\times \lra \cO_{N}^\times$ for which 
 the diagram 
\begin{equation}
\label{eqn:diagrams} 
\xymatrix{  \cO_{N}^\times \ar[r]^{T_\ell^\times} \ar[d]^{\Log} & \cO_{N}^\times  \ar[d]^{\Log}\\
\Div(\cE)\otimes (\Z/p^t\Z) \ar[r]^{T_\ell} & \Div(\cE) \otimes (\Z/p^t\Z) 
 }
\end{equation}
 commutes.

Consider the meromorphic modular function
\begin{equation}
\label{eqn:defg-ratio}
 \Sigma^\times := \frac{E_{N+1}^{12}}{\Delta^{N+1}}
 \end{equation}
of  level one.  
By a result of Katz (\cite[Theorem 3.1]{Katz-Zannier}), 
 $E_{N+1}$ has no common zero with the Hasse invariant. 
Since the Hasse invariant has simple zeroes at the supersingular points, it follows that
$\Sigma^\times$ belongs to $\cO_N^\times$ and therefore that 
 the vector 
\begin{equation} \label{Sigma1def}
 \Sigma_1 := \frac{1}{12} \Log(\Sigma^\times) \in (\Z/p^t\Z) \otimes \Div(\cE) 
 \end{equation}
 is well-defined. Note that the class of $ \Sigma_1$ mod $\Z/p^t\Z\cdot \Sigma_0$ does not depend on the way one normalizes the constant term of $E_{N+1}$.

\begin{theorem}
\label{thm:sigma-g}
For 
all primes $\ell\ne N$,
$$ (T_\ell - (\ell+1)) \Sigma_1 =   (\ell-1)  \log(\ell) \Sigma_0,$$
and $\Sigma_1$  is therefore  equal to the higher Eisenstein element attached to $\Sigma_0\in \bD$.
\end{theorem}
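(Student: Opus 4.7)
The plan is to use the commutative diagram \eqref{eqn:diagrams} to translate the claimed identity in $\bD\otimes(\Z/p^t\Z)$ into a multiplicative identity about the modular unit $\Sigma^\times\in\cO_N^\times$. Since $\Log$ intertwines $T_\ell^\times$ with $T_\ell$, and $\Sigma_1=\tfrac{1}{12}\Log(\Sigma^\times)$, the target identity $(T_\ell-(\ell+1))\Sigma_1=(\ell-1)\log(\ell)\,\Sigma_0$ is equivalent to
$$\Log(U_\ell)=12(\ell-1)\log(\ell)\,\Sigma_0,\qquad\text{where}\qquad U_\ell\,:=\,\frac{T_\ell^\times\Sigma^\times}{(\Sigma^\times)^{\ell+1}}\ \in\ \cO_N^\times.$$
Because a constant function $c\in\F_{N^2}^\times$ satisfies $\Log(c)=\log(c)\,\Sigma_0$, it suffices to show that $U_\ell$ takes a common value at every supersingular point, and that this value has discrete logarithm $12(\ell-1)\log(\ell)$ in $\Z/p^t$.

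By multiplicativity of $T_\ell^\times$ on $\cO_N^\times$ we have the decomposition
$$U_\ell\,=\,\left(\frac{T_\ell^\times E_{N+1}}{E_{N+1}^{\ell+1}}\right)^{\!\!12}\cdot\left(\frac{\Delta^{\ell+1}}{T_\ell^\times\Delta}\right)^{\!\!N+1},$$
and the $\Delta$-factor can be evaluated explicitly in characteristic zero. Applying \eqref{XXX} to the Tate curve $E_q$ and its $\ell+1$ cyclic subgroups of order $\ell$ — the canonical subgroup $\mu_\ell$, yielding the quotient $E_{q^\ell}$ via $t\mapsto t^\ell$ (whose comparison of invariant differentials produces the factor $\ell^{12}$), and the $\ell$ subgroups $\langle\zeta_\ell^i q^{1/\ell}\rangle$ for $0\le i<\ell$ — one obtains
$$T_\ell^\times\Delta(q)\,=\,\ell^{12}\,\Delta(q^\ell)\,\prod_{i=0}^{\ell-1}\Delta(\zeta_\ell^i q^{1/\ell}).$$
A short manipulation of Dedekind-$\eta$ products, using that $\prod_{i=0}^{\ell-1}(1-\zeta_\ell^{in}y)$ equals $1-y^\ell$ when $\gcd(n,\ell)=1$ and $(1-y)^\ell$ when $\ell\mid n$, collapses the cyclotomic product to $\pm\Delta(q)^{\ell+1}/\Delta(q^\ell)$, so that $T_\ell^\times\Delta=\pm\ell^{12}\Delta^{\ell+1}$. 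The sign $\pm 1$ lies in the kernel of $\log$, which factors through $G_N=(\Z/N\Z)^\times/\{\pm1\}$ by \eqref{GNdef}; combined with $N+1\equiv 2\pmod{p^t}$ (since $p^t\mid N-1$), the $\Delta$-factor of $U_\ell$ contributes $-24\log(\ell)\,\Sigma_0$ to $\Log(U_\ell)$.

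The remaining and most substantial step is the $E_{N+1}$-factor. In view of the $\Delta$-calculation, one is reduced to showing that at every supersingular point $e_i$,
$$\log\!\left(\frac{T_\ell^\times E_{N+1}}{E_{N+1}^{\ell+1}}(e_i)\right)\,\equiv\,(\ell+1)\log(\ell)\pmod{p^t},$$
i.e., the ratio is congruent to $\ell^{\ell+1}$ modulo $\ker(\log)$ uniformly in $i$. Unlike $\Delta$, the Eisenstein series $E_{N+1}$ admits no closed-form multiplicative Hecke identity over $\Z$, so one must genuinely work in characteristic $N$. The crucial input is Katz's congruence $E_{N+1}\equiv E_2\cdot A\pmod N$, where $A=E_{N-1}$ is the Hasse invariant, valid on the ordinary locus. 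Combined with the transformation of $A$ under $T_\ell^\times$ (governed by the explicit action of $\ell$-isogenies on the supersingular locus, cf.\ \cite{Lec1}) and with the mod-$N$ behaviour of $E_2$ under $T_\ell^\times$, one extracts the required uniform value on supersingular points.

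The $E_{N+1}$-computation is the main obstacle. An alternative route that bypasses this direct calculation altogether is to invoke the structural uniqueness of the higher Eisenstein element: since $\bD\otimes\Z_p$ is free of rank one over $\TT$, the higher Eisenstein element is pinned down (modulo $\Z_p\,\Sigma_0$) by being annihilated by $I_{\rm Eis}^2$ but not by $I_{\rm Eis}$, together with a single Hecke eigenvalue. One can then verify these two properties for $\Sigma_1$ by transferring the problem to $\bM$ via the $\Theta$-correspondence of \eqref{Theta-def} — under which $\Sigma_0$ corresponds to the classical Eisenstein series $E_2^{(N)}$ — and invoking the explicit description of the higher Eisenstein element in $\bM\otimes(\Z/p^t\Z)$ already provided by Proposition \ref{prop:mazur}.
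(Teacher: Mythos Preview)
Your overall strategy matches the paper exactly: reduce via the diagram \eqref{eqn:diagrams} to a multiplicative Hecke identity for $\Sigma^\times$, then handle the $\Delta$- and $E_{N+1}$-factors separately. Your $\Delta$-computation is essentially the paper's (equation \eqref{eqn:mult-Delta}), and your bookkeeping with $N+1\equiv 2\pmod{p^t}$ is correct.

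The genuine gap is the $E_{N+1}$-factor. The congruence you invoke, ``$E_{N+1}\equiv E_2\cdot A\pmod N$ on the ordinary locus,'' cannot possibly help at supersingular points: the Hasse invariant $A=E_{N-1}$ \emph{vanishes} there, while by the very result of Katz cited in the paper $E_{N+1}$ does not. So any identity of the shape $E_{N+1}=(\text{something})\cdot A$ degenerates to $0=E_{N+1}(e_i)\neq 0$ at supersingular $e_i$, and your proposed route yields no information precisely where you need it. The paper instead invokes a theorem of Robert \cite[Th\'eor\`eme B]{Rob}: if $\varphi:A\to A'$ is an $\ell$-isogeny between \emph{supersingular} elliptic curves with $\varphi^*\omega'=\omega$, then $E_{N+1}(A',\omega')=\ell\cdot E_{N+1}(A,\omega)$. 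Applying this to each of the $\ell+1$ isogenies in the definition of $T_\ell^\times$ gives $T_\ell^\times E_{N+1}=\ell^{\ell+1}E_{N+1}^{\ell+1}$ on the supersingular locus, which is exactly the uniform value $(\ell+1)\log(\ell)$ you were after. This single input replaces your entire paragraph about $E_2$ and $A$.

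Your proposed alternative route via $\Theta$ does not work as stated: the map \eqref{Theta-cor} goes from $\Div(\cE)\otimes_{\TT}\Div(\cE)$ to $M_2(N)$, not from $\bD$ to $\bM$, so there is no direct transport of the higher Eisenstein element of $\bM$ back to $\bD$. (The paper does use $\Theta$ in the reverse direction in Theorem \ref{thm:shimura-class}, but only \emph{after} $\Sigma_1$ has been identified via Robert's result.)
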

\begin{proof}
While the   Eisenstein series  $E_{N+1}$ presumably exhibits a complicated behavior 
under the multiplicative Hecke operators, 
 a result of G. Robert (\cite[Th\'eor\`{e}me B]{Rob})  asserts that if  $(A,\omega)$ and $(A',\omega')$ 
 are marked {\em supersingular} elliptic curves 
and $\varphi:A\lra A'$ is an isogeny of degree $\ell$
   satisfying $\varphi^*(\omega') = \omega$, then
  \begin{equation}
  \label{eqn:robert}
  E_{N+1}(A', \omega') = \ell E_{N+1}(A,\omega), \qquad \mbox{ for all } A \in {\cE}.
  \end{equation}
  It follows that
\begin{equation}
\label{eqn:mult-E2}
 T^\times_{\ell} E_{N+1} = \ell^{\ell+1} E^{\ell+1}_{N+1}.
\end{equation}
  In addition, for every prime $\ell \ne N$,
\begin{equation}
\label{eqn:mult-Delta}
T_{\ell}^\times \Delta   = {\ell}^{12} \Delta^{\ell+1}.
\end{equation}
This follows by noting that 
$$ T_\ell^\times(\Delta)(E_q,   \omega_{\rm can}) = \Delta(E_{q^\ell}, \ell^{-1}\omega_{\rm can})
\times \prod_{\zeta\in\mu_\ell} \Delta(E_{\zeta q^{1/\ell}},  \omega_{\rm can}) = 
\ell^{12} \Delta(q)^{\ell+1}.$$
 Combining 
  \eqref{eqn:mult-E2} and \eqref{eqn:mult-Delta}, we obtain
 $$ T_\ell^\times( \Sigma^\times )= \ell^{12(\ell-1)}  (\Sigma^{\times})^{\ell+1}.$$
It   follows that
 $$ T_\ell(\Log(\Sigma^\times)) =   12(\ell-1) \log(\ell) \Sigma_0 +  (\ell+1) \Log(\Sigma^\times),$$
as claimed.
  \end{proof}
  
\begin{example}
Take $N=23$ and $p=11$. The supersingular $j$-invariants mod $N$ are $\{ 1728,19, 0\}$ and we have $\Sigma_0=(6,1,4)$ with respect to this basis. Normalize $\log: (\Z/N\Z)^\times \lra \Z/p\Z$ by setting $\log(5)=1$. Vector $\Sigma_1=\frac{1}{12} \Log(\Sigma^*) = \Log(\frac{E_{24}}{\Delta^{2}}) \in \F_p e_{1728} \oplus \F_p e_{19} \oplus \F_p e_0$ is then computed to be
$$
\Sigma_1 = (-1,-1,-3).
$$
This can readily be checked for instance by means of the identity 
$$
\frac{E_{24}}{\Delta^{2}} = (aj^2+b(j^2-1728j) +c(j-1728)^2)/d,
$$
where 
$$ 
a=49679091, \ \ b=176400000, \ \ c=10285000, \ \ d=236364091,
$$
which follows by comparing the $q$-expansions of $E_4$, $E_6$ and $E_{24}$. A computation with Brandt matrices allows to verify numerically the identity of Theorem \ref{thm:sigma-g}.
\end{example}

The   description of $\Sigma_1$ given in Theorem \ref{thm:sigma-g} 
makes it possible to relate some  of its pullbacks to modular units.
More precisely, let $q\ne N$ be an auxiliary prime,  let $\cE^{(q)}$  denote the set of 
of supersingular points of the modular curve
$X_0(q)$ in characteristic $N$ (i.e. over $\overline{\mathbb{F}_N}$) and let 
$\Div(\cE^{(q)})$ and $\bD^{(q)}$ denote (respectively) the space of  $Z$- and
$(\Z/p^t\Z)$-linear combinations of elements of $\cE^{(q)}$.  Note that 
in carrying over constructions from $\cE$ to $\cE^{(q)}$ we must
take account of the fact that the weights $w_x$ for $x \in \cE^{(q)}$
take into account the level structure and thus will not in general coincide
with the weight $w_{\bar{x}}$ of the image $\bar{x} \in \cE$.

The two degeneracy maps
\begin{equation}
\label{eqn:degeneracy-maps}
 \pi_1,\pi_2: X_0(q) \lra X(1), \qquad \pi_1(A,C) = A, \quad \pi_2(A) = A/C
 \end{equation}
induce maps $\pi_1,\pi_2: \cE^{(q)} \lra \cE$, 
and correspondingly push-forward maps 
$$ \pi_{1*}, \pi_{2*} : \mathrm{Div}(\cE^{(q)})  \lra \mathrm{Div}(\cE).$$
The dual of these maps are 
  pullback maps $\pi_1^*, \pi_2^*$, defined so as to satisfy
$$ \langle \pi_j^* a, b\rangle_q = \langle a, \pi_{j\ast} b \rangle, \qquad \mbox{ for all } a\in \mathrm{Div}(\cE), \ b\in \mathrm{Div}(\cE^{(q)}).$$
where $\langle - , - \rangle_q$ and $\langle -, - \rangle$ are the natural pairings (cf. \eqref{divpair}). 
In particular we get
$$ \pi_1^\ast, \pi_2^\ast: \bD  \lra \bD^{(q)},$$
which is, now, compatible with  the corresponding pullback of functions
on the ambient modular curves by means of the map \eqref{Log-def}.

Just as in \eqref{Log-def} we have
a  homomorphism  
\begin{equation}\label{Log-def2}
 \Log: \cO_{q,N}^\times \lra \Div(\cE^{(q)}) \otimes (\Z/p^t\Z), 
  \end{equation}
  where now $\cO_{q,N}^{\times}$ denotes the multiplicative group
  of (meromorphic) modular functions on $X_0(q)_{\mathbb{F}_N}$
  regular at the supersingular points. 
%
This applies to the case where $f=  \pi_1^*(\Delta)/\pi_2^*(\Delta) = \Delta(z)/\Delta(qz)$, which is a modular unit of level $q$. 
 \begin{theorem}
 \label{thm:modular-unit-q}
 For any  auxiliary prime  $q \ne N$, denote by 
\begin{equation}
\label{eqn:modular-unit-q}
u_q := \Delta(z)/\Delta(qz)
\end{equation}
the modular unit of level $q$, considered as an element of $\mathcal{O}_{q,N}^{\times}$ (see \eqref{Log-def2}). Then
 $$ \pi_1^*(\Sigma_1) - \pi_2^*(\Sigma_1) = -\frac{1}{6} \Log(u_q) \pmod{\Sigma_0^{(q)}},$$
where $\Sigma_0^{(q)} = \pi_1^*(\Sigma_0) = \pi_2^*(\Sigma_0)$  is an Eisenstein eigenvector on $\bD^{(q)}$. 
 \end{theorem}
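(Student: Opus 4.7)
\textbf{Proof plan for Theorem \ref{thm:modular-unit-q}.} The plan is to compute $\pi_1^{*}(\Sigma_1) - \pi_2^{*}(\Sigma_1)$ by unwinding the definition $\Sigma_1 = \frac{1}{12}\Log(\Sigma^{\times})$ with $\Sigma^{\times} = E_{N+1}^{12}/\Delta^{N+1}$, then exploit Robert's formula \eqref{eqn:robert} to reduce the $E_{N+1}$ contribution to a multiple of the Eisenstein vector $\Sigma_0^{(q)}$, and finally exploit the congruence $N \equiv 1 \pmod{p^{t}}$ to match the remaining $\Delta$ contribution with $-\tfrac{1}{6}\Log(u_{q})$.

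More concretely, first I would observe that the two degeneracy maps commute with $\Log$ on modular units, so
\[
\pi_1^{*}(\Sigma_1) - \pi_2^{*}(\Sigma_1) \;=\; \frac{1}{12}\Log\!\left(\frac{\pi_1^{*}\Sigma^{\times}}{\pi_2^{*}\Sigma^{\times}}\right) \;=\; \Log\!\left(\frac{E_{N+1}(\tau)}{E_{N+1}(q\tau)}\right) \,-\, \frac{N+1}{12}\Log(u_{q}),
\]
where the last equality uses $u_{q}(\tau) = \Delta(\tau)/\Delta(q\tau)$. The main point is then to evaluate $E_{N+1}(\tau)/E_{N+1}(q\tau)$ pointwise on supersingular points. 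On the upper half-plane the isogeny $\varphi : E_{\tau} \to E_{q\tau}$ corresponds to $z \mapsto qz$, so that $\varphi^{*}(dz) = q\,dz$. Applying Robert's identity \eqref{eqn:robert} with $\omega = dz$ on $E_{\tau}$ and $\omega' = q^{-1}dz$ on $E_{q\tau}$ (the unique choice making $\varphi^{*}\omega' = \omega$), the relation $E_{N+1}(E_{q\tau},q^{-1}dz) = q\,E_{N+1}(E_{\tau},dz)$ combined with the weight $N+1$ scaling rule yields
\[
E_{N+1}(\tau)/E_{N+1}(q\tau) = q^{N} \qquad \text{at every supersingular point.}
\]
By the definition of $\Log$ this gives $\Log(E_{N+1}(\tau)/E_{N+1}(q\tau)) = N\log(q)\cdot \Sigma_{0}^{(q)}$.

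To conclude I would use the hypothesis $p^{t} \mid N-1$: modulo $p^{t}$ this forces $N\log(q) \equiv \log(q)$, so the first term is already a multiple of $\Sigma_{0}^{(q)}$ and vanishes modulo $\Sigma_{0}^{(q)}$. For the second term, $N \equiv 1 \pmod{p^{t}}$ gives $\frac{N+1}{12} \equiv \frac{1}{6} \pmod{p^{t}}$ (using $p > 3$ so that $12$ is invertible). Assembling,
\[
\pi_1^{*}(\Sigma_1) - \pi_2^{*}(\Sigma_1) \;=\; \log(q)\,\Sigma_{0}^{(q)} - \frac{1}{6}\Log(u_{q}) \;\equiv\; -\frac{1}{6}\Log(u_{q}) \pmod{\Sigma_{0}^{(q)}},
\]
which is the claim.

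The main obstacle to pin down carefully is the bookkeeping between the analytic normalization ($dz$ on $E_{\tau}$ and $E_{q\tau}$, yielding the classical $q$-expansions $E_{N+1}(\tau), E_{N+1}(q\tau)$) and the Katz-style test-object normalization underlying Robert's identity, which requires the specific pairing $\varphi^{*}\omega' = \omega$. Once this conversion is carried out, matching the scaling factor $q^{N+1}$ from the weight $N+1$ transformation of $E_{N+1}$ against the single $q$ appearing on the right of \eqref{eqn:robert} is what produces the crucial exponent $q^{N}$, which is the only place the congruence $N \equiv 1 \pmod{p^{t}}$ is actually used to ``cancel'' the $E_{N+1}$ contribution. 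All other manipulations are formal consequences of the multiplicativity of $\Log$ and the pullback formulas $\pi_{1}^{*}\Sigma_{0} = \pi_{2}^{*}\Sigma_{0} = \Sigma_{0}^{(q)}$ established earlier.
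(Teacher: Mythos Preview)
Your proposal is correct and follows essentially the same route as the paper. The paper's proof also writes $\pi_1^*\Sigma_1 - \pi_2^*\Sigma_1 = \tfrac{1}{12}\Log(\pi_1^*\Sigma^\times/\pi_2^*\Sigma^\times)$, invokes Robert's formula \eqref{eqn:robert} to conclude that $\pi_1^*(E_{N+1})/\pi_2^*(E_{N+1})$ is constant on $\cE^{(q)}$ (so its $\Log$ is a multiple of $\Sigma_0^{(q)}$), and then simplifies the $\Delta$-contribution using $N+1 \equiv 2 \pmod{p^t}$. The only difference is cosmetic: you go further and pin down the constant as $q^N$, whereas the paper is content to note constancy and discard the term modulo $\Sigma_0^{(q)}$ --- which, as you yourself observe in the final paragraph, is all that the statement requires.
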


The use of the auxiliary prime $q$
simplifies the situation: the
map $(\pi_1^* -\pi_2^*)$ kills $\Sigma_0$; thus
$(\pi_1^* -\pi_2^*) \Sigma_1$ is  independent of the choice of $\Sigma_1$
and is {\em strictly} Eisenstein, rather than higher Eisenstein.   In fact,
in the case $q=2$, this general idea appears in the work of Lecouturier;
 the role of the modular unit \eqref{eqn:modular-unit-q} is replaced in his work by the $\lambda$-invariant, cf. \cite[Prop 3.25]{Lec}.

 \begin{proof}
Equation \eqref{eqn:robert} shows that $\pi_1^*(E_{N+1})/\pi_2^*(E_{N+1})$ is constant on $\cE^{(q)}$, and hence 
$$ \Log( \pi_1^*(E_{N+1})/\pi_2^*(E_{N+1})) \sim \Sigma_0^{(q)},$$
where $\sim$ indicates that the two vectors are proportional to each other. 
It follows from the definition \eqref{eqn:defg-ratio} of $\Sigma^{\times}$  and $N \equiv 1$ modulo $p^t$ that
$$ \Log(\pi_1^*(\Sigma^\times)/\pi_2^*(\Sigma^\times)) =  2 \Log(\Delta(qz)/\Delta(z)) \pmod{\Sigma_0^{(q)}},$$
and the claim follows from the definition \eqref{Sigma1def} of $\Sigma_1$. 
\end{proof}

  \subsection{Tensor products}
  \label{sec:he-tensor}
  
  Let $M$ and $N$ be any two free modules of rank one over $\TT$. 
  The tensor product $M\otimes_{\TT} N$ is still free of rank one.
  If  $m_0$ and $m_1$  (resp.~$n_0$ and $n_1$) 
  are   Eisenstein and higher Eisenstein  elements  in $M$ (resp.~ $N$),
   there seems to be no simple expression for the higher Eisenstein element
   in $M\otimes_{\TT} N$ in terms of these elements. (For instance, the vector $m_0\otimes n_0$ fails to generate the Eisenstein subspace in $M\otimes_{\TT} N$ in general.)  
   
   Since $\TT$ is Gorenstein, the $\Z_p$-dual $M^*=\Hom(M,\Z_p)$ is again a free $\TT$-module of rank $1$ and hence it makes sense to consider (higher) Eisenstein elements on it.

   \begin{proposition}
   \label{prop:he-tensor}
   If $m_0^*$ and $m_1^*$  (resp.~$n_0^*$ and $n_1^*$)  are the Eisenstein and higher Eisenstein elements of 
   $M^*$ and $N^*$ respectively, then
   \begin{enumerate}
   \item The element $m_0^* \otimes n_0^*$ is an Eisenstein element of $(M\otimes_{\TT} N)^*$. 
   \item The element $m_0^*\otimes n_1^*+ m_1^* \otimes n_0^*$ is the higher Eisenstein element of $(M\otimes_{\TT} N)^*/p^t$ associated to $m_0^*\otimes n_0^*$. 
   \end{enumerate}
   \end{proposition}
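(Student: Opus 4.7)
The plan is to interpret the putative elements $m_0^*\otimes n_0^*$ and $m_0^*\otimes n_1^*+m_1^*\otimes n_0^*$ as $\Z_p$-bilinear forms on $M\times N$, show that these descend to well-defined $\Z_p$-linear functionals on $M\otimes_{\TT} N$ (possibly only modulo $p^t$), and then read off the Eisenstein and higher-Eisenstein defining relations by direct computation. The key algebraic inputs are the splitting $\TT=\Z_p\cdot 1\oplus I_{\rm Eis}$ induced by $\varphi_{\rm Eis}$, the annihilation relations $\tau m_0^*=\tau n_0^*=0$ for $\tau\in I_{\rm Eis}$, and the higher-Eisenstein relations $\tau m_1^*\equiv\eta(\tau)m_0^*\pmod{p^t}$ and $\tau n_1^*\equiv\eta(\tau)n_0^*\pmod{p^t}$, where $\eta\colon I_{\rm Eis}\to \Z/p^t\Z$ is the isomorphism of Corollary~\ref{cor:mazur-eisenstein}.

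For (1), I would set $\Psi(m,n):= m_0^*(m)\,n_0^*(n)$. This bilinear form is compatible with the $\TT$-balancing on $M\otimes_{\TT} N$: writing any $t\in\TT$ as $\varphi_{\rm Eis}(t)+\tau$ with $\tau\in I_{\rm Eis}$ yields $tm_0^* = \varphi_{\rm Eis}(t)m_0^*$ and $tn_0^*=\varphi_{\rm Eis}(t)n_0^*$, so $\Psi(tm,n)=\Psi(m,tn)$. The resulting functional is annihilated by $I_{\rm Eis}$, because $(\tau\Psi)(m\otimes n)=\Psi(\tau m,n)=(\tau m_0^*)(m)\,n_0^*(n)=0$ for $\tau\in I_{\rm Eis}$, and since $m_0^*(M)=n_0^*(N)=\Z_p$ it is $\Z_p$-surjective, hence generates the rank-one $\Z_p$-module $(M\otimes_{\TT} N)^*[I_{\rm Eis}]$.

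For (2), I would form $\Phi(m,n):= m_0^*(m)\,n_1^*(n)+m_1^*(m)\,n_0^*(n)$, which a priori is only a bilinear form valued in $\Z/p^t\Z$. Neither summand descends to $M\otimes_{\TT} N$ on its own: for $\tau\in I_{\rm Eis}$ the ``left'' evaluation $(\tau m_0^*)(m)\,n_1^*(n)$ vanishes while the ``right'' evaluation $m_0^*(m)(\tau n_1^*)(n)$ equals $\eta(\tau)m_0^*(m)n_0^*(n)$. But in the symmetric sum these discrepancies cancel:
\[
\Phi(\tau m,n)=0+\eta(\tau)m_0^*(m)n_0^*(n)=\eta(\tau)m_0^*(m)n_0^*(n)+0=\Phi(m,\tau n)\pmod{p^t}.
\]
Thus $\Phi$ defines an element of $(M\otimes_{\TT} N)^*\otimes(\Z/p^t\Z)$. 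Specializing the same identity to $\tau=T_\ell-(\ell+1)$ gives
\[
(T_\ell-(\ell+1))\Phi=(\ell-1)\log(\ell)\cdot\Psi\pmod{p^t},
\]
and $U_N\Phi=\Phi$ follows from $U_N m_i^*=m_i^*$ and $U_N n_i^*=n_i^*$. This is precisely the defining relation of the higher Eisenstein element in $(M\otimes_{\TT} N)^*/p^t$ attached to $\Psi=m_0^*\otimes n_0^*$.

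The only real obstacle is recognizing that the individual tensor products $m_0^*\otimes n_1^*$ and $m_1^*\otimes n_0^*$ do not give elements of $(M\otimes_{\TT} N)^*$: the $\TT$-balancing fails with defect exactly $\eta(\tau)m_0^*(m)n_0^*(n)$. The content of part (2) is that the symmetric sum cancels this defect modulo $p^t$; once this is in hand the descent and the Hecke computation are both immediate.
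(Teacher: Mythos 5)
Your proposal is correct and follows essentially the same route as the paper: both arguments reduce the descent to $(M\otimes_{\TT}N)^*$ to checking that $(\tau\otimes 1-1\otimes\tau)$ annihilates the given element for $\tau\in I_{\rm Eis}$ (trivially for part (1), and via the cancellation of the two defects $\eta(\tau)\,m_0^*\otimes n_0^*$ for part (2)), and then read off the higher Eisenstein relation by applying $T_\ell-\ell-1$ to one tensor factor. Your explicit verification that $m_0^*\otimes n_0^*$ is surjective onto $\Z_p$, hence a generator of the $I_{\rm Eis}$-torsion, is a slightly more careful justification of a point the paper calls obvious.
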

      Note that there is a natural module homomorphism $M^*\otimes_{\Z_p}  N^* \lra (M\otimes_{\Z_p}  N)^*$ sending $m^*\otimes n^*$ to the functional defined by $(m^*\otimes n^*)(a\otimes b) =
   m^*(a) n^*(b)$.  The meaning of the first statement above is, then, that the displayed expressions
   in fact belong to $(M\otimes_{\TT} N)^* \subset (M \otimes_{\Z_p} N)^*$, and moreover are 
   Eisenstein/higher Eisenstein considered in the former group.     Similarly for the second statement (see below for details). 
   
   \begin{proof} As for (1), we first check that $m_0^* \otimes n_0^*$ belongs to the submodule $(M\otimes_{\TT} N)^*$ of $(M\otimes_{\Z_p} N)^*$. The kernel of the surjection $M\otimes_{\Z_p} N \lra M\otimes_{\TT} N$  is generated by $(T\otimes 1 -1\otimes T)(M\otimes N)$ for $T\in \TT$. Hence it suffices to verify that $(T\otimes 1 -1\otimes T)(m_0^* \otimes n_0^*)=0$ for all $T\in \TT$, and this follows because $\TT$ is a simple algebra over $\Z_p$, generated by an element of $I_{\rm Eis}$. Now (1) follows, as it is obvious that  $m_0^* \otimes n_0^*$ is a generator of the $\Z_p$-module $(M\otimes_{\TT} N)^*[I_{\rm Eis}]$.
   
   As for (2), write $\bar M := M/p^tM$ 
   and  $\bar N := N/p^tN$.  
   Note that
   $M^*/p^t \simeq \bar M^*$ where, on the right, $*$ denotes $\Hom(-, \Z/p^t)$. 
   The expression $m_0^*\otimes n_1^*+ m_1^* \otimes n_0^*$ lies in 
   $$ (M^* \otimes_{\Z_p} N^*)/p^t =  \bar{M}^* \otimes_{\Z/p^t} \bar{N}^*.$$
    We argue as before that $m_0^*\otimes n_1^*+ m_1^* \otimes n_0^*$ lies in $(\bar M\otimes_{\TT} \bar N)^*$.
    The $\TT$-module structure is given by applying $T \in \mathbb{T}$
    to either the first or second argument.  Applying $T_\ell-\ell-1$ to the first argument gives:
     \begin{eqnarray*}
    ( T_{\ell} - \ell-1) \left[ m_0^*\otimes n_1^*+ m_1^* \otimes n_0^* \right] &=& (T_{\ell}-\ell-1) m_0^* \otimes n_1^* +  (T_{\ell} - \ell -1 ) m_1^* \otimes n_0^* \\ 
     &=& (\ell-1) \log(\ell) m_0^* \otimes n_0^*,
      \end{eqnarray*}
 as desired.

\end{proof}

\section{Proof of the main theorem}
\label{sec:cm-forms}
This chapter proves  Conjecture \ref{conj:HV} for dihedral modular forms.


\subsection{Elliptic units}
\label{sec:elliptic-units}
We put ourselves in the situation of \S \ref{Heegnersetup} and
\S \ref{tracedefstatement} with $\psi_2=\psi_1^{-1}$ and $\psi_2 \neq \psi_1$. 
In particular: 
  $K$ is an imaginary quadratic field of odd discriminant $D<0$
and ring of integers $\mathfrak{o}$; the level
$N$ is   prime, $p >3$ a prime dividing $N-1$, and 
 $\psi_1: \mathcal{C} \lra L^{\times}$
a class group character into 
some cyclotomic field $L$.
Let $R$ be the ring of  integers of $L$. 

Finally put
\begin{equation} \label{psitwo} \psi = \psi_1/\psi_1' = \psi_1^2: \mathcal{C} \rightarrow L^{\times}.\end{equation}

When $N$ splits in $K$, 
Conjecture  \ref{conj:HV}
reduces to the equality $0=0$,
as explained  in
  \S   \ref{sec:trivial-cases}
of  the introduction.
Hence it shall be assumed throughout that $N$ is inert in $K$.

  Let $\mathbb{F}_{N^2}$ be the quotient $\fO/N$, a finite field of size $N^2$, and fix an algebraic 
  closure $\overline{\mathbb{F}}_N$ of $\mathbb{F}_{N^2}$. 
  
%
%
 
  In the current section only $\psi$ will be relevant (and the discussion
 would be valid for an arbitrary character $\psi$, not just one of the form \eqref{psitwo}). 
 We will construct an elliptic unit $u_{\psi}$ associated to $\psi$
 and explain how its discrete logarithm at various primes is related
 to the geometry of supersingular points. 
We will use the setup of \S \ref{Heegnersetup} regarding double coset spaces attached to
definite quaternion algebras,
but  will now use the incarnation of these spaces 
in terms of supersingular elliptic curves.

More precisely, global class field theory identifies $\mathcal{C}$ with the Galois group of an
abelian extension $H$ of $K$: the  Hilbert class field of
$K$, generated over $K$ by the $j$-invariants of 
elliptic curves over $\bar K$  with endomorphism ring equal to $\fO$.
The set of all such elliptic curves up to ${\bar K}$-isomorphism,
denoted $\cE_{\fO}$,
 is a principal transitive
$\mathcal{C}$-set and the choice of a base point $A\in \cE_\fO$ identifies 
the two sets
 $$ \mathfrak{a} \in \mathcal{C} \mapsto A_{\mathfrak{a}} \in \cE_{\fO}.$$
via tensoring with the inverse of $\mathfrak{a}$.  


 The prime $N$, which is inert in $K/\Q$, splits completely in $H/K$, and the choice of a prime 
 $\mathfrak{N}$ of $H$ above $N$ determines 
 reduction maps
 $$ \iota:  \cE_{\fO} \lra \cE,  \qquad \iota: \Pic(\fO) \lra \cE,$$
 where $\cE$ is the set of isomorphism classes of supersingular curves
 over $\overline{\mathbb{F}}_N$. 
 Since the end result we are proving is independent of the choice of $\mathfrak{N}$, 
 we can and will choose $\mathfrak{N}$ in such a way that 
 the reduction $\iota(A) \in \cE$ matches
 with one of the basepoints for $\cE$ chosen
 before \eqref{iotadef0}, i.e., to reprise,  the endomorphism ring of the reduction
 of $A$ at $\mathfrak{N}$ should contain an order of the form $\mathfrak{o} \oplus \mathfrak{o} j$.  
 
The map $\iota$ coincides with the map \eqref{iotadef0} after identifying $\cE$ with
maximal orders in the associated quaternion algebras, as specified prior to \eqref{iotadef0}.
%
As  in \eqref{psibrackdef}, the  image of $\psi$ under the pushforward map  
 $ \iota_\ast: R[\Pic(\mathfrak{o})] \longrightarrow \Div(\cE) \otimes R$
 is denoted by
  $   [\psi] := \iota_* (\psi)    \in \Div(\cE) \otimes R.$
  
  Let $q$ be an auxiliary rational prime which does not divide $D N$. 
A {\em Heegner point} on $X_0(q)(\bar{K})$ attached to  $\fO$ is a pair   $(A,C)$  where $A$ an elliptic curve over $\bar{K}$ equipped with a cyclic subgroup $C \subset A$ of  order $q$, 
 for which both $A$ and $A/C$ belong to $\cE_{\fO}$. 
The set $\cE_{\fO}^{(q)}$ of Heegner points on $X_0(q)(\bar{K})$ is non-empty precisely when
the prime $q\nmid D$ is split   in $K/\Q$, i.e.,  when $q = \fq \bar\fq$. It is then
 contained in  $X_0(q)(H)$. 
Just as  above, the choice of a prime $\mathfrak{N}$ of 
$\cO_H$ 
induces  reduction maps $\cE_{\fO}^{(q)} \rightarrow \cE^{(q)}$.

The set $\cE_{\fO}^{(q)}$ is equipped with the two degeneracy maps
$$ \pi_1,\pi_2: \cE_{\fO}^{(q)} \lra  \cE_{\fO}; \qquad
 \pi_1(A,C) = A, \quad \pi_2(A,C) = A/C,$$
obtained by restricting the corresponding degeneracy maps $X_0(q) \rightarrow X(1)$. 
The choice of  a prime divisor $\fq$ of $q$  determines a section  $\eta_\fq: \cE_{\fO} \lra \cE_{\fO}^{(q)}$ 
of $\pi_1$ by setting
 $$ \eta_\fq(A) =  \tilde{A} := (A, A[\fq]).$$
 Observe that the action of $\Pic(\fO)$ on $\cE_{\fO}$ 
satisfies
\begin{equation}
\label{eqn:action-compatibility}
A_{\fa\fq} = \pi_2(\eta_\fq(A_{\fa})).
 \end{equation}

  \begin{definition}\label{def-unit}
The {\em elliptic unit} attached to $\psi$ and $\fq$ is the element
\begin{equation} \label{upsiqdef} u_{\psi,q}  = \sum_{\fa \in {\rm Pic}(\fO)} u_q(\eta_{\fq}(A_\fa)) \otimes \psi(\fa)  \in H^{\times} \otimes R,\end{equation}
with $u_{q}$ the modular unit defined in 
 \eqref{eqn:modular-unit-q}.
\end{definition}

If $\psi$ is non-trivial, then $u_{\psi,q}$ belongs to $\cO_H^\times \otimes R$  and more precisely to its $\psi$-isotypical component, that is to say: 
\begin{equation}
\label{eqn:kl}
g \cdot u_{\psi,q} = \psi^{-1}(g) u_{\psi, q}, \qquad \mbox{ for  all } g \in \mathrm{Gal}(H/K). 
\end{equation}
(Cf.~\cite[\S 11, Thms.~1.1.~and~1.2]{kubert-lang}.) 
Note that on the  left hand side of \eqref{eqn:kl},
 $g$ acts on $H$ in the natural way. On the right-hand side,  $\psi$
is understood as a character of $\mathrm{Gal}(H/K)$ through the isomorphism
$\mathrm{Gal}(H/K) \simeq \mathcal{C}$  through which this Galois group acts on $\cE_{\fO}$, and
$\psi^{-1}(g)\in R^\times$ acts by multiplication on the second factor in the tensor 
product $\cO_H^\times \otimes R$.
 If $\psi=1$ then $u_{\psi,q}$ may fail to be a unit at the primes above $q$
but this case will not arise.

 The following proposition plays a key role in the proof of Conjecture \ref{conj:HV} for CM forms described in the next section,
since it is via this result that the relevant Stark unit makes its appearance.
\begin{proposition}
\label{prop:elliptic-units-mt}

For all characters $\psi: \mathcal{C} \rightarrow R^{\times}$, and all split primes $q = \fq\bar\fq$ as above, 
we have an equality in $R/p^t$: 
$$(1-\psi(\bar{\fq}))   \times  \langle \Sigma_1, [\psi] \rangle =    -\frac{1}{6} \log(u_{\psi,\fq}), $$
where  $\Sigma_1 \in \Div(\cE) \otimes \Z/p^t\Z$ is the higher Eisenstein element of Theorem \ref{thm:sigma-g},
and we wrote
$
\log: \cO_H^\times \otimes R \,\lra \, R/p^t
$
for the composition of the reduction map $\cO_H^\times \ra (\cO_H/\mathfrak{N})^\times \simeq \F_{N^2}^\times$ with discrete logarithm fixed at the outset.\footnote{This
discrete logarithm was defined on $(\Z/N\Z)^{\times}$ but uniquely extends to $\F_{N^2}^{\times}$. }

\end{proposition}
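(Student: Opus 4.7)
The plan is to transfer the pairing from the level-one supersingular module $\bD$ to the level-$q$ supersingular module $\bD^{(q)}$ via the section $\eta_{\fq}$, so as to bring Theorem \ref{thm:modular-unit-q} into play. Concretely, I will pair both sides of the congruence
\[ \pi_1^*(\Sigma_1) - \pi_2^*(\Sigma_1) \equiv -\tfrac{1}{6} \Log(u_q) \pmod{\Sigma_0^{(q)}} \]
against the divisor $\eta_{\fq\ast}([\psi]) \in \Div(\cE^{(q)})\otimes R$, where $\eta_{\fq\ast}([\psi]) = \sum_{\fa \in \mathcal{C}} \psi(\fa)\, e_{\eta_{\fq}(A_\fa)}$ (using the reduction at $\mathfrak{N}$ to land in $\cE^{(q)}$).

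The left-hand side of the pairing simplifies by the adjointness $\langle \pi_i^* x,\, y\rangle_q = \langle x,\, \pi_{i\ast} y\rangle$. First, $\pi_1 \circ \eta_{\fq} = \mathrm{id}$ directly gives $\pi_{1\ast}\eta_{\fq\ast}([\psi]) = [\psi]$. Second, the compatibility \eqref{eqn:action-compatibility} gives $\pi_2(\eta_{\fq}(A_\fa)) = A_{\fa\fq}$; reindexing $\fb = \fa\fq$ and using that $\psi$ is anticyclotomic (so $\psi(\fq^{-1}) = \psi(\bar{\fq})$ as classes in $\mathcal{C}$, since $\fq\bar{\fq} = (q)$ is principal) yields $\pi_{2\ast}\eta_{\fq\ast}([\psi]) = \psi(\bar{\fq})\, [\psi]$. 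Subtracting gives
\[ \langle \pi_1^*(\Sigma_1) - \pi_2^*(\Sigma_1),\, \eta_{\fq\ast}([\psi]) \rangle = (1 - \psi(\bar{\fq}))\, \langle \Sigma_1, [\psi]\rangle. \]

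For the right-hand side, unfolding the definition of $\Log$ from \eqref{Log-def2} and the pairing $\langle e_x, e_y\rangle = w_x \delta_{xy}$ gives
\[ \langle \Log(u_q),\, \eta_{\fq\ast}([\psi]) \rangle = \sum_{\fa \in \mathcal{C}} \psi(\fa)\, \log\bigl(u_q(\eta_{\fq}(A_\fa))\bigr), \]
which is exactly $\log(u_{\psi,\fq})$ by Definition \ref{def-unit} and the description of the map $\log : \cO_H^\times \otimes R \to R/p^t$ in the statement. Finally, the ambiguity ``mod $\Sigma_0^{(q)}$'' in Theorem \ref{thm:modular-unit-q} is harmless here because $\langle \Sigma_0^{(q)},\, \eta_{\fq\ast}([\psi]) \rangle = \sum_\fa \psi(\fa) = 0$ whenever $\psi \neq 1$ (the trivial case of the proposition being vacuous, as both sides vanish thanks to the factor $1 - \psi(\bar{\fq})$). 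Combining these three computations yields the identity.

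I do not expect any serious obstacle: the argument is essentially bookkeeping built on top of Theorem \ref{thm:modular-unit-q}, which already does the heavy lifting. The only point that merits care is the sign/orientation convention encoded in the anticyclotomic identity $\psi(\fq^{-1}) = \psi(\bar{\fq})$ and the compatibility with the chosen prime $\mathfrak{N}$ of $H$ above $N$; both are already present in \S\ref{sec:elliptic-units}, so no new input is needed.
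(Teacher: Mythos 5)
Your proposal is correct and follows essentially the same route as the paper's own proof: both arguments amount to pairing the congruence of Theorem \ref{thm:modular-unit-q} against $\eta_{\fq\ast}([\psi])$, using adjointness of $\pi_i^*$ and $\pi_{i\ast}$ together with the identities $\pi_1\circ\eta_{\fq}=\mathrm{id}$ and $\pi_2(\eta_{\fq}(A_{\fa}))=A_{\fa\fq}$, and killing the $\Sigma_0^{(q)}$ ambiguity via $\sum_{\fa}\psi(\fa)=0$. The only (minor, shared with the paper) caveat is the case $\psi=1$: there the left-hand side vanishes but your claim that the right-hand side also vanishes is not justified; this case is never used, since in the application $\psi=\psi_1^2\neq 1$.
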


\begin{proof}
  Recall that $A$ is a fixed basepoint for $\cE_{\mathfrak{o}}$ and $[\psi] = \sum_{I \in \mathcal{C}} \psi(I) A_{I}$. We may write:
  \begin{eqnarray*}
  (1-\psi(\bar{\fq}))  \langle \Sigma_1, [\psi] \rangle 
   &=& \sum_{I \in \Pic(\fO)} (\psi(I) - \psi(I\bar{\fq}))  \langle \Sigma_1, A_{I} \rangle  \\
  &=&  \sum_{I \in \Pic(\fO)} \psi(I)  \langle \Sigma_1, A_{I} -   A_{I \fq} \rangle. 
  \end{eqnarray*}
   Letting ${\tilde A_I} := \eta_\fq(A_{I})$, we have, by  \eqref{eqn:action-compatibility},
   $$ A_I - A_{I\fq} = (\pi_1-\pi_2)_\ast(\tilde A_I),$$ and hence, 
   by invoking Theorem  \ref{thm:modular-unit-q},
 \begin{eqnarray*}
  (1-\psi(\bar{\fq}))  \langle \Sigma_1, [\psi] \rangle &=&   \sum_{I \in \Pic(\fO)} \psi(I) \langle (\pi_1^\ast - \pi_2^\ast) \Sigma_1, {\tilde A}_{I}  \rangle \\
  &=&  -\frac{1}{6}\cdot  \sum_{I \in \Pic(\fO)} \psi(I) \langle \Log(u_q), {\tilde A}_{I}  \rangle,
  \end{eqnarray*}
    with the pairings the natural ones on $\Div(\cE^{(q)})$. 
   The latter expression is equal to  $-\frac{1}{6} \langle \Log(u_q),[\psi]\rangle =   - \frac{1}{6} \log(u_{\psi,\fq})$, 
   the equality taking place in $R/p^t$: 
    \begin{eqnarray*} 
    \langle \Log(u_q),[\psi]\rangle & \stackrel{\eqref{Log-def2}}{=}&
     \sum_{I \in \mathcal{C}}   \log u_q(\iota \circ \eta_{\fq}(A_I) )  \psi(I)   \\
   &=&  \log \left(  \sum_{I \in \mathcal{C}}   u_q(\iota \circ \eta_{\fq}(A_I) ) \otimes  \psi(I)\right) \\ 
   &=&  \log   \mathrm{red}_{\mathfrak{N}} \sum_{I \in \mathcal{C}} u_q(\eta_{\fq}(A_I)) \otimes \psi(I) =  \log(u_{\psi,\fq}).
\end{eqnarray*}
           \end{proof}
           
%

\begin{remark}
One can replace the algebra $M_2(\Q)$ in the above considerations by a non-split, indefinite
quaternion algebra $D_M$ over $\Q$, of discriminant $M>1$ say, 
which is associated to a Shimura curve $X_M$  arising from a co-compact subgroup
of $\SL_2(\R)$. Given a prime $N\nmid M$, the module $\cE_{M,N}$ of supersingular points of $X_M$
in characteristic $N$ is identified with the space of functions on a finite double coset space attached
to the definite quaternion algebra $D_{MN}$ of discriminant $MN$. If $\psi$ is a character of the class group of
a quadratic imaginary field $K$ in which all the primes dividing $MN$ are inert, one can define an associated
vector $[\psi]\in \cX_{M,N}$ much as in the case where $M=1$.  The space $\cX_{M,N}$ contains an Eisenstein
eigenvector $\Sigma_0$,  whose value on a double coset is equal to the cardinality of its stabiliser subgroup.
Theorems 1.2 and 1.3 of \cite{hwajong-yoo}  show  that the Hecke algebra $\TT_{MN}$ 
acting on $\cX_{M,N}$ is equipped with an Eisenstein homomorphism $\tilde\varphi_{\rm Eis}$ as
in 
\eqref{eqn:phi-eis}
with $\TT$ replaced by $\TT_{MN}$, 
and
 suggest that,  if $p>3$ is a prime 
with $p^t || N-1$,  the module 
 $\cX_{M,N} \otimes (\Z/p^t\Z)$ contains a  generalised Eisenstein eigenvector 
$\Sigma_1$ attached to a choice of discrete logarithm $\log:\F_N^\times \lra \Z/p^t\Z$, satisfying
$$ (T_\ell - (\ell+1)) \Sigma_1 = (\ell-1) \log(\ell) \Sigma_0.$$
Does such a $\Sigma_1$,  when it  exists, satisfy an analogue of  Proposition
\ref{prop:elliptic-units-mt} relating $\langle \Sigma_1, [\psi]\rangle$ to the discrete logarithm of the elliptic unit $u_\psi$, which does not depend on $N$? Such a relationship would be intriguing in light of the fact that the arithmetic subgroup of
$\SL_2(\R)$ defining $X_M$ has no parabolic elements and hence there are no modular units
on  $X_M$ that 
could be parlayed into a direct construction of $\Sigma_1$. 
\end{remark}
 
 \subsection{Proof of Conjecture \ref{conj:HV} for definite theta series}
  
 We now restrict to the case $D$ prime; however, 
 as we comment in the statements,    the proofs verbatim give results for $D$ odd under further restrictions on $N$. 
 
 \label{sec:proof-cm2}  
 We   let $g=\theta_{\psi_1}$ be the associated $\theta$ series.  It is a cusp form by virtue of the assumption that $\psi_1 \neq \psi_1^{-1}$. 
 The Galois representation $\rho_g$ is the induction to $G_{\Q}$ of the finite order
 character $\psi_1$. 
Let $$G\in M_2(\Gamma_0(N)) = \mathsf{Tr}^{ND }_{N} g(z)g^*(Nz)$$ denote the modular form defined as the trace to the space of modular forms of weight $2$ and level $N$  of the product $g(z)g^*(Nz)= \theta_{\psi_1}(z) \theta_{\psi_1^{-1}}(Nz)$. 

%
Recall from \eqref{Theta-cor} the $\Theta$-correspondence
$$  \Theta: \Div(\cE) \otimes_{\mathbb{T}(N)} \Div(\cE) \rightarrow M_2(\Gamma_0(N)).$$
In particular, this induces a map on localizations at the Eisenstein ideal $\mathfrak{m}$, and it follows from 
   \cite[Theorem 0.5]{Em}  that the resulting map  is an isomorphism   of free $\mathbb{T}=\mathbb{T}(N)_{\mathfrak{m}}$-modules of rank one. 
Write
$$  \Tp = \mathbb{T}/p^t, \quad \bar \bD = \Div(\cE)_{\mathfrak{m}}/p^t, \quad \bar\bM = M_2(\Gamma_0(N)_{\mathfrak{m}}/p^t, \quad
\bar\bS = S_2(\Gamma_0(N))_{\mathfrak{m}}/p^t.$$
 Then, reducing $\Theta$ modulo $p^t$, we obtain an isomorphism 
\begin{equation} \label{Thetadef0} \Theta:  \bar\bD  \otimes_{\Tp} \bar \bD \simeq  \bar{\bM},\end{equation}
with associated adjoint
\begin{equation} \label{Thetadef1}  \Theta^*: \bar{\bM}^* \simeq (\bar\bD  \otimes_{\Tp} \bar \bD )^*.\end{equation}
Here $*$ denotes $\Hom(-, \Z/p^t)$. 
%
%
%

The strategy of the proof of Conjecture \ref{conj:HV}, as outlined in \S \ref{outline},  is to express the inner product
 $\langle G,  \fS \rangle$    as an inner product on $\bar \bD \otimes \bar \bD$ via 
$\Theta$.  
It follows from Theorem \ref{thm:waldspurger-garrett-CM-general} that
\begin{equation}
\label{eq:waldspurger-garrett-CM}
\langle G,\mathfrak{S}\rangle = 4 \cdot \langle \Theta([1] \otimes [\psi]), \mathfrak{S} \rangle =
4 \langle [1]\otimes [\psi], \Theta^*(\mathfrak{S})\rangle.
\end{equation}
Here we regard the equality as occurring inside $R/p^t$,
and we regard $\mathfrak{S} \in \bar{\bM}^*$ and $[1] \otimes [\psi] \in \bar{\bD} \otimes_{\Tp} \bar{\bD}$. 
 We now need:

\begin{theorem}
\label{thm:shimura-class}
Let $\fS_0$ and $\fS_1 \in \bar\bM^*$ denote the Eisenstein and higher classes
described in Section \ref{sec:he-Mdual}
and let 
$\Sigma_0$ and $\Sigma_1$ denote the analogous classes in $\bar \bD$ described in 
Section \ref{sec:supersingular}.
Then
\begin{enumerate}
\item
$\Theta^*(\fS_0) = \frac{1}{2} \Sigma_0 \otimes \Sigma_0$;
\item  $
 \Theta^* (\fS_1) \equiv    \frac{1}{2} (\Sigma_1 \otimes \Sigma_0 + \Sigma_0 \otimes \Sigma_1) \mbox{ modulo } \Sigma_0 \otimes \Sigma_0.$    \end{enumerate}
\end{theorem}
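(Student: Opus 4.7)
The plan is to exploit the fact that $\Theta^{*}: \bar{\bM}^{*} \to (\bar{\bD} \otimes_{\Tp} \bar{\bD})^{*}$ is an isomorphism of free rank-one $\Tp$-modules (this comes from Emerton's theorem already invoked for \eqref{Thetadef0}). Such an isomorphism must send the Eisenstein line to the Eisenstein line and the higher Eisenstein element to a higher Eisenstein element; the only work is then to identify these canonical classes concretely on the right-hand side, and to pin down a single proportionality constant.

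First I would set up the identification. The Hecke-self-adjoint pairing \eqref{divpair} induces a $\Tp$-module isomorphism $\bar{\bD} \isoarrow \bar{\bD}^{*}$ sending $x$ to $\langle x, -\rangle$. A direct check shows $\Sigma_{0}$ corresponds to the Eisenstein functional $e_{y} \mapsto 1$; and because Hecke acts self-adjointly, the defining relation $(T_{\ell}-\ell-1)\Sigma_{1} = (\ell-1)\log(\ell)\Sigma_{0}$ transports to the analogous relation for $\langle \Sigma_{1}, -\rangle$, so $\Sigma_{1}$ corresponds to a higher Eisenstein class in $\bar{\bD}^{*}$. Tensoring and using that $\bar{\bD}^{*} \otimes_{\Tp} \bar{\bD}^{*} \isoarrow (\bar{\bD}\otimes_{\Tp}\bar{\bD})^{*}$ for free rank-one modules over a Gorenstein ring, Proposition~\ref{prop:he-tensor} identifies $\Sigma_{0} \otimes \Sigma_{0}$ with the Eisenstein element of $(\bar{\bD}\otimes_{\Tp}\bar{\bD})^{*}$ and $\Sigma_{0} \otimes \Sigma_{1} + \Sigma_{1} \otimes \Sigma_{0}$ with a higher Eisenstein element modulo the Eisenstein line.

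Next, for (1), I would compute the normalising constant by a direct evaluation. Using that $\fS_{0}$ is the constant-term functional together with the explicit formula \eqref{Theta-def}, for any supersingular points $x,y$,
$$\Theta^{*}(\fS_{0})(e_{x}\otimes e_{y}) = a_{0}\bigl(\Theta(e_{x}\otimes e_{y})\bigr) = \tfrac{1}{2}\langle e_{x},\Sigma_{0}\rangle \langle e_{y},\Sigma_{0}\rangle = \tfrac{1}{2},$$
whereas $(\Sigma_{0}\otimes \Sigma_{0})(e_{x}\otimes e_{y}) = 1$ under the pairing identification. This gives $\Theta^{*}(\fS_{0}) = \tfrac{1}{2}\Sigma_{0}\otimes \Sigma_{0}$.

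Finally, for (2), the same constant is forced on the higher Eisenstein part by functoriality of the recursion. Apply $\Theta^{*}$ to $(T_{\ell}-\ell-1)\fS_{1} = (\ell-1)\log(\ell)\fS_{0}$ from Theorem~\ref{thm:shimura-hes} to obtain
$$(T_{\ell}-\ell-1)\Theta^{*}(\fS_{1}) = \tfrac{\ell-1}{2}\log(\ell)\,\Sigma_{0}\otimes \Sigma_{0}.$$
A direct computation using $(T_{\ell}-\ell-1)\Sigma_{1} = (\ell-1)\log(\ell)\Sigma_{0}$ shows that the class $\tfrac{1}{2}(\Sigma_{1}\otimes\Sigma_{0} + \Sigma_{0}\otimes\Sigma_{1})$ satisfies exactly the same relation. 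Since the higher Eisenstein element is uniquely determined modulo the Eisenstein line by this recursion, the two classes must agree modulo $\Sigma_{0}\otimes\Sigma_{0}$, which is the claim. The only point requiring any care is the compatibility of the pairing identification with passage to the tensor product over $\Tp$, and this is where the Gorenstein hypothesis on $\mathbb{T}$ is used.
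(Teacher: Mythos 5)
Your proposal is correct and follows essentially the same route as the paper: part (1) by reading off the constant term from the explicit formula \eqref{Theta-def}, and part (2) by Hecke equivariance of $\Theta^*$ combined with Proposition \ref{prop:he-tensor} and the uniqueness of the higher Eisenstein element modulo the Eisenstein line. The preliminary identifications via the self-adjoint pairing \eqref{divpair} and the Gorenstein property are exactly those the paper records in the remarks surrounding the theorem.
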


Here we used the pairing $\langle \ , \ \rangle$ given in \eqref{divpair}
to identify $\bar \bD \simeq (\bar \bD)^*$;
we also used the inclusion $\left( \bar{\bD}\otimes_{\Tp} {\bar \bD} \right)^*
\subset \left( \bar{\bD}^* \otimes_{\Z/p^t} {\bar \bD}^* \right)$ to describe
elements of the left hand group, just as was done in 
Proposition  
\ref{prop:elliptic-units-mt}.

\begin{proof}
The first part of the theorem follows directly from the definition of
$\Theta$ given in \eqref{Theta-cor}. The second follows from the Hecke equivariance of $\Theta^*$, in light of the fact that $\Sigma_1\otimes \Sigma_0 + \Sigma_1\otimes \Sigma_0$ is the higher Eisenstein element in
$(\bar\bD\otimes_{\TT} \bar\bD)^*$ attached to $\Sigma_0\otimes \Sigma_0$,
by Proposition \ref{prop:he-tensor}.
\end{proof}

 We now choose an auxiliary prime ideal $\fq$ so that $\psi(\fq)$ is a primitive
root of unity of order equal to the order of $\psi$.


 \begin{proposition}
 \label{prop:almost-final-cm}
There is an equality inside $R/p^t$ 
\begin{equation}
\label{eqn:almost-final-cm}
(1-\psi(\bar{\fq})) \langle G, \mathfrak{S} \rangle = \frac{ -h(\fO)}{3}  \log(u_{\psi,\fq}),
\end{equation}
where $u_{\psi, \fq}$ is the elliptic unit defined in 
  \eqref{def-unit}, and $h(\fO)$ is the order of the class group $\mathcal{C}$. 
\end{proposition}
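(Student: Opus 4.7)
The plan is to assemble Proposition \ref{prop:almost-final-cm} directly from the machinery developed in the previous sections — in essence, it is a bookkeeping exercise combining the trace identity, the adjoint of the theta lift applied to the Shimura class, and Proposition \ref{prop:elliptic-units-mt}.

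First I would start from \eqref{eq:waldspurger-garrett-CM}, which already rewrites
$$\langle G, \mathfrak{S}\rangle \;=\; 4 \cdot \langle [1]\otimes[\psi],\, \Theta^*(\mathfrak{S})\rangle$$
inside $R/p^t$. By Theorem \ref{thm:shimura-hes}, the Shimura class $\mathfrak{S}$ is the higher Eisenstein element $\fS_1 \in \bar{\bM}^*$, so I may replace $\Theta^*(\mathfrak{S})$ with $\Theta^*(\fS_1)$. By Theorem \ref{thm:shimura-class}(2), this equals $\tfrac{1}{2}(\Sigma_1\otimes \Sigma_0 + \Sigma_0 \otimes \Sigma_1)$ modulo a multiple of $\Sigma_0\otimes\Sigma_0$.

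Second, I would evaluate the pairing $\langle [1]\otimes[\psi], \cdot \rangle$ on each of the three terms. The point is that $\psi = \psi_1/\psi_1'$ is a non-trivial character of $\mathcal{C}$ (by the hypothesis $\psi_1^2 \neq 1$), and hence
$$\langle [\psi], \Sigma_0\rangle \;=\; \sum_{I \in \mathcal{C}} \psi(I) \;=\; 0,$$
while $\langle [1], \Sigma_0\rangle = h(\fO)$. Consequently the contribution of $\Sigma_1\otimes\Sigma_0$ vanishes, and the correction coming from the ambiguity modulo $\Sigma_0\otimes\Sigma_0$ also vanishes, since it picks up $\langle [\psi],\Sigma_0\rangle = 0$. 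Only the $\Sigma_0\otimes\Sigma_1$ term survives, yielding
$$\langle G, \mathfrak{S}\rangle \;=\; 4 \cdot \tfrac{1}{2} \cdot \langle [1],\Sigma_0\rangle \cdot \langle [\psi],\Sigma_1\rangle \;=\; 2\, h(\fO) \cdot \langle [\psi],\Sigma_1\rangle.$$

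Finally, I would multiply by $(1-\psi(\bar{\fq}))$ and apply Proposition \ref{prop:elliptic-units-mt}, which gives
$$(1-\psi(\bar{\fq})) \cdot \langle \Sigma_1, [\psi]\rangle \;=\; -\tfrac{1}{6} \log(u_{\psi,\fq}).$$
Combining, $(1-\psi(\bar{\fq}))\langle G,\mathfrak{S}\rangle = 2h(\fO)\cdot(-\tfrac{1}{6})\log(u_{\psi,\fq}) = -\tfrac{h(\fO)}{3}\log(u_{\psi,\fq})$, as desired.

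There is no real obstacle: the only subtlety is keeping track of the identification $\bar{\bD}\simeq \bar{\bD}^*$ used to view $\Theta^*(\mathfrak{S})$ inside $\bar{\bD}\otimes \bar{\bD}$, and making sure the $\Sigma_0\otimes\Sigma_0$ ambiguity in Theorem \ref{thm:shimura-class}(2) is annihilated against $[1]\otimes[\psi]$ — which it is, precisely because $\psi$ is non-trivial. All the non-trivial analytic and geometric input (the see-saw relation with Heegner cycles, the identification of $\Theta^*(\fS_1)$, and the Kronecker-limit style formula for the elliptic unit) has been absorbed into the cited theorems.
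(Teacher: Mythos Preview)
Your proposal is correct and follows essentially the same route as the paper's own proof: start from \eqref{eq:waldspurger-garrett-CM}, apply Theorem~\ref{thm:shimura-class}(2), use the vanishing $\langle [\psi],\Sigma_0\rangle=0$ (since $\psi$ is nontrivial) to kill both the $\Sigma_1\otimes\Sigma_0$ term and the $\Sigma_0\otimes\Sigma_0$ ambiguity, and then invoke Proposition~\ref{prop:elliptic-units-mt}. The paper is slightly terser but the argument is identical.
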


Note that, for $D$ odd but not assumed prime,
the same conclusion holds true with the following caveats: 
we suppose not merely that $N$ is inert in $\Q(\sqrt{-D})$, but
that $-N$ is a square modulo $D$; and, owing to the denominators
potentially introduced in \S \ref{Shimuraproof},  it 
is only valid for $p$ sufficiently large, in the sense of Theorem 
\ref{thm:waldspurger-garrett-CM-general}.

\begin{proof}
By \eqref{eq:waldspurger-garrett-CM} and part (2) of Theorem \ref{thm:shimura-class},
     $$ \langle G, \mathfrak{S}\rangle = 
2 \langle   [1]\otimes [\psi],  \Sigma_0\otimes \Sigma_1 + \Sigma_1\otimes \Sigma_0 \rangle
    =2 \langle \Sigma_0, [1]\rangle \langle \Sigma_1, [\psi] \rangle $$
   where we have used the fact that   $\langle \Sigma_0, [\psi]\rangle =0$ since $\psi$ is non-trivial. Since $\langle \Sigma_0, [1]\rangle = h(\fO)$ by definition,
the theorem now follows from  
Proposition \ref{prop:elliptic-units-mt}
\end{proof}   
 
 To prove Theorem \ref{thm:main} of the introduction for CM weight one forms, it remains to relate the right-hand side of  
\eqref{eqn:almost-final-cm} to the expression $\red_N(u_g)$ occuring in this theorem;
this is done by the following Lemma.

\begin{lemma}
\label{lemma:regulator-cm}
Let $U_g  := ( \cO_H^\times \otimes \Ad^*(\rho_g)^\circ)^{G_\Q}$. 
There exists $u_g \in U_g$ with the property that, for all $N$ as above, 
$$ \log(\red_N(u_g)) = 2 \log(u_{\psi,\fq}).$$
\end{lemma}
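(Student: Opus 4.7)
The plan is to construct $u_g$ explicitly from $u_{\psi,\fq}$ by Frobenius reciprocity, with the desired factor of $2$ emerging naturally from the two terms produced by the induction process.

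First I would unwind $U_g$. Since $\rho_g = \mathrm{Ind}_K^{\Q}(\psi_1)$, the decomposition $\Ad(\rho_g) = \chi_K \oplus \mathrm{Ind}_K^{\Q}(\psi)$ (with $\chi_K$ the quadratic character of $K/\Q$ and $\psi = \psi_1/\psi_1'$) induces a decomposition of $U_g$. Because $K$ is imaginary, its units are torsion, so the $\chi_K$-summand contributes only torsion to $U_g$ and may be disregarded; modulo torsion $U_g$ is determined by the $\mathrm{Ind}(\psi)$-part. By Frobenius reciprocity this part is canonically isomorphic to $(\cO_H^\times \otimes \psi^{-1})^{G_K}$, the $\psi$-isotypic subspace of $\cO_H^\times \otimes R$; the elliptic unit $u_{\psi,\fq}$ already lies there by \eqref{eqn:kl}.

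Next I would write $u_g$ explicitly. Fix a basis of $V_g$ diagonalising $\rho_g|_{G_K}$ as $(\psi_1,\psi_1')$, so the off-diagonal elementary matrices $E_{01},E_{10}$ span the $\mathrm{Ind}(\psi)$-factor of $\Ad(\rho_g) \cong \Ad^*(\rho_g)$, identified through the Galois-equivariant trace pairing $\langle X,Y\rangle = \mathrm{tr}(XY)$. Choose $\sigma_N$ to be a Frobenius at the prime $\mathcal N$ above $N$; since $N$ is inert in $K$, this is a lift of the nontrivial element of $\mathrm{Gal}(K/\Q)$. A short matrix computation gives $\rho_g(\sigma_N) = E_{01} + a E_{10}$ and $\sigma_N\cdot E_{01} = aE_{10}$, for $a := \psi_1(\sigma_N^2)\in R^\times$ (a root of unity). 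Setting
$$
u_g \; := \; a^{-1}\bigl(\, u_{\psi,\fq}\otimes E_{01} \; + \; \sigma_N(u_{\psi,\fq})\otimes \sigma_N(E_{01}) \,\bigr),
$$
each summand is $G_K$-invariant by \eqref{eqn:kl}, and $\sigma_N$ exchanges the two (using $\sigma_N^2 \in \ker\psi$, which follows from comparing $\rho_g(\sigma_N)^2$ with $\rho_g(\sigma_N^2)$), so $u_g$ is $G_\Q$-invariant. The $a^{-1}$ is a $R^\times$-scalar included to normalise the final answer, and integrality of $u_g$ in the lattice $\Ad^*(\rho_g)^\circ$ can be ensured by any further clearing of denominators that the proof of existence permits.

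Finally I would evaluate $\red_N(u_g)$. Using the trace pairing one computes directly
$$
\mathrm{tr}\bigl(E_{01}\cdot \rho_g(\sigma_N)\bigr) \;=\; \mathrm{tr}\bigl(aE_{10}\cdot \rho_g(\sigma_N)\bigr) \;=\; a,
$$
so the evaluation $\mathrm{ev}_{\rho_g(\sigma_N)}(u_g) = u_{\psi,\fq} + \sigma_N(u_{\psi,\fq}) \in \cO_H^\times \otimes R$ (additively). Since $\sigma_N$ stabilises $\mathcal N$ and induces the $N$-power Frobenius on $\cO_H/\mathcal N = \F_{N^2}$, we have $\log\mathrm{red}_\mathcal{N}(\sigma_N u_{\psi,\fq}) = N\log u_{\psi,\fq}$, which equals $\log u_{\psi,\fq}$ in $R/p^t$ because $p^t\mid N-1$. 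Adding the two contributions yields
$$
\log\mathrm{red}_N(u_g)\;=\;(1+N)\,\log u_{\psi,\fq}\;=\;2\log u_{\psi,\fq}
$$
in $R/p^t$, as required. The only delicate point in the argument is verifying that $u_g$ genuinely lives in the integral lattice $\cO_H^\times\otimes\Ad^*(\rho_g)^\circ$; this is the main thing to pin down, but since $a$ and the basis vectors $E_{01},E_{10}$ are integral up to $R^\times$-scalars, it should reduce to a direct check within the integral group ring.
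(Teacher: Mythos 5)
Your construction is the same as the paper's: the explicit element $u_g = u_{\psi,\fq}\otimes E_{01} + \sigma_N(u_{\psi,\fq})\otimes \sigma_N(E_{01})$ (the paper takes $a=\psi_1(\sigma_N^2)=1$ since $\psi_1$ is an unramified class-group character and $\sigma_N^2$ is Frobenius of the principal ideal $(N)$), evaluation against $\rho_g(\sigma_N)$ via the trace form to get $u_{\psi,\fq}+\sigma_N(u_{\psi,\fq})$, and the observation that $\sigma_N$ acts on $\F_{N^2}^\times$ by $x\mapsto x^N$ with $N\equiv 1 \pmod{p^t}$, giving the factor $2$. The Frobenius-reciprocity preamble and the flagged integrality check are handled equally lightly in the paper, so the proposal is correct and essentially identical in approach.
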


This lemma concludes  the proof of Theorem \ref{thm:main},
after multiplying equality \eqref{eqn:almost-final-cm} by  $\frac{-6n}{1-\psi(\bar{\fq})} \in R$
with $n$ the norm of $1-\psi(\bar{\fq})$: 
$$ (-6n)  \langle G, \mathfrak{S} \rangle = \log(u_g'), \qquad
u_g' :=  \frac{-h(\mathfrak{o}) n}{ (1-\psi(\mathfrak{q}))}  \cdot u_g.
$$
where, in the last equality, we are implicitly using the $R$-module structure
on $U_g$ to form the product. 

\begin{proof}
For typographical simplicity we write just $u_{\psi}$ instead of $u_{\psi, \fq}$. 

Let $e_1$ be an eigenvector in $V_g$ for the action of $G_K$, on which $G_K$ acts via the character 
$\psi_1$. Since $N$ is inert in $K$, the associated Frobenius automorphism $\sigma_N\in G_{\Q}$ 
sends $e_1$ to a complementary vector $e_2 = \sigma_N(e_1)$, on which $G_K$ acts via the character 
$\psi_1'$.  Since $\sigma_N$ has determinant $-1$,  it then sends $e_2$ to $ e_1$. 
Representing  elements of ${\rm Ad}(V_g)$ as matrices relative to the basis $(e_1,e_2)$, 
so that 
$$  \rho_g(x) =\left(\begin{array}{cc} \psi_1(x) & 0   \\ 0 & \psi_1(x)^{-1} \end{array}\right), \ \ \mbox{ for } x \in G_K, \qquad \mbox{ and } \  \rho_g(\sigma_N) = \left(\begin{array}{cc} 0 & 1 \\ 1 & 0 \end{array}\right),$$
and using the  trace form to identify $\Ad(V_g)$ with its dual $\Ad^*(V_g)$, 
 we define
$u_g \in  U_g$  via
\begin{equation} \label{ugCMdef}  u_g := 
u_\psi\otimes  \left(\begin{array}{cc} 0 & 1 \\ 0 & 0 \end{array}\right) + \sigma_N(u_\psi) \otimes \left(\begin{array}{cc} 0 & 0 \\ 1 & 0 \end{array}\right).\end{equation}
Note that the matrices above do indeed define functionals on $\Ad(V_g)$ that  
send the image of $R[G_{\Q}]$ to $R$. 
We readily see that $u_g$ is in fact $G_{\Q}$-invariant,
  e.g. $\rho_g(G_K)$ acts on the $u_{\psi}$ through $\psi^{-1}$
and on the upper nilpotent matrix through $\psi=\psi_1^2$. 
 We then compute an equality inside $\mathcal{O}_H^{\times} \otimes R$:
 $$ \red_N(u_g) = \left\langle u_g, \rho_g(\sigma_N)\right\rangle = {\rm Trace} \left( \begin{array}{cc} u_\psi & 0 \\ 0 & \sigma_N(u_\psi) \end{array}\right) = 
 u_\psi + \sigma_N (u_\psi).$$
 (The reader is cautioned that  additive notation for the group law in $\cO_H^\times\otimes R/p^t$ has been used in this last equation.) 
 Since the discrete logarithm mod $N$ is equivariant for the action of $\sigma_N$, which acts trivially on $(\Z/N\Z)^\times$, we obtain the desired equality
 $$  \log(\red_N(u_g)) = \log( u_\psi + \sigma_N u_\psi)  = 2 \log(u_\psi).$$
 \end{proof}

\subsection{Proof of Conjecture \ref{conj:HV} for indefinite theta series}
 \label{sec:proof-rm2}

We now turn to prove  Conjecture \ref{conj:HV} when $g$ is an RM form.
We will be in the situation of \S \ref{main-id-RM} with $\psi_2 =\psi_1^{-1}$. 
More precisely,  let  
$
\psi_1: G_K \, \lra \, R^\times
$
be the finite order character of mixed signature as in the beginning of \S \ref{main-id-RM},
with values in the ring of integers of a finite extension $L$ of $\Q$, such that $g=\theta_{\psi_1}$ is the theta series associated to $\psi_1$ as described in \eqref{classicalO2}.
 Let $N$ be an odd prime and define $G\in S_2(\Gamma_0(N))$ as the
 trace to the space of modular forms of level $N$ of $ \theta_{\psi_1}(z) \theta_{\psi_1^{-1}}(Nz)$.
   As explained in the introduction, {\em the conjecture we address in this note becomes trivial when $N$ remains inert, and hence we assume throughout that it splits in $K$ as $N=\mathfrak{N} \cdot \bar{\mathfrak{N}}$.}

 The proof of Conjecture \ref{conj:HV},  which computes the pairing of $G$ with the Shimura class, again relies
crucially on the  $\Theta$-correspondence, namely the Hecke-equivariant map  
$$  \Theta: H_{1,\Bet}(X_0(N),{\rm cusps}; Z)^+ \otimes_{\mathbb{T}(N)}  H_{1,\Bet}(Y_0(N),Z)^- 
 \rightarrow M_2(N)$$
given by \newcommand{\realthetaconstant}{\frac{-1}{24}}
 \begin{equation}
\label{eqn:def-theta-rm}
 \Theta(\gamma^+ \otimes \gamma^-) = \realthetaconstant  \kappa_0^+(\gamma^+) \cdot
  \kappa_0^-(\gamma^-)   + \sum_{m \geq 1} \langle T_m \gamma^+,  \gamma^- \rangle q^m.
 \end{equation}
 Here $\kappa_0^{\pm}$ are as defined in   \S \ref{sec:he-betti-rel}
 and \S \ref{sec:he-betti}.  Note that the sign of $\realthetaconstant$ depends on orientation conventions implicit
 in the definition of the intersection pairing. 
 
 For lack of a suitable reference we sketch a proof. 
We identify the relative homology group
$H_{1,\Bet}(X_0(N), {\rm cusps}; Z)^+$ with $H^1_{\rm{B}}(Y_0(N), Z)^-$,
 a free $\mathbb{T}(N)$-module of rank one, and thus with $\mathbb{T}(N)$ itself. We can similarly identify $H_{1,\Bet}(Y_0(N), Z)^-$
 with its dual $M_2(N;Z)$. 
Adjusting these identifications if necessary, we can suppose that the Poincar{\'e} pairing $\langle - , - \rangle$
 corresponds to the pairing on $\mathbb{T}(N)\times M_2(N;Z)$ given by
  $(T, f ) \mapsto a_1(Tf)$,
 and $\Theta$ corresponds to $(T, f) \mapsto Tf$.   The formula
 \eqref{eqn:def-theta-rm} follows from this, up to the identification of the constant $\realthetaconstant$. 
 To compute the constant we take $\gamma^+$ the element represented by the geodesic from $0$ to $\infty$,
 and $\gamma^{-}$  a small loop around $\infty$
 and we fix orientations so that $\langle \gamma^+,  \gamma^- \rangle = 1$. 
 In particular,
 $$\langle T_m \gamma^+, \gamma^- \rangle= \sum_{d|m, (d,N)=1} d, \qquad 
\kappa_0^+(\gamma^+) = 1, \qquad \kappa^{-}_0(\gamma^{-}) =  N-1.$$
 The expansion on the right-hand side of \eqref{eqn:def-theta-rm} must represent
 $E_2^{(N)}$, and therefore this fixes the constant as $\realthetaconstant$. 
 
 We note in particular that $\kappa^+_0$ vanishes on the image of $H_{1,\Bet}(X_0(N))$ and so the formula above in
 fact matches with \eqref{eqn:Jgg} used in an earlier section.

As in the CM setting, we can observe that  -- with $\mathfrak{m}$ the Eisenstein ideal as before --
      \begin{itemize}
      \item   the modules $\bH_+ := H_{1,\Bet}(X_0(N),{\rm cusps}; \Z)^+_{\mathfrak{m}}$ and 
      $\bH_-= H_{1,\Bet}(Y_0(N),\Z)^- _{\mathfrak{m}}$, obtained from completing the singular homology of the complex modular curves,  are again   
       free $\TT$-modules of rank $1$.\footnote{Note that we get, by duality,
       an isomorphism of these with the (sign-altered) cohomological analogues: $\bH_+ \simeq \bH^-$ and $\bH_- \simeq \bH^+$, 
       so this result follows from its cohomological analogue.}
       
\item the map $\Theta_{\mathfrak{m}}$ is an isomorphism:
$ (\bH_+ \otimes_{\mathbb{T}} \bH_-) \longrightarrow \mathbb{M}$ and
so (cf. \eqref{Thetadef0}, \eqref{Thetadef1}) we have adjoint maps
\begin{equation} \label{Thetadef2} \Theta:  \bar\bH_+  \otimes_{\Tp} \bar \bH_- \simeq  \bar{\bM}, \ \  \Theta^*: \bar{\bM}^* \simeq (\bar\bH_+  \otimes_{\Tp} \bar \bH_- )^*.\end{equation}
Here bars denote tensoring with $\Z/p^t$ and $*$ denotes $\Hom(-, \Z/p^t)$. 
 
\end{itemize}
  The strategy of the proof of Conjecture \ref{conj:HV} is, much as in the case of CM theta series,
 to express the inner product
 $\langle G,  \mathfrak{S}\rangle$    as an inner product on $\bH_+ \otimes_{\TT}\bH_-$ via 
$\Theta$. 

We will follow the notation of
\S \ref{sec:trace-indef-statement}; in particular $\mathcal{C}$
is the narrow class group of $K$, and 
we have introduced  Heegner cycles $\gamma_I$ attached
to $I \in \mathcal{C}$, as well as weighted combinations
  $\gamma_{\psi}$  in \eqref{gammapsidef}.
  The following proposition plays a key role in the proof of Conjecture \ref{conj:HV} for RM forms, 
 since it is via this result that the relevant Stark unit -- in this case, a fundamental
 unit of the real quadratic field --  makes its appearance.
\begin{proposition}
\label{prop:heegner-cycles-mt}
For all even characters $\psi$ of the narrow Picard group $\mathcal{C}$, 
$$  \kappa_0^+(\gamma_\psi) =  0, \qquad \mbox{and } \quad
 \kappa_1^+(\gamma_\psi) = \begin{cases} 
  - h \log(u_K) & \mbox{ if } \psi =1, \\
0 & \mbox{ if } \psi \ne 1,
\end{cases} 
 $$
where $\kappa_0^+$ and 
  $\kappa_1^+ \in H^1_{\Bet}(X_0(N),{\rm cusps};\Z)^+$ are the Eisenstein and higher Eisenstein elements described in \S \ref{sec:he-betti-rel},
$h$ is the order of the narrow class group $\mathcal{C}$, 
  and $\log(u_K)$ refers to the logarithm of the reduction of $u_K$
  at the chosen divisor $\mathfrak{N}$ of $N$.\footnote{ The definition of $\gamma_\psi$ also depends on the choice of divisor of $N$, although this is not indicated in the notation.   One checks that the identity
  remains valid upon replacing $\mathfrak{N}$ by $\mathfrak{N}'$ on both sides. }
\end{proposition}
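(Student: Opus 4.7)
Both assertions will follow from the explicit descriptions of the classes $\kappa_0^+$ and $\kappa_1^+$ recalled in \S\ref{sec:he-betti-rel}, once each geodesic $\gamma_I$ is identified with a conjugacy class in $\Gamma_0(N)$. The vanishing $\kappa_0^+(\gamma_\psi) = 0$ is immediate: each closed geodesic $\gamma_I$ lies in the image of absolute homology $H_{1,\Bet}(X_0(N);Z) \hookrightarrow H_{1,\Bet}(X_0(N),\{0,\infty\};Z)$, so it is annihilated by $\kappa_0^+ = \partial^*(1)$ by adjunction with the boundary map \eqref{eqn:boundary-hom}.

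For the second assertion, the fact that $\gamma_I$ is absolute means that pairing with $\kappa_1^+$ reduces to evaluating the homomorphism $\bar\kappa_1^+$ on the conjugacy class in $\Gamma_0(N)$ representing $\gamma_I$. The orientation convention of \S\ref{Idef}, together with the observation recorded there that $\alpha_I(\varepsilon)$ translates along $(\tau_I,\tau_I')$ in the direction \emph{opposite} to the chosen orientation, identifies this conjugacy class with that of $\alpha_I(\varepsilon^{-1})$. Writing $\varepsilon = (t + u\sqrt{D})/2$ with $t^2 - Du^2 = 4$ and applying \eqref{alphadef}, a direct computation yields
\[
\alpha_I(\varepsilon^{-1}) = \mat{(t-ub_I)/2}{uc_I}{-ua_I}{(t+ub_I)/2},
\]
and the explicit formula for $\bar\kappa_1^+$ then gives $\bar\kappa_1^+(\alpha_I(\varepsilon^{-1})) = \log\bigl((t-ub_I)/2\bigr)$.

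The crucial input is reduction modulo $\mathfrak{N}$. Since $I$ was chosen divisible by $\mathfrak{N} = (N, \delta_N - \sqrt{D})$ but not by $\mathfrak{N}'$, one has $b_I \equiv \delta_N \pmod{N}$, so $(t-ub_I)/2 \equiv (t - u\delta_N)/2 \pmod{N}$. Under the isomorphism $\mathfrak{o}/\mathfrak{N} \simeq \mathbb{F}_N$ sending $\sqrt{D} \mapsto \delta_N$, this is precisely the image of $\varepsilon^{-1} = (t - u\sqrt{D})/2$; extending $\log$ to $\mathbb{F}_{N^2}^\times$ as in \S\ref{notn}, we obtain $\bar\kappa_1^+(\alpha_I(\varepsilon^{-1})) \equiv \log(u_K^{-1}) = -\log(u_K) \pmod{p^t}$, independently of $I$. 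Summing against $\psi$ and invoking character orthogonality gives $\kappa_1^+(\gamma_\psi) = -h \log(u_K)$ if $\psi = 1$ and $0$ otherwise; the ambiguity by multiples of $\kappa_0^+$ in the definition of $\kappa_1^+$ is absorbed by the first part.

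The main obstacle is the orientation and sign bookkeeping -- in particular, carefully verifying that the observation from \S\ref{Idef} (that $\alpha_I(\varepsilon)$ moves against the orientation of the geodesic) translates into replacing $\alpha_I(\varepsilon)$ by $\alpha_I(\varepsilon^{-1})$ when representing $\gamma_I$ in homology. This sign reversal is precisely what forces the minus sign in the final formula $-h\log(u_K)$.
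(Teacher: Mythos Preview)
Your proposal is correct and follows essentially the same approach as the paper's proof. Both argue that $\kappa_0^+$ vanishes because closed geodesics have trivial boundary, and then evaluate $\bar\kappa_1^+$ on the matrix representing $\gamma_I$ by observing that the upper-left entry coincides with reduction modulo $\mathfrak{N}$, with the sign coming from the orientation observation in \S\ref{Idef}; your version is simply more explicit in writing out $\alpha_I(\varepsilon^{-1})$ and checking the congruence $b_I\equiv\delta_N\pmod N$ by hand. One minor remark: since $N$ splits in $K$, the residue field $\mathfrak{o}/\mathfrak{N}$ is already $\mathbb{F}_N$, so the extension of $\log$ to $\mathbb{F}_{N^2}^\times$ is not needed here.
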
 
\begin{proof}
The  assertion about $\kappa_0^+$ 
 follows from the fact that the Heegner cycles 
$\gamma_I$, viewed as cycles in the integral homology of $X_0(N)$ relative to the cusps, 
are in the kernel of the
boundary map $\partial$ of \eqref{eqn:boundary-hom}, 
 and hence are orthogonal to   $\kappa_0$. 

To show the second assertion, recall that the class $\kappa_1^+$ was defined 
 modulo $p^t$ by  choosing a discrete logarithm $\log: (\Z/N\Z)^\times \lra \Z/p^t\Z$, and setting
 $$ \kappa_1^+ \left(\begin{matrix} a & b \\ c & d \end{matrix}\right)  = \log(a).$$
With this choice we have
\begin{equation}
\label{eqn:log-of-ek}
 \kappa_1^+(\gamma_I) =  - \log(u_K), 
 \end{equation}
 where $u_K$ is a
  fundamental unit of norm $1$ of the real quadratic field $K$,
  $\log(u_K)$ refers to the logarithm of the reduction of $u_K$
  at $\mathfrak{N}$. .
  This is because (notation of \S \ref{sec:trace-indef-statement})
  the cycle $\gamma_I$ arises from an embedding $\mathfrak{o} \rightarrow M_0(N)$,
 with respect to which
   the ring homomorphism  sending a matrix in $M_0(N)$
   to the mod $N$ reduction of upper left hand entry  restricts to reduction modulo $\mathfrak{N}$ on $\fO$ (see discussion above \eqref{eqn:Heegner-cycles});
   the sign arises for the orientation reason noted below \eqref{CCsign}.
      Equation  \eqref{eqn:log-of-ek}
  therefore  implies that $\kappa_1^+(\gamma_\psi) =  -(\sum_\fa \psi(\fa)) \log(u_K)$,
and the 
   result follows.
  \end{proof}

%
  \begin{proposition}
 \label{prop:mild-dependence}
 For all totally odd ring class characters $\psi$, 
  $$ \kappa_0^-(\gamma_\psi) =   (1-\psi(\fN))  L_{\mathrm{alg}}(\psi),$$
  where    $\kappa_0^-\in H^1_{\Bet}(Y_0(N),\Z)^-$ is as defined in Section \ref{sec:he-betti}, and 
  $L_{\mathrm{alg}}(\psi) \in R$ will be defined in \eqref{Lalgdef} and is in particular independent of $\mathfrak{N}$.
%
 \end{proposition}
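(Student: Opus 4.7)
The plan is to unwind the explicit formula \eqref{eqn:DR-on-N} for $\kappa_0^-$, apply it to the hyperbolic matrices representing the cycles $\gamma_I$, and reduce the statement to a classical formula of Meyer and Zagier that expresses $L(\psi,0)$ for a totally odd unramified character of a real quadratic field as a weighted sum of values of the Rademacher $\varphi$-function on fundamental units.

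First I would use \eqref{eqn:DR-on-N} to write, for each ideal class $I$ (represented by an ideal divisible by $\mathfrak{N}$ and not by $\mathfrak{N}'$, so that $\alpha_I(\varepsilon)\in \Gamma_0(N)$),
$$\kappa_0^-(\gamma_I)\ =\ \varphi(\alpha_I(\varepsilon)) - \varphi(W\alpha_I(\varepsilon)W^{-1}),$$
where $W=\mathrm{diag}(N,1)$ implements the swap $\bigl(\begin{smallmatrix} a&b\\Nc&d\end{smallmatrix}\bigr)\mapsto\bigl(\begin{smallmatrix} a&Nb\\c&d\end{smallmatrix}\bigr)$. Guided by the discussion in \S\ref{sec:he-betti}, I would take the first term to define, after summation against $\psi$, the algebraic $L$-value
$$L_{\mathrm{alg}}(\psi)\ :=\ \sum_{I\in\mathcal{C}}\psi(I)\,\varphi(\alpha_I(\varepsilon)),$$
and I would appeal to Meyer's theorem (cf.~Zagier's paper on real quadratic fields and sums of Dedekind symbols) to conclude that $L_{\mathrm{alg}}(\psi)$ belongs to $R$ and is proportional, by an explicit rational factor depending only on $D$, to the classical Artin $L$-value $L(\psi,0)$. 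This quantity is manifestly independent of $\mathfrak{N}$.

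Second I would identify the conjugated embedding $\alpha'_I:=W\alpha_I(\cdot)W^{-1}$ geometrically. A direct matrix calculation, starting from the formula \eqref{alphadef} and the representation $I=(a,(-b+\sqrt D)/2)$ with $N\mid a$, shows that $\alpha'_I$ is the Heegner embedding $\alpha_J$ attached to the fractional ideal $J=(a/N,(-b+\sqrt D)/2)=\mathfrak{N}^{-1} I$. In the narrow class group $\mathcal{C}$ we have $[\mathfrak{N}]^{-1}=[\mathfrak{N}']$ because $(N)$ is a totally positive principal ideal, so $[J]=[I\cdot\mathfrak{N}']$. Summing against $\psi$ and re-indexing via $J=I\mathfrak{N}'$ therefore yields
$$\sum_{I}\psi(I)\,\varphi(\alpha_{I\mathfrak{N}'}(\varepsilon))\ =\ \psi(\mathfrak{N}'^{-1})\cdot L_{\mathrm{alg}}(\psi)\ =\ \psi(\mathfrak{N})\cdot L_{\mathrm{alg}}(\psi),$$
using $\psi(\mathfrak{N})\psi(\mathfrak{N}')=\psi(N)=1$, which holds because $\psi$ is unramified and trivial on totally positive rational ideles. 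Combining the two contributions produces the factor $(1-\psi(\mathfrak{N}))$ asserted in the statement.

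The main obstacle I expect is a careful bookkeeping of step two: matching the conjugation by $W$ with a specific shift by $\mathfrak{N}$ (versus $\mathfrak{N}'$) in the narrow class group, and keeping track of signs and orientation conventions inherent to the definition of $\alpha_I$ and of the cycle $\gamma_I$ in \S\ref{Idef}. In particular one must be vigilant about the distinction between narrow and wide ideal classes (which is nontrivial here since all units of $K$ have norm one), and about the asymmetry introduced by the choice of $\mathfrak{N}$ over $\mathfrak{N}'$ already flagged in Remark \ref{symmetrybreaking}; these conventions determine on which side the single power of $\psi(\mathfrak{N})$ appears.
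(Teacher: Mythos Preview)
Your proposal is correct and follows essentially the same route as the paper. Both arguments use \eqref{eqn:DR-on-N} to write $\kappa_0^-$ of a Heegner cycle as a difference of two Rademacher $\varphi$-values, identify the second value with the $\varphi$-invariant of an ideal class shifted by $\fN$, and then sum against $\psi$ to extract the factor $(1-\psi(\fN))$. The only cosmetic difference is indexing: the paper starts with $I$ prime to $N$, builds an explicit basis $(Ne_1,e_2)$ for $I\fN$, and obtains $\kappa_0^-(\gamma_{I\fN})=\varphi(\eta_{I\fN})-\varphi(\eta_I)$; you start with $I$ divisible by $\fN$, conjugate by $W=\mathrm{diag}(N,1)$, and recognise $W\alpha_I W^{-1}$ as the level-one embedding attached to $\fN^{-1}I$. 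These are the same computation read in opposite directions.

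Two small remarks. First, your appeal to Meyer's theorem is not needed for the proposition itself: the assertion $L_{\mathrm{alg}}(\psi)\in R$ follows immediately from the fact that the Rademacher function $\varphi$ is $\Z$-valued on $\SL_2(\Z)$ and that $\psi$ takes values in $R^{\times}$. The interpretation via $L(\psi^{-1},1)$ is relegated to a remark following the proof. Second, in carrying out your re-indexing you are implicitly using that $\varphi(\eta_I)$ depends only on the narrow class of $I$ (so that the sum over representatives divisible by $\fN$ and the sum over representatives prime to $N$ agree); this is true because $\eta_I$ is well-defined up to $\SL_2(\Z)$-conjugacy and $\varphi$ is a class function on hyperbolic elements, but it is worth making explicit.
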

 Recall that $\kappa_0^{-}$ arises 
 from the Dedekind-Rademacher function $\varphi$ of \eqref{eqn:zagier-rho} which encodes the periods of the (complex!)
 logarithm of the modular unit $\Delta(Nz)/\Delta(z)$. 
The proposition shows that $\kappa_0^-(\gamma_\psi)$ exhibits a mild dependence on $N$ through the 
  factor $(1-\psi(\fN))$.
  
 \begin{proof} 
 The issue to be dealt with here is, essentially, passage from level $1$ to level $N$. 
 Let $I \in \mathcal{C}$.  Choose a representative that
  is relatively prime to $N$ and an oriented basis $(e_1,e_2)$.
  The element 
  $$ \eta_I = \left(\begin{array}{cc} a & b \\ c & d \end{array}\right) \in \SL_2(\Z), \qquad 
  \mbox{ where } \quad \begin{array}{l} u_K e_1 = a e_1 + c e_2 \\ u_K e_2 = b e_1 + d e_2\end{array}$$
  has conjugacy class in $\SL_2(\Z)$ that does not depend on the choice of oriented basis,  
  and in particular $\varphi(\eta_{\fa})$ is well-defined.   
Choose $(e_1, e_2)$ so that  $e_2$ belongs to $I \cap \fN$, and observe then that $(e_1',e_2') := (N e_1,e_2)$ is an oriented basis for $I\fN$ and that $u_K$ acts on this basis according to the rule
  $u_K e_1' = a e_1' + (cN) e_2'$ and $u_ke_2' = (b/N) e_1' + d e_2'$.
By \eqref{eqn:DR-on-N} as well as the definition \eqref{alphadef} of cycles $\gamma_J$, we get
   $\kappa_0^-(\gamma_{I \fN}) = \varphi(\eta_{I \fN}) - \varphi(\eta_{I})$ and it follows that
  \begin{eqnarray*}
   \kappa_0^-(\gamma_\psi) &=&
  \sum \psi(I \fN)  \left( \varphi(\eta_{I \fN}) -  \varphi(\eta_{I}) \right) \\
  &=& 
(1-\psi(\fN))    \sum_{I} \psi(I) \varphi(\eta_{I}), 
\end{eqnarray*}
  and we obtain the result upon defining 
  \begin{equation}
   \label{Lalgdef}
   L_{\mathrm{alg}}(\psi) :=  \sum_{I} \psi(I)^{-1} \varphi(\eta_{I}).\end{equation} 
  \end{proof}

  \begin{remark}  \label{Lpsi}  As is implicit in the notation, $L_{\rm alg}(\psi)$ is closely related to the ``algebraic part" of   the $L$-series
 $L(\psi,s)=
 \sum_{\fa \lhd \fO_K} \psi(\fa) (N\fa)^{-s}$
  attached to $\psi$, at $s=1$. 
The justification for this is given by Meyer's analogue of the Kronecker limit formula for real quadratic fields  (cf.~\cite[\S 4]{zagier}) which asserts that,  at least for all   unramified, totally odd characters $\psi$ of the narrow Hilbert
  class field of $K$,
  $ L_{\rm alg}(\psi)   =   \frac{12\sqrt{D}}{\pi^2} L(\psi^{-1}, 1).$
 \end{remark}
\vspace{0.2cm}

Note  that if $x^2-a_N(g) + \chi_K(N) = (x-\alpha_N)(x-\beta_N)$   is
 the $N$-th Hecke polynomial attached to $g$, then we may order $\alpha_N$ and $\beta_N$ in such a way that
  $$ \alpha_N = \psi_1(\fN), \quad \beta_N = \psi_1(\fN'), \mbox{ and so }
 \psi(\fN) = \psi_1(\fN) / \psi_1'(\fN) = \alpha_N/\beta_N,$$
 where we use the definition \eqref{psidef}.   
 Proposition   \ref{prop:mild-dependence}
 can then be rewritten as 
\begin{equation}
\label{eqn:eval2}
  \kappa_0^-(\gamma_{\psi})  =  \left(1-\alpha_N/\beta_N\right) \times  L_{\rm alg}(\psi).
\end{equation}

\vspace{0.3cm}

%

 
 Let $\fS_0$ and $\fS = \fS_1 \in \bar\bM^*$ denote the Eisenstein and higher classes
described in \S \ref{sec:he-Mdual}.
It follows from  Theorem \ref{thm:RM-general}   applied to the pair $(\psi_1, \psi_1^{-1})$
-- so  by \eqref{psi12def}    $\psi_{12} =1$ and $\psi_{12'} = \psi_1/\psi_1' =\psi$
 -- 
 that there exists $C_g \in R$ independent of $N$ such that    
\begin{eqnarray}
\label{eq:waldspurger-garrett-RM}
\langle G, \mathfrak{S}\rangle &=&   \beta_N \, C_g \langle \Theta([\gamma_{1}] \otimes [\gamma_{\psi}]), \mathfrak{S}\rangle \\
\nonumber
 &=&  \beta_N  C_g \cdot \langle[ \gamma_1] \otimes [\gamma_{\psi}], \Theta^*(\mathfrak{S})\rangle, 
\end{eqnarray}
where  we understand $[ \gamma_1] \otimes [\gamma_{\psi}]$ as an element of
$ (\bar\bH_+  \otimes_{\Tp} \bar \bH_- )$, and $\Theta^* (\mathfrak{S})$
as an element of the $\Z/p^t$-dual, see
 \eqref{Thetadef2}. 
 Here we regard $\Theta$ as normalized as in  
\eqref{eqn:def-theta-rm}; the $C_g$ that appears in the above equation only agrees with that constant
appearing in Theorem \ref{thm:RM-general} up to sign, 
 arising from the fact that the choice of orientation convention for \eqref{eqn:def-theta-rm}
was not compared with the choice of orientation convention used in Theorem \ref{thm:RM-general}.
This sign may be computed by the enthusiastic reader.

The next theorem below, which determines the image of $\mathfrak{S}$ under $\Theta^*$, 
plays exactly the same role in the RM proof as  
Theorem \ref{thm:shimura-class} in the CM setting.

 \begin{theorem}
\label{thm:shimura-class-bis}
We have 
\begin{enumerate}
\item
$\Theta^*(\fS_0) = \realthetaconstant \kappa_0^+ \otimes \kappa_0^-$;
\item  $
 \Theta^* (\mathfrak{S}) \equiv  \realthetaconstant (\kappa_1^+ \otimes \kappa_0^- + \kappa_0^+ \otimes \kappa_1^-)$  modulo $\kappa_0^+ \otimes \kappa_0^-$. 
  \end{enumerate}
where $\kappa^+$ are the Eisenstein classes of  
 \S \ref{sec:he-betti-rel}, or rather their image in $( \mathbb{H}_+)^*$
 or $(\overline{\mathbb{H}}_+)^*$, and similarly
 $\kappa^-$ are similarly defined from the Eisenstein classes of 
\S \ref{sec:he-betti}. 
 \end{theorem}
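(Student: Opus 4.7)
The approach will parallel the proof of the CM analogue Theorem \ref{thm:shimura-class}: I would dispatch (1) by direct unwinding of the definition of $\Theta$, and then deduce (2) formally from (1) combined with Theorem \ref{thm:shimura-hes} and the general tensor-product mechanism of Proposition \ref{prop:he-tensor}.

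For (1), the plan is to evaluate the constant-term functional $\fS_0$ against the right-hand side of \eqref{eqn:def-theta-rm}. The only non-$q$-power term is $\realthetaconstant\,\kappa_0^+(\gamma^+)\cdot\kappa_0^-(\gamma^-)$, which---once $\kappa_0^{\pm}$ are viewed as elements of $\bar\bH_{\pm}^*$ through the natural Poincar\'e pairings and the sign decompositions---is exactly $\realthetaconstant\,(\kappa_0^+\otimes\kappa_0^-)$ evaluated at $\gamma^+\otimes\gamma^-$. This directly identifies $\Theta^*(\fS_0)$ with $\realthetaconstant\,\kappa_0^+\otimes\kappa_0^-$.

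For (2), I would exploit the $\bar\TT$-linear isomorphism $\Theta^*:\bar\bM^*\isoarrow(\bar\bH_+\otimes_{\bar\TT}\bar\bH_-)^*$ arising from \eqref{Thetadef2}. Both sides are free of rank one over $\bar\TT$, and the Eisenstein and higher Eisenstein elements on each side are characterised (respectively up to $\Z_p^{\times}$-scaling, and modulo an Eisenstein multiple) by the Hecke-theoretic relations of \S\ref{sec:gen-he}. Since Theorem \ref{thm:shimura-hes} identifies $\fS$ as the higher Eisenstein element of $\bar\bM^*$ attached to $\fS_0$, the image $\Theta^*(\fS)$ must be the higher Eisenstein element of $(\bar\bH_+\otimes_{\bar\TT}\bar\bH_-)^*$ attached to $\Theta^*(\fS_0)=\realthetaconstant\,\kappa_0^+\otimes\kappa_0^-$, modulo a scalar multiple of the Eisenstein element $\kappa_0^+\otimes\kappa_0^-$. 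Applying Proposition \ref{prop:he-tensor} with $M=\bar\bH_+$ and $N=\bar\bH_-$---where the Eisenstein/higher Eisenstein pair in $M^*$ is $(\kappa_0^+,\kappa_1^+)$ by Theorem \ref{thm:eisenstein-relative-H1}, and the corresponding pair $(\kappa_0^-,\kappa_1^-)$ in $N^*$ is supplied by \S\ref{sec:he-betti} together with the abstract existence result of \S\ref{sec:gen-he}---yields the desired expression $\realthetaconstant(\kappa_1^+\otimes\kappa_0^- + \kappa_0^+\otimes\kappa_1^-)$ modulo $\kappa_0^+\otimes\kappa_0^-$.

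The main piece of bookkeeping I anticipate is verifying the compatibility between the Poincar\'e-duality identifications $\bar\bH_\pm\simeq\bar\bH_\pm^*$ and the $\bar\TT$-module structures (in particular the self-adjointness of $U_N$ under the intersection pairing), so that the $\kappa_i^{\pm}$ may legitimately play the role of Eisenstein/higher Eisenstein elements in the duals to which Proposition \ref{prop:he-tensor} is applied. Once that is in place, nothing beyond the explicit shape of $\Theta$ in \eqref{eqn:def-theta-rm}, Hecke-equivariance, and the abstract formalism of Chapter \ref{sec:hee} is required.
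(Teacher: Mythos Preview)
Your proposal is correct and follows essentially the same route as the paper: part (1) is read off directly from the constant term in the definition \eqref{eqn:def-theta-rm} of $\Theta$, and part (2) is deduced from the Hecke-equivariance of $\Theta^*$ together with Proposition \ref{prop:he-tensor}, using that $\mathfrak{S}$ is the higher Eisenstein element of $\bar\bM^*$ (Theorem \ref{thm:shimura-hes}). The bookkeeping you anticipate is not spelled out in the paper either; the argument is presented at exactly the level of detail you describe.
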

 
The statements should be interpreted just as in Theorem \ref{thm:shimura-class}:
we use 
$$  \left( \overline{\mathbb{H}}_+ \otimes_{\Tp} \overline{\mathbb{H}}_- \right)^*
\subset \left( \overline{\mathbb{H}}_+ \otimes_{\Z/p^t} \overline{\mathbb{H}}_- \right)^*  =
 (\overline{\mathbb{H}}_+)^* \otimes_{\Z/p^t} (\overline{\mathbb{H}}_-)^*  ,$$
where $*$ means $\Hom(-, \Z/p^t)$.

\begin{proof}
The first part of the theorem follows directly from the definition of
$\Theta$ given in \eqref{eqn:def-theta-rm}. The second follows from the Hecke equivariance of $\Theta^*$, in light of the fact that $\kappa_1^+\otimes \kappa_0^- + \kappa_1^+\otimes \kappa_0^-$ is the higher Eisenstein element in
$(\bH^+\otimes_{\TT} \bH^-)^\vee$ attached to $\kappa_0^+\otimes \kappa_0^-$,
by Proposition \ref{prop:he-tensor}.
\end{proof}

We can now prove Conjecture \ref{conj:HV} in the RM setting.

 \begin{proposition}
 \label{prop:almost-final-rm} 
We have
\begin{equation}
\label{eqn:almost-final-rm} 
 \langle G, \mathfrak{S} \rangle = \frac{1}{24} h(\mathfrak{o}) C_g
  \cdot L_{\rm alg}(\psi) \cdot (\beta_N-\alpha_N)  \cdot \log(u_K).
  \end{equation}
\end{proposition}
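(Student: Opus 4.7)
The plan is to combine \eqref{eq:waldspurger-garrett-RM} with the explicit formula for $\Theta^*(\mathfrak{S})$ from Theorem \ref{thm:shimura-class-bis}, and then evaluate the resulting pairing on each factor using Propositions \ref{prop:heegner-cycles-mt} and \ref{prop:mild-dependence}. Concretely, I would start by rewriting
$$ \langle G, \mathfrak{S}\rangle \;=\; \beta_N \, C_g \cdot \langle [\gamma_1] \otimes [\gamma_\psi],\, \Theta^*(\mathfrak{S})\rangle, $$
as in \eqref{eq:waldspurger-garrett-RM}, and then substituting the congruence
$$ \Theta^*(\mathfrak{S}) \;\equiv\; \realthetaconstant\bigl(\kappa_1^+ \otimes \kappa_0^- + \kappa_0^+ \otimes \kappa_1^-\bigr) \pmod{\kappa_0^+\otimes \kappa_0^-}. $$

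The next step is to eliminate all but one term through vanishing. By Proposition \ref{prop:heegner-cycles-mt}, the boundary pairing $\kappa_0^+$ annihilates every real quadratic cycle $\gamma_I$ (and hence $\gamma_1$ and $\gamma_\psi$). Consequently, the term $\kappa_0^+\otimes \kappa_1^-$ contributes $\kappa_0^+(\gamma_1)\cdot \kappa_1^-(\gamma_\psi)=0$, and the ambiguities modulo $\kappa_0^+\otimes \kappa_0^-$ (in $\Theta^*(\mathfrak{S})$) as well as the $\kappa_0^+$-ambiguity in $\kappa_1^+$ and the $\kappa_0^-$-ambiguity in $\kappa_1^-$ all drop out because any factor of $\kappa_0^+$ evaluated on one of our cycles vanishes. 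The pairing thus reduces cleanly to
$$ \langle [\gamma_1] \otimes [\gamma_\psi],\, \Theta^*(\mathfrak{S})\rangle \;=\; \realthetaconstant \cdot \kappa_1^+(\gamma_1)\cdot \kappa_0^-(\gamma_\psi). $$

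Now I would plug in the two evaluations already in hand: Proposition \ref{prop:heegner-cycles-mt} gives $\kappa_1^+(\gamma_1) = -h(\mathfrak{o})\log(u_K)$ (the trivial-character case, which is the only one that survives), and \eqref{eqn:eval2} gives $\kappa_0^-(\gamma_\psi) = (1-\alpha_N/\beta_N)\,L_{\mathrm{alg}}(\psi)$. Multiplying everything out with $\realthetaconstant = -1/24$ yields
$$ \langle G, \mathfrak{S}\rangle \;=\; \beta_N\,C_g\cdot\tfrac{-1}{24}\cdot\bigl(-h(\mathfrak{o})\log(u_K)\bigr)\cdot\bigl(1-\alpha_N/\beta_N\bigr)\,L_{\mathrm{alg}}(\psi), $$
and the factor $\beta_N(1-\alpha_N/\beta_N) = \beta_N-\alpha_N$ produces exactly the desired identity.

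The main thing to verify carefully is that the various ambiguities truly cancel: $\Theta^*(\mathfrak{S})$ is only well-defined modulo $\kappa_0^+\otimes \kappa_0^-$, and each higher Eisenstein element $\kappa_1^{\pm}$ carries its own $\kappa_0^{\pm}$-ambiguity. The calculation above shows they all die because $\gamma_1$ and $\gamma_\psi$ are in the kernel of $\kappa_0^+$; this is the one structural point I would be sure to articulate before the arithmetic substitution. Everything else is direct bookkeeping.
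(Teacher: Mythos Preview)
Your proof is correct and follows essentially the same route as the paper: combine \eqref{eq:waldspurger-garrett-RM} with Theorem~\ref{thm:shimura-class-bis}(2), use $\kappa_0^+(\gamma_1)=0$ to kill the unwanted term and the ambiguities, and then substitute the values from Proposition~\ref{prop:heegner-cycles-mt} and \eqref{eqn:eval2}. Your explicit discussion of why the various $\kappa_0^\pm$-ambiguities drop out is a nice addition that the paper leaves more implicit.
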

\begin{proof}
Applying  \eqref{eq:waldspurger-garrett-RM} 
and part (2) of Theorem \ref{thm:shimura-class-bis},
     $$ \langle G, \mathfrak{S}\rangle =  \frac{-\beta_N  C_g}{24} \cdot
  \langle  \gamma_1\otimes \gamma_{\psi},  \kappa_0^+\otimes \kappa_1^- + \kappa_1^+\otimes \kappa_0^- \rangle
    =  \frac{-\beta_N  C_g}{24} \cdot \kappa_1^+(\gamma_1) \cdot \kappa_0^-(\gamma_{\psi}),
    $$
   where we have used the fact that   $\kappa_0^+(\gamma_1) =0$  
   to ignore the term arising from $\langle \gamma_1\otimes \gamma_\psi, \kappa_0^+\otimes \kappa_1^-\rangle$.
The theorem now follows from Proposition
\ref{prop:heegner-cycles-mt} and \eqref{eqn:eval2}, which imply that
$$
   \kappa_1^+(\gamma_1) =  - h \log(u_K), \qquad 
  \kappa_0^-(\gamma_\psi) =  (1-\alpha_N/\beta_N) \cdot L_{\rm alg}(\psi).$$
\end{proof}   
 To prove Theorem \ref{thm:main} of the introduction
when $K$ is a real 
quadratic field, it remains, as before,  to relate the right-hand side of  
\eqref{eqn:almost-final-rm} to the expression $\red_N(u_g)$ occuring in this theorem.


\begin{lemma}
\label{lemma:regulator-multiverse} (cf. Lemma 
\ref{lemma:regulator-cm}).
Let $U_g   := ( \cO_K^\times \otimes \Ad^*(\rho_g)^{\circ})^{G_\Q}$. 
There exists $u_g \in U_g $ with the property that, for all $N$ as above, 
$$ \log(\red_N(u_g)) = (\alpha_N-\beta_N) \log(u_K).$$
\end{lemma}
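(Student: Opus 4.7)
The plan is to mirror the construction in Lemma \ref{lemma:regulator-cm}, modified to reflect that $N$ now \emph{splits} in $K$ rather than being inert. First I would fix a basis $(e_1,e_2)$ of $V_g$ consisting of eigenvectors for $G_K$ with eigenvalues $\psi_1$ and $\psi_1'$ respectively. Since $N=\mathfrak{N}\mathfrak{N}'$ splits, the Frobenius $\sigma_N=\Frob_\mathfrak{N}$ lies in $G_K$ itself, and so acts diagonally on this basis as $\diag(\psi_1(\mathfrak{N}),\psi_1'(\Frob_\mathfrak{N}))=\diag(\alpha_N,\beta_N)$, rather than interchanging $e_1$ and $e_2$ as in the CM proof. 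Correspondingly, the off-diagonal nilpotent matrices appearing in \eqref{ugCMdef} should be replaced by the traceless \emph{diagonal} matrix $H:=\diag(1,-1)\in \Ad(V_g)^\circ$.

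With this in mind, I would set
$$u_g := u_K\otimes H \;\in\; \cO_K^\times\otimes \Ad^*(\rho_g)^\circ,$$
where $\Ad^*(\rho_g)$ is identified with $\Ad(\rho_g)$ via the trace form, and $u_K$ denotes the fundamental unit of $K$ (of norm one, by the running hypothesis that the narrow class number of $K$ is twice its wide class number). Checking that $u_g$ lies in $U_g$ is immediate: the subgroup $G_K$ acts diagonally on $V_g$ and hence fixes $H$ as well as $u_K$; and the nontrivial element $c\in \Gal(K/\Q)$, which exchanges $e_1$ and $e_2$, sends $H\mapsto -H$ while $c(u_K)=u_K'=u_K^{-1}$, so the two sign changes cancel in the tensor product.

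To conclude, I would evaluate the functional represented by $H$ against $\rho_g(\sigma_N)$ via the trace pairing: $\langle H,\rho_g(\sigma_N)\rangle = \tr\bigl(H\cdot\diag(\alpha_N,\beta_N)\bigr)=\alpha_N-\beta_N$. Following the recipe for $\red_N$ given in \S \ref{notn}, this yields $\red_N(u_g)=\red_\mathcal{N}(u_K)\otimes(\alpha_N-\beta_N)\in (\Z/N\Z)^\times\otimes R$, and applying the (extension to $\F_{N}^\times$ of the) discrete logarithm gives precisely $\log(\red_N(u_g))=(\alpha_N-\beta_N)\log(u_K)$. There is no serious obstacle here; the only care needed is in the sign bookkeeping---specifically, verifying that $\psi_1'(\Frob_\mathfrak{N})=\psi_1(\mathfrak{N}')=\beta_N$ in the paper's conventions, which follows from the identity $\psi_1'(\sigma)=\psi_1(c\sigma c^{-1})$ together with $c\Frob_\mathfrak{N} c^{-1}=\Frob_{\mathfrak{N}'}$, and checking that $u_K'=u_K^{-1}$ (which is automatic since $N_{K/\Q}(u_K)=+1$).
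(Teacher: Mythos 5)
Your proposal is correct and follows essentially the same route as the paper: the same diagonal basis of $G_K$-eigenvectors, the same element $u_g = u_K \otimes \mathrm{diag}(1,-1)$, the same invariance check (the nontrivial automorphism negates the diagonal matrix while sending $u_K \mapsto u_K' = u_K^{-1}$, i.e.\ negating it in the additive notation of the tensor factor), and the same trace-pairing evaluation yielding $u_K \otimes (\alpha_N - \beta_N)$. The sign bookkeeping you flag at the end is consistent with the paper's conventions.
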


As before,   Theorem  \ref{thm:main}
will follow from this:
we have
$$24 \langle G, \mathfrak{S} \rangle = \log(\mathrm{red}_N(u_g')),$$
with $u_g' = -h(\mathfrak{o}) L_{\mathrm{alg}}(\psi) C_g \cdot u_g$. 

%

\begin{proof}
Let $e_1$ and $e_2$ be  eigenvectors in $V_g$ for the action of $G_K$, on which $G_K$ acts via the characters
$\psi_1$ and $\psi_1'$ respectively. 
Since $N$ is split in $K$, the associated Frobenius automorphism $\sigma_N\in G_{\Q}$ 
is a diagonal matrix, with entries $\alpha_N$ and $\beta_N$.   
Representing  elements of ${\rm Ad}(V_g)$ as matrices relative to the basis $(e_1,e_2)$, 
so that $ \rho_g(\sigma_N) = \left(\begin{array}{cc} \alpha_N & 0 \\ 0 & \beta_N \end{array}\right)$, and using the   trace form to identify $\Ad(V_g)$ with its dual $\Ad^*(V_g)$, we define
$$ u_g := u_K\otimes  \left(\begin{array}{cc} 1 &0  \\ 0 & -1 \end{array}\right),$$ 
which is clearly $G_{\Q}$-invariant: it is fixed by $G_K$, and the nontrivial automorphism of $K$
negates both factors.  As after \eqref{ugCMdef} this indeed defines an element of $U_g$.
 One then finds
\begin{eqnarray*}
 \red_N(u_g)&=&  \left\langle u_g, \rho_g(\sigma_N)\right\rangle \ \  = \ \ {\rm Trace} \left( \begin{array}{cc} 
 u_K \otimes \alpha_N & 0 \\ 0 & u_K \otimes (-\beta_N)
  \end{array}\right) \\
  &=& 
u_K \otimes (\alpha_N-\beta_N).
\end{eqnarray*}
Therefore,   
 $$  \log(\red_N(u_g)) =  (\alpha_N-\beta_N) \log(u_K).$$
The lemma follows.
\end{proof}





\end{document}